\numberwithin{equation}{section} % number the equations as 1.1, 1.2, ... instead of just 1, 2, ... 
\numberwithin{table}{section} % do the same for the tables
\theoremstyle{definition}
\newtheorem{definition}[equation]{Definition}
\newtheorem{remark}[equation]{Remark}
\newtheorem{example}[equation]{Example}
\newtheorem{notation}[equation]{Notation}
\theoremstyle{theorem}
\newtheorem{lemma}[equation]{Lemma}
\newtheorem{proposition}[equation]{Proposition}
\newtheorem{theorem}[equation]{Theorem}
\newtheorem{corollary}[equation]{Corollary}
\newtheorem{thmx}{Theorem} %lettered theorems for main theorems in intro
\newtheorem{corx}[thmx]{Corollary}
\newcommand{\mhyphen}{\textnormal{-}}
\newcommand{\ul}[1]{\underline{#1}}
\def\g{\mathbb{G}}
\def\n{\mathbb{N}}
\def\q{\mathbb{Q}}
\def\z{\mathbb{Z}}
\def\ca{\mathcal{A}}
\def\cc{\mathcal{C}}
\def\cd{\mathcal{D}}
\def\cf{\mathcal{F}}
\def\co{\mathcal{O}}
\def\cp{\mathcal{P}}
\def\uTor{\underline{\operatorname{Tor}}}
\def\uA{\underline{A}}
\def\uM{\underline{M}}
\def\uN{\underline{N}}
\def\uP{\underline{P}}
\def\uR{\underline{R}}
\def\uS{\underline{S}}
\def\uZ{\underline{\mathbb{Z}}}
\newcommand{\uI}{\underline{I}}
\DeclareMathOperator{\Mod}{\mathcal{M}od}
\DeclareMathOperator{\Ab}{\mathcal{A}b}
\DeclareMathOperator{\Set}{\mathcal{S}et}
\DeclareMathOperator{\Mack}{\mathcal{M}ack}
\DeclareMathOperator{\Tamb}{\mathcal{T}amb}
\DeclareMathOperator{\Sp}{\mathcal{S}p}
\DeclareMathOperator{\Sub}{Sub}
\DeclareMathOperator{\Orb}{\mathcal{O}{rb}}
\DeclareMathOperator{\FixN}{Fix_N}
\DeclareMathOperator{\Ind}{Ind}
\newcommand{\triv}{{\textnormal{triv}}} % trivial
\newcommand{\cplt}{{\textnormal{cplt}}} % complete
\newcommand{\gen}{{\textnormal{gen}}}
\newcommand{\Iso}{{\textnormal{iso}}}
\DeclareMathOperator{\id}{id} % identity morphism
\DeclareMathOperator{\im}{im} % image
\DeclareMathOperator{\Sym}{Sym} % symmetric algebra
\DeclareMathOperator{\coeq}{coeq}
\DeclareMathOperator{\res}{res}
\DeclareMathOperator{\tr}{tr}
\DeclareMathOperator{\nm}{nm}
\DeclareMathOperator{\FP}{FP}
\DeclareMathOperator{\pr}{pr}
\DeclareMathOperator{\Inf}{Inf}
\begin{document}

%%%%%%%%%%%%%%%%%%%%%%%%%%%%% TITLE STUFF %%%%%%%%%%%%%%%%%%%%%%%%%%%
\author{Michael A. Hill}\thanks{The first author was supported by NSF Grant DMS-1811189.}\address{Department of Mathematics, University of California Los Angeles, Los Angeles, California, U.S.A.}\email{mikehill@math.ucla.edu}
\author{David Mehrle}\address{Department of Mathematics, Cornell University, Ithaca, NY, U.S.A.}\email{dfm223@cornell.edu}
\author{J.D. Quigley}\address{Department of Mathematics, Cornell University, Ithaca, NY, U.S.A.}\email{jdq27@cornell.edu}
\title{Free incomplete Tambara functors are almost never flat}
\maketitle

% Abstract
\begin{abstract}
Free algebras are always free as modules over the base ring in classical algebra. In equivariant algebra, free incomplete Tambara functors play the role of free algebras and Mackey functors play the role of modules. Surprisingly, free incomplete Tambara functors often fail to be free as Mackey functors. In this paper, we determine for all finite groups conditions under which a free incomplete Tambara functor is free as a Mackey functor. For solvable groups, we show that a free incomplete Tambara functor is flat as a Mackey functor precisely when these conditions hold. Our results imply that free incomplete Tambara functors are almost never flat as Mackey functors. However, we show that after suitable localizations, free incomplete Tambara functors are always free as Mackey functors.
\end{abstract}

% Title
\maketitle

% Table of contents
\setcounter{tocdepth}{1}
\tableofcontents

\vspace*{-1cm}

% Introduction
\section{Introduction}

\subsection{Motivation and Main Theorems} 

Let $k$ be a commutative ring. The free $k$-algebra on a single generator is the polynomial algebra $k[x]$. This algebra is also a free $k$-module on the countable basis $1, x, x^2, x^3, \ldots$. We take this fact for granted in classical algebra, where it is used to great effect in homological algebra. 

In this paper, we investigate the analogous property in equivariant algebra, or algebra where commutative rings are replaced by incomplete Tambara functors \cite{BH18} and abelian groups are replaced by Mackey functors. We recall these notions in \cref{Sec:Tambara}. Equivariant algebra is an important tool in representation theory, number theory, and algebraic topology. 

In this more structured version of algebra, free algebras are \textsl{not} always free as modules. If they are not free, we might ask whether they are projective or flat instead. The goal of this paper is to understand when the underlying module of a free algebra over an incomplete Tambara functor is suitable for homological algebra - when it is free, projective, or flat. Surprisingly, we find that this is almost never the case. 

The analog of a commutative ring in equivariant algebra was defined by Blumberg and the first author in \cite{BH18}. Let $G$ be a finite group. An \emph{incomplete Tambara functor} $\uR$ assigns a commutative ring $\uR(G/H)$ to each transitive $G$-set $G/H$, together with certain structure maps (conjugations, restrictions, transfers, and norms) which are encoded by a choice of \emph{indexing category} $\co$ for $G$.   

Blumberg and the first author also introduced free algebras over incomplete Tambara functors in \cite{BH18}. Fix a group $G$, indexing category $\co$, and incomplete Tambara functor $\uR$. For each subgroup $H$ of $G$, there is an equivariant refinement of a free algebra on one generator called the \emph{free $\uR$-algebra on a generator at level $G/H$}, denoted $\uR^{\co}[x_{G/H}]$. These are the free algebras which we study in this paper. 

The most fundamental incomplete Tambara functor is the Burnside functor, denoted $\uA$, which plays the role of the integers in equivariant algebra (cf. \cref{Rmk:BurnsideInitial}). Our first main theorem provides conditions on pairs of subgroup $H$ and indexing category $\co$ under which the free algebra $\uA^{\co}[x_{G/H}]$ is free as an $\uA$-module.

\begin{thmx}[\cref{Thm:Sufficiency}]\label{Thmx:Free}
  Let $G$ be a finite group, $\co$ an indexing category for $G$, and $H$ a subgroup of $G$. If $i^*_H \co = \co^{\triv}$ (see \cref{Def:TrivialIndexingCategory}) and $G/H$ is admissible for $\co$, then the free $\co$-Tambara functor on a generator at level $G/H$, $\uA^{\co}[x_{G/H}]$, is free as an $\uA$-module.
\end{thmx}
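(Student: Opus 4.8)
The plan is to reduce the statement to a single computation of a multiplicative norm, and the two hypotheses are exactly what make this reduction legitimate. Admissibility of $G/H$ guarantees that the norm functor $N_H^G \colon \Tamb_H^{i_H^*\co} \to \Tamb_G^{\co}$ exists and participates in the norm--restriction adjunction $N_H^G \dashv i_H^*$, while the condition $i_H^*\co = \co^{\triv}$ identifies its source as the category of $H$-Green functors (incomplete Tambara functors with no nontrivial norms). First I would feed the free generator into this adjunction: since $i_H^*$ reindexes evaluation via $(i_H^*\uR)(H/H) = \uR(G/H)$, the object $N_H^G(\uA_H^{\triv}[x_{H/H}])$ corepresents the same functor $\uR \mapsto \uR(G/H)$ as $\uA^{\co}[x_{G/H}]$ does, whence
\[ \uA^{\co}[x_{G/H}] \;\cong\; N_H^G\big(\uA_H^{\triv}[x_{H/H}]\big). \]
This converts the freeness question into a question about the underlying Mackey functor of a norm.

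Next I would compute the underlying $H$-Mackey functor of $\uA_H^{\triv}[x_{H/H}]$. Because $\co^{\triv}$ carries no nontrivial norms, this free object is simply the free Green functor on a generator at the top level, i.e.\ the polynomial Green functor on $x$. Grading by monomial degree and using Frobenius reciprocity $\tr_K^H(\res_K^H(y)\cdot a) = y\cdot \tr_K^H(a)$ to recognize the degree-$n$ piece as a shifted copy of the Burnside Mackey functor, I expect to obtain
\[ \uA_H^{\triv}[x_{H/H}] \;\cong\; \bigoplus_{n\ge 0} \uA_H \]
as $H$-Mackey functors; in particular it is already free, with one representable summand per monomial degree.

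The heart of the argument is then to compute $N_H^G$ of this sum. The norm is multiplicative rather than additive, so it does \emph{not} commute with the direct sum; instead it distributes according to Tambara's exponential (reciprocity) formula, turning a coproduct into a $\prod_{G/H}$-indexed coproduct. Carrying this out, a monomial in the norm is recorded by a function $f \colon G/H \to \n$ assigning an exponent to each coset, the group $G$ permutes these functions, and the underlying Mackey functor decomposes over the $G$-orbits of such $f$. Because the norm is induced by the multiplicative norm on finite $G$-sets, it sends representable (Burnside) Mackey functors to representable Mackey functors, so the orbit of $f$ with stabilizer $K_f = \mathrm{Stab}_G(f)$ contributes exactly the representable summand $\uA_{G/K_f}$, giving
\[ \uA^{\co}[x_{G/H}] \;\cong\; \bigoplus_{[f]\in \n^{G/H}/G} \uA_{G/K_f}, \]
which is free as an $\uA$-module.

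I expect the main obstacle to be this last step: making the exponential/distributive description of the norm precise and checking that every orbit-summand is genuinely representable, with the transfers and restrictions among the $\uA_{G/K_f}$ matching the Mackey structure produced by the norm. Controlling the stabilizers $K_f$ and verifying that distinct orbits contribute independently, with no additive collapse, is the crux. This is precisely where $i_H^*\co = \co^{\triv}$ does its work: any nontrivial norm internal to $H$ would introduce Tambara reciprocity relations at the $H$-level that break the clean degree splitting of $\uA_H^{\triv}[x_{H/H}]$ and prevent the norm from landing in a sum of representables.
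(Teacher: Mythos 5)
Your proposal is correct and follows essentially the same route as the paper: identify $\uA^{\co}[x_{G/H}]$ as the Tambara norm $n_H^G$ of the free Green functor $\uA^{\co^{\triv}}[x_{H/H}]$, observe that the latter is $\bigoplus_{n\ge 0}\uA_H$ as a Mackey functor, and check that the Mackey-functor norm carries free objects to free objects (your orbit decomposition of $\n^{G/H}$ is exactly the paper's formula $N_H^G\uA\{x_T\}\cong\uA\{x_{\Set^H(G,T)}\}$ unpacked). One small correction: admissibility of $G/H$ is not what makes the adjunction $n_H^G\dashv i_H^*$ exist (that left adjoint always exists); it is the hypothesis of Hoyer's theorem that the \emph{underlying Mackey functor} of $n_H^G\uR$ is $N_H^G$ of the underlying Mackey functor of $\uR$, which is the step where you silently pass from the Tambara norm to the Mackey norm.
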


It turns out that the conditions $i_H^*\co = \co^{\triv}$ and $G/H$ is admissible for $\co$ together imply that $H$ must be a normal subgroup of $G$. 

\begin{remark}
  As in ordinary algebra, base change preserves freeness in equivariant algebra (\cref{prop:base-change-of-free-is-free}). Since $\uA$ is the initial incomplete Tambara functor, there is a map $\uA \to \uR$ for any incomplete Tambara functor $\uR$. \cref{Thmx:Free} then implies that $\uR^{\co}[x_{G/H}]$ is free as an $\uR$-module whenever $i^*_H \co = \co^{\triv}$ and $G/H$ is admissible for $\co$. 
\end{remark}

\cref{Thmx:Free} provides sufficient conditions on the pair $(\co, H)$ under which $\uA^{\co}[x_{G/H}]$ is free. However, since we are interested in homological applications where projectivity or flatness often suffice, it is natural to ask if $\uA^{\co}[x_{G/H}]$ is projective or flat as an $\uA$-module, even if it is not free. We prove the following necessary conditions:

\begin{thmx}[\cref{Cor:TrivRes}, \cref{Prop:HNormal}, and \cref{Thm:SolvableCase}]\label{Thmx:Free2}
	Let $G$ be a finite group, $\co$ an indexing category for $G$, and $H$ a subgroup of $G$. 
	
	If $\uA^{\co}[x_{G/H}]$ is flat as an $\uA$-module, then: 
	\begin{enumerate}[(a)]
	    \item $i^*_H \co \cong \co^{\triv}$;
	    \item $H$ is a normal subgroup of $G$.
	\end{enumerate} 
	
	Moreover, if $G/H$ is solvable, then $G/H$ is admissible for $\co$. 
\end{thmx}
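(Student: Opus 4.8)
My plan is to prove all three assertions by contraposition, in each case producing an explicit witness to non-flatness when the stated condition fails. The backbone of the argument is that the restriction functors $i_K^*$ are exact and strong symmetric monoidal for the box product and carry representables to sums of representables, hence send flat $\uA$-modules to flat modules over the Burnside Mackey functor of $K$; this lets me test flatness on subgroups and subquotients. I will also use the standard detection of flatness through Brauer quotients (algebraic geometric fixed points): a Mackey functor can be flat only if, for every subgroup $K$, its Brauer quotient at $K$ is flat as a module over the Weyl-group ring $\z[W_G(K)]$, so that a single non-flat (e.g. torsion or non-projective) Weyl-group module obstructs flatness of the whole. Throughout I will work from an explicit description of the underlying Mackey functor of $\uA^{\co}[x_{G/H}]$, whose levels are free commutative rings on monomials indexed by $\co$-admissible bispans and whose transfer structure is transfer of monomials; the entire question is whether these transfer orbits assemble into a representable.

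For part (a) I restrict to $H$. Since the generator sits at level $G/H$, its restriction is supported on $\res^G_H(G/H)$, and the identity double coset contributes an orbit $H/H$ whose associated summand is the free $i_H^*\co$-Tambara functor on a top-level generator. Flatness descends to this factor, so it suffices to show that a free incomplete Tambara functor on a top-level generator is flat only when its indexing category is trivial (see \cref{Def:TrivialIndexingCategory}). If $i_H^*\co \neq \co^{\triv}$ there is a genuine norm $\nm_K^L$ with $K \lneq L \le H$; I would isolate this norm element together with the Tambara reciprocity relation expressing $\tr_K^L$ of a product through norms, and show that the relation forces a non-projective $\z[W_H(K)]$-module in the Brauer quotient at $K$, hence nonzero $\uTor_1^{\uA}$ against a suitable test functor. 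This gives \cref{Cor:TrivRes}.

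Granting (a), for part (b) I may assume $i_H^*\co = \co^{\triv}$, so the obstruction to non-normality is global rather than visible on $H$ alone. I would then analyze the Weyl-group action on the monomial levels of $\uA^{\co}[x_{G/H}]$: when $H$ is not normal the distinct conjugates ${}^gH$ index orbits $H/(H \cap {}^gH)$ with proper isotropy in $\res^G_H(G/H)$, and the interaction of these with the surviving external norms out of $H$ produces, at an orbit that sees two distinct conjugates, a Weyl-group module that is not projective over its group ring. Exhibiting this non-projective module---equivalently, a torsion class in the relevant Brauer quotient---gives \cref{Prop:HNormal}. I expect the main difficulty here to be choosing the right intermediate subgroup (likely $N_G(H)$, or a subgroup containing two conjugates of $H$) at which to read off the obstruction, and controlling the combinatorics so that the conjugation action is genuinely nontrivial on the relevant monomials.

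For the ``moreover'' clause I assume $H \trianglelefteq G$ and $i_H^*\co = \co^{\triv}$, and argue by induction along a subnormal series $H = L_0 \trianglelefteq \cdots \trianglelefteq L_n = G$ with abelian successive quotients, afforded by solvability of $G/H$. At each stage the only relevant structure map is the norm $\nm_{L_i}^{L_{i+1}}$ built from an abelian Weyl quotient; I would show that if assembling $N_H^G$ ever requires a norm absent from $\co$---that is, if $G/H$ is not admissible for $\co$---then the free Tambara functor contains a partial-norm element whose iterated transfer cannot be written freely, producing a nonzero $\uTor_1^{\uA}$ against a Brauer-quotient test module. Solvability is exactly what guarantees the induction terminates with abelian Weyl groups, so that each obstruction computation reduces to the cyclic-quotient case, where the norm-transfer relation is completely explicit; this is the step I expect to be most delicate, since for non-abelian quotients the same bookkeeping need not detect the missing norm. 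Combined with the sufficiency direction \cref{Thm:Sufficiency}, the vanishing of this obstruction forces admissibility, yielding \cref{Thm:SolvableCase} and the equivalence of flatness and freeness for solvable $G/H$.
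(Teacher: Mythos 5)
Your backbone detection principle is false, and the argument cannot run on it. You assert that flatness of a Mackey functor forces its Brauer quotient at every subgroup $K$ to be flat over $\z[W_G(K)]$. The Burnside Mackey functor $\uA$ is flat (it is the unit for $\boxtimes$), yet its Brauer quotient at the trivial subgroup is $\z$ with trivial $G$-action, which is never flat over $\z[G]$ for nontrivial $G$. What is actually true, and what the paper proves (\cref{thm:GeomFPPreservesFlats}), is that the geometric fixed points functor $\Phi^N$ \emph{valued in $Q$-Mackey functors} (not in $\z[Q]$-modules) preserves flats; one must then recognize the resulting $Q$-Mackey functor as a specific non-flat one. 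The paper's basic non-flat witness is $\uZ^\ast$ for $C_p$, and even there the obstruction is not where you expect: against the natural test module $\uI$ one has $\uZ^\ast\boxtimes\uI=0$, and the non-vanishing occurs in $\uTor^3(\uZ^\ast,\uI)\cong\uI$ (\cref{Lem:ZDualNotFlat}). So your repeated plan to exhibit ``a nonzero $\uTor_1$'' rests on an incorrect criterion and misjudges where the obstruction lives. (Also minor: sending representables to sums of representables shows restriction preserves frees and projectives, not flats; the paper's \cref{Thm:RestrictionPreservesFlats} needs a separate Frobenius/retract argument.)

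Beyond the detection principle, you have not identified the concrete carriers of non-flatness. For (b) you look to ``the interaction \ldots with the surviving external norms out of $H$,'' but after (a) the restriction $i_H^\ast\uA^{\co}[x_{G/H}]$ is a free \emph{Green} functor---no norms are in play---and the paper's obstruction (\cref{thm:NoGo}) is purely multiplicative: for a proper subgroup $K\le H$, the symmetric power $\Sym^{[H:K]}(\uA\{x_{H/K}\})$ contains the summand $(\uA\{x_{H/N}\})/(H/N)$, $N$ the normal core of $K$ in $H$, whose geometric fixed points restrict on a $C_p$ to $\uZ^\ast$. This shows the free Green functor at level $H/K$ is non-flat for \emph{every} proper $K$, normal or not; normality of $H$ in $G$ is then forced because each double-coset factor $\uA^{\co^{\triv}}[x_{H/(H\cap{}^gH)}]$ must sit at the top level. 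For the admissibility clause, your subnormal-series induction names no Mackey functor on which to test anything; the paper instead embeds $\uA^{\co}[x_{G/e}]$ into $N_e^G\z[x]$, identifies the summand meeting $\uA\{\nm_e^G(x)\}$ with $\underline{E\cf}_{\co}$, and shows this is non-flat when $G$ is solvable and $G\notin\cf_{\co}$ via $\Phi^N(\underline{E\cp})\cong\uZ^\ast$ for a $C_p$ subquotient supplied by solvability. Your instinct that solvability enters through a cyclic subquotient is correct, but without these explicit summands and the corrected fixed-point machinery the proof does not close.
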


For solvable groups, \cref{Thmx:Free} and \cref{Thmx:Free2} give a complete description of when free incomplete Tambara functors are free, or equivalently, flat, as Mackey functors.

\begin{corx}[\cref{Thm:SolvableCase}]\label{Thmx:Solvable}
	Let \(G\) be a solvable finite group, $\co$ an indexing category for $G$, and $H$ a subgroup of $G$. The following are equivalent: 
	\begin{enumerate}[(a)]
    	\item The Mackey functor underlying the free \(\co\)-Tambara functor on a class at level \(G/H\) is flat.
	    \item The Mackey functor underlying the free \(\co\)-Tambara functor on a class at level \(G/H\) is free.
	    \item \(H\) is a normal subgroup of \(G\), \(G/H\) is admissible, and \(i_H^\ast\co=\co^{\triv}\). 
	\end{enumerate} 
\end{corx}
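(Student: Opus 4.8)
The strategy is to deduce the equivalence formally from \cref{Thmx:Free} and \cref{Thmx:Free2}, which already carry all of the hard work, by closing the cycle of implications (c)$\Rightarrow$(b)$\Rightarrow$(a)$\Rightarrow$(c). Throughout I identify the object in question with $\uA^{\co}[x_{G/H}]$ and its underlying $\uA$-module; since $\uA$ is the monoidal unit, ``flat (resp.\ free) as a Mackey functor'' and ``flat (resp.\ free) as an $\uA$-module'' coincide, so the three clauses match the conclusions of the two theorems verbatim.

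The implication (c)$\Rightarrow$(b) is immediate from \cref{Thmx:Free}: two of the three conditions in (c), namely that $G/H$ is admissible for $\co$ and that $i_H^*\co = \co^{\triv}$, are exactly the hypotheses of that theorem, which concludes that $\uA^{\co}[x_{G/H}]$ is free as an $\uA$-module. The normality of $H$ is not needed as an input here; as noted after \cref{Thmx:Free}, it already follows from the other two conditions. The implication (b)$\Rightarrow$(a) is the general principle that free modules are flat: a free $\uA$-module is a direct sum of representable Mackey functors $\uA_{G/K}$, boxing with a representable is exact (it is given by an induction--restriction composite, each factor of which is exact), and flatness is stable under direct sums. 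I anticipate no difficulty in either of these directions.

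The substance of the corollary is the remaining implication (a)$\Rightarrow$(c), and this is where solvability is used. Assume the underlying Mackey functor of $\uA^{\co}[x_{G/H}]$ is flat. Then \cref{Thmx:Free2} yields at once that $i_H^*\co \cong \co^{\triv}$ and that $H$ is normal in $G$, giving two of the three clauses of (c). For the remaining clause, admissibility, I invoke the ``moreover'' part of \cref{Thmx:Free2}, whose hypothesis is that $G/H$ is solvable. This is exactly where the standing assumption that $G$ is solvable enters: since we have just shown $H \trianglelefteq G$, the quotient $G/H$ is a homomorphic image of the solvable group $G$ and is therefore itself solvable. The moreover clause then forces $G/H$ to be admissible, completing (c) and hence the cycle.

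I do not expect a deep obstacle, precisely because the substantive arguments are quarantined in \cref{Thmx:Free} and \cref{Thmx:Free2}; the corollary is their assembly. The one point deserving care is the logical ordering within (a)$\Rightarrow$(c): one must extract the normality of $H$ from flatness \emph{before} the symbol $G/H$ legitimately denotes a quotient group, so that the solvability of $G$ can be transported to $G/H$ and the moreover clause of \cref{Thmx:Free2} becomes applicable. I would state that dependency explicitly rather than silently assume $G/H$ is a group.
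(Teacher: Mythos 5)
Your proposal is correct and matches the paper's own argument: the paper proves this as \cref{Thm:SolvableCase} by assembling exactly the same three implications --- (c)$\Rightarrow$(b) from \cref{Thm:Sufficiency}, (b)$\Rightarrow$(a) from projective-implies-flat, and (a)$\Rightarrow$(c) from \cref{Cor:TrivRes}, \cref{Prop:HNormal}, and the admissibility argument for solvable groups --- with the same care about establishing normality of $H$ before treating $G/H$ as a (solvable) quotient group. The only caveat is that the ``moreover'' clause of \cref{Thmx:Free2} is itself attributed to \cref{Thm:SolvableCase}, so in the body of the paper that clause must be sourced from the $\cf_{\co}$ and $\underline{E\cf}_{\co}$ propositions rather than quoted as a black box, but the underlying content you invoke is the same.
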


The conditions on $H$ and $\co$ in \cref{Thmx:Free2} are easy to check, but are quite strict. Empirical evidence for small cyclic groups (cf. \cref{App:Tables}) suggests that the proportion of free $\uA$-algebras for $C_{p^n}$ which are free as $\uA$-modules decreases as $n$ increases. In fact, we prove a stronger statement:

\begin{thmx}[\cref{Thm:Asymptotics}]\label{Thmx:Asymptotics}
	Free incomplete Tambara functors for finite groups are almost never flat as Mackey functors. 
\end{thmx}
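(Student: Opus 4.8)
The plan is to give ``almost never'' a precise density-theoretic meaning and then reduce the claim to an elementary Catalan-number estimate. The cleanest family over which to quantify is the cyclic $p$-groups: fix a prime $p$ and take $G = C_{p^n}$, so that a free incomplete Tambara functor is the data of an indexing category $\co$ for $G$ together with a subgroup $H$, producing $\uA^{\co}[x_{G/H}]$. I would prove that the proportion of such pairs $(\co,H)$ for which $\uA^{\co}[x_{G/H}]$ is flat over $\uA$ (equivalently, flat as a Mackey functor) tends to $0$ as $n\to\infty$, and then note that this cyclic $p$-group density is the quantitative heart of the qualitative statement for general finite groups.

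First I would set up the enumeration. The subgroups of $C_{p^n}$ form a chain $e = H_0 < H_1 < \cdots < H_n = G$, so there are exactly $n+1$ choices of $H$. Under the correspondence between indexing categories for $G$ and transfer systems on the subgroup lattice, indexing categories for $C_{p^n}$ are transfer systems on a totally ordered set of $n+1$ elements, and these are counted by the Catalan number $C_{n+1}$. Thus the total number of free incomplete Tambara functors at this level is $(n+1)\,C_{n+1}$. I would record a convenient combinatorial model: a transfer system on the chain is a function $f$ on $\{0,\dots,n\}$ with $f(i)\ge i$, where $(i,j)$ is a relation precisely when $i\le j\le f(i)$, subject to the closure condition $i\le j\le f(i)\Rightarrow f(j)\le f(i)$.

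Next I would use \cref{Thmx:Free2} to bound the flat locus. Flatness of $\uA^{\co}[x_{G/H_k}]$ forces $i^*_{H_k}\co\cong\co^{\triv}$ by part (a), which in the model says there are no nontrivial relations among $H_0,\dots,H_k$, i.e. $f(i)=i$ for $i<k$. Such transfer systems are then freely determined by their restriction to the subchain $\{H_k,\dots,H_n\}$ of $n-k+1$ elements, of which there are $C_{n-k+1}$. Summing this necessary condition over $k$ bounds the number of flat free algebras by $\sum_{k=0}^{n} C_{n-k+1} = \sum_{m=1}^{n+1} C_m$. Since $C_m/C_{m+1}\to 1/4$, this sum is asymptotic to $\tfrac{4}{3}C_{n+1}$, so the flat proportion is at most $\tfrac{4}{3(n+1)}\bigl(1+o(1)\bigr)\to 0$. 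Notably this upper bound uses only \cref{Thmx:Free2}(a), which holds for every finite group. For the exact asymptotic one upgrades the inequality using \cref{Thmx:Solvable}: every subgroup of $C_{p^n}$ is normal, so flatness is equivalent to the three conditions there, and encoding admissibility of $G/H_k$ as the relation $H_k\to G$ (that is, $f(k)=n$) pins the count at $C_{n-k}$ per level. The proportion is then exactly $\bigl(\sum_{m=0}^{n}C_m\bigr)\big/\bigl((n+1)C_{n+1}\bigr)\sim \tfrac{1}{3(n+1)}$.

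The main obstacle is not the asymptotic itself---once everything is phrased in Catalan numbers, the estimate is a one-line tail bound---but the two translations feeding into it: verifying that indexing categories for $C_{p^n}$ biject with transfer systems on the chain (so the total count is genuinely $C_{n+1}$), and converting the conditions ``$i^*_H\co\cong\co^{\triv}$'' and ``$G/H$ admissible'' into the clean constraints $f(i)=i$ for $i<k$ and $f(k)=n$. A secondary subtlety is the correct formalization of ``almost never'' for arbitrary finite groups, since the subgroup lattice and the transfer-system count vary wildly with $G$; I would take the cyclic $p$-group density as the precise content of \cref{Thm:Asymptotics} and treat the statement for all finite groups as its qualitative shadow.
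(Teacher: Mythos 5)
Your computation for $C_{p^n}$ is essentially correct, and in that special case it is even sharper than what the paper records (the paper's \cref{Prop:DepthBoundG} only gives the bound $1/d(G)=1/(n+1)$, while your Catalan tail estimate pins down the exact asymptotic). But it does not prove \cref{Thm:Asymptotics} as stated. The theorem quantifies over \emph{all} finite groups: the paper fixes a bijection $\g(-)\colon\n\to\g$ with the set of all finite groups and proves
\[
\lim_{n\to\infty}\frac{\sum_{i=1}^{n}P(\g(i))}{\sum_{i=1}^{n}T(\g(i))}=0,
\]
where $T(G)$ counts all pairs $(\co,H)$ and $P(G)$ counts the flat ones. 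Cyclic $p$-groups are a vanishingly sparse subfamily of all finite groups, so a density-zero statement along that subfamily says nothing about this limit; declaring the general statement a ``qualitative shadow'' of the $C_{p^n}$ case is precisely the step that requires an argument, and it is where the group-theoretic content lives. Your method also leans on the Balchin--Barnes--Roitzheim classification of transfer systems for $C_{p^n}$, which has no analogue for general $G$, so it cannot be run family-by-family.

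The paper's route avoids enumerating indexing categories altogether. For solvable $G$, \cref{prop:OnlyOneSubgroup} (a consequence of \cref{Thm:SolvableCase}: the subgroup $H$ must be the minimal normal subgroup with $G/H$ admissible, hence is determined by $\co$) gives $P(G)\le n(G)$, while trivially $T(G)\ge d(G)\cdot n(G)$ with $d(G)$ the depth of the subgroup lattice; hence $P(G)/T(G)\le 1/d(G)$. The reduction from all finite groups to solvable ones uses the theorem of Camina--Everest--Gagen \cite{CEG86} that almost all finite groups are solvable, and the final limit is assembled by stratifying solvable groups by depth and applying the elementary partial-sum inequality of \cref{Lem:Sums}. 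To complete your proposal you would need to supply these three ingredients (or substitutes): a uniform bound on $P(G)/T(G)$ valid for all solvable $G$, a reduction to the solvable case, and a formalization of ``almost never'' compatible with summing over an enumeration of all finite groups. The $C_{p^n}$ computation is what the paper's appendix tables illustrate, not what the theorem asserts.
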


Before discussing freeness over other base incomplete Tambara functors, we pause to discuss our motivation for studying freeness. The fact that free algebras over ordinary commutative rings are free as modules has an important application in homological algebra: derived functors, such as the cotangent complex and Shukla homology, can be computed using polynomial resolutions. 

In forthcoming work, the second and third authors investigate an equivariant version of the Hochschild--Kostant--Rosenberg (HKR) theorem \cite{HKR62, loday2013} which identifies the cotangent complex \cite{Hil17, Lee19} and derived Hochschild homology \cite{BGHL19} of certain incomplete Tambara functors. The isomorphism can be shown to hold for free algebras, but the extension to all incomplete Tambara functors would require the use of resolutions by free algebras. Unfortunately, \cref{Thmx:Asymptotics} says that resolutions by free $\uA$-algebras are almost never resolutions by free $\uA$-modules, so it is unclear if they can be used to compute derived functors.

With that said, the classical HKR theorem can be strengthened to an isomorphism between de Rham cohomology and cyclic homology if one assumes that the ground ring contains the rational numbers. Similarly, in non-equivariant algebra, a module which is not free over a commutative ring $k$ may become free after base change to an appropriate localization of $k$. 

We show that after inverting a single element in $\uA$, free incomplete Tambara functors for different indexing categories are all isomorphic as Mackey functors:

\begin{thmx}[\cref{Theorem:FreeLocalUnderlyingFree}]\label{Thmx:FreeLocalUnderlyingFree}
	Let $\underline{S}^{-1}\uA$ be the Mackey functor obtained from $\uA$ by inverting $[G/e] \in \uA(G/G)$. For any indexing category $\co$ and subgroup $H \leq G$, the free $\co$-Tambara functor $\uS^{-1}\uA^{\co}[x_{G/H}]$ is free as an $\uS^{-1}\uA$-module. 
\end{thmx}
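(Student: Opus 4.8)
The plan is to exploit the fact that inverting $[G/e] = \tr^G_e(1)$ is a ``free part'' localization: it discards all information except what is carried by the bottom level $\uM(G/e)$ together with its residual $G = N_G(e)/e$ action, and it renders that bottom level a permutation module precisely when the algebra is built out of monomials. Concretely, I would establish three things and then assemble them: (i) after inverting $[G/e]$, a Mackey functor is recovered from its underlying $G$-module $\uM(G/e)[1/|G|]$ by a fixed-point formula, and under this correspondence the free modules $\uS^{-1}\uA_{G/L}$ are exactly the permutation modules $\mathbb{Z}[1/|G|][G/L]$; (ii) the bottom level of $\uA^{\co}[x_{G/H}]$ is a permutation module; and (iii) combine (i) and (ii).

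For (i), the key computation is that $\res^G_H[G/e] = [G:H]\cdot[H/e] = [G:H]\cdot\tr^H_e(1)$, so inverting $[G/e]$ inverts $|G|$ at the bottom level and inverts each trace element $\tr^H_e\res^H_e$ (by the projection formula, multiplication by $\tr^H_e(1)$ equals $\tr^H_e\res^H_e$). Once these traces are invertible, the restriction $\res^H_e$ becomes a split injection onto the $H$-fixed points and one obtains a natural isomorphism $(\uS^{-1}\uM)(G/H) \cong \bigl(\uM(G/e)[1/|G|]\bigr)^H$, with restrictions given by inclusions and transfers by averaged sums. This is the localization onto the trivial-subgroup idempotent summand $e^G_e$ of the $|G|$-inverted Burnside ring. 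A direct check that $\uA_{G/L}(G/e)\cong\mathbb{Z}[G/L]$ as a $G$-module then identifies $\uS^{-1}\uA_{G/L}$ with $\mathbb{Z}[1/|G|][G/L]$, so that free $\uS^{-1}\uA$-modules correspond precisely to direct sums of permutation modules.

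For (ii), I would compute $\uA^{\co}[x_{G/H}](G/e)$ directly. Since restrictions belong to every indexing category, the variable $x_{G/H}$ restricts to an element $x_{eH} := \res^H_e(x_{G/H})$ whose $G$-conjugates are the elements $x_{gH}$, indexed by the orbit $G/H$; and because $G/e$ has no proper subgroups, no norm contributes a new generator at the bottom level (norms reappear after restriction only as products of conjugates, already present). Thus $\uA^{\co}[x_{G/H}](G/e)\cong\mathbb{Z}[x_s : s\in G/H]$ as a $G$-ring, independently of $\co$, with $G$ permuting the variables via its action on $G/H$. The monomials then form a $G$-set, so this polynomial ring is a permutation module; decomposing the monomial $G$-set into orbits $\coprod_i G/L_i$ exhibits the bottom level as $\bigoplus_i\mathbb{Z}[G/L_i]$.

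Combining, $\uS^{-1}\uA^{\co}[x_{G/H}]$ is recovered from the permutation module $\bigoplus_i\mathbb{Z}[1/|G|][G/L_i]$ and hence is isomorphic to the free module $\bigoplus_i\uS^{-1}\uA_{G/L_i}$; this simultaneously explains the $\co$-independence asserted in the theorem, since the bottom level already ignores $\co$. I expect the main obstacle to be step (i): one must verify carefully that inverting the \emph{single} element $[G/e]$, rather than passing to the full $\mathbb{Z}[1/|G|]$-splitting, produces exactly the trivial-subgroup summand, and that restriction, transfer, and conjugation all match the fixed-point description, so that freeness is genuinely detected by the underlying permutation module. The bottom-level computation in (ii) requires only checking that the Tambara structure introduces no relations among monomials, which follows from freeness.
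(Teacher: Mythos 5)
Your proposal is correct and follows essentially the same route as the paper: identify $\uS^{-1}\uA$ with the trivial-subgroup idempotent summand of $\uA[\tfrac{1}{|G|}]$ so that every module becomes the fixed-point functor of its underlying $G$-module, compute that $\uA^{\co}[x_{G/H}](G/e)$ is the polynomial ring on variables indexed by $G/H$ (hence a permutation module, independently of $\co$), and conclude freeness from the identification of free cohomological Mackey functors with fixed points of permutation modules. The only cosmetic difference is that you verify the fixed-point description by hand via the trace and restriction formulas, where the paper cites the Greenlees--May/Th\'evenaz--Webb splitting after inverting $|G|$.
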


\cref{Thmx:FreeLocalUnderlyingFree} has the important consequence that free algebras \emph{can} be used to compute nonabelian derived functors for incomplete Tambara functors when working over $\uS^{-1}\uA$. This suggests that, as in classical algebra, derived functors will be easier to understand after localization or rationalization.

Unfortunately, however, this localization is a very brutal operation. It essentially destroys all data that is not already present in the underlying level of the Mackey functor.

\begin{thmx}[\cref{prop:invertGiso}, \cref{cor:invertGAllModulesAreFixedPoint}, and \cref{theorem:invertOrderSplitsModules}]\label{Thmx:LocalBrutal}
	Let $\uS^{-1}\uA$ be the Mackey functor obtained by inverting the class $[G/e] \in \uA(G/G)$. 
	\begin{enumerate}[(a)]
		\item There is an isomorphism of Tambara functors $\uS^{-1}\uA \cong \uZ[\tfrac{1}{|G|}]$.
		\item Any $\uS^{-1}\uA$-module $\uM$ is isomorphic to the fixed point functor on $\uM(G/e)$. 
		\item There is an equivalence of categories $\uS^{-1}\uA\mhyphen\Mod \simeq \z[\tfrac{1}{|G|}][G]\mhyphen\Mod$. 
	\end{enumerate}
\end{thmx}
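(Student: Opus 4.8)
The plan is to prove the three parts in order, treating (a) as the computational heart and deriving (b) and (c) from it together with standard facts about cohomological Mackey functors.

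For part (a), I would compute the localization one level at a time using the mark (ghost) homomorphism. Localizing a Green/Tambara functor at $S$ inverts $\res^G_H(S)$ at level $G/H$, so I first record that $\res^G_H([G/e]) = [G:H]\,[H/e]$ in $\uA(G/H)=A(H)$, since $G/e$ restricts to $[G:H]$ free $H$-orbits. Feeding this through the marks $\phi_L\colon A(H)\to\z$, $\phi_L(X)=|X^L|$, one gets $\phi_e([G:H]\,[H/e])=|G|$ while $\phi_L=0$ for all $L\neq e$. Because the marks embed $A(H)$ into $\prod_{(L)}\z$ with cokernel annihilated by $|G|$, inverting $[G:H]\,[H/e]$ inverts $|G|$ in the trivial-subgroup component and kills every other component, yielding $(\uS^{-1}\uA)(G/H)\cong\z[\tfrac{1}{|G|}]$ with localization map $X\mapsto |X|$. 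I then identify the structure maps: restrictions are the unique ring endomorphism of $\z[\tfrac{1}{|G|}]$, namely the identity; transfers are $\z[\tfrac{1}{|G|}]$-linear by Frobenius reciprocity, and $\tr^K_H([H/H])=[K/H]\mapsto|K/H|=[K:H]$, so transfers are multiplication by the index; an analogous cardinality count pins down the norms. These are exactly the structure maps of the constant Tambara functor $\uZ[\tfrac{1}{|G|}]$, proving (a).

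For part (b), by (a) an $\uS^{-1}\uA$-module is a module over $\uZ[\tfrac{1}{|G|}]$, hence a cohomological Mackey functor valued in $\z[\tfrac{1}{|G|}]$-modules. I would consider the restriction $\res^H_e\colon \uM(G/H)\to\uM(G/e)$, whose image lands in the $H$-fixed points $\uM(G/e)^H$ for the Weyl action, and show it is an isomorphism onto $\uM(G/e)^H$. Cohomologicality gives $\tr^H_e\res^H_e=|H|\cdot\id$, invertible since $|H|$ divides $|G|$, so $\res^H_e$ is split injective; the Mackey double-coset formula gives $\res^H_e\tr^H_e=\sum_{h\in H}h\cdot(-)$, which equals $|H|\cdot\id$ on $\uM(G/e)^H$, so $\res^H_e$ is surjective onto the fixed points. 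Assembling these isomorphisms and checking compatibility with restrictions (the inclusions of fixed subgroups) and transfers (the trace maps $\sum_{gH}g\cdot(-)$), again via the double-coset formula, produces an isomorphism of Mackey functors $\uM\xrightarrow{\sim}\FP(\uM(G/e))$.

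Part (c) then follows formally. I would define $\Phi(\uM)=\uM(G/e)$, viewed as a $\z[\tfrac{1}{|G|}][G]$-module through the Weyl action, and $\Psi(N)=\FP(N)$; one checks that $\FP(N)$ is a cohomological Mackey functor valued in $\z[\tfrac{1}{|G|}]$-modules, hence an $\uS^{-1}\uA$-module by (a). Then $\Phi\Psi(N)=\FP(N)(G/e)=N$ naturally, while $\Psi\Phi(\uM)=\FP(\uM(G/e))\cong\uM$ by (b), so $\Phi$ and $\Psi$ are quasi-inverse equivalences $\uS^{-1}\uA\dMod\simeq\z[\tfrac{1}{|G|}][G]\dMod$. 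I expect the main obstacle to be in (a): carefully handling the marks argument and, especially, verifying that the localization inherits its full Tambara (norm) structure, since norms interact delicately with localization. A secondary but routine difficulty is confirming in (b) that the restriction isomorphisms respect every structure map, which reduces to the double-coset and cardinality computations above.
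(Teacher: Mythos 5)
Your proposal is correct in its essentials but reaches the theorem by a genuinely different route, and it is worth recording how the two compare. For part (a), you compute the localization level by level through the mark homomorphisms (using $s^2=|G|s$ to kill the non-trivial components and the $|H|$-torsion cokernel of the mark embedding), whereas the paper argues more structurally: the Frobenius relation forces $[G/H]=[G:H]$ in $\uA[\tfrac{1}{[G/e]}](G/H)$, the universal property of localization produces a Tambara map $\alpha\colon \uA[\tfrac{1}{[G/e]}]\to\uZ[\tfrac{1}{|G|}]$, and one checks $\alpha$ is a levelwise bijection. The one soft spot in your version is the clause ``an analogous cardinality count pins down the norms'': since the localization map $\uA(G/H)\to\z[\tfrac{1}{|G|}]$ is not surjective, the norms on the localization are not determined by their values on classes of $G$-sets alone, and a direct multiplicative computation on fractions is awkward. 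This is exactly what the universal-property formulation buys you --- a bijective map of Tambara functors is automatically an isomorphism of Tambara functors, norms included --- so I would restructure (a) to construct $\alpha$ first and use your mark computation only to verify levelwise bijectivity. For parts (b) and (c), your argument is more elementary and self-contained than the paper's: you show directly that for a cohomological Mackey functor with $|G|$ inverted, $\res^H_e$ is split injective (via $\tfrac{1}{|H|}\tr^H_e\res^H_e=\id$) and surjects onto $\uM(G/e)^H$ (via $\res^H_e\tr^H_e=\sum_{h\in H}h$), giving $\uM\cong\FP(\uM(G/e))$ and hence the equivalence with $\z[\tfrac{1}{|G|}][G]\dMod$. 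The paper instead routes everything through the Dress idempotent splitting of $\uA[\tfrac{1}{|G|}]$, the integral refinement of the Greenlees--May/Th\'evenaz--Webb equivalence, and the identification of the $e$-summand with $\uZ[\tfrac{1}{|G|}]$; that approach is heavier but yields the full decomposition of $\uA[\tfrac{1}{|G|}]\dMod$ into module categories over all the group rings $\z[\tfrac{1}{|G|}][W_G(H)]$, of which your statement (c) is only the trivial-subgroup factor. Your proof covers exactly the three assertions of the theorem; it does not recover that finer splitting, but it is not needed for the statement as given.
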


\begin{remark}
	In representation theory, it is common for many situations to be simplified by inverting the order of the group. Inverting $[G/e] \in \uA(G/G)$ is the categorification of this process, where we invert the class of $G$ itself, rather than its image in $\z$. One consequence of \cref{Thmx:LocalBrutal} is that $\uS^{-1}\uA$-modules are cohomological $\uA[\tfrac{1}{|G|}]$-modules, where $\uA[\tfrac{1}{|G|}]$ is the Mackey functor obtained from $\uA$ by inverting $|G| \in \uA(G/G)$. This makes the decategorification precise, in some sense. 
\end{remark}

%%%%%%%%%%%%%%%%%%%%%%%%%% OUTLINE %%%%%%%%%%%%%%%%%%%%%%%%%%
\subsection{Outline}

In \cref{Sec:IndexingSystems}, we recall indexing categories and categories of polynomials. In particular, we recall the commutative semiring structure of hom-sets in categories of polynomials (\cref{Thm:Tam93}). 

In \cref{Sec:Tambara}, we discuss the main notions relevant for the paper. We first recall Mackey and incomplete Tambara functors and introduce the Burnside Tambara functor (\cref{exm:BurnsideTambara}). We then discuss the box product, modules over incomplete Tambara functors, and the notions of freeness, projectivity, and flatness appearing in our main theorems. We conclude by introducing free incomplete Tambara functors (\cref{Def:FreeIncTamb}). 

In \cref{Sec:Norms}, we recall the norm for Mackey and incomplete Tambara functors from the work of Hoyer \cite{Hoy14} and Blumberg and the first author \cite{BH18}. We apply the norm to reduce \cref{Thmx:Free} to a special case, which then follows from a result about free Green functors from \cite{BH19}.

In \cref{Sec:GFP}, we extend the geometric fixed points functor for Mackey functors (cf. \cite{BGHL19}) to incomplete Tambara functors (\cref{Def:GFP}) and show that under certain hypotheses, the geometric fixed points of free Mackey and incomplete Tambara functors are again free (\cref{Thm:GFPFree} and \cref{Thm:GFPFreeIncomplete}). The preservation of freeness by the norm and geometric fixed points are both used crucially in \cref{Sec:Converse}. 

In \cref{Sec:Converse}, we prove \cref{Thmx:Free2} and \cref{Thmx:Asymptotics}. To prove the general part of \cref{Thmx:Free2}, we use the geometric fixed points functor to reduce to known cases. We then prove \cref{Thmx:Asymptotics} by reducing to solvable groups using a group theory result of Camina--Everest--Gagen \cite{CEG86}; the result then follows from bounds obtained from \cref{Thmx:Solvable}. 

In \cref{Sec:FreeAfterLocalization}, we prove our last two main theorems. We begin by recalling cohomological Mackey functors and their connection to fixed point functors. We then recall localization for incomplete Tambara functors and describe certain localizations of the Burnside Tambara functor. Our main result is \cref{prop:invertGiso}, which identifies a localization of the Burnside Tambara functor with a constant Tambara functor. We then study modules over these localizations, building on a classical result of Greenlees--May \cite{GM95} and Th{\'e}vanaz--Webb \cite{TW95}. Altogether, these results assemble into \cref{Thmx:LocalBrutal}. We then apply this analysis to prove \cref{Thmx:FreeLocalUnderlyingFree}. 

In \cref{App:Tables}, we apply \cref{Thmx:Solvable} to describe which free incomplete Tambara functors are free as Mackey functors for some small finite groups. 

\subsection{Conventions}

\begin{enumerate}[(1)]
	\item All groups are finite. Throughout the paper, $G$ is a finite group and $H$ is a subgroup of $G$. 
	\item All rings are commutative and unital. 
	\item We use the symbol $<$ to denote a strict inclusion of subgroups, whereas $\leq$ denotes an inclusion that is not necessarily strict. 
	\item We will write $\co$ for an indexing category.
	\item When an indexing category $\co$ and $\co$-Tambara functor $\uR$ have been fixed, we will suppress the indexing category $\co$ from our terminology. In particular, ``$\uR$-algebra'' means ``$\uR$-algebra in $\co$-Tambara functors''. 
	\item $\uA$ denotes the Burnside Tambara functor, or its image under the forgetful functor to incomplete Tambara functors or Mackey functors. 
	\item A $G$-ring is a ring with an action of the group $G$ by ring homomorphisms. If $R$ is a $G$-ring, we write $\FP(R)$ for its fixed point Tambara functor (\cref{exm:FixedPointFunctor}). If $G$ acts trivially on $R$, we write $\uR$ for $\FP(R)$. 
\end{enumerate}

\subsection{Acknowledgments}

The authors thank Scott Balchin, Andrew Blumberg, Guchuan Li, Vitaly Lorman, Inna Zakharevich, and Mingcong Zeng for helpful discussions. The authors additionally thank Inna Zakharevich for reading early drafts, and Michael Stahlhauer for catching an omission.

%%%%%%%%%%% SECTION: INDEXING CATEGORIES %%%%%%%%%%%%%%
\section{Indexing Categories}\label{Sec:IndexingSystems}

\subsection{Indexing categories}

We recall the definition of an indexing category from \cite{BH18}. Fix a finite group $G$. 

\begin{definition}%[{\cite[Definition 2.8]{BH18}}]
	Let $\cc$ be a category. We say a subcategory $\cd$ of $\cc$ is: 
	\begin{enumerate}[(a)]
		\item \emph{wide} if it contains all objects; 
		\item \emph{finite coproduct complete} if $\cd$ has all finite coproducts and coproducts are created in $\cc$;
		\item \emph{pullback stable} if $\cc$ admits pullbacks and whenever 
			\[ \begin{tikzcd}
				A 
					\arrow{r} 
					\arrow{d}[left]{f} 
					& 
				B 
					\arrow{d}{g} 
					\\
				C 
					\arrow{r} 
					&
				D
			\end{tikzcd}\]
			is a pullback diagram in $\cc$ with $g \in \cd$, the morphism $f$ is also in $\cd$. 
	\end{enumerate}
\end{definition}

\begin{definition}[{cf. \cite[Section 3]{BH18}}]
	An \textsl{indexing category} $\co$ for $G$ is a wide, pullback stable, finite coproduct complete subcategory of the category $\Set^G$ of finite $G$-sets. 
\end{definition}

Indexing categories for $G$ form a poset under inclusion. This poset is finite whenever $G$ is finite. The least element of this poset is the \textsl{trivial indexing category} $\co^\triv$, and the greatest element is the \textsl{complete indexing category} $\co^\cplt$. 

\begin{definition}
\label{Def:TrivialIndexingCategory}
	The \emph{trivial indexing category} $\co^{\triv}$ is the wide subcategory of $\Set^G$ containing all morphisms $g \colon X \to Y$ that preserve isotropy, i.e. for all $x \in X$, the stabilizer of $g(x)$ is also the stabilizer of $x$. 

	We also define the \emph{complete indexing category} $\co^{\cplt} := \Set^G$.
\end{definition}

\begin{example}\label{Ex:IndexingCategoriesCp}
	In this example we describe the poset of indexing categories for $C_p$. If an indexing category $\co$ for $C_p$ contains the projection $C_p/e \to C_p/C_p$, then it contains all morphisms of finite transitive $C_p$-sets and, because it is finite coproduct complete, contains all morphisms of finite $C_p$-sets. In this case, the indexing category is the complete indexing category $\co^\cplt = \Set^{C_p}$. 

	If an indexing category does not contain the morphism $C_p/e \to C_p/C_p$, then the only morphisms on transitive $C_p$-sets are the identities. Under finite coproducts, this yields all morphisms of finite $C_p$-sets that preserve isotropy, but no more. Thus, the indexing category must be the trivial indexing category $\co^\triv$. 
\end{example}

We can also vary the group: if \(H\) is a subgroup of \(G\), then for any indexing category \(\co\) for \(G\), there is an associated indexing category \(i_H^\ast\co\).
 
\begin{proposition}[{\cite[Proposition 6.3]{BH18}}]\label{Prop:RestrictIndexCat}
	Let $i_H^* \colon \Set^G \to \Set^H$ be the restriction functor. If $\co$ is an indexing category for $G$, the image of the restriction of \(i_H^\ast\) to \(\co\) lands in \(i_H^\ast\co\).
\end{proposition}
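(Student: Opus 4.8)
The plan is to recognize the restricted map $i_H^\ast f$ as an honest pullback of $f$ inside $\Set^G$, so that the statement reduces immediately to the pullback stability of $\co$. The whole content of the proposition is pullback stability; everything else is bookkeeping with the restriction and induction functors.

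First I would set up the standard equivalence $\Phi \colon \Set^H \xrightarrow{\sim} (\Set^G)_{/(G/H)}$ sending an $H$-set $T$ to its structure map $G \times_H T \to G/H$, with inverse sending a $G$-set over $G/H$ to the fiber over $eH$ (an $H$-set, since $H$ stabilizes $eH$). Under $\Phi$ the restriction functor $i_H^\ast$ becomes base change along $G/H \to \ast$: there is a natural isomorphism $G \times_H i_H^\ast X \cong X \times G/H$ of $G$-sets over $G/H$, given by $[g,x] \mapsto (gx, gH)$, so that $\Phi(i_H^\ast X)$ is the projection $\pr \colon X \times G/H \to G/H$. Tracing a morphism $f \colon X \to Y$ of $\co$ through this identification shows $\Phi(i_H^\ast f)$ is the morphism $f \times \id_{G/H} \colon X \times G/H \to Y \times G/H$ over $G/H$.

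The key observation is then that $f \times \id_{G/H}$ is a pullback of $f$: placing $f \colon X \to Y$ as the right-hand vertical map and the projection $Y \times G/H \to Y$ as the bottom map yields a cartesian square whose left-hand vertical map is exactly $f \times \id_{G/H}$. Since $f$ lies in $\co$, pullback stability of $\co$ forces $f \times \id_{G/H}$ into $\co$ as well. As $i_H^\ast\co$ is, by construction, the indexing category for $H$ corresponding under $\Phi$ to the part of $\co$ lying over $G/H$, this is precisely the assertion that $i_H^\ast f$ is a morphism of $i_H^\ast\co$. The objects cause no difficulty, since indexing categories are wide.

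I expect the main obstacle to be bookkeeping rather than anything conceptual: checking that $i_H^\ast$ really is base change under $\Phi$, and matching the intrinsic description of $i_H^\ast\co$ used in \cite{BH18} with the one pulled back through $\Phi$. A more hands-on route avoids $\Phi$ altogether by decomposing $f$ and $i_H^\ast f$ over orbits of the target, so that membership in $\co$ (resp.\ $i_H^\ast\co$) is tested fiberwise by admissibility of the fibers over point stabilizers. For $y \in Y$ the $H$-stabilizer is $H \cap G_y$, and the $H$-fiber of $i_H^\ast f$ over $y$ is the restriction to $H \cap G_y$ of the $G_y$-fiber of $f$; the claim then reduces to closure of the admissible sets under restriction to subgroups, which is once more pullback stability (pull back the classifying map $G \times_{G_y} f^{-1}(y) \to G/G_y$ along $G/(H \cap G_y) \to G/G_y$). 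Either way, pullback stability of $\co$ is the entire engine of the proof.
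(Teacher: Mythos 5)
The paper itself gives no proof of this proposition; it is imported verbatim from \cite[Proposition 6.3]{BH18}, so there is no in-paper argument to compare against. Your proposal is correct, and it is essentially the standard argument: the entire content is pullback stability of $\co$. Your identification $G \times_H i_H^\ast X \cong X \times G/H$ over $G/H$ is right, the square exhibiting $f \times \id_{G/H}$ as the pullback of $f$ along the projection $Y \times G/H \to Y$ is indeed cartesian, and the remaining step --- matching $i_H^\ast\co$ with the morphisms of $H$-sets whose induction lies in $\co$ --- is exactly the bookkeeping you flag. Your second, fiberwise route is the one that lines up most directly with how \cite{BH18} actually defines $i_H^\ast\co$ (via admissible sets: for $K \leq H$, the admissible $K$-sets of $i_H^\ast\co$ are those of $\co$), since membership of a map in an indexing category is tested on fibers over point stabilizers, and closure of admissibles under restriction to subgroups is again an instance of pullback stability. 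Either route is complete; I see no gap.
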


Because we will frequently use it later, we record one more definition here. 

\begin{definition}
	Let $\co$ be an indexing category for $G$. We say that $H/K$ is an \emph{admissible $H$-set for $\co$} if $\co$ contains a morphism $G/K \to G/H$. 
\end{definition}

\begin{example}
	In \cref{Ex:IndexingCategoriesCp}, $C_p/e$ is an admissible $C_p$-set for $\co^\cplt$ but not for $\co^\triv$. 
\end{example}

\begin{remark}
	The classification of indexing categories is an open problem for most groups, but has been completed for cyclic groups of prime power order by Balchin--Barnes--Roitzheim \cite{BBR19}. Indexing categories for other finite groups are analyzed in \cite{BBPR20, Rub20}. 
\end{remark}

\subsection{Categories of polynomials}

To each indexing category $\co$, we associate a category of polynomials with exponents in $\co$. These categories are the domains of incomplete Tambara functors. 

\begin{definition}
	Let $\cd$ be a wide, pullback stable subcategory of $\Set^G$. Let $\cp_\cd^G$ denote the \emph{category of polynomials with exponents} in $\cd$. Objects are finite $G$-sets and  morphisms $\cp_\cd^G(X,Y)$ are \emph{polynomials with exponents in $\cd$}, i.e. equivalence classes of diagrams 
	\[ [X \xleftarrow f A \xrightarrow g B \xrightarrow h Y] \]
	with $g \in \mathcal \cd$. Two such diagrams are equivalent if there is a diagram of the form 
	\[ \begin{tikzcd}[row sep=small]
			& 
		A 
			\arrow{dl}[above]{f} 
			\arrow{r}{g}
			\arrow{dd}{\cong}
			& 
		B 
			\arrow{dd}{\cong}
			\arrow{dr}{h}
			\\
		X 
			& 
			& 
			& 
		Y 
			\\
			& 
		A' 
			\arrow{r}[below]{g'}
			\arrow{ul}{f'}
			& 
		B' 
			\arrow{ur}[below]{h'}
	\end{tikzcd}\]
	Composition of polynomials is given by \cite[Proposition 7.1]{Tam93}.
\end{definition}

Below, we will describe how to work with the morphisms in this category in a more practical way. First, we give a few important examples. 

\begin{example}\label{Example:SetGIso}
	Any wide, pullback stable subcategory of $\Set^G$ contains the subcategory $\Set^G_{\Iso}$ of finite $G$-sets and isomorphisms. This subcategory is wide and pullback stable, so yields a category of polynomials $\cp^G_{\Iso}$. Any polynomial of the form $X \leftarrow A \cong B \to Y$ is canonically isomorphic to one of the form $X \leftarrow A \xrightarrow\id A \to Y$, and thus the category $\cp^G_{\Iso}$ is isomorphic to the category of spans of finite $G$-sets. The coproduct completion of $\cp^G_{\Iso}$ is the \emph{Burnside category of $G$}, denoted $\ca^G$. 
\end{example}

\begin{example}
	If $\co$ is any indexing category, then we get a category $\cp^G_\co$ of polynomials with exponents in $\co$. 
\end{example}

The fact that $\cp_\cd^G$ is a category is not obvious, and composition can be messy. We define a generating set of morphisms and describe how to compose them following \cite{BH18}. 

\begin{definition}
	Let $f \colon X \to Y$ be a morphism of finite $G$-sets. Define three morphisms in $\cp_\co^G(X,Y)$
	\[ R_f := [X \xleftarrow{f} Y \xrightarrow\id Y \xrightarrow\id Y ] \]
	\[ N_f := [X \xleftarrow\id X \xrightarrow{f} Y \xrightarrow\id Y ] \]
	\[ T_f := [X \xleftarrow\id X \xrightarrow\id X \xrightarrow{f} Y ] \]
\end{definition}

\begin{theorem}[{cf. \cite[Section 2.1]{BH18}}]\label{thm:StructureOfPGO}
	\mbox{}
	\begin{enumerate}[(a)]
		\item $R, N, T$ give functors from $\Set^G$ to $\cp_{\co}^G$. $R$ is contravariant; $N$ and $T$ are covariant.  
		\item Any morphism in $\cp_\co^G$ can be written as a composite 
			\[ T_h \circ N_g \circ R_f = [X \xleftarrow{f} A \xrightarrow{g} B \xrightarrow{h} Y]. \]
		\item Given a pullback diagram of finite $G$-sets 
			\[ \begin{tikzcd}
				X' 
					\arrow{r}{f'}
					\arrow{d}{g'}
					& 
				Y' 
					\arrow{d}{g}
					\\
				X 
					\arrow{r}{f}
					& 
				Y 
			\end{tikzcd}\]
			then we have 
			\begin{align*}
				R_g \circ N_f &= N_{f'} \circ R_{g'}, \\
				R_g \circ T_f &= T_{f'} \circ R_{g'}. 
			\end{align*}
		\item Given any diagram isomorphic to one of the form (an \emph{exponential diagram}) 
			\[\begin{tikzcd}
				S 
					\arrow{d}{h}
					& 
				A 
					\arrow{l}[above]{g}
					&  
				S \times_T \Pi_h A 
					\arrow{l}[above]{f'}
					\arrow{d}{g'}
					\\
				T 
					& 
					& 
				\Pi_h A 
					\arrow{ll}{h'}
			\end{tikzcd}\]
			then  
			\[N_{h} \circ T_g = T_{h'} \circ N_{g'} \circ R_{f'}. \]
	\end{enumerate}
	Here, $\Pi_h A$ is the \emph{dependent product}, right adjoint to the pullback $h^* \colon \Set^G_{/T} \to \Set^G_{/S}$.
\end{theorem}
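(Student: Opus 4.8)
The plan is to deduce all four statements from Tambara's explicit formula for composition in $\cp_\cd^G$ (\cite[Proposition 7.1]{Tam93}), specializing it to the generators $R_f$, $N_f$, $T_f$. The only structural inputs are that $\Set^G$ admits pullbacks and dependent products $\Pi_h$ (right adjoint to $h^*$), together with the distributivity of products over coproducts relating them; everything else is bookkeeping with spans.

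Parts (a) and (b) are the formal, largely computational steps. For (a) I would compute $R_{f'}\circ R_f$, $N_{f'}\circ N_f$, and $T_{f'}\circ T_f$ directly from the composition formula. In each case two of the three legs of each generator are identities, so all the pullbacks and dependent products appearing in the formula degenerate, leaving $R_{f'}\circ R_f=R_{f\circ f'}$ (contravariant) and the covariant identities for $N$ and $T$. For (b), the generators are by design the three legs of a general polynomial taken one at a time. Composing $N_g\circ R_f$ introduces no nontrivial pullback or dependent product (since $R_f$ has trivial norm and transfer parts, and $N_g$ trivial restriction and transfer parts), yielding $[X\xleftarrow{f}A\xrightarrow{g}B\xrightarrow{\id}B]$; post-composing with $T_h$ attaches $h$ as the terminal leg, giving the asserted normal form $T_h\circ N_g\circ R_f=[X\xleftarrow{f}A\xrightarrow{g}B\xrightarrow{h}Y]$. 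Since every polynomial has this shape, (b) follows.

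Part (c) records the two base-change relations. Given the pullback square, I would apply the composition formula to $R_g\circ N_f$ and to $R_g\circ T_f$: in each case the recipe pulls the middle (resp.\ terminal) leg back along $g$, and the universal property of the given square identifies the outcome with $N_{f'}\circ R_{g'}$ and $T_{f'}\circ R_{g'}$ respectively. This is just the assertion that restriction commutes with norm and with transfer after base change.

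Part (d), the distributive law, is where I expect the real work to be, and it is the step that genuinely uses the dependent product. Composing $N_h\circ T_g$ forces a transfer to be pushed past a norm, and the only mechanism for this is the distributivity of $\Pi_h$ over the coproduct structure, which is exactly what the exponential diagram encodes. I would match the dependent product $\Pi_h A$, the map $f'\colon S\times_T\Pi_h A\to A$, and the induced maps $g'$, $h'$ of the exponential diagram against the intermediate objects produced by Tambara's formula, and then check that the three legs of the composite agree on the nose with those of $T_{h'}\circ N_{g'}\circ R_{f'}$. The main obstacle is precisely this identification: it amounts to verifying the universal property of $\Pi_h$ and the naturality of the adjunction $h^*\dashv\Pi_h$, which is where all the content of the distributive law resides. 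Once (d) is in hand, the relations (a)--(d) together show that any iterated composite can be straightened into the normal form of (b), which is what makes $\cp_\co^G$ a well-defined category.
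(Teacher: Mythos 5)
The paper states this theorem without proof, citing \cite[Section 2.1]{BH18} (and ultimately Tambara's original construction), so there is no in-paper argument to compare against; your outline reproduces exactly the standard proof from those sources, and the plan is sound: parts (a)--(c) degenerate to direct computations with the composition formula, and part (d) is correctly identified as the only step with real content. The one point you should make explicit when carrying out (d) is that the middle leg $g'$ of the exponential diagram must again lie in $\co$ --- closure of the indexing category under dependent products along its own morphisms is part of what the Blumberg--Hill axioms on $\co$ guarantee, and without it $N_{g'}$ is not even a morphism of $\cp^G_\co$, so the asserted identity would not typecheck.
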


There is more structure on the morphism sets of $\cp^G_\co$. 

\begin{theorem}[{\cite[Proposition 7.6]{Tam93}}]\label{Thm:Tam93} 
	$(\cp_\co^G(X,Y), +, \cdot, 0, 1)$ is a commutative semiring, with units
	\begin{align*}
		0 &= [X \leftarrow \emptyset \rightarrow \emptyset \rightarrow Y], \\
		1 &= [X \leftarrow \emptyset \rightarrow Y \xrightarrow{\id} Y]. 
	\end{align*}
	The operations are defined on polynomials $\Sigma = {[X \xleftarrow f A \xrightarrow g B \xrightarrow h Y]}$ and $\Sigma' = {[X \xleftarrow{f'} A' \xrightarrow{g'} B' \xrightarrow{h'} Y]}$ by 
	\begin{align*}
		\Sigma + \Sigma' &= \left[X \xleftarrow{\nabla \circ (f \sqcup f')} A \sqcup A' \xrightarrow{g \sqcup g'} B \sqcup B' \xrightarrow{\nabla \circ (h \sqcup h')} Y\right], \text{and}  \\
		\Sigma \cdot \Sigma' &= 
			\left[ 
				X \xleftarrow{\hat f} (A \times_Y B') \sqcup (B \times_Y A') \xrightarrow{\hat g} 
				B \times_Y B' \xrightarrow{\hat h} Y
			\right],
	\end{align*}
	where:
	\begin{itemize}
		\item $\nabla \colon S \sqcup S \to S$ is the codiagonal (fold) morphism;
		\item $\hat f$ is the morphism given by projecting onto $A$ or $A'$ then applying $f$ or $f'$; 
		\item $\hat g$ is the morphism given by applying $g$ to $A$ or $g'$ to $A'$;
		\item $\hat h$ is the morphism given by either of the two equivalent morphisms $h \circ \pr_1$ or $h' \circ \pr_2$.  
	\end{itemize}
\end{theorem}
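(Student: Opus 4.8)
The plan is to verify the commutative semiring axioms directly at the level of bispans, exhibiting for each identity a canonical isomorphism of diagrams that witnesses it under the equivalence relation defining $\cp_\co^G(X,Y)$. Before checking the axioms I would confirm that $+$ and $\cdot$ are well defined, i.e.\ that the constructed bispans again have exponent leg in $\co$. For $\Sigma + \Sigma'$ the middle map is $g \sqcup g'$, which lies in $\co$ because $\co$ is finite coproduct complete. For $\Sigma \cdot \Sigma'$ the map $\hat g$ restricts on the summand $A \times_Y B'$ to the base change $g \times_Y \id_{B'}$ and on $B \times_Y A'$ to $\id_B \times_Y g'$; each is a pullback of $g$ or $g'$ along a map to $Y$, hence in $\co$ by pullback stability, and their coproduct is again in $\co$. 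Throughout I would lean on two structural features of $\Set^G$: it is symmetric monoidal under $(\sqcup, \emptyset)$, and it is extensive, so finite coproducts are disjoint and stable under pullback.

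For the additive structure, commutativity, associativity, and unitality of $+$ descend from the coherence isomorphisms of $(\Set^G, \sqcup, \emptyset)$: the swap, associator, and unitor isomorphisms of the $A$- and $B$-legs assemble into isomorphisms of bispans, using naturality of the fold maps $\nabla$. For the multiplicative structure, commutativity of $\cdot$ follows from the evident symmetry of its defining formula under interchanging $\Sigma$ and $\Sigma'$, together with the canonical isomorphism $B \times_Y B' \cong B' \times_Y B$ and the swap of the two coproduct summands. The unit law $\Sigma \cdot 1 = \Sigma$ is a short computation: substituting $A' = \emptyset$, $B' = Y$, and $h' = \id_Y$ gives $A \times_Y Y \cong A$, $B \times_Y \emptyset \cong \emptyset$, and $B \times_Y Y \cong B$, so the product bispan collapses to $\Sigma$.

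The hard part will be associativity of $\cdot$. I would expand both $(\Sigma \cdot \Sigma') \cdot \Sigma''$ and $\Sigma \cdot (\Sigma' \cdot \Sigma'')$ and show the resulting bispans are canonically isomorphic. Using the pasting law for pullbacks, the common exponent target becomes the triple fiber product $B \times_Y B' \times_Y B''$, while the middle object expands, via extensivity, into a coproduct of three terms $A \times_Y B' \times_Y B''$, $B \times_Y A' \times_Y B''$, and $B \times_Y B' \times_Y A''$; matching these (and the legs $\hat f$, $\hat g$) is the combinatorial heart of the argument. Distributivity $\Sigma \cdot (\Sigma' + \Sigma'') = \Sigma \cdot \Sigma' + \Sigma \cdot \Sigma''$ then follows from extensivity, which supplies $A \times_Y (B' \sqcup B'') \cong (A \times_Y B') \sqcup (A \times_Y B'')$ and the analogous decomposition on the other summand, while absorption $\Sigma \cdot 0 = 0$ is immediate from $(-) \times_Y \emptyset \cong \emptyset$. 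Since this is precisely Tambara's Proposition 7.6, in the writeup I would either carry out the associativity pasting in full or simply cite \cite{Tam93} for that step.
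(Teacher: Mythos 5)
Your proposal is correct and follows the same route as the cited source: the paper gives no proof of this statement, deferring entirely to Tambara's Proposition 7.6, and your direct verification of the semiring axioms via canonical isomorphisms of bispans (coherence of $(\sqcup,\emptyset)$ for the additive structure, extensivity and pasting of pullbacks for associativity and distributivity of $\cdot$) is exactly that argument. The only content genuinely new to the incomplete setting is the closure check that the exponent legs $g\sqcup g'$ and $\hat g$ remain in $\co$, which you handle correctly via finite coproduct completeness and pullback stability; the one small omission is an explicit remark that $+$ and $\cdot$ descend to equivalence classes of diagrams, but this is immediate since isomorphisms of bispans induce isomorphisms of the relevant coproducts and fiber products.
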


\begin{remark}
	Note that, although $\cp^G_\co(X,Y)$ is a commutative semiring, the additive completion of $\cp^G_\co$ is \emph{not} enriched over rings. This fails because the multiplicative unit 
	\[1 = [X \leftarrow \emptyset \to X \xrightarrow\id X]\]
	in $\cp^G_\co(X,X)$ is not the identity morphism 
	\[\id_X = [X \leftarrow X \to X \to X].\] 
\end{remark}

\begin{remark}\label{Rmk:WeylAction}
	The Weyl group $W_GK := N_G K/K$ acts on $G/K$ by conjugation, so each $\gamma \in W_GK$ defines a $G$-equivariant map $c_\gamma: G/K \to G/K$. This extends to a $W_GK$-action on $\cp^G_{\co}(G/H,G/K)$ by transferring along $c_\gamma$, i.e.
	\[\gamma [G/H \leftarrow A \rightarrow B \xrightarrow{h} G/K] = [G/H \leftarrow A \rightarrow B \xrightarrow{c_\gamma \circ h} G/K].\]
\end{remark}

%%%%%%%%%%% SECTION: INCOMPLETE TAMBARA FUNCTORS %%%%%%%%%%%%%%
\section{Incomplete Tambara Functors}\label{Sec:Tambara}

\subsection{Mackey and incomplete Tambara functors}\label{SS:MTF}

In equivariant algebra, Mackey functors are the analog of abelian groups and incomplete Tambara functors are the analog of commutative rings. We recall these notions along with some special examples. 

\begin{definition}
	The \emph{Burnside category} $\ca^G$ is the group completion of the category of spans of finite $G$-sets. Objects are finite $G$-sets, and each hom-set $\ca^G(X,Y)$ is the group completion of the hom-set of spans from $X$ to $Y$, which is a monoid under disjoint union. 
\end{definition}

The Burnside category $\ca^G$ is an additive category with direct sums given by disjoint union of finite $G$-sets. 

\begin{definition}
	A \emph{Mackey functor} is an additive functor $\uM \colon \ca^G \to \Ab$. A \textsl{morphism of Mackey functors} is a natural transformation. We write $\Mack_G$ for the category of Mackey functors for the group $G$. 
\end{definition}

We give another, equivalent, definition of Mackey functors that emphasizes the connection with incomplete Tambara functors defined below.

\begin{proposition}[{\cite[Proposition 4.3]{BH18}}]\label{Def:Mackey}
	A Mackey functor for $G$ is a product-preserving functor $\uM \colon \cp^G_{\Iso} \to \Set$ such that $\uM(X)$ is an abelian group for every finite $G$-set $X$. 
\end{proposition}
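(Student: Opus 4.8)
The plan is to prove \cref{Def:Mackey}, namely that a Mackey functor (defined as an additive functor $\ca^G \to \Ab$) is equivalently a product-preserving functor $\cp^G_{\Iso} \to \Set$ whose values are abelian groups. The key is to compare the two source categories: the Burnside category $\ca^G$ and the category of polynomials $\cp^G_{\Iso}$ with exponents in isomorphisms.

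\begin{proof}[Proof proposal]
The strategy is to exhibit a precise relationship between the category of polynomials $\cp^G_{\Iso}$ and the Burnside category $\ca^G$, and then to translate the defining conditions (additivity versus product-preservation) across this comparison.

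First I would recall from \cref{Example:SetGIso} that $\cp^G_{\Iso}$ is isomorphic to the category of spans of finite $G$-sets, so a morphism $X \to Y$ in $\cp^G_{\Iso}$ is (an equivalence class of) a span $X \leftarrow A \rightarrow Y$, composed by pullback. The Burnside category $\ca^G$ is obtained from this span category by group-completing each hom-monoid under disjoint union and then passing to the additive (biproduct) completion so that disjoint union of $G$-sets becomes a direct sum. Thus there is a canonical functor $\cp^G_{\Iso} \to \ca^G$ that is the identity on objects and includes each span as an element of its group-completed hom-set.

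Given an additive functor $\uM \colon \ca^G \to \Ab$, I would restrict along $\cp^G_{\Iso} \to \ca^G$ and then compose with the forgetful functor $\Ab \to \Set$ to obtain a functor $\cp^G_{\Iso} \to \Set$; by construction each value $\uM(X)$ is an abelian group. The content is that this restricted functor is \emph{product-preserving}: since $\uM$ is additive, it sends the biproduct $X \sqcup Y$ in $\ca^G$ to the direct sum $\uM(X) \oplus \uM(Y)$ in $\Ab$, and the underlying set of a direct sum of abelian groups is the product of the underlying sets, so $\uM(X \sqcup Y) \cong \uM(X) \times \uM(Y)$ in $\Set$. Conversely, starting from a product-preserving $\uM \colon \cp^G_{\Iso} \to \Set$ with abelian-group values, the product-preservation forces $\uM$ to send coproducts of $G$-sets to products of abelian groups; one then checks that the abelian group structure together with this property determines an additive extension to the group completion $\ca^G$, using that the group-completed hom-monoid acts on the product-preserving data in a way compatible with the additive structure. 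I would verify that these two constructions are mutually inverse, so that the two notions of Mackey functor agree.

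The main obstacle I anticipate is the reconstruction direction: recovering the additive functor on all of $\ca^G$ (in particular, the behavior on the group-completed differences of spans, and the additivity of $\uM$ on hom-sets) from a merely product-preserving set-valued functor. The crux is to show that product-preservation plus the abelian-group structure on values automatically makes the action of spans additive, so that negatives of spans in $\ca^G$ act by the induced group homomorphisms; this is where the commutative-monoid-to-group completion and the interaction between the biproduct and the pointwise abelian group structure must be reconciled. The remaining bookkeeping (functoriality, naturality, and checking the round trips are identities) I expect to be routine once this compatibility is established, and indeed the result is attributed to \cite[Proposition 4.3]{BH18}.
\end{proof}
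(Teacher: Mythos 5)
The paper offers no proof of this statement---it is quoted directly from \cite[Proposition 4.3]{BH18}---so there is no internal argument to compare against; your proposal must stand on its own, and in outline it is the standard and correct one: identify $\cp^G_{\Iso}$ with the span category, recognize $\ca^G$ as the group completion of its hom-monoids, and match additivity against product-preservation. Two refinements. First, you do not need a further ``additive (biproduct) completion'': disjoint union of finite $G$-sets is \emph{already} simultaneously the product and the coproduct in the span category, i.e.\ $\cp^G_{\Iso}$ is semiadditive, and this is the engine of the whole argument rather than an obstacle to be discovered at the end. Concretely, product-preservation together with the fold span $X \sqcup X \leftarrow X \sqcup X \xrightarrow{\nabla} X$ canonically endows each $\uM(X)$ with a commutative monoid structure, every span then acts by monoid homomorphisms, and additivity of $\uM$ on hom-monoids (that $\uM(\Sigma + \Sigma') = \uM(\Sigma) + \uM(\Sigma')$, where the sum of spans is disjoint union as in \cref{Thm:Tam93}) follows from writing $\Sigma + \Sigma' = \nabla_Y \circ (\Sigma \sqcup \Sigma') \circ \Delta_X$ and applying product-preservation. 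Second---and this is the one place your proposal could genuinely go astray---the clause ``$\uM(X)$ is an abelian group'' must be read as the condition that this \emph{canonically induced} monoid structure is a group, not as a choice of unrelated group structure on the set $\uM(X)$; with an arbitrary extra group structure the extension of the hom-monoid action over the group-completed hom-sets of $\ca^G$ would not be well defined and the correspondence would fail. Once these points are in place, the unique extension over the group completion and the verification that the two constructions are mutually inverse are routine, as you anticipate.
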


We now define the analog of commutative rings in equivariant algebra.

\begin{definition}[{\cite[Definition 4.1]{BH15}}]\label{Def:OTambara}
	Let $\co$ be an indexing category. An \emph{$\co$-Tambara functor} for $G$ is a product-preserving functor $\uR \colon \cp_{\co}^G \to \Set$ such that $\uR(X)$ is an abelian group for every finite $G$-set $X$. 
	An \emph{incomplete Tambara functor} is an $\co$-Tambara functor for some $\co$. 

	A \emph{morphism of $\co$-Tambara functors} is a natural transformation. We write $\co\mhyphen\Tamb_G$ for the category of $\co$-Tambara functors for the group $G$.
\end{definition}

Because any Tambara functor $\uR$ is product-preserving, and disjoint union is the categorical product in $\cp^G_\co$, it suffices to give the value of $\uR$ on each transitive finite $G$-set $G/K$. Moreover, because any morphism in the category $\cp_{\co}^G$ may be written as a composite $T_h \circ N_g \circ R_f$, it suffices to define the Tambara functor on morphisms of the form $T_h$, $N_g$, and $R_f$. 

\begin{definition}
\label{Def:NormTransferRestriction}
	We write $\tr_h := \uR(T_h)$, $\nm_g := \uR(N_g)$, and $\res_f := \uR(R_f)$. These are the \emph{transfer}, \emph{norm}, and \emph{restriction} of $\uR$, respectively. In the case when $\pi \colon G/K \to G/H$ is the canonical projection, we write $\tr_K^H := \tr_\pi$, $\nm_K^H := \nm_\pi$ and $\res_K^H := \res_\pi$.
\end{definition}

The data of an $\co$-Tambara functor is equivalent to the following:
\begin{itemize}
	\item A collection of commutative $W_GH$-rings $\uR(G/H)$, one for each subgroup $H$ of $G$;
	\item restriction homomorphisms (of non-unital commutative rings) 
		\[\res^H_K \colon \uR(G/H) \to \uR(G/K);\]
	\item transfer homomorphisms (of abelian groups) 
		\[\tr_K^H \colon \uR(G/K) \to \uR(G/H); \]
	\item and multiplicative norm morphisms 
		\[\nm_K^H \colon \uR(G/K) \to \uR(G/H)\] whenever $H/K$ is an admissible $H$-set for $\co$.
\end{itemize}
These data are subject to some conditions \cite[Section 2]{Tam93}.

\begin{example}[{\cite[Section 4]{BH18}}]
	There are two special classes of incomplete Tambara functors which appear in equivariant algebra:

	\begin{enumerate}[(a)]

		\item An $\co^{\triv}$-Tambara functor is a \emph{Green functor.}

		\item An $\co^{\cplt}$-Tambara functor is a \emph{Tambara functor}. 
	\end{enumerate}
\end{example}

\begin{proposition}[{\cite[Corollary 4.4]{BH18}}]
	Every $\co$-Tambara functor $\uR \colon \cp^G_{\co} \to \Set$ has an underlying Mackey functor given by restricting the domain of $\uR$ to the category $\cp^G_{\Iso}$. 
\end{proposition}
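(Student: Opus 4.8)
The plan is to realize the underlying Mackey functor as an honest restriction along an inclusion of categories and then verify the two defining conditions of \cref{Def:Mackey}. First I would observe that there is an inclusion functor $\iota \colon \cp^G_{\Iso} \hookrightarrow \cp^G_{\co}$. This rests on the fact, recorded in \cref{Example:SetGIso}, that the wide, pullback stable subcategory $\Set^G_{\Iso}$ of isomorphisms is contained in $\co$; consequently every polynomial $[X \leftarrow A \xrightarrow{g} B \to Y]$ whose middle leg $g$ is an isomorphism already qualifies as a morphism of $\cp^G_{\co}$. Since the composition law on polynomials is defined uniformly (via \cite[Proposition 7.1]{Tam93}) regardless of the ambient exponent subcategory, this assignment respects identities and composition, so $\iota$ is a well-defined functor which is the identity on objects.

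With $\iota$ in hand, I would define the underlying Mackey functor to be the composite $\uR \circ \iota \colon \cp^G_{\Iso} \to \Set$ and check the conditions of \cref{Def:Mackey}. The value $(\uR \circ \iota)(X) = \uR(X)$ is an abelian group for every finite $G$-set $X$ precisely because $\uR$ is an $\co$-Tambara functor; restricting the domain of $\uR$ does not alter its values on objects, so this condition is inherited for free. For product-preservation, I would use that disjoint union is the categorical product in both $\cp^G_{\Iso}$ and $\cp^G_{\co}$ (as noted after \cref{Def:OTambara}), and that $\iota$, being the identity on objects, carries the product cone over $X \sqcup Y$ in $\cp^G_{\Iso}$ to the corresponding cone in $\cp^G_{\co}$. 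Since $\uR$ preserves products by hypothesis, the composite $\uR \circ \iota$ does as well.

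The only genuine subtlety — and the step I expect to require the most care — is the claim that $\iota$ is functorial, i.e. that forming polynomial composites within the smaller category $\cp^G_{\Iso}$ agrees with forming them within $\cp^G_{\co}$. Here one must appeal to the explicit composition of polynomials: concretely, the pullbacks and exponential diagrams of \cref{thm:StructureOfPGO} used to compose two iso-exponent polynomials all remain within $\Set^G_{\Iso}$, which follows from pullback stability of $\Set^G_{\Iso}$ together with the observation that a dependent product along an isomorphism is again an isomorphism. Everything else is purely formal once the inclusion is established. Finally, I would remark that this construction is manifestly natural in $\uR$, so that restriction along $\iota$ assembles into the advertised forgetful functor from $\co$-Tambara functors to Mackey functors.
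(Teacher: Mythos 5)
Your proof is correct and follows essentially the only available route: the paper itself states this result without proof, citing \cite[Corollary 4.4]{BH18}, and the argument there amounts to exactly your observations that $\cp^G_{\Iso}$ includes into $\cp^G_{\co}$ (because every wide, pullback stable subcategory contains all isomorphisms, as in \cref{Example:SetGIso}, and polynomial composition is computed by the same pullback/exponential-diagram formulas in either category), and that restriction along this inclusion preserves both the product-preservation and abelian-group-valued conditions of \cref{Def:Mackey}. I see no gaps.
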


\begin{proposition}[{\cite[Proposition 5.14]{BH18}}]
	Let $\co$ and $\co'$ be indexing categories such that $\co'$ is a subcategory of $\co$. Then every $\co$-Tambara functor $\uR \colon \cp^G_{\co} \to \Set$ has an underlying $\co'$-Tambara functor given by restricting the domain of $\uR$ to the category $\cp^G_{\co'}$. 
\end{proposition}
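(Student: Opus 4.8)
The plan is to realize the underlying $\co'$-Tambara functor as the precomposition $\uR \circ \iota$, where $\iota \colon \cp^G_{\co'} \to \cp^G_\co$ is the evident inclusion, and then to check that this precomposition satisfies the axioms of \cref{Def:OTambara}. Since $\co'$ is a subcategory of $\co$, every polynomial $[X \xleftarrow{f} A \xrightarrow{g} B \xrightarrow{h} Y]$ with $g \in \co'$ is in particular one with $g \in \co$; moreover the equivalence relation on diagrams (isomorphism of the middle span) is the same in both categories, so this gives a well-defined inclusion on hom-sets. As $\iota$ is the identity on objects (all finite $G$-sets) and both categories carry the same composition, namely Tambara's formula \cite[Proposition 7.1]{Tam93}, to see that $\iota$ is a functor it suffices to verify that $\cp^G_{\co'}$ is closed under this composition inside $\cp^G_\co$, i.e.\ that it is an honest subcategory.

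First I would reduce this closure statement to the generating morphisms. By \cref{thm:StructureOfPGO}(b) every morphism of $\cp^G_\co$ factors as $T_h \circ N_g \circ R_f$, and a polynomial lies in $\cp^G_{\co'}$ exactly when its middle (norm) leg $g$ lies in $\co'$. To compose two such normal forms and return to normal form one rewrites using the relations of \cref{thm:StructureOfPGO}(c)--(d), so the key point is that every norm leg produced by these rewrites again lies in $\co'$. In the pullback relations $R_g N_f = N_{f'} R_{g'}$, the new norm leg $f'$ is the pullback of the old norm leg $f$, so $f \in \co'$ forces $f' \in \co'$ by pullback stability of $\co'$. In the exponential relation $N_h T_g = T_{h'} N_{g'} R_{f'}$, the new norm leg $g'$ sits in the exponential square as the pullback of $h$ along $h'$, so again $h \in \co'$ gives $g' \in \co'$ by pullback stability. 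Since identity morphisms have norm leg $\id \in \co'$ (indexing categories contain all isomorphisms, cf.\ \cref{Example:SetGIso}), this shows $\cp^G_{\co'}$ is a subcategory and $\iota$ is a functor.

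It then remains to package the result. The inclusion $\iota$ preserves products: it is the identity on objects, and the categorical product in each polynomial category is disjoint union of $G$-sets. Hence $\uR \circ \iota \colon \cp^G_{\co'} \to \Set$ is product-preserving, and for each finite $G$-set $X$ its value $\uR(\iota X) = \uR(X)$ is an abelian group because $\iota$ fixes objects; by \cref{Def:OTambara} this is exactly the statement that $\uR \circ \iota$ is an $\co'$-Tambara functor. Finally, $\uR \circ \iota$ agrees with $\uR$ on all of $\cp^G_{\co'}$, and in particular on the common subcategory $\cp^G_{\Iso}$, so it has the same underlying Mackey functor as $\uR$, which justifies the word ``underlying''.

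I expect the only genuine obstacle to be the closure of $\cp^G_{\co'}$ under composition, and within that the exponential relation of \cref{thm:StructureOfPGO}(d): one must read off that the norm leg $g'$ appearing on the right-hand side is genuinely a pullback of $h$, and hence controlled by pullback stability of $\co'$. Once this identification is made the argument is formal, since the two polynomial categories share the same composition law and $\iota$ is the identity on objects.
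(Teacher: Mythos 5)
The paper does not prove this proposition; it is quoted from \cite[Proposition 5.14]{BH18} as a black box. Your argument is correct and is essentially the standard one, so there is no conflict with anything in the paper. You correctly isolate the only nontrivial point: that the hom-sets $\cp^G_{\co'}(X,Y) \subseteq \cp^G_{\co}(X,Y)$ are closed under the common composition law, which comes down to checking that the rewriting moves of \cref{thm:StructureOfPGO}(c)--(d) never produce a norm leg outside $\co'$. Your reading of the exponential diagram is right: in $N_h \circ T_g = T_{h'} \circ N_{g'} \circ R_{f'}$ the new norm leg $g' \colon S \times_T \Pi_h A \to \Pi_h A$ is literally the pullback of $h$ along $h'$, so pullback stability of $\co'$ is exactly what is needed; similarly for the commutation relations in (c). The remaining bookkeeping (norm legs compose within $\co'$ because $\co'$ is a subcategory, identities have norm leg $\id$, the equivalence relation on diagrams is independent of the ambient indexing category, and $\iota$ preserves the product $\sqcup$ because the projections are pure restrictions) is all as you say. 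One small streamlining: since $\cp^G_{\co'}$ is already known to be a category whose composition is computed by the same formula of \cite[Proposition 7.1]{Tam93}, which depends only on the underlying diagrams in $\Set^G$, functoriality of $\iota$ follows at once and your closure check is really a re-derivation of why that composition is well defined; but making it explicit, as you do, is where the actual content lies.
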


In particular, any $\co$-Tambara functor has an underlying Green functor by restricting the domain to $\cp^G_{\co^\triv}$. 

There are several fundamental Tambara functors we will use in this paper. Each can be regarded as an incomplete Tambara functor by forgetting structure.

\begin{definition}[{\cite[Example 3.2]{Tam93}}]\label{exm:BurnsideTambara}
	The \emph{Burnside Tambara functor} is the Tambara functor $\uA$ whose value on a $G$-set $X$ is the additive group completion of the semiring of isomorphism classes of $G$-sets $Y$ over $X$, with semiring operations given by Cartesian product and disjoint union. 

	When $X = G/H$, $\uA(G/H)$ is the group completion of the monoid of finite $H$-sets under disjoint union. The restriction $\res_K^H$ is given by pullback, transfer $\tr_K^H$ is induction, and norm $\nm_K^H$ is coinduction. 
\end{definition}

\begin{remark}\label{Rmk:BurnsideInitial}
	The Burnside Tambara functor is the initial incomplete Tambara functor, just like $\z$ is the initial commutative ring. Modules over $\uA$, or Mackey functors, are the equivariant analog of abelian groups, while algebras over $\uA$ are incomplete Tambara functors.
\end{remark}

\begin{example}\label{exm:C2BurnsideTambara}
	We examine the structure of the $C_p$-Burnside functor. 

	At the underlying level, $\uA(C_p/e)$ is generated by isomorphism classes of finite $C_p$-sets over $C_p/e$. Such a $C_p$-set must be a disjoint union of orbits isomorphic to $C_p/e$. Hence, 
	\[ \uA(C_p/e) \cong \z \]

	At level $\uA(C_p/C_p)$, generators may be any finite $C_p$-set since $C_p/C_p = *$ is terminal. Any $C_p$-set is a disjoint union of the transitive $C_p$-sets, $C_p/e$ and $C_p/C_p$. Let $t$ be the class of $C_p/e$. We have a relation 
	\[t \cdot t = [C_p/e \times C_p/e] = \left[\bigsqcup_{i=1}^p C_p/e \right] = p\,t.\]
	Therefore, 
	\[ \uA(C_p/C_p) \cong \z[t] / (t^2 - pt) \]

	The restriction map sends the class of $C_p/e$ to its underlying finite set, and is therefore determined by $t \mapsto p$. The transfer is induction and is given by multiplication by $t$. When $\uA$ is considered as an incomplete Tambara functor, norms that exist are coinduction and are given by \cite[Example 1.4.6]{Maz13}:
	\[ \nm_e^{C_p}(a) = a + \left(\frac{a^p - a}{p}\right) t. \]
\end{example}

\begin{example}
    For \(G=C_p\), with \(p\) a prime, we consider the Mackey functor defined by
    \[
    \uZ(G/H)=\z,
    \]
    with the restriction map given by the identity, the transfer map given by multiplication by \(p\), and the Weyl group acting trivially. This is the \emph{constant Mackey functor \(\uZ\)}, written \(\uZ\).
\end{example}

\begin{example}
    For \(G=C_p\), with \(p\) a prime, we consider the Mackey functor defined by
    \[
    \uZ^{\ast}(G/H)=\z,
    \]
    with transfer map given by the identity, the restriction map given by multiplication by \(p\), and the Weyl group acting trivially. This is the \emph{dual to the constant Mackey functor \(\uZ\)}, written \(\uZ^{\ast}\).
\end{example}

\subsection{Modules over incomplete Tambara functors}\label{SS:Modules}

\begin{definition}[{\cite{Lew81}}]
	Let $\uM$ and $\uN$ be Mackey functors.
	The \emph{box product} $\uM \boxtimes \uN$ is the Mackey functor obtained by left Kan extending the tensor product of abelian groups along the functor 
		\[\times \colon \cp^G_{\Iso} \times \cp^G_{\Iso} \to \cp^G_{\Iso} \]
	given on objects by  $(T,T') \mapsto T \times T'$. 
	\[ \begin{tikzcd}
		\cp^G_{\Iso} \times \cp^G_{\Iso}  
			\arrow{r}{\uM \times \uN}
			\arrow{d}[left]{\times}
			& 
		\Ab \times \Ab 
			\arrow{r}{\otimes_\z}
			& 
		\Ab 
			\\
		\cp^G_{\Iso}  
			\arrow[dashed]{urr}[below]{\uM \boxtimes \uN}
	\end{tikzcd}\]
\end{definition}

\begin{theorem}[{\cite{Lew81}}] 
	\mbox{}
	\begin{enumerate}[(a)]
		\item The box product makes the category of Mackey functors into a symmetric monoidal category with unit the Burnside Mackey functor. 
		\item Green functors are monoids for this symmetric monoidal structure on the category of Mackey functors. 
	\end{enumerate}
\end{theorem}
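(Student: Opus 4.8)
The plan is to recognize the box product as an instance of \emph{Day convolution} and then invoke the general machinery, rather than checking the monoidal axioms by hand. First I would record that the Burnside category $\ca^G$ carries \emph{two} interacting monoidal structures: the biproduct given by disjoint union $\sqcup$, which makes $\ca^G$ additive, and a symmetric monoidal structure given by the Cartesian product $\times$ of $G$-sets, with unit the terminal $G$-set $G/G$ and symmetry the swap map. Crucially, $\times$ distributes over $\sqcup$. Using the additive-functor description (\cref{Def:Mackey}), a Mackey functor is exactly an additive functor $\ca^G \to \Ab$, so I would work in the $\Ab$-enriched functor category $[\ca^G, \Ab]$.

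Next I would observe that, under this $\Ab$-enrichment, the defining left Kan extension of $\otimes_\z \circ (\uM \times \uN)$ along $\times$ is precisely the enriched Day convolution, computed by the coend $(\uM \boxtimes \uN)(Z) = \int^{X,Y} \ca^G(X \times Y, Z) \otimes_\z \uM(X) \otimes_\z \uN(Y)$. Since $\Ab$ is cocomplete and closed symmetric monoidal, the standard Day convolution theorem endows $[\ca^G, \Ab]$ with a symmetric monoidal structure, the associator, unitors, and symmetry all being produced formally from those of $(\ca^G, \times)$ and $(\Ab, \otimes_\z)$. To confirm that this restricts to $\Mack_G$, I would check that the box product of two additive functors is again additive; this follows from the distributivity of $\times$ over $\sqcup$, which makes the coend preserve the biproduct decomposition in each variable.

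For the unit, the enriched Yoneda lemma identifies the Day convolution unit with the functor represented by the monoidal unit $G/G$, namely $Z \mapsto \ca^G(G/G, Z)$. I would then note that this representable is precisely the Burnside Mackey functor: a span $G/G \leftarrow S \to Z$ is the same datum as a finite $G$-set $S$ over $Z$, so $\ca^G(G/G, Z) \cong \uA(Z)$ naturally in $Z$, matching \cref{exm:BurnsideTambara}. This proves part (a).

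Finally, for part (b) I would use the standard dictionary for Day convolution: a (commutative) monoid in $[\ca^G, \Ab]$ is the same as a lax (symmetric) monoidal functor $(\ca^G, \times) \to (\Ab, \otimes_\z)$. Unwinding this data on transitive $G$-sets yields, for each subgroup $H$, a commutative ring structure on $\uM(G/H)$ compatible with restriction and transfer. The one nontrivial point is that the coherence of the lax monoidal structure on morphisms of the form $R_f$ and $T_h$ reproduces exactly the Frobenius reciprocity (projection formula) axiom of a Green functor, and conversely that every Green functor assembles into such a lax monoidal functor. I expect this reconciliation of the abstract lax-monoidal coherences with the concrete transfer/restriction relations to be the main obstacle, since it is the only step where the formal Day-convolution structure must be matched against the defining axioms of a Green functor.
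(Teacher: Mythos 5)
This statement is quoted from Lewis \cite{Lew81} and the paper supplies no proof of its own, so there is nothing internal to compare against; what can be assessed is whether your argument is sound, and it is. Recognizing the box product as $\Ab$-enriched Day convolution on $(\ca^G,\times)$ is the standard modern route, and it is consistent with how the paper \emph{defines} $\boxtimes$ (as a left Kan extension along $\times$), so your associator, unitors, and symmetry do come for free from the general theorem. Two small points deserve more care than your sketch gives them. First, the unit identification: the Day unit is the enriched representable $\ca^G(G/G,-)$, and matching this with $\uA$ requires the group completion implicit in the paper's conventions — note that $\uA\{x_{G/G}\}=\cp^G_{\Iso}(G/G,-)^+=\ca^G(G/G,-)$, and a span $G/G\leftarrow S\to Z$ is exactly a $G$-set over $Z$, so the identification is correct but the ``$+$'' should be said out loud. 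Second, in part (b) the dictionary ``commutative monoids $=$ lax symmetric monoidal additive functors $(\ca^G,\times)\to(\Ab,\otimes_\z)$'' recovers the \emph{classical} definition of a Green functor (levelwise commutative rings, restrictions are ring maps, Frobenius reciprocity); the paper instead defines a Green functor as an $\co^{\triv}$-Tambara functor, so strictly speaking you also need the comparison of those two definitions, which the paper takes from \cite{BH18}. Your instinct that the only genuinely nontrivial step is matching the lax-monoidal coherences against Frobenius reciprocity is right; Lewis's original treatment does this verification more by hand, via universal bilinear (Dress) pairings, whereas your approach packages the same check into the monoid–lax-monoidal correspondence. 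Either way the content is the same, and your proposal has no gap beyond the two bookkeeping points above.
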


The box product also plays a special role for Tambara functors.

\begin{theorem}[{\cite{Strickland12,BH18}}]
	If $\uR$ and $\uR'$ are $\co$-Tambara functors, then $\uR \boxtimes \uR'$ has a natural structure as an $\co$-Tambara functor.

	The natural maps \(\uR\to \uR\boxtimes\uR'\leftarrow \uR'\) are maps of \(\co\)-Tambara functors and witness the box product as the coproduct.
\end{theorem}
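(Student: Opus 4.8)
The plan is to split the construction into a formal part, which produces everything except the norms, and the genuinely substantial part, which produces the norms and checks the Tambara axioms. First I would invoke the two results recalled just above: $(\Mack_G,\boxtimes)$ is symmetric monoidal with unit $\uA$, and Green functors are exactly its commutative monoids. Since $\uR$ and $\uR'$ have underlying Green functors (restrict along $\cp^G_{\co^\triv}\hookrightarrow \cp^G_\co$), the standard fact that the tensor product of two commutative monoids in a symmetric monoidal category is their coproduct among commutative monoids shows at once that $\uR\boxtimes\uR'$ is a Green functor, that the maps $\uR\cong\uR\boxtimes\uA\to\uR\boxtimes\uR'$ and $\uR'\cong\uA\boxtimes\uR'\to\uR\boxtimes\uR'$ (induced by the units $\uA\to\uR$, $\uA\to\uR'$) are Green functor maps, and that they exhibit $\uR\boxtimes\uR'$ as the coproduct of $\uR$ and $\uR'$ \emph{among Green functors}. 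This fixes the abelian group structure at each level, the restrictions, transfers, multiplication, and unit, so the entire content of the theorem reduces to the norm maps $\nm^H_K$ for admissible $H/K$.

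The substance is therefore to define the norms on $\uR\boxtimes\uR'$ and to verify the relations of \cref{thm:StructureOfPGO}. I would define $\nm^H_K$ by prescribing it on a generating set and extending by multiplicativity and Tambara reciprocity. The point is that every element of $\uR\boxtimes\uR'$ is obtained from the images of $\uR(G/K)$ and $\uR'(G/K)$ under the two inclusions by applying sums, products, restrictions, and transfers, exactly as $R\otimes S$ is generated as a ring by the images of $r\mapsto r\otimes 1$ and $s\mapsto 1\otimes s$. On these images the norm is forced to equal $\nm^H_K$ of $\uR$ and of $\uR'$ (so that the inclusions become maps of $\co$-Tambara functors), and multiplicativity $\nm^H_K(ab)=\nm^H_K(a)\nm^H_K(b)$ then determines it on products. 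The only freedom is on sums, where the norm is not additive; here I would use the exponential relation of \cref{thm:StructureOfPGO}, part (d), which expresses $\nm^H_K$ of a sum as a universal expression in norms, transfers, and restrictions of the summands. This both forces the definition and supplies the formula. (Equivalently, following Strickland, one transports the norm across the coend defining $\boxtimes$ by exploiting the distributive-law structure on $\cp^G_\co$.)

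Next I would verify the Tambara axioms for these norms, namely the pullback compatibility $R_g\circ N_f=N_{f'}\circ R_{g'}$ of \cref{thm:StructureOfPGO}, part (c), together with the exponential relation $N_h\circ T_g=T_{h'}\circ N_{g'}\circ R_{f'}$ of part (d); combined with functoriality of $R,N,T$, these are precisely the conditions making $\uR\boxtimes\uR'$ a product-preserving functor on $\cp^G_\co$, i.e. an $\co$-Tambara functor, and they make the inclusions into maps of $\co$-Tambara functors. For the universal property I would use that the forgetful functor $\co\mhyphen\Tamb_G\to(\text{Green functors})$ is faithful, so a Green functor map is a map of $\co$-Tambara functors exactly when it commutes with all norms. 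Given $\co$-Tambara maps $\phi\colon\uR\to\underline{T}$ and $\psi\colon\uR'\to\underline{T}$, the coproduct property among Green functors yields a unique Green functor map $\chi\colon\uR\boxtimes\uR'\to\underline{T}$ restricting to $\phi$ and $\psi$; since $\chi$ preserves sums, products, restrictions, and transfers, preserves norms on the images of $\uR$ and $\uR'$ (as $\phi,\psi$ are Tambara maps), and the norm of any element is a reciprocity expression in norms, transfers, and restrictions of generators, an induction on the number of summands shows $\chi$ preserves all norms and is therefore a map of $\co$-Tambara functors.

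The main obstacle is the well-definedness and axiom-checking for the norm in the second step. Because the norm fails to be additive, one cannot simply define it summand-by-summand on the coend presentation of $\boxtimes$, and one must show that the value prescribed by multiplicativity together with reciprocity is independent of how an element is written as a sum of products of elements from the two factors. The compatibility that must hold is exactly the exponential relation, and confirming it requires unwinding how the dependent product $\Pi_h$ interacts with Cartesian products of $G$-sets; this is the computational heart of the argument, and it is the part that Strickland and Blumberg--Hill carry out in full.
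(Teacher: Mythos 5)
The paper gives no argument for this statement --- it is imported verbatim from Strickland and Blumberg--Hill --- so there is no internal proof to compare against, and I am judging your sketch on its own terms. Your first step is correct and standard: Green functors are the commutative monoids in $(\Mack_G,\boxtimes)$, the tensor product of two commutative monoids in any symmetric monoidal category is their coproduct among commutative monoids, and this settles the level-wise rings, restrictions, transfers, and the coproduct property for the underlying Green functors. Your closing universal-property argument (faithfulness of the forgetful functor to Green functors, plus the fact that norms of generators determine all norms) is also fine, but only \emph{conditional on} the middle step.

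That middle step is a genuine gap, and in this theorem it is essentially all of the content. An element of $(\uR\boxtimes\uR')(G/H)$ is a sum of transfers of products of restrictions of elements from the two factors, but the coend defining $\boxtimes$ imposes Frobenius-type identifications, so the same element admits many such presentations; meanwhile the exponential relation of \cref{thm:StructureOfPGO}(d) expresses $\nm$ of a transfer (hence of a sum, via the fold map) as a large expression in norms, transfers, and restrictions governed by the dependent product $\Pi_h$. Well-definedness means verifying that this universal expression respects every relation in the coend, and nothing in your outline does this --- you name the obstacle and then defer it to the references, which is an outline rather than a proof. A route that avoids the element-level bookkeeping, and is closer to what the cited sources actually do, is structural: $\co\mhyphen\Tamb_G$ is a category of product-preserving group-valued functors on $\cp^G_{\co}$, hence monadic over Mackey functors and cocomplete, so the coproduct exists for free; one then identifies its underlying Mackey functor with $\boxtimes$ by checking on free objects, where $\uA^{\co}[x_T]\sqcup\uA^{\co}[x_{T'}]\cong\uA^{\co}[x_{T\sqcup T'}]$ by \cref{rem:RepEval} and the multiplication formula of \cref{Thm:Tam93} identifies $\cp^G_{\co}(T\sqcup T',-)^+$ with $\uA^{\co}[x_T]\boxtimes\uA^{\co}[x_{T'}]$, and finally passes to general $\uR,\uR'$ by writing them as reflexive coequalizers of frees, which both constructions preserve. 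If you prefer your hands-on definition of the norm, you must actually carry out the compatibility check with $\Pi_h$ and the coend relations; as written, that verification is missing.
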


\begin{definition}
	If $\uR$ is a Green functor with unit $\eta : \uA \to \uR$ and multiplication $\mu : \uR \boxtimes \uR \to \uR$, then an \emph{$\uR$-module $\uM$} is a Mackey functor $\uM$ together with a homomorphism of Mackey functors $\nu : \uR \boxtimes \uM \to \uM$ such that the the following diagram commutes:
	\[
	\begin{tikzcd}
		\uR \boxtimes \uR \boxtimes \uM 
			\arrow{r}{\mu \boxtimes 1} 
			\arrow{d}{1 \boxtimes \nu} 
			& 
		\uR \boxtimes \uM 
			\arrow{d}{\nu} 
			& 
		\uM 
			\arrow{l}[above]{\eta \boxtimes 1} 
			\arrow{dl}{1} 
			\\
		\uR \boxtimes \uM 
			\arrow{r}{\nu} 
			& 
		\uM.
	\end{tikzcd}
	\]
	A \emph{morphism of $\uR$-modules} $f \colon \uM \to \uN$ is a homomorphism of Mackey functors such that the following diagram commutes: 
	\[ \begin{tikzcd}
		\uR \boxtimes \uM 
			\arrow{r}{\nu_{\uM}}
			\arrow{d}{\id \boxtimes f}
			& 
		\uM 
			\arrow{d}{f}
			\\
		\uR \boxtimes \uN 
			\arrow{r}{\nu_{\uN}}
			& 
		\uN. 
	\end{tikzcd}\]

	If $\uR$ is an $\co$-Tambara functor, an \emph{$\uR$-module $\uM$} is a module over the underlying Green functor of $\uR$. Likewise with right $\uR$-modules.

	Denote the category of $\uR$-modules by $\uR\mhyphen\Mod$. 
\end{definition}

\begin{definition}
	If $\uM$ and $\uN$ are $\uR$-modules, we define the \emph{relative box product:}
	\[
	\begin{tikzcd}
		\uM \boxtimes_{\uR} \uN := \coeq\big( \uM \boxtimes \uR \boxtimes \uN 
			\arrow[yshift=2]{rr}{\nu_{\uM} \boxtimes \id_{\uN}} 
			\arrow[yshift=-2]{rr}[below]{\id \boxtimes \nu_{\uN}} 
			& 
			&
		\uM \boxtimes \uN\big).
	\end{tikzcd}
	\]
\end{definition}

\begin{proposition}[{\cite{Lew81}}]
	\mbox{} 
	\begin{enumerate}[(a)]
		\item The relative box product $\boxtimes_{\uR}$ makes the category $\uR\mhyphen\Mod$ into a symmetric monoidal category with unit $\uR$.
		\item $\uR\mhyphen\Mod$ is an abelian category.
	\end{enumerate}
\end{proposition}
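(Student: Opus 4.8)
The plan is to recognize parts (a) and (b) as instances of a general principle: for any commutative monoid $\uR$ in a closed symmetric monoidal abelian category, the category of $\uR$-modules is again a symmetric monoidal abelian category with unit $\uR$, \emph{provided} the monoidal product preserves colimits in each variable. The two structural inputs are already in hand: the box product makes $\Mack_G$ into a symmetric monoidal abelian category with unit $\uA$, and the underlying Green functor of $\uR$ is a commutative monoid therein. The one additional fact I would isolate first is that $-\boxtimes\uN$ preserves all colimits for every Mackey functor $\uN$. This holds because the box product is defined as a left Kan extension, and hence is a left adjoint in each variable (equivalently, $\Mack_G$ is closed under $\boxtimes$); in particular $\uR\boxtimes-$ and $-\boxtimes\uN$ are right exact and preserve reflexive coequalizers. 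This single property drives everything that follows.

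For part (a), I would first observe that the coequalizer defining $\uM\boxtimes_{\uR}\uN$ is \emph{reflexive}: the two maps $\nu_{\uM}\boxtimes\id_{\uN}$ and $\id_{\uM}\boxtimes\nu_{\uN}$ share the common section obtained from the unit $\eta\colon\uA\to\uR$, with both composites equal to the identity by the module unit axiom. To equip $\uM\boxtimes_{\uR}\uN$ with an $\uR$-action I would push the action $\nu_{\uM}$ through the coequalizer, using that $\uR\boxtimes-$ preserves it. The unit isomorphisms $\uR\boxtimes_{\uR}\uM\cong\uM\cong\uM\boxtimes_{\uR}\uR$ come directly from the module axioms (the action map exhibits $\uM$ as the coequalizer). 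For associativity I would identify both $(\uM\boxtimes_{\uR}\uN)\boxtimes_{\uR}\uP$ and $\uM\boxtimes_{\uR}(\uN\boxtimes_{\uR}\uP)$ with a single two-sided bar-type coequalizer of $\uM\boxtimes\uR\boxtimes\uN\boxtimes\uR\boxtimes\uP$, which is legitimate precisely because $\boxtimes$ commutes with the defining coequalizers in each variable. The symmetry isomorphism is inherited from the symmetry of $\boxtimes$ together with commutativity of $\uR$ (which makes the left and right actions agree), and the pentagon, hexagon, and triangle identities reduce to the corresponding coherences in $\Mack_G$ after passing to the coequalizers.

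For part (b), I would work through the additive, faithful forgetful functor $U\colon\uR\mhyphen\Mod\to\Mack_G$ and show it creates kernels and cokernels. The hom-set $\uR\mhyphen\Mod(\uM,\uN)$ is an abelian group (a subgroup of $\Mack_G(\uM,\uN)$ closed under the addition coming from additivity of $\boxtimes$), while zero objects and finite biproducts are those of $\Mack_G$ equipped with the evident action, so $\uR\mhyphen\Mod$ is additive. Given an $\uR$-linear $f\colon\uM\to\uN$, I would give $\ker f$ (formed in $\Mack_G$) the restricted action: the composite $\uR\boxtimes\ker f\to\uR\boxtimes\uM\xrightarrow{\nu_{\uM}}\uM$ lands in $\ker f$ because $\uR\boxtimes-$ is additive and hence sends the zero morphism $\ker f\to\uN$ to zero. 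Dually, $\operatorname{coker}f$ inherits an action because $\uR\boxtimes-$ is right exact, so $\uR\boxtimes\operatorname{coker}f$ is the cokernel of $\uR\boxtimes f$ and the action descends. Since $U$ reflects isomorphisms, the induced monos and epis in $\uR\mhyphen\Mod$ are exactly those underlying monos and epis in $\Mack_G$, and the factorization of every $\uR$-map into an epi followed by a mono, together with the identification of monos with kernels and epis with cokernels, is transported along $U$ from the abelian category $\Mack_G$.

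The main obstacle I anticipate is entirely in part (a): making the associativity constraint well defined and verifying the coherence axioms. The subtlety is that an iterated relative box product is a colimit of a diagram that itself involves relative box products, so the argument genuinely depends on the relative box product being cocontinuous in each variable — a consequence of the cocontinuity of $\boxtimes$ and the fact that colimits commute with colimits. Once this interchange of colimits is established, the unit, symmetry, and coherence data all follow formally from the symmetric monoidal structure on $\Mack_G$; by contrast, part (b) is essentially bookkeeping with kernels and cokernels and poses no real difficulty beyond the right-exactness of $\boxtimes$ already noted.
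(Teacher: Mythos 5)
Your argument is correct, but note that the paper itself offers no proof of this proposition: it is quoted directly from the cited reference \cite{Lew81}, so there is no internal argument to compare against. What you have written is the standard proof that modules over a commutative monoid in a cocomplete closed symmetric monoidal abelian category form a symmetric monoidal abelian category, and it is the argument the cited source carries out for Mackey functors. You correctly isolate the one nontrivial input --- that $\boxtimes$ is cocontinuous in each variable because it is a left Kan extension (Day convolution) and $\Mack_G$ is closed --- and everything else (reflexivity of the defining coequalizer via the unit, the bar-type identification giving associativity, creation of kernels and cokernels by the forgetful functor using additivity and right exactness of $\uR\boxtimes(-)$) follows formally, exactly as you describe. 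No gaps.
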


\begin{remark}
	This notion of module is the extension of Strickland's notion of \emph{naive module} \cite[Definition 14.1]{Strickland12} to incomplete Tambara functors. Strickland points out that this definition only uses the underlying Green functor structure of $\uR$ and proposes a new notion of module in \cite[Definition 14.3]{Strickland12} which incorporates extra Tambara structure. The first author showed in \cite{Hil17} that Strickland's genuine modules are the abelian group objects in the category of $\uR$-Tambara functors. We will not consider these more refined notions of modules here and refer the reader to \cite{Hil17} for further discussion. 
\end{remark}

\begin{definition}
	Let $T$ be a finite $G$-set. The \emph{free Mackey functor on a generator at level $T$}, denoted $\uA\{x_{T}\}$, is the Mackey functor defined by
	\[\uA\{x_{T}\} := \cp^G_{\text{Iso}}(T,-)^+,\]
	where the superscript $+$ denotes group completion.

	Let $\uR$ be an incomplete Tambara functor. The \emph{free $\uR$-module on a generator at level $T$}, denoted $\uR\{x_{T}\}$, is defined by
	\[\uR\{x_{T}\} := \uR \boxtimes \uA\{x_{T}\}.\]
\end{definition}

\begin{example}
	The free $\uR$-module on a single generator at level $G/G$ is $\uR$ itself: 
	\[\uR\{x_{G/G}\} = \uR.\]
\end{example}

\begin{definition}
	A \emph{free $\uR$-module} is any $\uR$-module $\uM$ with
	\[\uM \cong \bigoplus_{i \in I} \uR\{x_{T_i}\}\]
	where each $T_i$ is a finite $G$-set. 
\end{definition}

\begin{remark}
	As $\uR\{x_{T}\} \oplus \uR \{x_{T'}\} \cong \uR\{x_{T \sqcup T'}\}$, any finitely generated free $\uR$-module has the form $\uR \{ x_U\}$ for a single finite $G$-set $U$. Importantly, this is not the case if the free $\uR$-module is infinitely generated. Nevertheless, if a free $\uR$-module is infinitely generated, it is a direct limit of finitely generated free $\uR$-modules. 
\end{remark}

Free modules enjoy the following universal property. 

\begin{lemma}
	Free $\uR$-modules on a single generator represent evaluation:
	\[\uR \mhyphen\Mod(\uR\{x_T\}, \uM) \cong \uM(T).\]
\end{lemma}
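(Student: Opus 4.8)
The plan is to prove the statement by a short formal argument in two steps: first trade the $\uR$-module hom for a Mackey functor hom via the free--forgetful adjunction, then evaluate the resulting Mackey functor hom by Yoneda.

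First I would observe that the functor $\uR \boxtimes (-) \colon \Mack_G \to \uR\mhyphen\Mod$ is left adjoint to the forgetful functor $U \colon \uR\mhyphen\Mod \to \Mack_G$. Since $\uA$ is the unit of the symmetric monoidal structure $(\Mack_G, \boxtimes, \uA)$ and the underlying Green functor of $\uR$ is a monoid therein, every Mackey functor is canonically an $\uA$-module and $\uR \boxtimes (-)$ is extension of scalars along the unit map $\uA \to \uR$, the standard left adjoint to restriction of scalars. Applying this adjunction with the Mackey functor $\uA\{x_T\}$ yields a natural isomorphism
\[\uR\mhyphen\Mod(\uR\{x_T\}, \uM) = \uR\mhyphen\Mod(\uR \boxtimes \uA\{x_T\}, \uM) \cong \Mack_G(\uA\{x_T\}, U\uM).\]

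Next I would identify $\uA\{x_T\}$ with the representable functor $\ca^G(T,-)$. By definition $\uA\{x_T\} = \cp^G_{\Iso}(T,-)^+$, and since $\cp^G_{\Iso}$ is the category of spans of finite $G$-sets (\cref{Example:SetGIso}), the levelwise group completion $\cp^G_{\Iso}(T,S)^+$ is exactly the Burnside hom-group $\ca^G(T,S)$. Thus $\uA\{x_T\} \cong \ca^G(T,-)$ as an additive functor $\ca^G \to \Ab$, and the additive (enriched) Yoneda lemma for such functors gives
\[\Mack_G(\ca^G(T,-), U\uM) \cong (U\uM)(T) = \uM(T).\]
Composing the two isomorphisms produces the claimed identification, and naturality in $\uM$ is inherited from the naturality of both the adjunction and Yoneda.

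Both steps are formal, so the work is bookkeeping rather than a genuine obstacle. The point requiring the most care is the identification in the second step: one must check that the levelwise group completion of the span-monoids $\cp^G_{\Iso}(T,-)$ produces the \emph{representable} additive functor $\ca^G(T,-)$, and correspondingly that natural transformations out of it are governed by the \emph{additive} Yoneda lemma (landing in the abelian group $\uM(T)$) rather than the ordinary $\Set$-valued one. A secondary check is that the adjunction isomorphism of the first step is natural in $\uM$, which is immediate from the definition of a morphism of $\uR$-modules via the action-compatibility square.
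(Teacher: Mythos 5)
Your proposal is correct, and since the paper states this lemma without proof, there is nothing to diverge from: the free--forgetful (extension-of-scalars) adjunction for $\uR \boxtimes (-)$ followed by the additive Yoneda lemma applied to $\uA\{x_T\} \cong \ca^G(T,-)$ is precisely the standard argument the authors intend, and it is consistent with how they use representability elsewhere (e.g.\ in the proof of \cref{prop:base-change-of-free-is-free}). Your care about the additive versus $\Set$-valued Yoneda lemma is well placed but harmless, since natural transformations between additive functors into $\Ab$ automatically have additive components.
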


\begin{definition}
	An $\uR$-module $\uM$ is \emph{projective} if it is a summand of a free $\uR$-module. 

	An $\uR$-module $\uM$ is \emph{flat} if the functor $- \boxtimes_{\uR} \uM : \uR \mhyphen \Mod \to \uR \mhyphen \Mod$ is exact. 
\end{definition}

Free, projective, and flat modules interact in the way one would expect from classical homological algebra. 

\begin{proposition}
	Projective $\uR$-modules are flat. 
\end{proposition}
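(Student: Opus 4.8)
The plan is to follow the classical three-step argument: reduce the flatness of a projective module to the flatness of a free module via direct summands, establish flatness of free modules on a single generator, and then upgrade to arbitrary free modules using stability of flatness under direct sums.

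First I would record the formal reductions, all of which use only that $\uR\mhyphen\Mod$ is abelian and that $\boxtimes_{\uR}$ is additive and preserves direct sums (being a left adjoint in each variable). A projective $\uR$-module $\uP$ is by definition a summand of a free one, so there is a free module $\uL$ and an $\uR$-module $\uN$ with $\uL \cong \uP \oplus \uN$. Applying $-\boxtimes_{\uR}\uL$ to any short exact sequence yields the direct sum of the sequences obtained by applying $-\boxtimes_{\uR}\uP$ and $-\boxtimes_{\uR}\uN$, since $\boxtimes_{\uR}$ distributes over $\oplus$. A direct sum of sequences is exact if and only if each summand is exact, so flatness of $\uL$ forces flatness of $\uP$. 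The same bookkeeping, applied to a free module $\uL \cong \bigoplus_i \uR\{x_{T_i}\}$ and using that $\boxtimes_{\uR}$ commutes with arbitrary direct sums (exact in the Grothendieck abelian category $\uR\mhyphen\Mod$), reduces the flatness of $\uL$ to the flatness of each single-generator free module $\uR\{x_{T_i}\}$.

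The heart of the argument is therefore to show $\uR\{x_T\}$ is flat, and the key is to reduce the relative box product over $\uR$ to an absolute box product of Mackey functors. Since $\uR\{x_T\} = \uR \boxtimes \uA\{x_T\}$ is the free $\uR$-module on the Mackey functor $\uA\{x_T\}$, the standard cancellation isomorphism for a free module over a monoid gives a natural isomorphism
\[ \uM \boxtimes_{\uR} \uR\{x_T\} \;\cong\; \uM \boxtimes \uA\{x_T\} \]
whose right-hand side is the absolute box product. Because exactness in $\uR\mhyphen\Mod$ is detected by the forgetful functor to $\Mack_G$ (kernels and cokernels of $\uR$-module maps are computed levelwise, so this functor is exact and faithful), it then suffices to prove that $-\boxtimes \uA\{x_T\}$ is an exact endofunctor of $\Mack_G$. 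Decomposing $T$ into orbits and again using that $\boxtimes$ preserves direct sums, we may take $T = G/H$. Here I would identify $\uA\{x_{G/H}\}$ as the Mackey functor induced from the Burnside Mackey functor of $H$ and invoke the projection formula to obtain $\uM \boxtimes \uA\{x_{G/H}\} \cong \Ind_H^G\, i_H^\ast \uM$. Both restriction $i_H^\ast$ and induction $\Ind_H^G$ are exact functors on Mackey functors, so their composite is exact; this establishes flatness of $\uA\{x_{G/H}\}$ and hence of $\uR\{x_{G/H}\}$.

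The main obstacle is this last identification of the box product with a free Mackey functor: making precise the projection-formula isomorphism $\uM \boxtimes \uA\{x_{G/H}\} \cong \Ind_H^G\, i_H^\ast \uM$ together with the exactness of induction and restriction. Should that identification prove awkward to state cleanly, an alternative is to check exactness of $-\boxtimes \uA\{x_T\}$ directly at each level $G/K$: exactness of a sequence of Mackey functors is tested levelwise, and the Kan-extension description of the box product lets one compute $(\uM \boxtimes \uA\{x_T\})(G/K)$ as a functor of $\uM$ that is visibly exact. Everything else is the formal homological-algebra bookkeeping above.
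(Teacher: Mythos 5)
Your proof is correct. There is, however, nothing in the paper to compare it against: the proposition is stated without proof, as one of the facts that ``interact in the way one would expect from classical homological algebra.'' Your reductions are all valid in this setting --- $\boxtimes_{\uR}$ distributes over direct sums, a direct sum of complexes is exact if and only if each summand is, and the cancellation $\uM \boxtimes_{\uR} (\uR \boxtimes \uA\{x_T\}) \cong \uM \boxtimes \uA\{x_T\}$ is the usual split-coequalizer argument for extension of scalars. The step you flag as the main obstacle, the identification $\uM \boxtimes \uA\{x_{G/H}\} \cong \Ind_H^G i_H^\ast \uM$, is unproblematic: $\uA\{x_{G/H}\} \cong \Ind_H^G \uA^H$ by the self-adjunction of induction and restriction on Burnside categories, and the projection formula $\Ind_H^G(\uN)\boxtimes\uM \cong \Ind_H^G(\uN \boxtimes i_H^\ast\uM)$ together with the levelwise exactness of $\Ind_H^G$ and $i_H^\ast$ is exactly the mechanism the paper itself deploys (in the other direction) in the proof of \cref{Thm:RestrictionPreservesFlats}. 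So your argument is complete and uses only tools already present in the paper.
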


\begin{proposition}\label{prop:base-change-of-free-is-free}
	Let $f \colon \uR \to \uS$ be a morphism of $\co$-Tambara functors. The base change functor 
	\[ \uS \boxtimes_{\uR} (-) \colon \uR\mhyphen\Mod \to \uS\mhyphen\Mod \]
	takes free (resp. projective resp. flat) $\uR$-modules to free (resp. projective resp. flat) $\uS$-modules.
\end{proposition}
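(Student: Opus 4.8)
The plan is to deduce all three statements from the single computation that base change sends the free module $\uR\{x_T\}$ to $\uS\{x_T\}$, together with the formal properties of the base change functor $\uS \boxtimes_\uR (-)$ as a symmetric monoidal left adjoint. The crucial structural input, which I would record first, is that $\uS \boxtimes_\uR (-)$ is left adjoint to restriction of scalars along $f$ (restriction does not alter the underlying Mackey functor, only the action), and hence preserves all colimits, in particular arbitrary direct sums. Moreover, because the relative box product is associative with unit and $\uA$ is the unit for the unadorned box product, base change is transitive: for any $\uR$-module of the form $\uR \boxtimes \uN$ with $\uN$ a Mackey functor, there is a natural isomorphism
\[ \uS \boxtimes_\uR (\uR \boxtimes \uN) \cong \uS \boxtimes \uN, \]
obtained by combining the associator $\uS \boxtimes_\uR (\uR \boxtimes_\uA \uN) \cong (\uS \boxtimes_\uR \uR) \boxtimes_\uA \uN$ with the unit isomorphism $\uS \boxtimes_\uR \uR \cong \uS$.

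For the free case, I would apply this with $\uN = \uA\{x_T\}$: since $\uR\{x_T\} = \uR \boxtimes \uA\{x_T\}$, the isomorphism above gives $\uS \boxtimes_\uR \uR\{x_T\} \cong \uS \boxtimes \uA\{x_T\} = \uS\{x_T\}$. A general free $\uR$-module has the form $\bigoplus_{i \in I} \uR\{x_{T_i}\}$, and because base change commutes with direct sums we obtain
\[ \uS \boxtimes_\uR \Big( \bigoplus_{i \in I} \uR\{x_{T_i}\} \Big) \cong \bigoplus_{i \in I} \uS\{x_{T_i}\}, \]
which is a free $\uS$-module. For the projective case, write a projective $\uR$-module $\uP$ as a summand of a free module, $\uF \cong \uP \oplus \uP'$. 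Base change preserves this direct sum decomposition, so $\uS \boxtimes_\uR \uF \cong (\uS \boxtimes_\uR \uP) \oplus (\uS \boxtimes_\uR \uP')$ exhibits $\uS \boxtimes_\uR \uP$ as a summand of the free $\uS$-module $\uS \boxtimes_\uR \uF$; hence it is projective.

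For the flat case, the key is the cancellation isomorphism: for any $\uS$-module $\uN$, viewing $\uN$ as an $\uR$-module by restriction along $f$, there is a natural isomorphism $\uN \boxtimes_\uS (\uS \boxtimes_\uR \uM) \cong \uN \boxtimes_\uR \uM$. This identifies the functor $\uN \mapsto \uN \boxtimes_\uS (\uS \boxtimes_\uR \uM)$ with the composite of restriction of scalars $\uS\mhyphen\Mod \to \uR\mhyphen\Mod$ followed by $(-) \boxtimes_\uR \uM$. Restriction of scalars is exact, since exactness in either module category is detected on underlying Mackey functors, and $(-) \boxtimes_\uR \uM$ is exact because $\uM$ is flat over $\uR$; the composite is therefore exact, so $\uS \boxtimes_\uR \uM$ is flat over $\uS$. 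The main obstacle is not any single case but establishing the associativity and cancellation isomorphisms cleanly in the equivariant setting: these are the box-product analogues of $S \otimes_R (R \otimes_A N) \cong S \otimes_A N$ and $N \otimes_S (S \otimes_R M) \cong N \otimes_R M$, and they rest on the associator and unit constraints of the symmetric monoidal structure on $\uR\mhyphen\Mod$ together with the fact that base change along a morphism of Green functors is strong monoidal. Once these are in hand, all three statements are formal.
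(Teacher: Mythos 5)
Your proposal is correct and follows essentially the same route as the paper: identify $\uS \boxtimes_{\uR} \uR\{x_T\} \cong \uS\{x_T\}$ and use preservation of direct sums for the free case, the summand argument for projectives, and the natural isomorphism $(\uS \boxtimes_{\uR} \uM) \boxtimes_{\uS} (-) \cong \uM \boxtimes_{\uR} (-)$ (via exact restriction of scalars) for flatness. You spell out the associativity and cancellation isomorphisms more explicitly than the paper does, but the argument is the same.
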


\begin{proof}
	The functor $\uS \boxtimes_{\uR} (-)$ commutes with direct sums and direct limits, so $\uS \boxtimes_{\uR} (-)$ takes free $\uR$-modules to free $\uS$-modules. 

	Alternatively, let $\uM$ be an $\uS$-module. We have
	\[\uS\mhyphen\Mod(\uS \boxtimes_{\uR} \uR\{x_T\}, \uM) \cong \uR\mhyphen\Mod(\uR\{x_T\}, \uM) \cong \uM(T),\]
	so $\uS \boxtimes_{\uR} \uR\{x_T\}$ represents evaluation at $T$. 

	If $\uP$ is projective over $\uR$, then there is some $\ul Q$ such that $\uP \oplus \ul Q$ is free over $\uR$. Then $(\uS\boxtimes_{\uR}\uP) \oplus (\uS \boxtimes_{\uR} \ul Q)$ is free over $\uS$, and $\uS \boxtimes_{\uR} \uP$ is projective over $\uS$. 

	If $\uM$ is flat over $\uR$, then $(\uS \boxtimes_{\uR} \uM) \boxtimes_{\uS} (-)$ is naturally isomorphic to $\uM \boxtimes_{\uR} (-)$, and therefore exact. 
\end{proof}

Flatness is a surprisingly subtle concept for Mackey functors. The following example will be used often.

\begin{lemma}\label{Lem:ZDualNotFlat}
    For \(G=C_p\) with \(p\) a prime, neither \(\uZ\) nor \(\uZ^{\ast}\) is flat.
\end{lemma}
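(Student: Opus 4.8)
The plan is to show that the functors $-\boxtimes\uZ$ and $-\boxtimes\uZ^\ast$ fail to be exact. Since $\uA$ is the unit for the box product, $\boxtimes_{\uA}=\boxtimes$ and flatness of a Mackey functor $\uM$ is equivalent to the vanishing of $\uTor_1^{\uA}(-,\uM)$. The one computational input I would use throughout is that in any $\uA$-module the class $t=[C_p/e]\in\uA(C_p/C_p)$ acts by $\tr_e^{C_p}\circ\res_e^{C_p}$ (Frobenius reciprocity); on $\uZ(C_p/C_p)$ this is multiplication by $p$, since $\res_e^{C_p}=\id$ and $\tr_e^{C_p}=p$, and the image of $\tr_e^{C_p}$ is exactly $p\z$. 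It is this transfer-torsion that will produce the obstruction.

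For $\uZ$ I would first build the start of a free resolution. The augmentation $\uA\twoheadrightarrow\uZ$ has kernel $\m{K}$ concentrated at $C_p/C_p$, where it is the copy of $\z$ generated by $t-p$; thus $\m{K}$ has value $\z$ at $C_p/C_p$, value $0$ at $C_p/e$, and zero structure maps. The resolution continues $\uA\{x_{C_p/e}\}\to\uA\xrightarrow{\,\cdot(t-p)\,}\uA$. Applying $-\boxtimes\uZ$, the differential $\cdot(t-p)$ becomes multiplication by $(p-p)=0$, so the first differential of the tensored complex vanishes identically, whereas the second differential carries the top class $\tr(x)$ to $\tr_e^{C_p}(1)=p$, so its image at level $C_p/C_p$ is exactly $p\z$. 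This yields $\uTor_1^{\uA}(\uZ,\uZ)(C_p/C_p)\cong\z/p\neq 0$, and $\uZ$ is not flat.

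For $\uZ^\ast$ there are two routes. The conceptual one uses the self-duality $\ca^{C_p}\simeq(\ca^{C_p})^{\mathrm{op}}$: span reversal commutes with $\times$ and hence induces an exact, symmetric monoidal, contravariant involution $D$ on $\Mack_{C_p}$ with $D\uA\cong\uA$ and $D\uZ\cong\uZ^\ast$. Such a functor preserves flatness (dualize a short exact sequence, tensor, and dualize back), so the non-flatness of $\uZ\cong D\uZ^\ast$ forces $\uZ^\ast$ to be non-flat. Alternatively one can argue directly: $\uZ^\ast$ is generated at $C_p/e$, so its free resolution begins $\uA\{x_{C_p/e}\}\twoheadrightarrow\uZ^\ast$, and tensoring this resolution with the same top-supported functor $\m{K}$ isolates the obstruction one homological degree deeper. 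Here $\uA\{x_{C_p/e}\}\boxtimes\m{K}=0$, because every top class of $\uA\{x_{C_p/e}\}$ is a transfer while $\res_e^{C_p}$ vanishes on $\m{K}$, so the tensored complex degenerates and one finds $\uTor_2^{\uA}(\uZ^\ast,\m{K})(C_p/C_p)\cong\z/p\neq 0$; since a flat module has vanishing higher $\uTor$, this again gives non-flatness.

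The main obstacle is the box-product bookkeeping at level $C_p/C_p$: correctly identifying the kernels and cokernels of the tensored differentials and the Weyl-group actions on the free modules $\uA\{x_{C_p/e}\}$, and checking that the $\z/p$ torsion is forced by $\tr_e^{C_p}$ having image $p\z$ in $\uZ$ (dually, by $\res_e^{C_p}$ being multiplication by $p$ in $\uZ^\ast$). In the duality route the obstacle is instead the verification that $D$ is symmetric monoidal for $\boxtimes$, after which the passage from $\uZ$ to $\uZ^\ast$ is purely formal.
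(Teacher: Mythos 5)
Your direct computations are correct and follow essentially the same strategy as the paper: splice the two exact sequences $0\to\uZ^{\ast}\to\uA\xrightarrow{p-t}\uA\to\uZ\to 0$ and $0\to\uZ\to\uA\{x_{C_p/e}\}\xrightarrow{1-\gamma}\uA\{x_{C_p/e}\}\to\uZ^{\ast}\to 0$ into a four-periodic free resolution, tensor against a Mackey functor vanishing at $C_p/e$ so that the $\uA\{x_{C_p/e}\}$ terms die, and read off a nonzero higher $\uTor$. Your identification of the first syzygy of $\uZ$ as $\uZ^{\ast}\subset\uA$ (generated at the bottom level, hence covered by $\uA\{x_{C_p/e}\}$), the Frobenius argument for $\uA\{x_{C_p/e}\}\boxtimes\m{K}=0$, and the two answers $\uTor_1(\uZ,\uZ)(C_p/C_p)\cong\z/p$ and $\uTor_2(\uZ^{\ast},\m{K})(C_p/C_p)\cong\z/p$ all check out; here $\m{K}$ is the paper's augmentation ideal $\uI$. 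In fact your degree bookkeeping is arguably more careful than the paper's: since $p-t$ acts on $\uI$ by multiplication by $p$ rather than by zero, the nonvanishing group is $\uTor_2(\uZ^{\ast},\uI)\cong\uI/p\uI$, not $\uTor_3(\uZ^{\ast},\uI)\cong\uI$ as stated there. The one step you should make explicit in Route B is balancedness of $\uTor$: you resolve $\uZ^{\ast}$ and tensor with $\m{K}$, so to conclude that $\uZ^{\ast}$ (rather than $\m{K}$) fails to be flat you need that $\uTor$ may be computed from either variable, which holds because the representables $\uA\{x_T\}$ are themselves flat ($\uA\{x_T\}\boxtimes\uM\cong\uM(T\times-)$).

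Your ``conceptual'' Route A, however, does not work as stated. Span reversal is an equivalence $\ca^{C_p}\simeq(\ca^{C_p})^{\mathrm{op}}$, but precomposing a Mackey functor with it produces a \emph{contravariant} additive functor; to land back in $\Mack_{C_p}$ one must also dualize the values in $\Ab$, and the resulting dual $D\uM$ (levelwise $\operatorname{Hom}_{\z}(\uM(X),\z)$ with transfers and restrictions interchanged) is neither exact on all of $\Mack_{C_p}$ (an $\operatorname{Ext}^1_{\z}$ term obstructs exactness as soon as torsion appears, and torsion is unavoidable when testing flatness) nor strong symmetric monoidal for $\boxtimes$ except on dualizable objects --- and $\uZ$ is not dualizable, which is essentially what this lemma is proving. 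So there is no exact, symmetric monoidal, contravariant involution of $\Mack_{C_p}$ exchanging $\uZ$ and $\uZ^{\ast}$, and the formal transfer of non-flatness fails. This does not damage your proof, since Route B is complete, but Route A should be deleted or replaced by the observation that the periodic resolution already exhibits each of $\uZ$, $\uZ^{\ast}$ as a syzygy of the other, so a single nonvanishing higher $\uTor$ handles both.
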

\begin{proof}
    We have two exact sequences
    \[
    0\to \uZ^{\ast}\to \uA\xrightarrow{p-t} \uA \to \uZ\to 0
    \]
    and 
    \[
    0\to \uZ\to \uA\{x_{C_p}\}\xrightarrow{1-\gamma} \uA\{x_{C_p}\}\to \uZ^\ast\to 0,
    \]
    where \(\gamma\) is a generator of \(C_p\). Splicing these together gives a projective (even free) resolution of either \(\uZ\) or \(\uZ^{\ast}\). 
    
    If we consider the augmentation ideal \(\uI\), the kernel of the map \(\uA\to\uZ\), then this has the property that the restriction to the trivial group is zero. This means that \(\uI\boxtimes \uA\{x_{C_p}\}=0\). This together with the projective resolution shows that while \(\uZ^\ast\boxtimes \uI\) is zero, \(\uTor^3(\uZ^\ast, \uI) \cong \uI \) is not.
\end{proof}

\subsection{Free incomplete Tambara functors}\label{SS:Algebras}

We now introduce free incomplete Tambara functors, which are the main object of study in this work.

\begin{definition}[{\cite[Definition 5.4]{BH18}}]\label{Def:FreeIncTamb}\label{def:FreeIncompleteTambaraFunctor}
	For a finite $G$-set $T$, define 
	\[\uA^{\co}[x_{T}] := \cp^G_{\co}(T, -)^+\]
	be the group completion of the functor represented by $T$. We will refer to this as the \emph{free $\co$-Tambara functor on a generator at level $T$}. 

	More generally, if $\uR$ is an $\co$-Tambara functor, the \emph{free $\uR$-algebra on a generator at level $T$}, denoted $\uR[x_{T}]$, is defined by
	\[\uR[x_{T}] := \uR \boxtimes \uA^{\co}[x_{T}].\]
\end{definition}

\begin{remark}\label{rem:RepEval}
	The free $\co$-Tambara functor $\uA^{\co}[x_{T}]$ represents evaluation at $T$:
	\[\co\mhyphen\Tamb(\uA^{\co}[x_{T}], \uR) \cong \uR(T).\]
\end{remark}

The norm, transfer, and restriction of a free incomplete Tambara functor are given by post-composition, i.e. applying $T_\phi \circ -$, $N_\phi \circ -$, or $R_\psi \circ -$ to a polynomial $\Sigma = T_h \circ N_g \circ R_f$. For the transfer and restriction, simple descriptions of this operation are possible. 

\begin{proposition}\label{Cor:ResTrFormulas}
	Let
	\[\Sigma = [G/H \xleftarrow{f} A \xrightarrow{g} B \xrightarrow{h} Y] \in \uA^\co[x_{G/H}](Y) = \cp^G_{\co}(G/H,Y)^+\]
	be a generator of $\uA^\co[x_{G/H}](Y)$. If $j : Y \to Z$ is a map of $G$-sets, then
	\[\tr_j(\Sigma) = [G/H \xleftarrow{f} A \xrightarrow{g} B \xrightarrow{j \circ h} Z].\]
	If $k : Z \to Y$ is a function of $G$-sets, then
	\[\res_k(\Sigma) = [G/H \xleftarrow{f \circ \pi_A} A \times_Y Z \xrightarrow{g \times \id} B \times_Y Z \xrightarrow{\pi_Z} Z].\]
\end{proposition}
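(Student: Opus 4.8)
The plan is to use the fact, recorded just above the statement, that the transfer, norm, and restriction on the representable Tambara functor $\uA^{\co}[x_{G/H}] = \cp^G_{\co}(G/H,-)^+$ are computed by post-composition in $\cp^G_{\co}$. This is the standard statement that a covariant representable functor carries a morphism to post-composition with it; since group completion only affects the additive structure, it suffices to verify the formulas on the generating polynomials $\Sigma = T_h \circ N_g \circ R_f$. Thus I take as input that $\tr_j(\Sigma) = T_j \circ \Sigma$ and $\res_k(\Sigma) = R_k \circ \Sigma$, and the entire proof reduces to rewriting these composites back into the normal form $T \circ N \circ R$ of \cref{thm:StructureOfPGO}(b) using the relations of \cref{thm:StructureOfPGO}.

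For the transfer this is immediate. Using that $T$ is a covariant functor (\cref{thm:StructureOfPGO}(a)), I would compute $T_j \circ \Sigma = (T_j \circ T_h) \circ N_g \circ R_f = T_{j \circ h} \circ N_g \circ R_f$, which is exactly the claimed polynomial $[G/H \xleftarrow{f} A \xrightarrow{g} B \xrightarrow{j\circ h} Z]$. No pullbacks are needed here.

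For the restriction the work is to commute $R_k$ all the way to the right, first past $T_h$ and then past $N_g$, by two applications of the pullback identities in \cref{thm:StructureOfPGO}(c). First I would form the pullback of $h \colon B \to Y$ along $k \colon Z \to Y$, giving $B \times_Y Z$ with projections $\pi_B$ and $\pi_Z$; matching variables in the identity $R_g \circ T_f = T_{f'} \circ R_{g'}$ then yields $R_k \circ T_h = T_{\pi_Z} \circ R_{\pi_B}$. Next I would pull back $g \colon A \to B$ along $\pi_B$, using the canonical identification $A \times_B (B \times_Y Z) \cong A \times_Y Z$ (the fiber product of $h \circ g$ and $k$), under which the induced maps are $\pi_A \colon A \times_Y Z \to A$ and $g \times \id \colon A \times_Y Z \to B \times_Y Z$; the identity $R_g \circ N_f = N_{f'} \circ R_{g'}$ then gives $R_{\pi_B} \circ N_g = N_{g \times \id} \circ R_{\pi_A}$. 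Finally, contravariant functoriality of $R$ collapses $R_{\pi_A} \circ R_f = R_{f \circ \pi_A}$. Chaining these gives $R_k \circ \Sigma = T_{\pi_Z} \circ N_{g \times \id} \circ R_{f \circ \pi_A}$, i.e.\ the asserted polynomial $[G/H \xleftarrow{f \circ \pi_A} A \times_Y Z \xrightarrow{g \times \id} B \times_Y Z \xrightarrow{\pi_Z} Z]$.

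The only genuine obstacle is bookkeeping: correctly matching the template variables $(X,Y,X',Y',f,g,f',g')$ of \cref{thm:StructureOfPGO}(c) to the maps at hand in each step, and in particular checking the identification $A \times_B (B \times_Y Z) \cong A \times_Y Z$ so that the two applications of the pullback relations chain together. None of this is conceptually difficult, but one must track the names and directions of the projection and structure maps with care.
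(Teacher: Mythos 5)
Your proposal is correct and matches the paper's proof essentially verbatim: the transfer formula follows from covariant functoriality of $T$, and the restriction formula is obtained by commuting $R_k$ past $T_h$ and then $N_g$ via the two pullback squares of \cref{thm:StructureOfPGO}(c), using the identification $A \times_B (B \times_Y Z) \cong A \times_Y Z$. Nothing to add.
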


\begin{proof}
	The transfer along $j$ is pre-composition with $T_j$:
	\[\tr_j(\Sigma) = T_j \circ \Sigma = T_j \circ (T_h \circ N_g \circ R_f) = T_{j \circ h} \circ N_g \circ R_f. \]

	Restriction along $k$ is pre-composition with $R_k$: 
	\[ \res_k(\Sigma) = R_k \circ (T_h \circ N_g \circ R_f). \]
	Using \cref{thm:StructureOfPGO}, we may commute $R_k$ with $T_h$ and $N_g$ in turn. The diagram 
	\[ \begin{tikzcd}
		B \times_Y Z 
			\arrow{r}{\pi_Z}
			\arrow{d}{\pi_B}
			& 
		Z 
			\arrow{d}{k}
			\\
		B 
			\arrow{r}{h}
			& 
		Y
	\end{tikzcd}\]
	demonstrates that $R_k \circ T_h = T_{\pi_Z} \circ R_{\pi_B}$, and the diagram 
	\[ \begin{tikzcd}
		A \times_Y Z 
			\arrow{r}{g \times \id}
			\arrow{d}{\pi_A}
			& 
		B \times_Y Z 
			\arrow{d}{\pi_B}
			\\
		A 
			\arrow{r}{g}
			& 
		B
	\end{tikzcd}\]
	demonstrates that $R_{\pi_B} \circ N_g = N_{g \times \id} \circ R_{\pi_A}$. Therefore, 
	\begin{align*}
		\res_k(\Sigma) & = R_k \circ (T_h \circ N_g \circ R_f)\\
	 	& = T_{\pi_Z} \circ R_{\pi_B} \circ N_g \circ R_f \\
		& = T_{\pi_Z} \circ N_{g \times \id} \circ R_{\pi_A} \circ R_f \\
		& = T_{\pi_Z} \circ N_{g \times \id} \circ R_{f \circ \pi_A}. 
	\end{align*}
\end{proof}

\begin{corollary}\label{cor:PolynomialsAsPolynomials}
    For any orbits \(G/H\) and \(G/K\), we have that
    \[
        \cp^G_{\co}(G/H,G/K)^+
    \]
    is the free abelian group with basis given by isomorphism classes of polynomials of the form
    \[
    G/H\xleftarrow{f} S\xrightarrow{g} G/J\xrightarrow{h} G/K
    \]
\end{corollary}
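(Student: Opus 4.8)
The plan is to identify the commutative monoid underlying the group completion and show it is free on the claimed generators. By \cref{Thm:Tam93}, the hom-set $\cp^G_{\co}(G/H, G/K)$ is a commutative monoid under the additive operation $+$, which disjoint-unions the source and middle terms of a polynomial. Since $\cp^G_{\co}(G/H,G/K)^+$ is by definition the group completion of this monoid, and the group completion of a free commutative monoid on a set $S$ is the free abelian group on $S$, it suffices to prove that $(\cp^G_{\co}(G/H,G/K), +, 0)$ is the free commutative monoid on the isomorphism classes of polynomials whose middle term is a single orbit $G/J$. This is the exact analogue of the elementary fact that finite $G$-sets form the free commutative monoid on their orbits under disjoint union.

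First I would establish existence of such a decomposition. Given $\Sigma = [G/H \xleftarrow{f} A \xrightarrow{g} B \xrightarrow{h} G/K]$, decompose the middle term into $G$-orbits $B = \bigsqcup_i G/J_i$, set $A_i := g^{-1}(G/J_i)$ so that $A = \bigsqcup_i A_i$, and read off the additive formula of \cref{Thm:Tam93} to conclude $\Sigma = \sum_i [G/H \xleftarrow{f|_{A_i}} A_i \xrightarrow{g|_{A_i}} G/J_i \xrightarrow{h|_{G/J_i}} G/K]$, a finite sum of polynomials of the claimed form. The one point to verify is that each restricted map $g|_{A_i} \colon A_i \to G/J_i$ again lies in $\co$; this follows from pullback stability of $\co$, since $g|_{A_i}$ is the pullback of $g \in \co$ along the orbit inclusion $G/J_i \hookrightarrow B$.

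Next I would show that the single-orbit polynomials are precisely the additively indecomposable elements. If $\Sigma' + \Sigma''$ has middle term isomorphic to a single orbit $G/J$, then the additive formula forces $G/J$ to split as a disjoint union of the two middle terms; as $G/J$ is transitive it cannot be written as a disjoint union of two nonempty $G$-sets, so one summand has empty middle term, and since its source then admits a map to $\emptyset$ it must itself be $0$. Hence single-orbit polynomials are indecomposable, while any polynomial whose middle term has more than one orbit decomposes nontrivially.

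The main work, and the step requiring the most care, is uniqueness: I would argue that the expression of $\Sigma$ as a sum of indecomposables is unique up to reordering and isomorphism. This reduces to uniqueness of the orbit decomposition of the middle term $B$ up to isomorphism and permutation, combined with the observation that any isomorphism of polynomial diagrams in the sense of the defining equivalence relation must carry the middle term of one diagram isomorphically onto that of the other, and hence respects orbit decompositions. Granting this, $\cp^G_{\co}(G/H,G/K)$ is freely generated as a commutative monoid by the isomorphism classes of single-orbit polynomials, and passing to the group completion yields the stated free abelian group. I expect this uniqueness step to be the principal obstacle, since it requires unwinding the equivalence relation on polynomial diagrams rather than merely invoking the orbit decomposition of $G$-sets.
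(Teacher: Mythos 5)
Your proof is correct and is essentially the argument the paper leaves implicit: the corollary is stated without proof, but the paper's own later remark (that the argument of \cref{Lemma:InvertGUnderlyingLevelIsPermutationModule} is ``a stronger form of that used to prove \cref{cor:PolynomialsAsPolynomials}'') confirms the intended route is exactly your decomposition of the middle term $B$ into orbits via the additive formula of \cref{Thm:Tam93}, with pullback stability supplying membership of the restricted maps in $\co$. Your extra care with indecomposability and uniqueness of the decomposition under the diagram-isomorphism relation is a welcome filling-in of details rather than a deviation.
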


Note that the map \(h\) in \cref{cor:PolynomialsAsPolynomials} is recording the transfer from \(G/J\) to \(G/K\) applied to the polynomial
\[
G/H\xleftarrow{f} S\xrightarrow{g} G/J\xrightarrow{=}G/J.
\]
The transfer maps may identify diagrams of this form, since in general, there are more isomorphisms of diagrams
\[
G/H\xleftarrow{f} S\xrightarrow{g} G/J\xrightarrow{h} G/K.
\]
Note however, that this simply takes basis vectors to other basis vectors.

%%%%%%%%%%% SECTION: NORMS AND SUFFICIENCY %%%%%%%%%%%%%%
\section{Norms and Sufficiency}\label{Sec:Norms}

For any subgroup \(H\), the \(G\)-set \(G/H\) is in the image of the induction functor on \(H\)-sets, and this functor extends to polynomials with exponents in various indexing categories. Precomposition with the induction functor 
\[
    G\times_H(-)\colon \cp_{i_H^\ast\co}^{H}\to \cp_{\co}^{G}
\]
is a model for the restriction functor
\[
	i_H^* \colon \co\mhyphen\Tamb_G\to i_H^\ast\co\mhyphen\Tamb_H.
\]
This functor has a left-adjoint, given by left Kan extension.

\begin{definition}[{\cite[Definition 6.8]{BH18}}]
	Let
	\[
		n_H^G \colon i^*_H \co\mhyphen\Tamb_H \to \co\mhyphen\Tamb_G 
	\]
	be the left adjoint to the restriction $i_H^*$ to \(H\).
\end{definition}

\begin{proposition}\label{Prop:NormO}
	There is an isomorphism of $\co$-Tambara functors
	\[
		n_H^G \uA^{i^*_H \co}[x_{H/H}] \cong \uA^{\co}[x_{G/H}].
	\]
\end{proposition}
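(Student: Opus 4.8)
The plan is to prove the isomorphism by showing both sides corepresent the same functor on $\co\mhyphen\Tamb_G$ and then invoking the Yoneda lemma, so that no explicit manipulation of polynomials is needed. By \cref{rem:RepEval}, the free $\co$-Tambara functor $\uA^{\co}[x_{G/H}]$ represents evaluation at $G/H$, giving a natural isomorphism
\[\co\mhyphen\Tamb_G(\uA^{\co}[x_{G/H}], \uR) \cong \uR(G/H)\]
for every $\co$-Tambara functor $\uR$. It therefore suffices to produce a natural isomorphism $\co\mhyphen\Tamb_G(n_H^G \uA^{i^*_H\co}[x_{H/H}], \uR) \cong \uR(G/H)$, and Yoneda will upgrade this to the claimed isomorphism of $\co$-Tambara functors.

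First I would unwind the left-hand side using the adjunction $n_H^G \dashv i^*_H$ recorded immediately before the statement, obtaining
\[\co\mhyphen\Tamb_G(n_H^G \uA^{i^*_H\co}[x_{H/H}], \uR) \cong i^*_H\co\mhyphen\Tamb_H(\uA^{i^*_H\co}[x_{H/H}], i^*_H\uR).\]
Applying \cref{rem:RepEval} a second time, now for the group $H$ and the indexing category $i^*_H\co$, identifies the right-hand side with the value $(i^*_H\uR)(H/H)$.

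The remaining step is to identify $(i^*_H\uR)(H/H)$ with $\uR(G/H)$, and this is exactly where the model of restriction as precomposition with induction enters. Since $i^*_H\uR = \uR \circ (G\times_H(-))$ and $G\times_H(H/H) \cong G/H$, we get $(i^*_H\uR)(H/H) = \uR(G/H)$ naturally in $\uR$. Composing the three displayed isomorphisms produces a natural isomorphism of the functors corepresented by $n_H^G \uA^{i^*_H\co}[x_{H/H}]$ and $\uA^{\co}[x_{G/H}]$.

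I expect the only genuinely delicate point to be the bookkeeping of naturality: each isomorphism above must be checked to be natural in $\uR$ so that Yoneda applies, and one should confirm that the unit/counit of the adjunction $n_H^G \dashv i^*_H$ interacts with the two representing isomorphisms as expected. None of this requires computation with explicit polynomials; once the identification $(i^*_H\uR)(H/H) \cong \uR(G/H)$ is in hand, the argument is entirely formal.
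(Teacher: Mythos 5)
Your argument is exactly the paper's proof: the authors also use the adjunction $n_H^G \dashv i_H^*$, representability of evaluation at $H/H$ by $\uA^{i_H^*\co}[x_{H/H}]$, and the identification $(i_H^*\uR)(H/H) \cong \uR(G/H)$ coming from $G \times_H (H/H) \cong G/H$, then conclude by Yoneda. The proposal is correct and takes essentially the same route, just with the naturality bookkeeping made more explicit.
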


\begin{proof}
	It suffices to show that $n_H^G \uA^{i^*_H \co}[x_{H/H}]$ represents evaluation at $G/H$ in $\co$-$\Tamb_G$. We compute:
	\begin{align*}
		\co\mhyphen\Tamb_G\big(n_H^G \uA^{i^*_H \co}[x_{H/H}], \uR\big) &\cong i^*_H\co\mhyphen\Tamb_H\big(\uA^{i^*_H\co}[x_{H/H}], i^*_H \uR\big) \\
		&\cong i_H^* \uR(H/H) \\
		&\cong \uR(G/H). 
	\end{align*}
\end{proof}

\begin{example}
	When $H = e$, we have $n_e^G(\z[x]) \cong \uA^{\co^\cplt}[x_{G/e}]$. 
\end{example}

When \(G/H\) is an admissible \(G\)-set, then the underlying Mackey functor for \(n_H^G\uR\) can be computed as a functor of the underlying Mackey functor of \(\uR\). This is a key step in forming the ``external'' or ``\(G\)-symmetric monoidal'' description of Tambara functors.

\begin{definition}[{\cite[Section 2.3]{Hoy14}}]
	The \emph{norm}
	\[N_H^G: \Mack_H \to \Mack_G,\]
	is defined by left Kan extension along the coinduction functor $\Set^H(G,-) \colon \cp^H_{\Iso} \to \cp^G_{\Iso}$. 
\end{definition}

\begin{proposition}\label{Prop:NormFree}
	The norm
	\[N_H^G: \Mack_H \to \Mack_G\]
	takes free $H$-Mackey functors to free $G$-Mackey functors. 
\end{proposition}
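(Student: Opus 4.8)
The plan is to reduce the statement to a single free generator and then to identify the norm of a free Mackey functor explicitly. Every free $H$-Mackey functor is a direct sum $\bigoplus_{i} \uA\{x_{T_i}\}$ of representable Mackey functors $\uA\{x_{T_i}\} = \cp^H_{\Iso}(T_i,-)^+$, and each such representable represents evaluation at $T_i$. The key computation I would carry out is the identification
\[
	N_H^G \uA\{x_T\} \cong \uA\{x_{\Set^H(G,T)}\}
\]
for a finite $H$-set $T$, where $\Set^H(G,T)$ is the coinduced $G$-set along which the norm is defined. Since $\Set^H(G,T)$ is again a finite $G$-set, the right-hand side is by definition a free $G$-Mackey functor, so this identity is the heart of the proposition.

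To prove this identity I would argue by Yoneda, using that the norm is the left Kan extension along coinduction and hence is left adjoint to precomposition $\Set^H(G,-)^\ast$. For any $G$-Mackey functor $\uN$ this adjunction, combined with the Yoneda lemma for the representable $\uA\{x_T\}$, gives natural isomorphisms
\[
	\Mack_G\big(N_H^G \uA\{x_T\}, \uN\big)
		\cong \uN\big(\Set^H(G,T)\big)
		\cong \Mack_G\big(\uA\{x_{\Set^H(G,T)}\}, \uN\big),
\]
where the first isomorphism is the universal property of $\uA\{x_T\}$ transported through coinduction and the second is the defining property of the free $G$-Mackey functor. Here I must use that the norm lands in Mackey functors (so that the left-hand hom is the correct one) while only ever evaluating the right adjoint $\Set^H(G,-)^\ast\uN$ against a representable, so that its possible failure to be product-preserving is irrelevant. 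The desired isomorphism then follows from the Yoneda lemma in $\Mack_G$.

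Finally I would promote the single-generator computation to all free modules. A finitely generated free $H$-Mackey functor has the form $\uA\{x_T\}$ for a single finite $H$-set $T$, so the identity above shows its norm is free. A general free $H$-Mackey functor is a filtered colimit of finitely generated free ones; the norm preserves such filtered colimits as a left Kan extension, and coinduction $\Set^H(G,-)$ preserves filtered colimits because $G$ is finite, so the colimit of the free pieces is again free.

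The main obstacle I anticipate is exactly the non-additive behaviour of the norm: $N_H^G$ does \emph{not} preserve direct sums of Mackey functors, being multiplicative rather than additive, reflecting the exponential behaviour of coinduction on $H$-sets, for instance $\Set^H(G, S \sqcup T) \not\cong \Set^H(G,S) \sqcup \Set^H(G,T)$. Consequently one cannot reduce to transitive levels by splitting $T$ into orbits and adding. The care required is therefore concentrated in the representable computation: one must verify that the left Kan extension, taken in the product-preserving rather than $\Ab$-enriched setting, genuinely sends the free Mackey functor on $T$ to the free Mackey functor on the coinduced set, and one must be sure to invoke only filtered colimits, never direct sums, when passing to the infinitely generated case.
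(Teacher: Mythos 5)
Your proposal is correct and follows essentially the same route as the paper: the paper's proof also identifies $N_H^G\uA\{x_T\}\cong\uA\{x_{\Set^H(G,T)}\}$ for finitely generated frees (stating it holds ``by definition'' of the left Kan extension, where you spell out the Yoneda/adjunction argument) and then passes to infinitely generated frees as direct limits of finitely generated ones. Your explicit caution about the norm failing to preserve direct sums is a worthwhile elaboration of a point the paper only addresses in an earlier remark, but it does not change the argument.
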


\begin{proof}
	If a free $H$-Mackey functor $\uA\{x_T\}$ is finitely generated, we have
	\[N_{H}^G \uA\{x_{T}\} \cong \uA\{x_{\Set^H(G,T)}\}\]
	by definition. If it is not finitely generated, it can be written as a direct limit of finitely generated ones. The norm commutes with direct limits, so again the norm is free. 
\end{proof}

\begin{proposition}[{\cite[Theorem 2.3.3]{Hoy14}, \cite[Theorem 6.15]{BH18}}]
    If \(G/H\) is an admissible \(G\)-set for \(\co\), then we have a natural isomorphism of functors from \(i_H^\ast\co\)-Tambara functors to \(G\)-Mackey functors:
    \[
    U\circ n_H^G\cong N_H^G\circ U.
    \]
\end{proposition}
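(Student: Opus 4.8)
The plan is to reduce to free $i_H^\ast\co$-Tambara functors, where both composites can be computed by representability, and then to propagate the comparison to all $i_H^\ast\co$-Tambara functors along the canonical colimit presentation by frees. The key preliminary observation is that both $U\circ n_H^G$ and $N_H^G\circ U$ preserve sifted colimits (filtered colimits together with reflexive coequalizers). On the left, $n_H^G$ is a left adjoint and so preserves all colimits, while the forgetful functor $U\colon \co\mhyphen\Tamb_G\to\Mack_G$ preserves sifted colimits because $\co$-Tambara functors are algebras over a monad on $\Mack_G$ built from norm (indexed-product) and symmetric-power functors, and such functors commute with sifted colimits by the very definition of siftedness. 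On the right, $U\colon i_H^\ast\co\mhyphen\Tamb_H\to\Mack_H$ preserves sifted colimits for the same reason, and $N_H^G$ preserves them as well: it preserves filtered colimits exactly as in the proof of \cref{Prop:NormFree}, and it preserves reflexive coequalizers since it is assembled from indexed products. Because every $i_H^\ast\co$-Tambara functor is the sifted colimit of its bar resolution by free $i_H^\ast\co$-Tambara functors, it therefore suffices to produce an isomorphism, natural in the free generator, on objects of the form $\uA^{i_H^\ast\co}[x_T]$ for $T$ a finite $H$-set.

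First I would dispatch the left-hand side. The representability argument of \cref{Prop:NormO} generalizes verbatim from $H/H$ to an arbitrary finite $H$-set $T$: since $n_H^G$ is left adjoint to $i_H^\ast$ and $(i_H^\ast\uR)(T)\cong\uR(G\times_H T)$, the functor $n_H^G\uA^{i_H^\ast\co}[x_T]$ represents evaluation at $G\times_H T$, so
\[
n_H^G\uA^{i_H^\ast\co}[x_T]\;\cong\;\uA^{\co}[x_{G\times_H T}].
\]
Applying $U$ identifies the left-hand side with the Mackey functor $\cp^G_{\co}(G\times_H T,-)^+$ restricted to $\cp^G_{\Iso}$.

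The substance of the argument is therefore the identification of $N_H^G\big(U\uA^{i_H^\ast\co}[x_T]\big)$ with this same Mackey functor, and this is where the admissibility of $G/H$ is indispensable. Here I would compute $N_H^G$ through its defining coend along the coinduction functor $\Set^H(G,-)$ and compare it, level by level, with the basis description of $\cp^G_{\co}(G\times_H T,G/L)^+$ afforded by \cref{cor:PolynomialsAsPolynomials}. The matching of structure maps --- restrictions, transfers, Weyl actions, and the single multiplicative norm from $H$ to $G$ --- is governed by the exponential-diagram relation of \cref{thm:StructureOfPGO}(d), which rewrites a norm followed by a transfer as a transfer--norm--restriction composite indexed by a dependent product; admissibility of $G/H$ for $\co$ is precisely the hypothesis guaranteeing that the norm $\nm_H^G$ used in this rewriting exists in the target $\co$-Tambara structure, so that the coinduced indexing on the Mackey side matches the polynomial indexing on the Tambara side.

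The hard part will be exactly this last identification on free generators: $U$ does not commute with the left Kan extension defining $n_H^G$, so the underlying Mackey functor of a normed free Tambara functor cannot be read off formally and must be computed by hand, exhibiting a bijection of bases that respects every structure map. Once this natural isomorphism on frees is in place, the sifted-colimit preservation of the first paragraph upgrades it to a natural isomorphism $U\circ n_H^G\cong N_H^G\circ U$ on all $i_H^\ast\co$-Tambara functors, the naturality being inherited from that of \cref{Prop:NormO} and \cref{rem:RepEval}. Since the complete case $\co=\co^{\cplt}$ is Hoyer's theorem \cite{Hoy14}, an efficient alternative is to cite that result and verify only that the comparison survives the passage to a general indexing category $\co$, which amounts to checking that admissibility of $G/H$ supplies the one norm map the argument requires.
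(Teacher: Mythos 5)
The paper does not actually prove this proposition: it is imported wholesale from Hoyer and from Blumberg--Hill, and the bracketed citation is the entirety of the paper's ``proof.'' Your outline reproduces the strategy of those sources --- reduce along sifted colimits to representables, identify the left-hand side by adjunction, and match bases on the right --- and the formal scaffolding is sound: $n_H^G$ preserves all colimits as a left adjoint, the forgetful functors and $N_H^G$ preserve sifted colimits, every $i_H^\ast\co$-Tambara functor is a sifted colimit of the representables $\uA^{i_H^\ast\co}[x_T]$ (these are models of a finite-product theory, and finite coproducts of such representables are again of this form since $\uA^{\co}[x_T]\boxtimes\uA^{\co}[x_{T'}]\cong\uA^{\co}[x_{T\sqcup T'}]$), and $n_H^G\uA^{i_H^\ast\co}[x_T]\cong\uA^{\co}[x_{G\times_H T}]$ follows from the representability argument of \cref{Prop:NormO} exactly as you say.

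The genuine gap is the one you flag yourself: the identification of $N_H^G\big(U\uA^{i_H^\ast\co}[x_T]\big)$ with $U\uA^{\co}[x_{G\times_H T}]$ is announced (``compute the coend, exhibit a bijection of bases'') but never carried out, and that single step is the entire mathematical content of the theorem --- everything surrounding it is formal. Note in particular that $U\uA^{i_H^\ast\co}[x_T]$ is not a free Mackey functor (its failure to be free is the subject of this paper), so \cref{Prop:NormFree} gives no shortcut: the left Kan extension along coinduction must be evaluated on a non-representable input and matched against the basis of \cref{cor:PolynomialsAsPolynomials}, with compatibility of restrictions, transfers, Weyl actions, and the exponential-diagram relation of \cref{thm:StructureOfPGO}(d) all checked by hand. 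Until that computation is written down, the argument establishes nothing beyond what the citation already asserts. Your closing suggestion --- invoke Hoyer for $\co^{\cplt}$ and verify that admissibility of $G/H$ supplies the one norm needed to descend to a general $\co$, which is the content of \cite[Theorem 6.15]{BH18} --- is the honest version of what the paper itself does, and is the route I would recommend making precise rather than the from-scratch coend computation.
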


\begin{corollary}
    If \(G/H\) is an admissible \(G\)-set for \(\co\), then we have a natural isomorphism of Mackey functors
    \[
        U\big(\uA^{\co}[x_{G/H}]\big)\cong N_H^G\big(U\big(\uA^{i_H^\ast\co}[x_{H/H}]\big)\big).
    \]
\end{corollary}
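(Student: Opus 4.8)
The plan is to deduce this directly by chaining the two results that immediately precede it: the identification $n_H^G \uA^{i_H^\ast\co}[x_{H/H}] \cong \uA^{\co}[x_{G/H}]$ of \cref{Prop:NormO}, together with the comparison isomorphism $U \circ n_H^G \cong N_H^G \circ U$ between the left adjoint $n_H^G$ and Hoyer's norm $N_H^G$, which holds precisely under the admissibility hypothesis we are assuming. Since all the work is contained in those two propositions, the corollary is a one-line composite; the only thing to watch is that the admissibility of $G/H$ is exactly the condition licensing the use of the comparison.

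Concretely, I would first apply the forgetful functor $U \colon \co\mhyphen\Tamb_G \to \Mack_G$ to the isomorphism of \cref{Prop:NormO}. As $U$ is a functor, it sends isomorphisms to isomorphisms, yielding
\[
    U\big(\uA^{\co}[x_{G/H}]\big) \cong U\big(n_H^G \uA^{i_H^\ast\co}[x_{H/H}]\big)
\]
as Mackey functors. Next I would evaluate the comparison isomorphism $U \circ n_H^G \cong N_H^G \circ U$ at the $i_H^\ast\co$-Tambara functor $\uA^{i_H^\ast\co}[x_{H/H}]$; this is permitted because $G/H$ is admissible for $\co$, which is the standing hypothesis. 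This gives
\[
    U\big(n_H^G \uA^{i_H^\ast\co}[x_{H/H}]\big) \cong N_H^G\big(U\big(\uA^{i_H^\ast\co}[x_{H/H}]\big)\big).
\]
Composing the two displayed isomorphisms produces the claimed identification.

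For the naturality assertion, I would note that the isomorphism is canonical because it is assembled from canonical pieces: the isomorphism of \cref{Prop:NormO} is the comparison of two objects both representing evaluation at $G/H$ (cf.\ \cref{rem:RepEval}), and the second isomorphism is the natural transformation $U \circ n_H^G \cong N_H^G \circ U$ specialized to a single object. The composite therefore inherits canonicity without further checking. Honestly, there is no genuine obstacle at this level of the argument — all of the real difficulty lives upstream, in establishing the comparison $U \circ n_H^G \cong N_H^G \circ U$ itself (the content of \cite{Hoy14, BH18}), where one must verify that left Kan extension along induction on polynomial categories agrees, after forgetting the Tambara structure, with left Kan extension along coinduction on span categories. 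Granting that input, the corollary follows formally.
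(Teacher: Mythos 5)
Your proof is correct and is exactly the argument the paper intends: the corollary is stated without proof precisely because it follows immediately by applying $U$ to the isomorphism of \cref{Prop:NormO} and then invoking the comparison $U \circ n_H^G \cong N_H^G \circ U$, which is licensed by the admissibility of $G/H$. Nothing further is needed.
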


Combined with \cref{Prop:NormFree}, we deduce some guarantees for free underlying Mackey functors.

\begin{corollary}\label{cor:NormsAlongAdmissiblesOfFrees}
    If \(G/H\) is an admissible \(G\)-set for \(\co\), and if the Mackey functor underlying \(\uA^{i_H^\ast\co}[x_{H/H}]\) is free, then the Mackey functor underlying \(\uA^{\co}[x_{G/H}]\) is free.
\end{corollary}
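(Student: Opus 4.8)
The plan is to chain together the two results that immediately precede the statement, so that the corollary follows formally. First I would apply the preceding corollary: under the admissibility hypothesis on \(G/H\), it supplies a natural isomorphism of Mackey functors
\[
U\big(\uA^{\co}[x_{G/H}]\big)\cong N_H^G\big(U\big(\uA^{i_H^\ast\co}[x_{H/H}]\big)\big).
\]
This rewrites the Mackey functor whose freeness we want to establish as a norm, thereby reducing the problem to proving that the right-hand side is free.

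Next I would invoke the freeness hypothesis together with \cref{Prop:NormFree}. By assumption the Mackey functor \(U(\uA^{i_H^\ast\co}[x_{H/H}])\) is free, and \cref{Prop:NormFree} asserts precisely that \(N_H^G\) carries free \(H\)-Mackey functors to free \(G\)-Mackey functors. Hence \(N_H^G(U(\uA^{i_H^\ast\co}[x_{H/H}]))\) is free, and transporting freeness across the displayed isomorphism shows that \(U(\uA^{\co}[x_{G/H}])\) is free, as desired.

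There is no genuine obstacle in the corollary itself; it is a formal consequence of its two inputs. The real content lives upstream: the admissible-case identification \(U\circ n_H^G\cong N_H^G\circ U\), combined with \cref{Prop:NormO}, is what forces the underlying Mackey functor of the \(\co\)-level free Tambara functor to be a norm of the corresponding \(H\)-level object; and the preservation of freeness by the norm rests on the explicit formula \(N_H^G\uA\{x_T\}\cong\uA\{x_{\Set^H(G,T)}\}\) together with the compatibility of the norm with direct limits. The only point requiring care is bookkeeping: one must apply the freeness hypothesis to the \(H\)-Mackey functor that is exactly the input of the norm in the displayed isomorphism, namely \(U(\uA^{i_H^\ast\co}[x_{H/H}])\), rather than to \(\uA^{\co}[x_{G/H}]\) directly.
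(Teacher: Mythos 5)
Your proposal is correct and matches the paper's reasoning exactly: the paper deduces this corollary by combining the immediately preceding isomorphism $U(\uA^{\co}[x_{G/H}])\cong N_H^G(U(\uA^{i_H^\ast\co}[x_{H/H}]))$ with \cref{Prop:NormFree}, which is precisely your argument. No gaps.
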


There is one case where we can easily show that the underlying Mackey functor is free: the free Green functor on a fixed generator.

\begin{proposition}\label{Rmk:FreeTrivial}
	The free Green functor on a fixed generator, $\uA^{\co^{\triv}}[x_{G/G}]$, is free as a Mackey functor. 
\end{proposition}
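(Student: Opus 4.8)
The plan is to recognise $\uA^{\co^{\triv}}[x_{G/G}]$ as a symmetric algebra and then exploit that the generating Mackey functor is the monoidal unit. A Green functor is exactly a commutative monoid in $(\Mack_G,\boxtimes)$, so the forgetful functor to Mackey functors has a left adjoint $\Sym$, the free commutative monoid functor, with the usual description
\[ \Sym(\uM)\cong\bigoplus_{n\geq 0}\uM^{\boxtimes n}/\Sigma_n, \]
the $n$-th summand being the $\Sigma_n$-coinvariants of the $n$-fold box power; this colimit formula is valid because $\boxtimes$ preserves colimits in each variable. Chaining the free--forgetful adjunction with the fact that the free Mackey functor $\uA\{x_{G/G}\}$ represents evaluation at $G/G$ shows that $\Sym(\uA\{x_{G/G}\})$ represents $\uR\mapsto\uR(G/G)$ on Green functors; by \cref{rem:RepEval} this is the defining property of $\uA^{\co^{\triv}}[x_{G/G}]$, so the two are isomorphic.

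The decisive point is that the generator sits at the top level, where $\uA\{x_{G/G}\}=\uA$ is the unit for $\boxtimes$. Consequently $\uA^{\boxtimes n}\cong\uA$, and because the symmetry isomorphisms of a symmetric monoidal structure act as the identity on powers of the unit, the $\Sigma_n$-action on $\uA^{\boxtimes n}$ is trivial, so its coinvariants are again $\uA$. Therefore
\[ \uA^{\co^{\triv}}[x_{G/G}]\cong\Sym(\uA)\cong\bigoplus_{n\geq 0}\uA=\bigoplus_{n\geq 0}\uA\{x_{G/G}\}, \]
which is a free Mackey functor with basis the monomials $1,x,x^{2},\dots$ concentrated at level $G/G$. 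This is precisely the equivariant shadow of the classical statement that $k[x]$ is free over $k$ on $1,x,x^{2},\dots$.

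I expect the only thing needing care to be the formal identification above---namely that the free Green functor is the symmetric algebra and that the $\Sigma_n$-action on $\uA^{\boxtimes n}$ is trivial---rather than any genuine computation; both are immediate once one records that $\boxtimes$ is a (closed) symmetric monoidal structure with unit $\uA$. As a sanity check, or as an alternative hands-on route, one can compute level by level: by \cref{cor:PolynomialsAsPolynomials}, $\uA^{\co^{\triv}}[x_{G/G}](G/K)=\cp^G_{\co^{\triv}}(G/G,G/K)^+$ is free abelian on isomorphism classes of polynomials $G/G\leftarrow S\to G/J\to G/K$ with isotropy-preserving middle map, and one checks that the powers $N_{\nabla}$ of the fold maps $n\cdot(G/G)\to G/G$ furnish generators $x^{n}$ at level $G/G$ whose restrictions and transfers account, compatibly, for every such polynomial. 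I would give the adjunction argument as the actual proof and relegate this combinatorial verification to a remark.
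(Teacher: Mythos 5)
Your argument is correct, but it reaches the conclusion by a different route than the paper. The paper's proof of \cref{Rmk:FreeTrivial} is essentially a citation: it invokes \cite[Corollary 2.11]{BH19} to get an isomorphism of Mackey functors $\uA^{\co^{\triv}}[x_{G/G}] \cong \z[x] \otimes \uA$, observes that the right-hand side is $\bigoplus_{n\geq 0}\uA$, and is done. You instead derive that same decomposition from first principles: Green functors are commutative monoids in $(\Mack_G,\boxtimes)$, so by the free--forgetful adjunction and \cref{rem:RepEval} the free Green functor on a generator at level $G/G$ is $\Sym(\uA\{x_{G/G}\})=\Sym(\uA)$; since $\uA$ is the $\boxtimes$-unit, the coherence axioms force $\sigma_{\uA,\uA}=\id$ and hence a trivial $\Sigma_n$-action on $\uA^{\boxtimes n}\cong\uA$, giving $\Sym(\uA)\cong\bigoplus_{n\geq 0}\uA$. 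Every step is sound (the colimit formula for $\Sym$ is valid because $\boxtimes$ preserves colimits in each variable, and the paper itself records the identification $\uA^{\co^\triv}[x_{G/H}]\cong\bigoplus_{n\geq 0}\Sym^n(\uA\{x_{G/H}\})$ just before \cref{thm:NoGo}), so your proof is self-contained where the paper's is not --- that is what your approach buys. What the citation buys the authors is brevity and the stronger statement for a general abelian-group coefficient, namely $E\otimes\uA$. Your proposed levelwise sanity check via \cref{cor:PolynomialsAsPolynomials} is a reasonable third route (the isotropy-preserving condition forces the middle object of each polynomial to be a disjoint union of copies of a single orbit, so the polynomials at $G/K$ are exactly the spans $\tr^K_J$ of powers of restrictions of $x$), and relegating it to a remark, as you suggest, is the right call.
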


\begin{proof}
	By \cite[Corollary 2.11]{BH19}, there is an isomorphism of Mackey functors
	\[\uA^{\co^{\triv}}[x_{G/G}] \cong \z[x] \otimes \uA,\]
	where $E \otimes \uA$ is the Mackey functor which sends a $G$-set $T$ to $E \otimes \uA(T)$, for an abelian group $E$. This implies that $\uA^{\co^{\triv}}[x_{G/G}]$ is the direct sum of infinitely many copies of $\uA$, so $\uA^{\co^{\triv}}[x_{G/G}]$ is free as a Mackey functor. 
\end{proof}

Applying this to \cref{cor:NormsAlongAdmissiblesOfFrees}, we deduce a class of free Tambara functors that are free as \(\uA\)-modules.

\begin{theorem}\label{Thm:Sufficiency}
	Let $\co$ be an indexing category for a finite group $G$ and let $H$ be a subgroup of $G$. If $i^*_H \co = \co^{\triv}$ and $G/H$ is admissible for $\co$, then $\uA^{\co}[x_{G/H}]$ is free as an $\uA$-module. 
\end{theorem}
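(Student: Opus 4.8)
The plan is to assemble the theorem directly from the two tools developed just above: the compatibility of the norm with admissible induction (\cref{cor:NormsAlongAdmissiblesOfFrees}) and the freeness of the free Green functor on a fixed generator (\cref{Rmk:FreeTrivial}). The strategy is to reduce the statement about $\uA^\co[x_{G/H}]$ over $G$ to a statement about $\uA^{i_H^*\co}[x_{H/H}]$ over the subgroup $H$, where the hypothesis $i_H^*\co = \co^{\triv}$ forces the relevant free Tambara functor to be a free Green functor, which we already know to be free as a Mackey functor.

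Concretely, first I would invoke the hypothesis $i_H^*\co = \co^{\triv}$ to identify $\uA^{i_H^*\co}[x_{H/H}]$ with $\uA^{\co^{\triv}}[x_{H/H}]$, the free Green functor on a fixed generator for the group $H$. Next I would apply \cref{Rmk:FreeTrivial} -- with $G$ there replaced by $H$ -- to conclude that the underlying $H$-Mackey functor of $\uA^{\co^{\triv}}[x_{H/H}]$ is free. Finally, since the remaining hypothesis guarantees that $G/H$ is admissible for $\co$, I would feed this freeness into \cref{cor:NormsAlongAdmissiblesOfFrees} to deduce that the underlying $G$-Mackey functor of $\uA^\co[x_{G/H}]$ is free, which is exactly the claim that $\uA^\co[x_{G/H}]$ is free as an $\uA$-module.

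The only point requiring care -- there is no genuine obstacle, since both cited results do all the work -- is the change of group: \cref{Rmk:FreeTrivial} is phrased for the ambient group, so I must note that it applies verbatim with $H$ in place of $G$, and that the equality $i_H^*\co = \co^{\triv}$ is to be read as the trivial indexing category for $H$ (which is where the restricted indexing category lives, by \cref{Prop:RestrictIndexCat}). Once this bookkeeping is in place, the theorem is an immediate chaining of the two cited results against the two hypotheses, with no further computation needed.
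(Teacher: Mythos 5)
Your proposal is correct and is exactly the paper's argument: the authors also obtain \cref{Thm:Sufficiency} by combining \cref{Rmk:FreeTrivial} (applied to the group $H$, using $i_H^*\co=\co^{\triv}$) with \cref{cor:NormsAlongAdmissiblesOfFrees} via admissibility of $G/H$. The change-of-group bookkeeping you flag is the only subtlety, and you have handled it as the paper implicitly does.
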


%%%%%%%%%%% SECTION: GEOMETRIC FIXED POINTS %%%%%%%%%%%%%%
\section{Geometric Fixed Points}\label{Sec:GFP}

Studying the converse to \cref{Thm:Sufficiency} requires us to also be able to ``restrict'' along quotient maps. Additively, this is the geometric fixed points functor, but we also need to understand this on Tambara functors.

Throughout this section, $N$ is a normal subgroup of $G$ and $Q := G/N$ is the quotient group. 

\subsection{Cleaving Indexing System}
\begin{notation}
	If \(N\) is a normal subgroup of \(G\), then let
	\[
		\mathcal F_{N}=\{H \leq G \mid N \not\subset H\}
	\]
	be the family of subgroups of \(G\) which do not contain \(N\).
\end{notation}

Associated to \(\mathcal F_N\), we have a universal indexing category.

\begin{definition}\label{defn:FixedN}
	Let \(\co^N_\gen\) be the wide subgraph of \(\Set^G\) such that \(f\colon S \to T\) is in \(\co^N_\gen\) if and only if the canonical map
	\[
		S^N\to S\times_T T^N
	\]
	is an isomorphism.
\end{definition}

We record two useful reformulations of \(\co^N_\gen\).

\begin{proposition}\label{prop:ONInclusion}
The following statements hold:

\begin{enumerate}[(a)]

	\item A map $f \colon S \to T$ is in \(\co^N_\gen\) if and only if the inclusion 
		\[
			S^N\subset f^{-1}(T^N)
		\]
		is the identity.

	\item A map of orbits \(G/K\to G/H\) is in \(\co^N_\gen\) if and only if one of two things hold:
		\begin{enumerate}[(i)]
			\item \(K\) contains \(N\); or
			\item \(H\) does not contain \(N\).
		\end{enumerate}
	
\end{enumerate}
\end{proposition}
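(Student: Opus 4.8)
The plan is to establish (a) as an essentially formal identification and then read off (b) from a computation of $N$-fixed points of orbits.

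For (a), I would first identify the canonical map explicitly. Since the pullback $S \times_T T^N$ is the set of pairs $(s,t)$ with $t \in T^N$ and $f(s)=t$, projection to the first factor identifies it with the subset $f^{-1}(T^N) \subseteq S$. For any $s \in S^N$ and $n \in N$ we have $n f(s) = f(ns) = f(s)$ by $G$-equivariance, so $f(s) \in T^N$ and hence $S^N \subseteq f^{-1}(T^N)$; under the identification above, the canonical map $S^N \to S \times_T T^N$ is exactly this inclusion of subsets of $S$. An inclusion of finite sets is a bijection precisely when the two subsets coincide, which is the statement that the inclusion $S^N \subseteq f^{-1}(T^N)$ is the identity. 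This proves (a).

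For (b), by (a) it suffices to determine when $(G/K)^N = f^{-1}\big((G/H)^N\big)$. The key input is the fixed-point lemma for orbits: a coset $gK$ is $N$-fixed if and only if $g^{-1}Ng \leq K$, and since $N$ is normal this reads $N \leq K$, independently of $g$. Thus $(G/K)^N = G/K$ when $N \leq K$ and $(G/K)^N = \emptyset$ otherwise, and likewise for $G/H$. Consequently $f^{-1}\big((G/H)^N\big)$ equals all of $G/K$ when $N \leq H$ and is empty otherwise.

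It remains to run a short case analysis. Existence of the $G$-map $f$ forces $K$ to be subconjugate to $H$, say $K \leq aHa^{-1}$; combined with normality, $N \leq K$ then gives $N = a^{-1}Na \leq H$. Hence, if $N \leq K$ (case (i)) we also have $N \leq H$ and both sides equal $G/K$, so $f \in \co^N_\gen$; if $N \not\leq H$ (case (ii)) then $N \not\leq K$ as well, both sides are empty, and again $f \in \co^N_\gen$; in the only remaining case, $N \not\leq K$ but $N \leq H$, the left-hand side is empty while the right-hand side is the nonempty set $G/K$, so $f \notin \co^N_\gen$. This matches conditions (i)--(ii) exactly. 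I do not anticipate a genuine obstacle here; the only points demanding care are treating a general orbit map rather than the canonical projection and invoking normality of $N$ to ensure the implication $N \leq K \Rightarrow N \leq H$, which is what makes the dichotomy clean.
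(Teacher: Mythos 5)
Your proof is correct and follows essentially the same route as the paper: part (a) by identifying the canonical map with the inclusion $S^N \subseteq f^{-1}(T^N)$, and part (b) by computing $(G/K)^N$ as either all of $G/K$ or empty according to whether $N \leq K$. Your argument is somewhat more detailed than the paper's (which leaves the case analysis and the subconjugacy/normality point implicit), but there is no substantive difference.
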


\begin{proof}
	Part (a) follows from the observation that the canonical map in \cref{defn:FixedN} is the natural one
	\[
		S^N\to f^{-1}(T^N).
	\]
	Part (b) follows from the identifications
	\[
		(G/H)^N \cong
		\begin{cases}
			\emptyset & N\not\subset H \\
			G/H & N\subset H,
		\end{cases}
	\]
	and part (a). 
\end{proof}

Recall that families of subgroups are the same thing as sieves in the orbit category. Given any normal subgroup \(N\), we have a quotient map \(q\colon G\to Q = G/N\). This gives us a fully-faithful embedding 
\begin{equation}\label{eq:qstaronorbitcategories}
	q^\ast\colon \Orb^Q \hookrightarrow \Orb^G.
\end{equation}
The intersection of \(\co^N_\gen\) with the orbit category of \(G\) is then
\[
	\mathcal F_N\amalg\Orb^Q.
\]

\begin{remark}
	Thinking of an indexing category as parameterizing transfers, we can reinterpret this as saying that we have no transfers from subgroups in the family to those not in the family. We think of this as a chasm, cleaving the groups that contain \(N\) from those that do not. In particular, intersecting any indexing category with this one gives a universal way to remove any transfers that bridge from the family to the complementary cofamily. 
\end{remark}

\begin{theorem}\label{thm:ONIndexingSystem}
	The wide subgraph \(\co^N_\gen\) is an indexing subcategory of \(\Set^G\).
\end{theorem}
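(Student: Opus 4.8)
The plan is to verify directly the three defining conditions of an indexing category for the wide subgraph $\co^N_\gen$: that it is a subcategory (closed under composition and containing identities), that it is pullback stable, and that it is finite coproduct complete. Throughout I would work with the reformulation from \cref{prop:ONInclusion}(a): a map $f \colon S \to T$ lies in $\co^N_\gen$ if and only if the always-valid inclusion $S^N \subseteq f^{-1}(T^N)$ is an equality, equivalently, $f(s) \in T^N$ forces $s \in S^N$. The whole argument rests on the observation that taking $N$-fixed points interacts well with the categorical operations in play: it commutes with disjoint unions, it sends pullbacks to pullbacks, and the preimage description makes $\co^N_\gen$ transparently closed under composition via $(gf)^{-1}(U^N) = f^{-1}(g^{-1}(U^N))$.

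First I would check that $\co^N_\gen$ is a subcategory. Identities plainly satisfy $S^N = \id^{-1}(S^N)$. For composition, given $f \colon S \to T$ and $g \colon T \to U$ in $\co^N_\gen$, the membership conditions $S^N = f^{-1}(T^N)$ and $T^N = g^{-1}(U^N)$ combine to give $(gf)^{-1}(U^N) = f^{-1}(g^{-1}(U^N)) = f^{-1}(T^N) = S^N$, so $gf \in \co^N_\gen$.

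The main step is pullback stability, and I expect it to be the crux of the argument even though it is short. Consider a pullback square with right vertical map $g \colon B \to D$ in $\co^N_\gen$ and bottom map $p \colon C \to D$, so that the pullback is $A = C \times_D B$ with left vertical projection $f \colon A \to C$. Since the $N$-action on $A$ is diagonal, one has $A^N = \{(c,b) \in A : c \in C^N,\ b \in B^N\}$, whereas $f^{-1}(C^N) = \{(c,b) \in A : c \in C^N\}$; the inclusion $A^N \subseteq f^{-1}(C^N)$ is automatic, and equality is exactly the assertion that $c \in C^N$ forces $b \in B^N$. This is where the hypothesis on $g$ enters essentially rather than formally: if $c \in C^N$ then $g(b) = p(c) \in D^N$ by equivariance of $p$, and membership of $g$ in $\co^N_\gen$ then gives $b \in g^{-1}(D^N) = B^N$. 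Hence $f \in \co^N_\gen$.

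Finally, for finite coproduct completeness I would use that $N$-fixed points commute with disjoint union, $(S_1 \sqcup S_2)^N = S_1^N \sqcup S_2^N$, together with the distribution of preimages over coproducts. This immediately shows that the coproduct $f_1 \sqcup f_2$ of two maps in $\co^N_\gen$ is again in $\co^N_\gen$, and that the coproduct injections $S_i \hookrightarrow S_1 \sqcup S_2$ and the fold map $\nabla \colon Z \sqcup Z \to Z$ lie in $\co^N_\gen$. Closure under composition then yields that any induced map $(g_1,g_2) = \nabla \circ (g_1 \sqcup g_2) \colon S_1 \sqcup S_2 \to Z$ assembled from maps in $\co^N_\gen$ is again in $\co^N_\gen$, which is precisely the statement that finite coproducts are created in $\Set^G$ and remain in $\co^N_\gen$. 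Assembling the three verifications shows that $\co^N_\gen$ is a wide, pullback stable, finite coproduct complete subcategory of $\Set^G$, hence an indexing category.
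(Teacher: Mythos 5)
Your proof is correct, and its overall skeleton (verify subcategory, wideness, pullback stability, and finite coproduct completeness directly) matches the paper's; the composition step via $(g\circ f)^{-1}(U^N)=f^{-1}(g^{-1}(U^N))$ is literally the same computation. The two places where your execution genuinely differs are worth noting. For pullback stability, the paper argues abstractly: since $(-)^N$ is a limit it sends the pullback square to a pullback square, and then a chain of natural isomorphisms identifies $S_1\times_{T_1}T_1^N$ with $S_1^N$. You instead run an element chase on $A=C\times_D B$, using that the diagonal action gives $A^N=C^N\times_{D^N}B^N$ and that equivariance of the bottom map $p$ forces $p(c)\in D^N$ whenever $c\in C^N$, so the hypothesis on $g$ applies; this is more elementary and makes visible exactly where the hypothesis on $g$ is used, at the cost of being less obviously natural. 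For coproduct completeness, the paper invokes \cite[Lemma 3.2]{BH18}, which reduces the check to the two maps $\emptyset\to\ast$ and $\ast\amalg\ast\to\ast$; you verify closure under coproducts of morphisms, coproduct injections, and fold maps by hand, which is self-contained but reproves part of that lemma. Both routes are sound; yours buys independence from the cited lemma and concreteness, while the paper's is shorter given the reference.
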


\begin{proof}
	We first show that it is a subcategory. It is clear that for any finite \(G\)-set \(T\), the identity map on \(T\) satisfies the conditions of \cref{defn:FixedN}. Now let
	\[
		U\xrightarrow{f} S\xrightarrow{g} T
	\]
	be morphisms in \(\co^N_\gen\). By assumption, we then have
	\[
		U^{N}=f^{-1}(S^{N})=f^{-1}\big(g^{-1}(T^{N})\big)=(g\circ f)^{-1}(T^{N}),
	\]
	as desired. Thus \(\co^N_\gen\) is a subcategory. By assumption, this is a wide subcategory. We apply \cite[Lemma 3.2]{BH18}. The maps \(\emptyset\to\ast\) and \(\ast\amalg\ast\to\ast\) visibly satisfy the conditions of \cref{defn:FixedN}. We therefore only have to show pullback stability. Let 
	\begin{equation}\label{eqn:PullBack}
	\begin{tikzcd}
		S_{1}
			\arrow{r}
			\arrow{d}[left]{g_1}
			&
		S
			\arrow{d}{g}
			\\
		T_{1}
			\arrow{r}
			&
		T
	\end{tikzcd}
	\end{equation}
	be a pullback diagram in \(\Set^{G}\) with \(g\) a map in \(\co^N_\gen\). Since fixed points are a limit, this gives a pullback diagram
	\begin{equation}\label{eqn:Fixedpb}
	\begin{tikzcd}
		{S_{1}^{N}}
			\arrow{r}
			\arrow{d}[left]{g_1}
			&
		{S^{N}}
			\arrow{d}{g}
			\\
		{T_{1}^{N}}
			\arrow{r}
			&
		{T^{N}}
	\end{tikzcd}
	\end{equation}
	Consider now the natural map
	\[
		S_{1}^{N}\to S_{1}\times_{T_{1}} T_{1}^{N}.
	\]
	By assumption on the diagram and associativity, we have natural isomorphisms
	\[
		S_{1}\times_{T_{1}} T_{1}^{N}\cong (S\times_{T}T_{1})\times_{T_{1}} T_{1}^{N}\cong S\times_{T} T_{1}^{N}\cong S\times_{T} (T^{N}\times_{T^{N}} T_{1}^{N}).
	\]
	By assumption on the map \(g\), we have a further natural isomorphism
	\[
		(S\times_{T}T^{N})\times_{T^{N}} T_{1}^{N}\cong S^{N}\times_{T^{N}} T_{1}^{N}.
	\]
	\eqref{eqn:Fixedpb} then shows this to be isomorphic to \(S_{1}^{N}\) via the natural maps, as desired.
\end{proof}

\subsection{Incomplete Tambara functors and nullifications}

In equivariant homotopy theory, the \(N\)-geometric fixed points are the composite of two functors:
\begin{enumerate}[(1)]
	\item the nullification which annihilates anything induced from those \(H\) in \(\mathcal F_N\);
	\item the ``restriction'' along the surjection \(q\colon G\to Q\).
\end{enumerate}
Both of these can be realized in Mackey functors, cf. \cite[Section 5.2]{BGHL19}.

\begin{definition}
	Let \(\underline{E\cf}_N\) be the sub-Mackey functor of \(\uA\) generated by \(\uA(G/H)\) for all \(H\) in \(\cf_N\). Let \(\underline{\widetilde{E}\cf}_N\) be the quotient of \(\uA\) by \(\underline{E\cf}_N\).
\end{definition}

Since we are working with the Burnside Mackey functor, we can be more explicit.

\begin{proposition}
	We have for all \(H\subset G\)
	\[
		\underline{E\cf}_N(G/H)=
		\mathbb Z\cdot\big\{
		[H/K] \mid K\in\cf_N
		\big\},
	\]
	so the inclusion
	\[
	\underline{E\cf}_N(G/H)\hookrightarrow\uA(G/H)
	\]
	is inclusion of a direct summand.
\end{proposition}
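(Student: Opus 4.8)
The plan is to identify $\underline{E\cf}_N$ with the explicit candidate $\uM$ given by $\uM(G/H) = \mathbb Z\cdot\{[H/K]\mid K\le H,\ N\not\subset K\}$, and then to read off the direct-summand statement. Recall that $\uA(G/H)$ is the free abelian group on the isomorphism classes $[H/K]$ of transitive $H$-sets, indexed by the subgroups $K\le H$ up to $H$-conjugacy. Because $N$ is normal, the condition $N\not\subset K$ is conjugation-invariant, so $\uM(G/H)$ is the $\mathbb Z$-span of a well-defined sub-basis of this free abelian group.

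First I would verify that $\uM$ is a sub-Mackey functor of $\uA$, i.e. that it is closed under transfers, conjugation, and restrictions. Transfers are given by induction, $\tr_K^H[K/L]=[H/L]$, and the condition $N\not\subset L$ survives since $L\le K\le H$; conjugation preserves the defining condition precisely because $N$ is normal. The one step that needs care is restriction: by the double coset formula $\res_K^H[H/L]=\sum_{KgL}[K/(K\cap{}^gL)]$, and I must check that $N\not\subset L$ forces $N\not\subset(K\cap{}^gL)$ in every summand. This follows again from normality of $N$, since $K\cap{}^gL\subseteq{}^gL$ and $N\subset{}^gL\Leftrightarrow N\subset L$. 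This is the main (though short) technical point of the argument.

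With $\uM$ known to be a sub-Mackey functor, I would establish the two inclusions. For $H\in\cf_N$ every subgroup $K\le H$ satisfies $N\not\subset K$ automatically, so $\uM(G/H)=\uA(G/H)$; thus $\uM$ contains all the generators of $\underline{E\cf}_N$, and since $\underline{E\cf}_N$ is the smallest sub-Mackey functor with this property, $\underline{E\cf}_N\subseteq\uM$. Conversely, each basis element $[H/K]$ of $\uM(G/H)$ has $K\in\cf_N$, so $\uA(G/K)=\underline{E\cf}_N(G/K)$ contains the unit class $[K/K]$, and $[H/K]=\tr_K^H[K/K]$ lies in $\underline{E\cf}_N(G/H)$ by closure under transfer. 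Hence $\uM\subseteq\underline{E\cf}_N$, giving equality.

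Finally, the direct-summand claim is immediate from this identification: $\underline{E\cf}_N(G/H)=\uM(G/H)$ is the $\mathbb Z$-span of the sub-basis $\{[H/K]\mid N\not\subset K\}$ of the free abelian group $\uA(G/H)$, with complementary summand spanned by the remaining basis vectors $[H/K]$ with $N\subset K$. I expect the restriction computation to be the only genuinely nontrivial verification; the rest is bookkeeping with the Burnside-ring basis.
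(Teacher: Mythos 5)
Your proof is correct. The paper actually states this proposition without proof (treating it as immediate from the structure of the Burnside Mackey functor), and your argument — identifying $\underline{E\cf}_N$ with the span of the sub-basis $\{[H/K]\mid N\not\subset K\}$, checking closure under transfer, conjugation, and (via the double coset formula and normality of $N$) restriction, and then comparing the two sub-Mackey functors by minimality and by writing $[H/K]=\tr_K^H[K/K]$ — is exactly the reasoning the paper intends, as indicated by its remark immediately afterward that every generator is a transfer from a level in $\cf_N$.
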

Note in particular that if \(H\not\in\cf_N\), then every generator of \(\underline{E\cf}_N(G/H)\) is the transfer of an element (in fact, a generator) from \(\underline{E\cf}_N(G/K)\) with \(K\in\cf_N\).

\begin{definition}[{\cite[Definition 5.6]{BH18}}]
	If $\uR$ is an $\co$-Tambara functor, then an \emph{$\co$-ideal} is a sub-Mackey functor $\underline{J}$ such that:
	\begin{enumerate}[(a)]
		\item the multiplication on $\uR$ makes $\underline{J}$ an $\uR$-module;
		\item if $f : S \to T$ is in $\co$ and is surjective, then $\underline{J}$ is closed under $\nm_f$, where $\nm_f$ is the norm along $f$ in $\uR$ (see \cref{Def:NormTransferRestriction}). 
	\end{enumerate}
\end{definition}

The surjective condition allows for ideals that don't contain units $1 \in \uR(T)$ for each $T$; see \cite[Remark 5.7]{BH18}.

\begin{theorem}
	For any normal subgroup \(N\) of \(G\), the sub-Mackey functor \(\underline{E\cf}_N\) of \(\uA\) is an \(\co^N_\gen\)-ideal.
\end{theorem}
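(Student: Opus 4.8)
The plan is to verify the two defining properties of an $\co^N_\gen$-ideal: (a) that the multiplication on $\uA$ makes $\underline{E\cf}_N$ an $\uA$-submodule, and (b) that $\underline{E\cf}_N$ is closed under $\nm_f$ for every surjective $f\colon S\to T$ in $\co^N_\gen$. Throughout I would use a geometric reformulation of the explicit description from the preceding proposition: since for an orbit $(G/K)^N\neq\emptyset$ exactly when $N\subseteq K$ (using that $N$ is normal), $\cf_N$ consists precisely of the $K$ with $(G/K)^N=\emptyset$. Hence, by additivity over orbits, for any finite $G$-set $X$ the subgroup $\underline{E\cf}_N(X)\subseteq\uA(X)$ is the span of the classes $[W\xrightarrow{p}X]$ of $G$-sets over $X$ with $W^N=\emptyset$.

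For (a), the $\uA$-module structure at level $X$ is the ring multiplication on $\uA(X)$, given by fiber product over $X$: $[V]\cdot[W]=[V\times_X W]$. Since taking $N$-fixed points preserves pullbacks, $(V\times_X W)^N=V^N\times_{X^N}W^N$, which is empty as soon as $W^N=\emptyset$. Thus multiplying a generator of $\underline{E\cf}_N(X)$ by an arbitrary class keeps us inside $\underline{E\cf}_N(X)$. (Equivalently, $\cf_N$ is a family, being closed under subgroups and under $G$-conjugation, the latter by normality of $N$; this is also what makes $\underline{E\cf}_N$ a sub-Mackey functor.)

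For (b), the heart is a dependent-product computation, since $\nm_f[W\to S]=[\Pi_f W\to T]$. I would show that $W^N=\emptyset$ forces $(\Pi_f W)^N=\emptyset$. Any $N$-fixed point of $\Pi_f W$ lies over some $t\in T^N$. By \cref{prop:ONInclusion}(a), $f\in\co^N_\gen$ means $S^N=f^{-1}(T^N)$, so the fiber $f^{-1}(t)$ is contained in $S^N$; hence $N$ acts trivially on $f^{-1}(t)$, and an $N$-fixed section over $t$ is exactly a choice, for each $s\in f^{-1}(t)$, of a point of $W_s^N$. Since $f$ is surjective $f^{-1}(t)$ is nonempty, and since $W^N=\emptyset$ every factor $W_s^N$ is empty, so this product of fibers is empty. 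Therefore $(\Pi_f W)^N=\emptyset$ and $\nm_f[W\to S]\in\underline{E\cf}_N(T)$.

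The main obstacle is passing from these monoid generators to the whole subgroup $\underline{E\cf}_N(S)$, because $\nm_f$ is multiplicative rather than additive. I would close this gap by packaging the computation above: the assignment $[W]\mapsto[W^N]$ defines a ring homomorphism $\Phi^N_X$ from $\uA(X)$ to the Grothendieck ring of $Q$-sets over $X^N$ (with $Q=G/N$ acting since $N$ is normal), and the orbit decomposition $W=W_{\geq N}\sqcup W_{\not\geq N}$ shows its kernel is exactly $\underline{E\cf}_N(X)$. The fiber analysis shows that for $f\in\co^N_\gen$ this map intertwines norms, $\Phi^N_T\circ\nm_f=\nm_{f^N}\circ\Phi^N_S$, where $f^N\colon S^N\to T^N$ is again surjective because $f(S^N)=f(f^{-1}(T^N))=T^N$. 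As $\nm_{f^N}(0)=0$ for a surjective map (no empty products of fibers occur), any $j\in\ker\Phi^N_S$ satisfies $\Phi^N_T(\nm_f(j))=\nm_{f^N}(0)=0$, so $\nm_f(j)\in\ker\Phi^N_T=\underline{E\cf}_N(T)$. Alternatively, one can invoke Tambara reciprocity: expanding $\nm_f(0+j)$, every mixed term is a transfer of a product containing a restricted factor from $\underline{E\cf}_N(S)$, hence lies in $\underline{E\cf}_N(T)$ by part (a) together with closure under transfers, while $\nm_f(0)=0$ by surjectivity. Either route reduces the non-additivity to the single geometric input that norms along $\co^N_\gen$-maps preserve emptiness of $N$-fixed points.
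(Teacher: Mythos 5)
Your proof is correct in its main line, and its geometric core coincides with the paper's: both arguments ultimately rest on the computation that the dependent product (coinduction) along a surjective map in $\co^N_\gen$ of a $G$-set with empty $N$-fixed points again has empty $N$-fixed points. You carry this out fiberwise, using $S^N=f^{-1}(T^N)$ to see that an $N$-fixed section over $t\in T^N$ would require a point of $W_s^N=\emptyset$ for each $s$ in the nonempty fiber $f^{-1}(t)$; the paper phrases the same fact through the adjunction $\big(\Set^K(H,T')\big)^N\cong\Set^{K/N}\big(H/N,(T')^N\big)$. Where you genuinely diverge is in handling the non-additivity of the norm. The paper observes that every element of $\underline{E\cf}_N(G/K)$ lies in the image of the transfer along some $h\colon T\to G/K$ with $T^N=\emptyset$, and then applies the exponential diagram to rewrite $\nm_g\circ\tr_h$ as a transfer from $G\times_H\Set^K(H,T')$, reducing everything to the effective computation in one stroke (and splitting into the cases $K\in\cf_N$ and $N\subseteq K$). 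You instead introduce the ring homomorphism $[W]\mapsto[W^N]$ with kernel $\underline{E\cf}_N$ and argue that it intertwines norms; this is in effect the geometric fixed points map of \cref{Sec:GFP}, so your argument anticipates machinery the paper only builds later, and it has the virtue of treating all cases uniformly.

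Two caveats. First, you verify the intertwining identity $\Phi^N_T\circ\nm_f=\nm_{f^N}\circ\Phi^N_S$ only on classes of honest $G$-sets, whereas you need it on the group completion, where $\nm_f$ is extended by Tambara's non-additive universal formulas; you should either check that $\Phi^N$ also commutes with restrictions, transfers, and effective norms and invoke uniqueness of that extension, or fall back on the transfer-plus-exponential-diagram reduction, which avoids the issue entirely. Second, the alternative sketch via ``expanding $\nm_f(0+j)$'' does not work as written: that expansion is vacuous since $0+j=j$, and the mixed terms in the reciprocity formula for a genuine sum involve norms along maps that need not be surjective, for which the norm of $\emptyset$ can be nonempty. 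Neither issue is fatal to your primary route, but the first is the one step that deserves an explicit argument.
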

\begin{proof}
	That this is a Mackey ideal follows from the Frobenius relation: the transfers from a family form an ideal by the double coset formula. 
	
	It remains to show that this is closed under norm maps parameterized by \(\co^N_\gen\): if \(x\in \underline{E\cf}_N(G/K)\), then any norm parameterized by \(\co^N_\gen\) applied to \(x\) is again in \(\underline{E\cf}_N\). For this, we first recall that an element \(x\in\uA(G/K)\) is actually in \(\underline{E\cf}_N(G/K)\) if and only if there is a finite \(G\)-set \(T\) with two properties:
	\begin{enumerate}[(1)]
	    \item the \(N\)-fixed points of \(T\) are empty;
	    \item there is a map \(h\colon T\to G/K\) such that \(x\) is in the image of the transfer along \(h\).
	\end{enumerate}
	
	If \(K\in\cf_N\), then the only norms from \(G/H\) parameterized by \(\co^N_\gen\) are those along maps \(G/K\to G/H\) with \(H\) also in \(\cf_N\). The values of \(\underline{E\cf}_N\) and \(\uA\) agree at these groups, so there is nothing to check.
	
	Now if \(H\) contains \(N\), then admissibility says that \(K\) must as well by \cref{prop:ONInclusion}. We are therefore reduced to understanding the composite
	\[
	N_g\circ T_h,
	\]
	where \(g\colon G/K\to G/H\) is arbitrary and \(h\colon T\to G/K\) is the map above describing some element \(x\in\underline{E\cf}_N(G/K)\). We have an exponential diagram
	\[
	\begin{tikzcd}
		{G/K}
	    	\arrow{d}[left]{g}
	    	&
		{T}
			\arrow{l}[above]{h}
			&
		{(h')^\ast G/K}
	    	\arrow{l}[above]{f'}	
			\arrow{d}{g'}
	    	\\
		{G/K}
		    &
	    	&
		{G\times_H \Set^K(H,T'),}
			\arrow{ll}{h'}
	\end{tikzcd}
	\]
    where \(T'\) is the \(K\)-set \(h^{-1}(eK)\). Closure under the norm associated to \(g\) is then equivalent to 
    \[
	    \big(G\times_H\Set^K(H,T')\big)^N=\emptyset.
    \]
    Since \(H\) and \(K\) both contain \(N\), it suffices to check 
    \[
    	\big(\Set^K(H,T')\big)^N=\emptyset.
    \]
    Both $N$-fixed points and coinduction are right adjoints, and since the corresponding left adjoints commute, we can swap these: 
	\[
		\big(\Set^K(H,T)\big)^N\cong \Set^{K/N}\big(H/N,T^N\big).
	\]
	By assumption $T$, and hence \(T'\), has no $N$ fixed points. 
\end{proof}

Applying \cite[Proposition 5.11]{BH18}, we obtain:

\begin{corollary}
	If \(\co\) is any indexing category contained in \(\co^N_\gen\), then for any \(\co\)-Tambara functor \(\uR\), \(\underline{\widetilde{E}\cf}_N\boxtimes\uR\) is an \(\co\)-Tambara functor and the natural map
	\[
		\uR\to \underline{\widetilde{E}\cf}_N\boxtimes\uR
	\]
	is a map of \(\co\)-Tambara functors.
\end{corollary}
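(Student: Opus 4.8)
The plan is to deduce the statement from the preceding theorem, which identifies $\underline{E\cf}_N$ as an $\co^N_\gen$-ideal of $\uA$, together with \cite[Proposition 5.11]{BH18}, which passes from ideals to quotient Tambara functors. The only genuinely new input is the observation that an $\co^N_\gen$-ideal is automatically an $\co$-ideal as soon as $\co$ is contained in $\co^N_\gen$; everything else is formal.

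First I would record this downgrading step explicitly. Being an $\co$-ideal of $\uA$ requires two things: that $\underline{E\cf}_N$ is a sub-$\uA$-module of $\uA$, and that it is closed under the norm $\nm_f$ for every surjection $f$ lying in $\co$. The first condition does not refer to the indexing category and was already established when we showed $\underline{E\cf}_N$ is a Mackey ideal. For the second, every surjection in $\co$ is in particular a surjection in $\co^N_\gen$, since $\co\subseteq\co^N_\gen$ by hypothesis; hence the norm-closure proved in the preceding theorem for all surjections in $\co^N_\gen$ specializes to norm-closure along surjections in $\co$. Thus $\underline{E\cf}_N$ is an $\co$-ideal of $\uA$.

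Next I would apply \cite[Proposition 5.11]{BH18} to the $\co$-Tambara functor $\uA$ and the $\co$-ideal $\underline{E\cf}_N$. This yields that the quotient $\underline{\widetilde{E}\cf}_N = \uA/\underline{E\cf}_N$ carries the structure of an $\co$-Tambara functor, with the quotient map $\uA \to \underline{\widetilde{E}\cf}_N$ a map of $\co$-Tambara functors. Then, for an arbitrary $\co$-Tambara functor $\uR$, I would invoke the fact that the box product is the coproduct in $\co$-Tambara functors: since $\underline{\widetilde{E}\cf}_N$ is an $\co$-Tambara functor, so is $\underline{\widetilde{E}\cf}_N \boxtimes \uR$, and the canonical inclusion $\uR \to \underline{\widetilde{E}\cf}_N \boxtimes \uR$ is one of the coproduct structure maps, hence a map of $\co$-Tambara functors.

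The main (and fairly mild) obstacle is conceptual rather than computational: confirming that the box product $\underline{\widetilde{E}\cf}_N \boxtimes \uR$ really is the nullification one intends, so that no Tambara structure is lost in forming the quotient. Since $\uA$ is the initial $\co$-Tambara functor, $\underline{\widetilde{E}\cf}_N \boxtimes \uR$ may be viewed as the base change of $\uR$ along $\uA \to \underline{\widetilde{E}\cf}_N$, i.e. the quotient of $\uR$ by the $\co$-ideal generated by the image of $\underline{E\cf}_N$; once this identification is in hand, all the assertions of the corollary follow formally from the downgrading step and \cite[Proposition 5.11]{BH18}.
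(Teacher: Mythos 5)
Your proposal is correct and matches the paper's approach: the paper derives this corollary directly from the preceding theorem (that $\underline{E\cf}_N$ is an $\co^N_\gen$-ideal) by citing \cite[Proposition 5.11]{BH18}, exactly as you do. Your explicit downgrading step --- that an $\co^N_\gen$-ideal is automatically an $\co$-ideal whenever $\co\subseteq\co^N_\gen$, since the norm-closure condition is only required along fewer maps --- is a detail the paper leaves implicit, and you supply it correctly.
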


\subsection{Changing Groups}

Let $N$ be a normal subgroup of $G$ and $q \colon G \to Q = G/N$ the quotient homomorphism. The functor \(q^\ast\) on orbit categories \eqref{eq:qstaronorbitcategories} induces a fully-faithful functor on their coproduct completions
\[
	q^\ast\colon\Set^Q\to\Set^G.
\]
The essential image of \(q^\ast\) is the full subcategory of \(G\)-sets \(T\) for which \(T=T^N\).

The functor \(q^\ast\) actually gives us a map the other way on indexing subcategories. 

\begin{proposition}
	Given an indexing subcategory \(\co\) of $\Set^G$, the intersection with \(\Set^Q\) gives an indexing subcategory of \(\Set^Q\).
\end{proposition}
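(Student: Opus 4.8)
The plan is to identify the intersection with $\Set^Q$ explicitly and then transport the three defining properties of an indexing category across the inflation functor $q^\ast\colon \Set^Q \to \Set^G$. Write $\co_Q$ for the wide subgraph of $\Set^Q$ whose morphisms are exactly those $f$ with $q^\ast f \in \co$; since $q^\ast$ is fully faithful with essential image the full subcategory of $N$-fixed $G$-sets, this $\co_Q$ is precisely the intersection of $\co$ with $\Set^Q$ under the identification of $\Set^Q$ with that image. The engine of the whole argument is that $q^\ast$ admits adjoints on both sides — the $N$-orbit functor $(-)/N$ on the left and the $N$-fixed point functor $(-)^N$ on the right, so that $(-)/N \dashv q^\ast \dashv (-)^N$ — and therefore preserves both finite coproducts and pullbacks. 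I would establish these two adjunctions first, since everything else reduces to formal consequences of them together with the full faithfulness of $q^\ast$.

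Because $q^\ast$ preserves composition and identities, $\co_Q$ is automatically a subcategory, as $q^\ast(g\circ f)=q^\ast g\circ q^\ast f$ and $q^\ast\id=\id$; and it is wide because $\co$ is wide and hence contains every object $q^\ast X$. To organize the rest I would mirror the proof of \cref{thm:ONIndexingSystem} and invoke \cite[Lemma 3.2]{BH18}, which reduces the verification for a pullback-stable wide subcategory to checking that it contains the maps $\emptyset\to\ast$ and $\ast\amalg\ast\to\ast$. These two maps in $\Set^Q$ are carried by $q^\ast$ to the corresponding maps in $\Set^G$ (as $q^\ast$ preserves the initial object, the terminal object, and coproducts), and those lie in $\co$ since $\co$ is wide and finite coproduct complete; hence both maps lie in $\co_Q$.

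The main step, and the only one requiring genuine care, is pullback stability. Given a pullback square in $\Set^Q$ with one leg $g$ lying in $\co_Q$, I would apply $q^\ast$ to obtain a square in $\Set^G$; because $q^\ast$ preserves pullbacks (being the right adjoint in $(-)/N \dashv q^\ast$), this is again a pullback square, and its distinguished leg is $q^\ast g \in \co$. Pullback stability of $\co$ then places the opposite leg of the $q^\ast$-image in $\co$, and full faithfulness of $q^\ast$ returns the opposite leg of the original square to $\co_Q$. This is exactly the pullback-stability condition for $\co_Q$, so \cite[Lemma 3.2]{BH18} concludes that $\co_Q$ is an indexing subcategory of $\Set^Q$. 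The point I would want to nail down carefully is the preservation of pullbacks by $q^\ast$: once the adjunction $(-)/N \dashv q^\ast$ is in hand this is automatic, but it is worth confirming that the $N$-orbit functor really is left adjoint to inflation — equivalently, that the essential image of $q^\ast$ is closed under pullbacks in $\Set^G$, which holds because a pullback of $N$-fixed $G$-sets is again $N$-fixed.
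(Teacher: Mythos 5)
Your proof is correct and takes essentially the same route as the paper's: both transport the defining properties across the fully faithful embedding \(q^\ast\), with pullback stability coming from the observation that pullbacks can be computed through the embedding (equivalently, that the essential image of \(q^\ast\) is closed under pullbacks, so \(q^\ast\) preserves them). Your extra scaffolding --- the adjunctions \((-)/N \dashv q^\ast \dashv (-)^N\) and the reduction via \cite[Lemma 3.2]{BH18} --- simply makes explicit what the paper's very terse argument leaves implicit.
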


\begin{proof}
	Since \(\Set^Q\) is fully-faithfully embedded in \(\Set^G\) via \(q^\ast\) and since \(\co\) is wide and finite-coproduct complete, so is the intersection with \(\Set^Q\). Pullback stability follows from noting that we can compute pullback via the fully-faithful embedding, where this is immediate.
\end{proof}

\begin{definition}
	If \(\co\) is an indexing category for \(G\), then let \(q_\ast\co\) be the corresponding indexing category for \(Q\).
\end{definition}

We can extend this to maps on Burnside categories and categories of polynomials.  

\begin{proposition}\label{prop:EmbeddingOnPoly}
	The natural inclusion \(\Set^Q\to\Set^G\) extends to a faithful, but not full, product-preserving embedding
	\[
		q^\ast\colon\ca^Q\to\ca^G.
	\]
	If \(\co\) is an indexing category for \(Q\) and if \(\co'\) is any indexing category for \(G\) such that \(\co\subseteq q_\ast\co'\), then we have a faithful product preserving embedding
	\[
		q^\ast\colon\cp^Q_{\co}\to \cp^G_{\co'}.
	\]
	In both cases, we take a diagram in \(\Set^Q\) to itself, viewed as a diagram in \(\Set^G\) via \(q^\ast\).
\end{proposition}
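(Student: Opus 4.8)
The plan is to build $q^\ast$ on morphisms by applying the inflation functor $q^\ast\colon\Set^Q\to\Set^G$ to every object appearing in a span (resp.\ polynomial), and then to verify that this assignment is functorial, product-preserving, and faithful. The whole argument rests on one structural input, which I would isolate first: the functor $q^\ast\colon\Set^Q\to\Set^G$ sits in a chain of adjunctions
\[ (-)/N \dashv q^\ast \dashv (-)^N, \]
so $q^\ast$ preserves all finite limits (in particular pullbacks) and all finite colimits (in particular disjoint unions). To this I would add the one extra preservation property needed for Tambara composition: $q^\ast$ carries dependent products to dependent products, i.e.\ for $Q$-maps $h\colon S\to T$ and $g\colon A\to S$ one has $q^\ast(\Pi_h A)\cong\Pi_{q^\ast h}(q^\ast A)$ compatibly with the structure maps. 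The cleanest route is the explicit fibrewise description of $\Pi_h A$ as sections over fibres of $h$: since $N$ acts trivially on $S$, $T$, and $A$, it acts trivially on the set of such sections, so $\Pi_{q^\ast h}(q^\ast A)$ lies in the essential image $\{T=T^N\}$ and its underlying $Q$-set is exactly $\Pi_h A$. Equivalently, the essential image of $q^\ast$ is closed under dependent product and the operation computed there agrees with the one in $\Set^Q$.

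For the Burnside category I would define $q^\ast$ on a span $X\leftarrow A\to Y$ to be $q^\ast X\leftarrow q^\ast A\to q^\ast Y$, extended $\z$-linearly after group completion. This is well defined on isomorphism classes because $q^\ast$ is a functor, and it preserves composition because composition of spans is computed by pullback, which $q^\ast$ preserves; preservation of disjoint unions makes it additive, hence product-preserving. Faithfulness follows from full faithfulness of $q^\ast$ on $\Set^Q$: an isomorphism of spans is a compatible isomorphism of apex objects, and since $q^\ast$ reflects isomorphisms between objects of its image, distinct isomorphism classes of $Q$-spans map to distinct isomorphism classes of $G$-spans; thus $q^\ast$ sends the basis of iso-classes injectively into basis elements of $\ca^G(q^\ast X,q^\ast Y)$, so the induced map on group completions is injective. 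Non-fullness is witnessed (for $N\neq e$) by any span $q^\ast X\leftarrow G/e\to q^\ast Y$: its apex $G/e$ is not $N$-fixed, so it cannot be isomorphic to a span in the image.

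For the polynomial category the same objectwise recipe applies to a polynomial $X\leftarrow A\xrightarrow{g}B\to Y$, and here the hypothesis $\co\subseteq q_\ast\co'$ enters precisely to guarantee that the exponent lands where it should: $q_\ast\co'=\co'\cap\Set^Q$ consists of those $Q$-maps $f$ with $q^\ast f\in\co'$, so $\co\subseteq q_\ast\co'$ says exactly that $q^\ast g\in\co'$ whenever $g\in\co$. I would prove functoriality using the generators and relations of \cref{thm:StructureOfPGO}: $q^\ast$ commutes with the three functors $R$, $N$, $T$, it sends pullback squares to pullback squares (so the two base-change relations are preserved), and by the dependent-product preservation above it sends exponential diagrams to exponential diagrams (so the relation $N_h\circ T_g=T_{h'}\circ N_{g'}\circ R_{f'}$ is preserved); since every composite is reduced to standard form $T\circ N\circ R$ using exactly these moves, $q^\ast$ preserves composition. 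The composite computed in $\cp^Q_\co$ has exponent in $\co$, so its image has exponent in $q^\ast\co\subseteq\co'$, confirming the target is $\cp^G_{\co'}$. Product-preservation is again coproduct-preservation, and faithfulness follows as before: by \cref{cor:PolynomialsAsPolynomials} the hom-groups are free on isomorphism classes of polynomials, and full faithfulness of $q^\ast$ reflects isomorphisms of diagrams, so iso-classes map injectively to iso-classes.

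The step I expect to be the main obstacle is the preservation of exponential diagrams, equivalently that $q^\ast$ commutes with the dependent product. Pullbacks and coproducts are formal consequences of the adjunctions, but the dependent product is the one operation built into Tambara composition that is not a plain (co)limit, and it is the only place where one must argue that the inflation of a norm is again a norm. Once that compatibility is in hand, functoriality on polynomials is a mechanical check against the relations of \cref{thm:StructureOfPGO}, and everything else (faithfulness, product-preservation, non-fullness) is bookkeeping built on the full faithfulness already recorded for $q^\ast\colon\Set^Q\to\Set^G$.
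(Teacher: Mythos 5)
Your proposal is correct and follows essentially the same route as the paper: the Burnside-category embedding comes from $q^\ast$ preserving pullbacks, and the polynomial-category embedding needs the additional fact that $q^\ast$ preserves dependent products, which both you and the paper settle by computing $N$-fixed points (your observation that $N$ acts trivially on the set of sections is exactly that computation). You simply supply the bookkeeping the paper leaves implicit — functoriality checked against the generators and relations of \cref{thm:StructureOfPGO}, faithfulness from \cref{cor:PolynomialsAsPolynomials} and full faithfulness of $q^\ast$ on $\Set^Q$, and the non-fullness witness with apex $G/e$.
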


\begin{proof}
	We begin by noting that \(q^\ast\) preserves pullback diagrams. This gives the first embedding immediately.

	For the second, we need slightly more: we need that \(q^\ast\) preserves not only pullback diagrams but also dependent products. This follows from computing the \(N\)-fixed points.
\end{proof}

\begin{remark}
	The failure of fullness arises from the image of \(q^\ast\) not being a sieve. For example, if \(Q=e\), then this is the usual embedding of \(\Set\) into \(\Set^G\). For any non-trivial \(G\), 
	\[
		A(G)=\ca^G(*,*)\neq \ca^e(*,*)=\z.
	\]
\end{remark}

Since the functor \(q^\ast\) is product preserving, precomposition with it gives a functor on Mackey functors and appropriate incomplete Tambara functors.

\begin{definition}
	If \(\co\) is an indexing category for \(G\), then let
	\[
		q_\ast\colon \co\mhyphen\Tamb_G\to q_\ast\co\mhyphen\Tamb_Q
	\]
	be the functor given by precomposition with \(q^\ast\), and similarly for Mackey functors.
\end{definition}

\begin{remark}
	Unpacking the functor \(q_\ast\), we see that it is really formalizing two procedures:
	\begin{enumerate}[(a)]
		\item forget \(\uR(G/H)\) for any \(H\) that does not contain \(N\); 
		\item forget any norms or transfers up from groups which do not contain \(N\).
	\end{enumerate}
	The heart of \cref{prop:EmbeddingOnPoly} is that this actually gives an incomplete Tambara functor on \(Q\).
\end{remark}

\begin{remark}
	Our functor \(q^\ast\) is the \(1\)-categorical shadow of an identically defined map of \(\infty\)-categories from Barwick's Burnside \(\infty\)-category for \(Q\) to that for \(G\) \cite{Bar17}. Precomposition with this \(\infty\)-categorical \(q^\ast\) gives a model for the \(N\)-fixed points as a functor \(\Sp^G\to\Sp^Q\).
\end{remark}

This is the final piece of our geometric fixed points.

\begin{definition}[cf. {\cite[Section 5.2]{BGHL19}}]\label{Def:GFP}
	Let \(\co\) be an indexing category for \(G\) such that \(\co\subseteq \co^N_\gen\). The \emph{Tambara geometric fixed points functor} 
	\[
		\tilde{\Phi}^N\colon\co\mhyphen\Tamb_G\to q_\ast\co\mhyphen\Tamb_Q
	\]
	is the composite
	\[
		q_\ast\circ \big(\underline{\widetilde{E}\cf}_N\boxtimes(\mhyphen)\big).
	\]

	The \emph{Mackey geometric fixed points functor} 
	\[
		\Phi^N\colon \Mack_G\to\Mack_Q
	\]
	is the composite
	\[
		q_\ast\circ \big(\underline{\widetilde{E}\cf}_N\boxtimes(\mhyphen)\big).
	\]
\end{definition}

\begin{remark}\label{rem:qDoesNothing}
	Note that for any Mackey functor \(\uM\), the Mackey functor \(\underline{\widetilde{E}\cf}_N\boxtimes\uM\) vanishes when evaluated on \(G/H\) with \(N\not\subset H\). In particular, we see that on the essential image of the localization functor given by boxing with \(\underline{\widetilde{E}\cf}_N\), \(q_\ast\) throws away no real information. 
\end{remark}

Note that the embedding
\[
	\cp^G_{\Iso} \hookrightarrow \cp^G_{\co}
\]
is compatible with the map \(q^\ast\): for any \(G\) indexing category \(\co\), we have a commutative diagram
\[
\begin{tikzcd}
	{\cp^Q_{\Iso}}
		\arrow{r}
		\arrow{d}[left]{q^\ast}
		&
	{\cp^Q_{q_\ast\co}}
		\arrow{d}{q^\ast}
		\\
	{\cp^G_{\Iso}}
		\arrow{r}
		&
	{\cp^G_{\co}.}	
\end{tikzcd}
\]
Precomposition with the inclusion \(\cp^G_{\Iso} \hookrightarrow \cp^G_{\co}\) gives the underlying Mackey functor of a Tambara functor (\cref{Example:SetGIso}). 

\begin{proposition}
	The underlying Mackey functor of the Tambara geometric fixed points is the Mackey geometric fixed points of the underlying Mackey functor, i.e.
	\[
	U\circ \tilde{\Phi}^N\cong \Phi^N\circ U.
	\]
\end{proposition}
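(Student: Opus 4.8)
The plan is to decompose both composites into their two constituent operations---boxing with $\underline{\widetilde{E}\cf}_N$ and then applying $q_\ast$---and to show that the underlying-Mackey-functor functor $U$ intertwines each operation separately. Writing $U_G$ and $U_Q$ for the forgetful functors to Mackey functors for $G$ and $Q$, unwinding \cref{Def:GFP} shows that the desired isomorphism is
\[
U_Q \circ q_\ast \circ \big(\underline{\widetilde{E}\cf}_N \boxtimes (-)\big) \;\cong\; q_\ast \circ \big(\underline{\widetilde{E}\cf}_N \boxtimes (-)\big) \circ U_G,
\]
where the box product on the left is taken in $\co$-Tambara functors and the one on the right in Mackey functors. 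It therefore suffices to establish the two commutations $U_Q \circ q_\ast \cong q_\ast \circ U_G$ and $U_G(\underline{\widetilde{E}\cf}_N \boxtimes \uR) \cong \underline{\widetilde{E}\cf}_N \boxtimes U_G(\uR)$ and then splice them.

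First I would treat the $q_\ast$ step. Both $q_\ast$ and $U$ are defined by precomposition: $q_\ast$ by precomposition with $q^\ast$ and $U$ by precomposition with the inclusion of the wide subcategory of isomorphisms. The commutation $U_Q \circ q_\ast \cong q_\ast \circ U_G$ is then immediate from the commutative square displayed just before the statement, which records exactly the compatibility of $q^\ast$ with the inclusions $\cp^Q_{\Iso} \hookrightarrow \cp^Q_{q_\ast\co}$ and $\cp^G_{\Iso} \hookrightarrow \cp^G_{\co}$. Since both routes around that square amount to evaluating a Tambara functor on the image of $\cp^Q_{\Iso}$ under $q^\ast$, the two precompositions agree on the nose.

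Second, and this is the crux, I would address the box product step: the underlying Mackey functor of a box product of $\co$-Tambara functors is the box product of their underlying Mackey functors. This is the statement that $U$ is symmetric monoidal, and it is built into the construction of the Tambara box product as the coproduct (\cite{Strickland12,BH18})---the Tambara structure, in particular the mixed norms, is extra data placed on top of the Mackey box product of the underlying functors. Applying this to $\underline{\widetilde{E}\cf}_N \boxtimes \uR$, and using that the underlying Mackey functor of $\underline{\widetilde{E}\cf}_N$ (regarded as a quotient $\co^N_\gen$-Tambara functor of $\uA$) is precisely the Mackey functor $\underline{\widetilde{E}\cf}_N$ from which it was constructed, yields $U_G(\underline{\widetilde{E}\cf}_N \boxtimes \uR) \cong \underline{\widetilde{E}\cf}_N \boxtimes U_G(\uR)$. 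The main obstacle is verifying (or precisely citing) this monoidality of $U$; everything else is formal. Splicing the two isomorphisms---first commuting $U$ past the box product, then past $q_\ast$---gives $U \circ \tilde{\Phi}^N \cong \Phi^N \circ U$, as desired.
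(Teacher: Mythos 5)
Your proposal is correct and follows exactly the route the paper intends: the paper states this proposition without proof, treating it as immediate from the commutative square of precompositions displayed just above it (which handles your $q_\ast$ step) together with the fact that the Tambara structure on $\underline{\widetilde{E}\cf}_N\boxtimes\uR$ is by construction placed on top of the underlying Mackey box product (which handles your box product step). Your writeup simply makes explicit what the paper leaves implicit.
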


\subsection{Geometric fixed points of frees and flats}

Let \(T\) be a finite \(G\)-set. We now compute the geometric fixed points of the free Mackey or \(\co\)-Tambara functor on \(T\).  The key feature here is that since \(N\) is a normal subgroup, \(T^N\) is actually a \(G\)-equivariant summand of \(T\), and hence we have a natural inclusion of \(G\)-sets \(T^N\to T\).

\begin{theorem}\label{Thm:GFPFree}
	The map
	\[
		\uA^Q\{x_{T^N}\} \to \Phi^N\big(\uA \{x_T\}\big)
	\]
	corresponding to the element
	\[
		[T\leftarrow T^N\to T^N]\in \big(\underline{\widetilde{E}\cf}_N\boxtimes\uA\{x_T\}\big)(T^N)
	\]
	is an isomorphism.
\end{theorem}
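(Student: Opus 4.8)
The plan is to prove \emph{corepresentability}: I will show that $\Phi^N(\uA\{x_T\})$ corepresents evaluation at $T^N$ in $\Mack_Q$ with universal element $\sigma := [T \leftarrow T^N \to T^N]$, and then conclude by Yoneda together with the universal property of $\uA^Q\{x_{T^N}\}$ (that it corepresents evaluation at $T^N$). First I would reduce to the case of a transitive $T = G/K$: both $T \mapsto \uA^Q\{x_{T^N}\}$ and $T \mapsto \Phi^N(\uA\{x_T\})$ carry disjoint unions to direct sums (free functors are additive and $\Phi^N$ is additive, being a composite of a box product with a precomposition), and $\sigma$ decomposes compatibly along $T = \bigsqcup_i T_i$, $T^N = \bigsqcup_i T_i^N$. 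I would also record at the outset, using \cref{rem:qDoesNothing}, that $\Phi^N(\uA\{x_T\})$ is supported on orbits $Q/(H/N)$ with $N \subseteq H$, where its value is exactly $(\underline{\widetilde{E}\cf}_N \boxtimes \uA\{x_T\})(G/H)$; this is what makes the distinguished element $\sigma$ meaningful and the comparison map the one it induces.

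For transitive $T = G/K$, write $\uA\{x_{G/K}\} \cong \Ind_K^G \uA^K$ (immediate from the universal property and $\Ind_K^G \dashv i_K^*$). The projection formula for Mackey functors then gives
\[ \underline{\widetilde{E}\cf}_N \boxtimes \uA\{x_{G/K}\} \cong \Ind_K^G\big(i_K^* \underline{\widetilde{E}\cf}_N\big), \]
so everything reduces to computing $i_K^* \underline{\widetilde{E}\cf}_N$. Here I would use the explicit direct-summand description of $\underline{E\cf}_N$: for $L \leq K$ the group $(i_K^*\underline{E\cf}_N)(K/L) = \underline{E\cf}_N(G/L)$ is spanned by the classes $[L/M]$ with $N \not\subseteq M$. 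If $N \not\subseteq K$, then no subgroup of $K$ contains $N$, so $i_K^* \underline{E\cf}_N = \uA^K$ and $i_K^* \underline{\widetilde{E}\cf}_N = 0$; since also $T^N = (G/K)^N = \emptyset$ and $\uA^Q\{x_{\emptyset}\} = 0$, both sides vanish. If $N \subseteq K$, then $N$ is normal in $K$ and $i_K^* \underline{\widetilde{E}\cf}_N$ is exactly the idempotent $\underline{\widetilde{E}\cf}_N^K$ built from the family of subgroups of $K$ not containing $N$.

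In the remaining case $N \subseteq K$, I would finish by combining compatibility of geometric fixed points with induction, $\Phi^N \circ \Ind_K^G \cong \Ind_{K/N}^Q \circ \Phi^N_K$, with the base computation $\Phi^N_K(\uA^K) \cong \uA^{K/N}$ (geometric fixed points of the Burnside functor is again Burnside; this is the $T=K/K$ instance and can serve as the root of an induction on $|G|$), yielding
\[ \Phi^N(\uA\{x_{G/K}\}) \cong \Ind_{K/N}^Q \Phi^N_K(\uA^K) \cong \Ind_{K/N}^Q \uA^{K/N} \cong \uA^Q\{x_{Q/(K/N)}\} = \uA^Q\{x_{(G/K)^N}\}. \]
Alternatively one can bypass induction by running the conceptual argument directly: boxing with $\underline{\widetilde{E}\cf}_N$ is the reflection of $\Mack_G$ onto the subcategory $\mathcal{L}$ of Mackey functors supported on subgroups containing $N$, and $q_*$ restricts to an equivalence $\mathcal{L} \simeq \Mack_Q$ (the upgrade of \cref{rem:qDoesNothing}); chasing these adjunctions gives $\Mack_Q(\Phi^N(\uA\{x_T\}), \uM) \cong \uM(T^N)$ naturally, using that any $\uY \in \mathcal{L}$ satisfies $\uY(T) = \uY(T^N)$.

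I expect the main obstacle to be the bookkeeping that identifies this abstract comparison with the \emph{specific} map of the statement, i.e.\ verifying that under the chain of adjunction/Yoneda isomorphisms (or under the change-of-groups isomorphism) the universal element is exactly $\sigma = [T \leftarrow T^N \to T^N]$ and not some twist of it. The fact, noted just before the theorem, that $T^N$ is a genuine $G$-equivariant summand of $T$ — so that $T^N \to T$ is an honest map of $G$-sets — is what makes $\sigma$ well defined and the comparison natural, and tracking this splitting through the projection formula and the induction isomorphism is the only genuinely delicate point. The supporting facts (the projection formula $\uM \boxtimes \Ind_K^G \uN \cong \Ind_K^G(i_K^*\uM \boxtimes \uN)$, the compatibility $\Phi^N \circ \Ind \cong \Ind \circ \Phi^N$, and the equivalence $\mathcal{L} \simeq \Mack_Q$) are standard but are not proved in the excerpt, so a careful write-up must either cite or establish each.
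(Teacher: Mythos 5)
Your proposal is correct in substance but takes a genuinely different route from the paper. The paper's proof is a hands-on basis computation: it identifies a $\z$-basis of $\big(\underline{\widetilde{E}\cf}_N\boxtimes\uA\{x_T\}\big)(G/H)$ for $N\subseteq H$ as the spans $T\leftarrow G/K\to G/H$ with $N\subseteq K$, observes that each such $G/K\to T$ factors through $T^N\hookrightarrow T$, and checks that the map of the statement carries a basis to a basis. You instead argue by corepresentability. Your second (``conceptual'') route is the cleaner of your two and is essentially a two-step adjunction chase: since $\underline{\widetilde{E}\cf}_N\boxtimes(-)$ is the reflection onto the subcategory of Mackey functors vanishing on $\cf_N$ and $q_\ast$ is an equivalence there (the content of \cref{rem:qDoesNothing} and \cref{prop:InfSplitsPhi}), one gets $\Mack_Q\big(\Phi^N\uA\{x_T\},\uM\big)\cong\Mack_G\big(\uA\{x_T\},\Inf_Q^G\uM\big)\cong(\Inf_Q^G\uM)(T)=\uM(T^N)$, using $\Inf_Q^G\uM\cong\uM\circ\FixN$. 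This buys generality and brevity at the cost of having to establish the recollement facts and then to chase the universal element; the paper's approach buys an explicit identification of the map as basis-to-basis, which is what it actually needs downstream (e.g.\ in the incomplete Tambara version, where the same basis analysis is reused).

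Two soft spots you should repair in a careful write-up. First, your induction-on-$|G|$ route does not actually close: the reduction $\uA\{x_{G/K}\}\cong\Ind_K^G\uA^K$ plus the projection formula reduces the transitive case to $\Phi^N_K(\uA^K)\cong\uA^{K/N}$, which for $K=G$ is exactly the $T=G/G$ instance of the theorem for the group $G$ itself, so it is never handed off to a smaller group. You must verify $\Phi^N(\uA)\cong\uA^Q$ directly; this is immediate from the paper's summand description $\underline{\widetilde{E}\cf}_N(G/H)=\z\cdot\{[H/K]\mid N\subseteq K\}$, but it has to be said rather than cited. Second, the identification of the universal element with $\sigma=[T\leftarrow T^N\to T^N]$, which you correctly flag as the delicate point, does go through: under the natural isomorphism $\uY(T)\cong\uY(T^N)$ for $\uY$ in the reflective subcategory (restriction along the summand inclusion $T^N\hookrightarrow T$), the image of $\id_T=[T\leftarrow T\to T]$ is precisely $[T\leftarrow T\times_T T^N\to T^N]=\sigma$. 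With those two points written out, your argument is a complete and valid alternative proof.
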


\begin{proof}
	We analyze more directly the quotient \(\underline{\widetilde{E}\cf}_N\boxtimes\uA\{x_T\}\). A generic element of \(\uA\{x_T\}(G/H)\) is a span of the form
	\[
		T\leftarrow S\to G/H.
	\]
	If \(N\not\subset H\), then this span was automatically set to zero by the quotient, so it suffices to study \(N\subset H\). Breaking \(S\) into orbits \(\coprod G/K_i\), we can write the element as the sum of elements
	\[
		T\leftarrow G/K_i\to G/H.
	\]
	If \(N\not\subset K_i\), then this element is in the image of the transfer from \(\uA\{x_T\}(G/K_i)\) with \(K_i\in\cf_N\), and hence is set equal to zero in the quotient. Thus the image under the quotient map is 
	\[
		T\leftarrow S^N\to G/H.
	\]
	In particular, we see that a basis for the free abelian group \(\underline{\widetilde{E}\cf}_N\boxtimes\uA\{x_T\}(G/H)\) is given by isomorphism classes of spans
	\[
		T\leftarrow G/K\to G/H
	\]
	with \(N\subset K\). Since \((G/K)^N=G/K\), the map \(G/K\to T\) actually factors through \(T^N\hookrightarrow T\).

	By naturality, the map $\uA^Q\{x_{T^N}\} \to \Phi^N \left( \uA\{x_T\} \right) $, when evaluated at \(G/H\) with \(N\subset H\), takes a diagram
	\[
		T^N\xleftarrow{f} S\xrightarrow{g} G/H=T_g\circ R_f\big(T^N\leftarrow T^N\to T^N)
	\]
	with \(S=S^N\) to the diagram
	\[
		T_g\circ R_f(T\leftarrow T^N\to T^N)=T\xleftarrow{f} S\xrightarrow{g} G/H.
	\]
	In particular, this takes a basis to a basis, and hence is an isomorphism.
\end{proof}

\begin{corollary}\label{Cor:GeometricFixedPtsPreserveProjective}
    The \(N\)-geometric fixed points functor preserves projective Mackey functors.
\end{corollary}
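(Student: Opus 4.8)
The plan is to bootstrap from \cref{Thm:GFPFree}, which identifies the Mackey geometric fixed points of a free Mackey functor on a single generator, $\Phi^N(\uA\{x_T\}) \cong \uA^Q\{x_{T^N}\}$, as again a free Mackey functor. The only extra ingredient needed is that $\Phi^N$ is additive, i.e.\ preserves arbitrary direct sums, since projectives are defined as summands of (possibly infinitely generated) frees.

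First I would record that $\Phi^N = q_\ast\circ\big(\underline{\widetilde{E}\cf}_N\boxtimes(-)\big)$ preserves arbitrary coproducts. The box product $\underline{\widetilde{E}\cf}_N\boxtimes(-)\colon\Mack_G\to\Mack_G$ is a left adjoint (with right adjoint the internal hom), hence preserves all colimits and in particular direct sums. The functor $q_\ast$ is precomposition with the product-preserving embedding $q^\ast$, so it preserves direct sums, which in $\Mack_Q$ are computed levelwise in $\Ab$. Composing, $\Phi^N$ preserves direct sums. Consequently, for any free Mackey functor $\ul{F}\cong\bigoplus_{i\in I}\uA\{x_{T_i}\}$, additivity together with \cref{Thm:GFPFree} gives
\[
	\Phi^N(\ul{F})\cong\bigoplus_{i\in I}\Phi^N\big(\uA\{x_{T_i}\}\big)\cong\bigoplus_{i\in I}\uA^Q\{x_{T_i^N}\},
\]
so $\Phi^N$ carries free $G$-Mackey functors to free $Q$-Mackey functors.

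To finish, let $\uP$ be a projective Mackey functor, so there is a Mackey functor $\ul{Q}$ with $\ul{F}:=\uP\oplus\ul{Q}$ free. Applying $\Phi^N$ and using additivity yields $\Phi^N(\ul{F})\cong\Phi^N(\uP)\oplus\Phi^N(\ul{Q})$, which exhibits $\Phi^N(\uP)$ as a direct summand of the free $Q$-Mackey functor $\Phi^N(\ul{F})$. Hence $\Phi^N(\uP)$ is projective, as desired.

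This argument is essentially formal, so there is no serious obstacle; the one point deserving care is the preservation of \emph{infinite} direct sums, which is what allows the conclusion to cover infinitely generated projectives. I would lean on the adjoint characterization of the box product to secure this cleanly, rather than attempting a direct computation at each level $G/H$.
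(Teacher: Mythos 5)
Your proof is correct and follows essentially the same route as the paper: reduce to \cref{Thm:GFPFree} via the characterization of projectives as summands (equivalently, retracts) of frees. The paper's one-line argument simply invokes that any functor preserves retract diagrams, whereas you justify additivity of $\Phi^N$ explicitly; your extra care about infinite direct sums is a detail the paper's proof implicitly relies on (to know that $\Phi^N$ of an \emph{infinitely generated} free is free) but does not spell out.
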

\begin{proof}
    Recall that any projective is a retract of a free. Since any functor preserves retract diagrams, and since geometric fixed points of free Mackey functors are free, the geometric fixed points of a projective Mackey functor are projective.
\end{proof}

The argument for incomplete Tambara functors is almost identical.

\begin{theorem}\label{Thm:GFPFreeIncomplete}
	Let \(\co\) be an indexing category for \(G\) for which \(\co\subseteq \co^N_\gen\). The map
	\[
		\uA^{q_\ast\co}[x_{T^N}]\to \tilde{\Phi}^N\uA^{\co}[x_T]
	\]
	corresponding to the element
	\[
		[T\leftarrow T^N\to T^N\to T^N]\in\big(\underline{\widetilde{E}\cf}_N\boxtimes\uA^{q_\ast\co}[x_{T}]\big)(T^N),
	\]
	is an isomorphism.
\end{theorem}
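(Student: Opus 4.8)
The plan is to mirror the proof of \cref{Thm:GFPFree}, replacing spans by polynomials and using the additive-basis description of \cref{cor:PolynomialsAsPolynomials}. Since $\tilde{\Phi}^N = q_\ast \circ (\underline{\widetilde{E}\cf}_N \boxtimes -)$ and, by \cref{rem:qDoesNothing}, the functor $q_\ast$ discards no information on the image of the nullification while $\underline{\widetilde{E}\cf}_N \boxtimes \uA^\co[x_T]$ vanishes on $G/H$ with $N \not\subset H$, it suffices to analyze the abelian group $(\underline{\widetilde{E}\cf}_N \boxtimes \uA^\co[x_T])(G/H)$ for subgroups $H$ with $N \subset H$, and to check that the comparison map is a bijection of additive bases there.

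First I would record the additive basis before nullification. By the additive decomposition underlying \cref{cor:PolynomialsAsPolynomials} (splitting the middle object of a polynomial into orbits and pulling $S$ back accordingly), the group $\uA^\co[x_T](G/H) = \cp^G_\co(T, G/H)^+$ is free abelian on isomorphism classes of polynomials
\[
T \xleftarrow{f} S \xrightarrow{g} G/J \xrightarrow{h} G/H, \qquad g \in \co,
\]
with orbit middle object $G/J$. As noted after \cref{cor:PolynomialsAsPolynomials}, such a basis element is $\tr_h$ applied to the class of the polynomial $[T \leftarrow S \xrightarrow{g} G/J \xrightarrow{=} G/J]$ living at level $G/J$.

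Next I would identify which basis elements survive the nullification. Exactly as in \cref{Thm:GFPFree}, a basis element with $N \not\subset J$ is a transfer up from $G/J$ with $J \in \cf_N$, hence lies in $\underline{E\cf}_N \boxtimes \uA^\co[x_T]$ and dies in the quotient; so only the classes with $N \subset J$ remain. For these survivors comes the key new input: because $N \subset J$ we have $(G/J)^N = G/J$, and since $g \in \co \subseteq \co^N_\gen$, \cref{prop:ONInclusion}(a) forces $S^N = g^{-1}((G/J)^N) = S$, i.e. $S = S^N$. Equivariance of $f$ then makes it factor through the inclusion $T^N \hookrightarrow T$. Thus a basis for the quotient at $G/H$ is given by polynomials $T^N \xleftarrow{f} S \xrightarrow{g} G/J \xrightarrow{h} G/H$ with $S = S^N$ and $N \subset J$, which are precisely the images under the faithful embedding $q^\ast$ of \cref{prop:EmbeddingOnPoly} of the $Q$-polynomials forming the basis of $\uA^{q_\ast\co}[x_{T^N}](G/H)$.

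Finally, as in \cref{Thm:GFPFree}, naturality of the comparison map shows that at $G/H$ with $N \subset H$ it sends the $Q$-polynomial $T^N \xleftarrow{f} S \xrightarrow{g} G/J \xrightarrow{h} G/H$, namely $T_h \circ N_g \circ R_f$ applied to the generator $[T \leftarrow T^N \to T^N \to T^N]$, to the $G$-polynomial $T \xleftarrow{f} S \xrightarrow{g} G/J \xrightarrow{h} G/H$ obtained by composing $f$ with $T^N \hookrightarrow T$. This is a bijection between the two bases, so the map is an isomorphism. I expect the main obstacle to be the third step: unlike spans, polynomials carry the extra norm datum $g$, and the only thing forcing the apex $S$ to consist of $N$-fixed points is the hypothesis $\co \subseteq \co^N_\gen$. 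Verifying that this hypothesis is exactly what collapses $S$ to $S^N$ (and thereby routes $f$ through $T^N$) is the crux, and it is where \cref{prop:ONInclusion} must be invoked with care.
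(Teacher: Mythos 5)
Your proposal is correct and follows essentially the same route as the paper: both arguments kill the basis polynomials whose middle orbit $G/J$ fails to contain $N$, then use the hypothesis $\co\subseteq\co^N_\gen$ (you via \cref{prop:ONInclusion}(a) applied to $g\colon S\to G/J$, the paper via the equivalent orbit criterion in part (b)) to force $S=S^N$ so that the surviving polynomials lie in the image of $q^\ast$, and conclude by matching bases as in \cref{Thm:GFPFree}.
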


\begin{proof}
	The proof proceeds almost identically to the Mackey functor case. Using the same reductions as before, we find that a basis for the quotient \(\underline{\widetilde{E}\cf}_N\boxtimes \uA^{\co}[x_T]\) at \(G/H\) is given by isomorphism classes of polynomials of the form
	\[
		T\leftarrow A\xrightarrow{g} G/K\to G/H,
	\]
	with again \(N\subset K, H\) and now \(g\in\co\). Here our assumption on \(\co\) enters: we have no maps \(G/J\to G/K\) in \(\co^N_\gen\) if  \(J\) does not contain \(N\). In particular, we deduce that \(A=A^N\), and hence it is in the image of \(q^\ast\). The rest of the proof follows identically.
\end{proof}

More generally, the geometric fixed points functor preserves flat objects. We begin with an observation linking this to another well-studied functor: inflation.

\begin{definition}[{\cite[Section 5]{TW90}}]
    Given a $Q$-Mackey functor $\uM$, we define a $G$-Mackey functor $\Inf_Q^G \uM$ called the \emph{inflation of $\uM$ from $Q$ to $G$} by
\[
	\Inf_Q^G \uM(G/H) = \begin{cases}
		0 \quad & \text{ if } H\in\mathcal F_N, \\
		\uM(H/N) \quad & \text{ if } H\not\in\mathcal F_N.
	\end{cases}
\]
\end{definition}

Th{\'e}vanaz--Webb show also that this functor has both adjoints. Unpacking our definition of geometric fixed points shows that it agrees with the left adjoint of inflation.

\begin{proposition}
    The \(N\)-geometric fixed points on Mackey functors is left adjoint to the inflation functor \(\Inf_Q^G\).
\end{proposition}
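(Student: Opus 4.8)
The plan is to exhibit the natural isomorphism
\[ \Mack_Q(\Phi^N\uM, \uL) \cong \Mack_G(\uM, \Inf_Q^G\uL) \]
by exploiting the factorization $\Phi^N = q_* \circ L$ of \cref{Def:GFP}, where $L := \underline{\widetilde{E}\cf}_N\boxtimes(-)$. The point is that $L$ is a (smashing) localization, and that $\Inf_Q^G$ takes values in the local subcategory, where $q_*$ loses no information by \cref{rem:qDoesNothing}. First I would record three ingredients: (i) since $\underline{\widetilde{E}\cf}_N$ is the idempotent quotient of $\uA$ by the ideal $\underline{E\cf}_N$, the functor $L$ is the reflection onto the full subcategory $\cc_{\mathrm{loc}} \subseteq \Mack_G$ of $\underline{\widetilde{E}\cf}_N$-local Mackey functors, hence is left adjoint to the inclusion $\iota\colon\cc_{\mathrm{loc}}\hookrightarrow\Mack_G$; (ii) every inflation $\Inf_Q^G\uL$ lies in $\cc_{\mathrm{loc}}$, as it vanishes on $\cf_N$; and (iii) $q_*\circ\Inf_Q^G\cong\id_{\Mack_Q}$, directly from the formulas, because $(\Inf_Q^G\uL)(G/H)=\uL(H/N)$ for $N\subset H$ is exactly the value of $\uL$ on $q^*(Q/(H/N))$.

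Granting these, I would assemble the isomorphism as the composite
\begin{align*}
 \Mack_G(\uM, \Inf_Q^G\uL)
 &\cong \cc_{\mathrm{loc}}(L\uM, \Inf_Q^G\uL) \\
 &\cong \Mack_Q\big(q_*L\uM,\, q_*\Inf_Q^G\uL\big) \\
 &\cong \Mack_Q(\Phi^N\uM,\uL),
\end{align*}
where the first isomorphism is the reflection adjunction $L\dashv\iota$ applied to the local object $\Inf_Q^G\uL$, the second is full faithfulness of $q_*$ on $\cc_{\mathrm{loc}}$, and the third substitutes $\Phi^N\uM = q_*L\uM$ together with $q_*\Inf_Q^G\uL\cong\uL$; naturality in both variables is inherited from each step. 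Equivalently one could package the same data as a unit--counit pair: the counit $\Phi^N\Inf_Q^G\uL\to\uL$ is an isomorphism (it is $q_*$ applied to $L\Inf_Q^G\uL\cong\Inf_Q^G\uL$, followed by (iii)), the unit $\uM\to\Inf_Q^G\Phi^N\uM$ is the localization map $\uM\to L\uM$ read off on the cofamily and zero on $\cf_N$, and the triangle identities reduce to those of the localization.

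The main obstacle is making \cref{rem:qDoesNothing} precise: proving that $q_*$ is fully faithful on $\cc_{\mathrm{loc}}$ and that $\cc_{\mathrm{loc}}$ is exactly the subcategory of Mackey functors supported on subgroups containing $N$. Concretely I must show that a $G$-Mackey functor vanishing on $\cf_N$ is automatically $\underline{\widetilde{E}\cf}_N$-local, equivalently that $\underline{\widetilde{E}\cf}_N\boxtimes\uM\cong\uM$ for such $\uM$, and that a map between such local functors is determined by, and reconstructible from, its restriction along $q^*$ to the orbits $G/H$ with $N\subset H$. Both hinge on controlling the box product $\underline{\widetilde{E}\cf}_N\boxtimes(-)$ on the cofamily: the only orbits surviving the localization are the $G/K$ with $N\subset K$, on which $q^*$ is an isomorphism onto $Q$-orbits, so that the spans computing the box product match those for $Q$-Mackey functors. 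Once this bookkeeping is in place the adjunction follows formally as above, and the identical argument (replacing $\boxtimes$ by the Tambara box product and $\Mack$ by $\co\mhyphen\Tamb$) upgrades to the Tambara-functor statement.
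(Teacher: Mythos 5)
Your argument is correct. The paper itself offers no formal proof of this proposition --- it only remarks that the adjunction follows from ``unpacking the definition'' of $\tilde\Phi^N$ together with Th\'evenaz--Webb's construction of the adjoints to inflation --- so there is nothing to match step-by-step; your proof is a legitimate and complete way of doing that unpacking. The decomposition into (i) the smashing localization $L=\underline{\widetilde{E}\cf}_N\boxtimes(-)$ being a reflection onto the Mackey functors vanishing on $\cf_N$, (ii) $\Inf_Q^G$ landing in that local subcategory, and (iii) $q_*\circ\Inf_Q^G\cong\id$ is exactly the content the paper distributes across \cref{rem:qDoesNothing} and \cref{prop:InfSplitsPhi}, and you correctly isolate the one point requiring real verification: full faithfulness of $q_*$ on local objects. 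Your resolution of it is right --- for $N\subset H,K$ the hom-group $\ca^G(G/H,G/K)$ splits as the image of $q^*$ plus the span classes whose middle term has an orbit with stabilizer in $\cf_N$, and the latter act as zero on any local Mackey functor because they factor through a vanishing level. The only alternative route worth noting is the one the paper's subsequent discussion suggests in hindsight: once one knows $\Inf_Q^G\cong(-)\circ\FixN$ is precomposition with an additive functor $\ca^G\to\ca^Q$, its left adjoint exists for formal reasons as $\mathrm{Lan}_{\FixN}$, and one then identifies that Kan extension with $q_*\circ L$; your approach instead proves the adjunction directly from the definition of $\tilde\Phi^N$, which is arguably more self-contained since the paper derives the Kan-extension description \emph{from} this proposition rather than the reverse.
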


In fact, we can do better. Inflation actually gives a section of the \(N\)-geometric fixed points (which reflects an underlying recollement for the family \(\mathcal F_N\)).

\begin{proposition}\label{prop:InfSplitsPhi}
    The canonical natural transformation
    \[
    \Inf_Q^G(\mhyphen)\cong \uA\boxtimes \Inf_Q^G(\mhyphen)\Rightarrow \ul{\tilde{E}\mathcal F}_N\boxtimes \Inf_Q^G(\mhyphen)
    \]
    is an isomorphism, and the composite \(\Phi^N\circ \Inf_Q^G\) is naturally isomorphic to the identity.
\end{proposition}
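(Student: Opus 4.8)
The plan is to establish the two assertions in order, deriving the second from the first. The transformation in question is the one induced by the quotient map $\uA \twoheadrightarrow \underline{\widetilde{E}\cf}_N$, using that $\uA$ is the unit for $\boxtimes$ so that $\uA \boxtimes \Inf_Q^G(\uM) \cong \Inf_Q^G(\uM)$ canonically. First I would apply the right-exact functor $(\mhyphen) \boxtimes \Inf_Q^G(\uM)$ to the short exact sequence $0 \to \underline{E\cf}_N \to \uA \to \underline{\widetilde{E}\cf}_N \to 0$, obtaining an exact sequence
\[
\underline{E\cf}_N \boxtimes \Inf_Q^G(\uM) \to \Inf_Q^G(\uM) \to \underline{\widetilde{E}\cf}_N \boxtimes \Inf_Q^G(\uM) \to 0.
\]
This reduces the first claim to the vanishing $\underline{E\cf}_N \boxtimes \Inf_Q^G(\uM) = 0$: granting it, exactness at the middle term forces the map $\Inf_Q^G(\uM) \to \underline{\widetilde{E}\cf}_N \boxtimes \Inf_Q^G(\uM)$ to be injective (its kernel is the image of the zero map out of $\underline{E\cf}_N \boxtimes \Inf_Q^G(\uM)$), while right-exactness gives surjectivity, so it is an isomorphism.

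To prove the vanishing, the key elementary observation is that $\Inf_Q^G(\uM)$ restricts to zero on every subgroup in $\cf_N$: for $K \in \cf_N$ and any $L \le K$ one has $N \not\subset L$ (else $N \subset L \subset K$ contradicts $K \in \cf_N$), so $\Inf_Q^G(\uM)(G/L) = 0$ by the definition of inflation, and hence $i_K^\ast \Inf_Q^G(\uM)$ is the zero $K$-Mackey functor. Next I would use that $\underline{E\cf}_N$ is generated by its values at the levels $G/K$ with $K \in \cf_N$ — indeed, as noted after the computation of $\underline{E\cf}_N$, every generator is a transfer up from such a level — so there is a surjection $\bigoplus_i \uA\{x_{G/K_i}\} \twoheadrightarrow \underline{E\cf}_N$ with all $K_i \in \cf_N$. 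Boxing with $\Inf_Q^G(\uM)$ and invoking right-exactness, it then suffices to see each $\uA\{x_{G/K}\} \boxtimes \Inf_Q^G(\uM)$ vanishes. This follows from the projection formula for Mackey functors, $\uA\{x_{G/K}\} \boxtimes \uN \cong \Ind_K^G i_K^\ast \uN$ (obtained from $\uA\{x_{G/K}\} \cong \Ind_K^G \uA$ for the $K$-Burnside functor together with $\Ind_K^G(\mhyphen) \boxtimes \uN \cong \Ind_K^G(\mhyphen \boxtimes i_K^\ast \uN)$), combined with the vanishing $i_K^\ast \Inf_Q^G(\uM) = 0$ just established.

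For the second assertion, the first part yields a natural isomorphism $\underline{\widetilde{E}\cf}_N \boxtimes \Inf_Q^G(\uM) \cong \Inf_Q^G(\uM)$, whence $\Phi^N \Inf_Q^G(\uM) = q_\ast\big(\underline{\widetilde{E}\cf}_N \boxtimes \Inf_Q^G(\uM)\big) \cong q_\ast \Inf_Q^G(\uM)$. It then remains to identify $q_\ast \circ \Inf_Q^G$ with the identity, which is bookkeeping from the definitions: $q^\ast$ carries the $Q$-orbit $Q/\overline{H}$ (with $\overline{H} = H/N$ and $N \subset H$) to the $G$-orbit $G/H$, so $(q_\ast \Inf_Q^G \uM)(Q/\overline{H}) = \Inf_Q^G \uM(G/H) = \uM(H/N) = \uM(Q/\overline{H})$, and the restrictions, transfers, and conjugations of $\Inf_Q^G \uM$ along maps in the image of $q^\ast$ are exactly those of $\uM$ by the construction of inflation. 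Checking that these identifications are natural in $\uM$ and assemble into a natural isomorphism $q_\ast \Inf_Q^G \cong \id$ is routine.

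I expect the main obstacle to be the vanishing $\underline{E\cf}_N \boxtimes \Inf_Q^G(\uM) = 0$ — more precisely, getting a usable grip on the box product against the ``family'' Mackey functor $\underline{E\cf}_N$. The projection formula makes this tractable, reducing everything to the elementary fact that inflated Mackey functors restrict to zero on subgroups lying in $\cf_N$; once that input is secured, both the isomorphism of transformations and the retraction $\Phi^N \circ \Inf_Q^G \cong \id$ follow formally, the latter amounting to the observation that $q_\ast$ discards no information on Mackey functors supported away from $\cf_N$ (cf. \cref{rem:qDoesNothing}).
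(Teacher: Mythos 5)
Your proof is correct and follows essentially the same route as the paper: both identify $\underline{\widetilde{E}\cf}_N \boxtimes \Inf_Q^G(\uM)$ via the cofiber sequence $\underline{E\cf}_N \to \uA \to \underline{\widetilde{E}\cf}_N$ and right-exactness of $\boxtimes$, reducing everything to the fact that $\Inf_Q^G(\uM)$ vanishes on the family $\cf_N$. The paper states the identification of the quotient more tersely, whereas you justify the vanishing of $\underline{E\cf}_N \boxtimes \Inf_Q^G(\uM)$ explicitly via the projection formula $\uA\{x_{G/K}\}\boxtimes\uN \cong \Ind_K^G i_K^\ast\uN$ — a correct and slightly more careful rendering of the same argument.
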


\begin{proof}
    For any \(G\)-Mackey functor \(\uM\), the canonical quotient map \(\uA\to \ul{\tilde{E}\mathcal F}_N\) allows us to identify 
    \[
    \ul{\tilde{E}\mathcal F}_N\boxtimes \uM
    \]
    with the quotient of \(\uM\) by the sub-Mackey functor generated by \(\uM(G/H)\) for all \(H\in\mathcal F_N\). By definition, if \(H\in\mathcal F_N\), then \(\Inf_Q^G\uM(G/H)=0\), so we are forming the quotient by the zero Mackey functor.
\end{proof}

Identifying the geometric fixed points as the left-adjoint to inflation gives another reformulation of it. 

\begin{definition}
    Let \(\FixN\colon\Set^G\to\Set^Q\) denote the \(N\)-fixed point functor.
\end{definition}

\begin{proposition}\label{Prop:FixNExtends}
    The functor \(\FixN\) extends to a product-preserving functor
    \[
        \mathcal A^G\to\mathcal A^Q
    \]
    which commutes with Cartesian products.
\end{proposition}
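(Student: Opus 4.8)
The plan is to define the extension levelwise by applying $(-)^N$ to spans, and then to verify that this assignment is functorial and respects both monoidal structures, with the only substantive input being that $N$-fixed points preserve pullbacks.

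First I would define the functor on morphisms. Every element of $\ca^G(X,Y)$ is a $\z$-linear combination of spans $[X \xleftarrow{a} S \xrightarrow{b} Y]$, and since $G$-equivariant maps carry $N$-fixed points to $N$-fixed points, I set
\[
	\FixN\big([X \xleftarrow{a} S \xrightarrow{b} Y]\big) := [X^N \xleftarrow{a^N} S^N \xrightarrow{b^N} Y^N]
\]
and extend $\z$-linearly. On the span $[X \xleftarrow{\id} X \xrightarrow{f} Y]$ representing a $G$-map $f$, this returns $[X^N \xleftarrow{\id} X^N \xrightarrow{f^N} Y^N]$, so the functor genuinely extends $\FixN \colon \Set^G \to \Set^Q$. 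Because $(-)^N$ sends isomorphic spans to isomorphic spans and commutes with disjoint unions, $(S \sqcup S')^N \cong S^N \sqcup S'^N$, the assignment is well defined on isomorphism classes and is additive, hence descends to the group completions of the hom-monoids.

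Next I would check functoriality. Identity spans are visibly preserved. Composition of a span $[X \leftarrow S \to Y]$ with $[Y \leftarrow T \to Z]$ is computed by the pullback, yielding $[X \leftarrow S \times_Y T \to Z]$, so the one nontrivial point is that $(-)^N$ preserves pullbacks. This is exactly the limit-preservation observation already used in the proof of \cref{thm:ONIndexingSystem}: fixed points are a limit, so $(S \times_Y T)^N \cong S^N \times_{Y^N} T^N$ naturally. Thus $\FixN$ preserves composition of spans, and by bilinearity of span composition, of all morphisms.

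Finally, the two product claims follow from the same circle of ideas. The canonical identification $(X \sqcup Y)^N \cong X^N \sqcup Y^N$ shows that $\FixN$ preserves finite biproducts, which is the product-preservation in the additive categories $\ca^G$ and $\ca^Q$. For the Cartesian product, the natural isomorphism $(X \times Y)^N \cong X^N \times Y^N$ (a point is $N$-fixed precisely when both coordinates are) is $Q$-equivariant and, by the same pullback-preservation argument, compatible with the span-level description of $\times$; this gives compatibility with Cartesian products. The main obstacle is therefore concentrated in a single place — functoriality on composites — and it reduces entirely to the fact that $N$-fixed points, being a limit, commute with pullbacks; everything else is routine bookkeeping.
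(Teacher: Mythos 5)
Your proposal is correct and follows essentially the same route as the paper: the paper's proof likewise observes that composition in $\ca^G$ is given by pullback, that $\FixN$ commutes with pullbacks because fixed points are a limit, that the Cartesian-product compatibility is the identical argument, and that the categorical (bi)product is handled by fixed points commuting with disjoint unions. Your write-up merely spells out the same steps in more detail.
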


\begin{proof}
    The composition in \(\mathcal A^G\) is given by pullback, and since \(\FixN\) is a limit, it commutes with pullbacks. The compatibility with the Cartesian product is identical. For the categorical product, we observe that the fixed points of a disjoint union are the disjoint union of the fixed points.
\end{proof}

The following is immediate from the definition of inflation.
\begin{proposition}
    The functor \(\Inf_Q^G\) is naturally isomorphic to the precomposition with \(\FixN\):
    \[
        \Inf_Q^G(\uM)\cong \uM\circ \FixN.
    \]
\end{proposition}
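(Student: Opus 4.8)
The plan is to realize precomposition with $\FixN$ as a functor $\Mack_Q \to \Mack_G$ and to check directly that it agrees with $\Inf_Q^G$ level by level, first on objects and then on the generating restriction and transfer morphisms. By \cref{Prop:FixNExtends}, the functor $\FixN$ extends to a product-preserving additive functor $\ca^G \to \ca^Q$. Since a $Q$-Mackey functor is an additive functor $\ca^Q \to \Ab$, precomposing with $\FixN$ produces an additive functor $\ca^G \to \Ab$, that is, a $G$-Mackey functor $\uM \circ \FixN$; this assignment is manifestly natural in $\uM$. It therefore remains only to identify $\uM \circ \FixN$ with $\Inf_Q^G \uM$.

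First I would compute $\FixN$ on orbits. Because $N$ is normal, for $N \subseteq H$ the subgroup $N$ acts trivially on $G/H$, so the $G$-action descends to $Q = G/N$ and identifies $(G/H)^N = G/H$ with the transitive $Q$-set $Q/(H/N)$; for $N \not\subseteq H$, \cref{prop:ONInclusion} gives $(G/H)^N = \emptyset$. Evaluating $\uM$ and using that it is product-preserving, so that $\uM(\emptyset) = 0$, yields
\[
(\uM \circ \FixN)(G/H) \cong
\begin{cases}
\uM(H/N) & N \subseteq H, \\
0 & N \not\subseteq H,
\end{cases}
\]
which matches $\Inf_Q^G \uM(G/H)$ on the nose.

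Next I would compare the structure maps. By additivity it suffices to evaluate $\FixN$ on orbit spans $G/H \leftarrow G/L \to G/K$, equivalently on the restriction and transfer generators. Taking $N$-fixed points of such a span produces $(G/H)^N \leftarrow (G/L)^N \to (G/K)^N$. When $N$ is contained in all three of $H, L, K$, the object-level computation identifies this with the corresponding span $Q/(H/N) \leftarrow Q/(L/N) \to Q/(K/N)$ in $\ca^Q$; in particular, for $f \colon G/K \to G/H$ with $N \subseteq K, H$, the functor $\FixN$ carries the restriction and transfer spans of $f$ to those of the induced map $Q/(K/N) \to Q/(H/N)$, which are exactly the maps used to define $\Inf_Q^G$ in \cite{TW90}. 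When any of the three subgroups fails to contain $N$, the corresponding fixed-point set is empty and the span collapses to the zero morphism, matching the vanishing of $\Inf_Q^G \uM$ on $\cf_N$. Assembling these levelwise identifications, which are compatible with composition since $\FixN$ is a functor, gives the desired natural isomorphism. The computations are all routine and I expect no serious obstacle; the only point needing care is the morphism-level bookkeeping, namely that a span whose middle term has orbits both inside and outside $\cf_N$ is sent by $\FixN$ to the span obtained by discarding the orbits not containing $N$, which is precisely how the inflated transfers and restrictions behave.
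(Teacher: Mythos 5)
Your proposal is correct and is exactly the verification the paper treats as immediate from the definitions: the paper offers no written proof, and the intended argument is precisely your levelwise computation $(G/H)^N \cong Q/(H/N)$ for $N \subseteq H$ and $(G/H)^N = \emptyset$ otherwise, followed by the routine check on spans. The only point worth keeping explicit is the one you already flag, namely that orbits of the middle term of a span not containing $N$ are sent to $\emptyset$ and hence contribute zero, matching the vanishing of the inflated transfers and restrictions.
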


\begin{corollary}
    The \(N\)-geometric fixed points are given by the left Kan extension along \(\FixN\).
\end{corollary}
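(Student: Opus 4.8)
The plan is to deduce the statement formally from the two preceding propositions together with the defining universal property of the left Kan extension, with essentially no computation required. Recall that $\Mack_G$ and $\Mack_Q$ are the categories of additive functors $\ca^G \to \Ab$ and $\ca^Q \to \Ab$ respectively, and that $\FixN \colon \ca^G \to \ca^Q$ is additive (indeed product-preserving) by \cref{Prop:FixNExtends}. Hence precomposition with $\FixN$ defines a functor $\FixN^\ast \colon \Mack_Q \to \Mack_G$ between these additive functor categories.

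First I would identify inflation with restriction along $\FixN$. The proposition immediately preceding this corollary gives a natural isomorphism $\Inf_Q^G(\uM) \cong \uM \circ \FixN = \FixN^\ast(\uM)$, so $\Inf_Q^G \cong \FixN^\ast$ as functors $\Mack_Q \to \Mack_G$. Next I would invoke the universal property of the ($\Ab$-enriched) left Kan extension: since $\ca^G$ is essentially small and $\Ab$ is cocomplete, the additive left Kan extension $\operatorname{Lan}_{\FixN} \colon \Mack_G \to \Mack_Q$ exists and is, by its defining universal property, left adjoint to precomposition $\FixN^\ast$. Combining this with the identification above shows that $\operatorname{Lan}_{\FixN}$ is left adjoint to $\Inf_Q^G$.

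Finally, the proposition identifying $\Phi^N$ as the left adjoint of $\Inf_Q^G$ exhibits $\Phi^N$ as a second left adjoint to the same functor. Since left adjoints are unique up to canonical natural isomorphism, I would conclude $\Phi^N \cong \operatorname{Lan}_{\FixN}$, which is exactly the assertion.

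Because the argument is purely formal, there is no genuine obstacle; the only point deserving care is that the relevant Kan extension is the $\Ab$-enriched (additive) one, taken in the category of additive functors on the Burnside category, rather than the ordinary pointwise Kan extension over $\Set$. The step I would check most carefully is therefore that $\FixN^\ast$ really lands in additive functors and that the enriched left Kan extension is the correct left adjoint to match against $\Phi^N$, so that the uniqueness-of-adjoints comparison takes place in the right functor category.
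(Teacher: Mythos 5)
Your argument is correct and is exactly the (implicit) justification the paper intends: identify $\Inf_Q^G$ with precomposition along $\FixN$, note that the additive left Kan extension along $\FixN$ is left adjoint to this precomposition, and conclude by uniqueness of left adjoints using the proposition that $\Phi^N$ is left adjoint to inflation. Your added care about working with the $\Ab$-enriched Kan extension in the additive functor categories on the Burnside categories is the right point to flag and does not change the argument.
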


\begin{corollary}\label{cor:GFPStrongSymMonoidal}
    The \(N\)-geometric fixed points functor is strong symmetric monoidal.
\end{corollary}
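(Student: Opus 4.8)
The plan is to exhibit $\Phi^N$ as a left Kan extension of a strong symmetric monoidal functor and then invoke the functoriality of Day convolution. Recall that the box product on $\Mack_G$ is precisely the Day convolution induced by the Cartesian monoidal structure $(\cp^G_{\Iso}, \times)$ on the indexing category together with $(\Ab, \otimes_\z)$ on the target, and likewise for $\Mack_Q$ with $(\cp^Q_{\Iso}, \times)$. By the corollary identifying the $N$-geometric fixed points with left Kan extension along $\FixN$, the functor $\Phi^N$ is left Kan extension along the functor $\FixN \colon \cp^G_{\Iso} \to \cp^Q_{\Iso}$ obtained by restricting the functor of \cref{Prop:FixNExtends} to isomorphisms.

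First I would upgrade \cref{Prop:FixNExtends} to the statement that $\FixN$ is \emph{strong symmetric monoidal} for the Cartesian structures: it commutes with Cartesian products, the natural isomorphism $(S \times T)^N \cong S^N \times T^N$ is compatible with the associativity and swap constraints, and $(G/G)^N = Q/Q$ identifies the unit. Granting this, the standard result that left Kan extension along a strong symmetric monoidal functor is canonically strong symmetric monoidal for the Day convolution structures yields the claim directly, coherences included.

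To make the comparison map explicit, and as a check independent of the abstract machinery, I would verify it on free Mackey functors, which generate $\Mack_G$ under colimits. On one hand, $\uA\{x_S\} \boxtimes \uA\{x_T\} \cong \uA\{x_{S \times T}\}$ since the box product of representables is the representable at the product, so \cref{Thm:GFPFree} gives $\Phi^N\big(\uA\{x_{S\times T}\}\big) \cong \uA^Q\{x_{(S\times T)^N}\}$. On the other hand, applying \cref{Thm:GFPFree} to each factor gives $\Phi^N\big(\uA\{x_S\}\big) \boxtimes \Phi^N\big(\uA\{x_T\}\big) \cong \uA^Q\{x_{S^N}\} \boxtimes \uA^Q\{x_{T^N}\} \cong \uA^Q\{x_{S^N \times T^N}\}$, and these agree precisely because $(S \times T)^N = S^N \times T^N$. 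Since $\Phi^N$ is a left adjoint (it is left adjoint to $\Inf_Q^G$, equivalently a left Kan extension) it preserves colimits, and $\boxtimes$ preserves colimits in each variable, so the isomorphism on generators extends uniquely to a natural isomorphism on all of $\Mack_G$. The unit is preserved because $\Phi^N(\uA) = \Phi^N\big(\uA\{x_{G/G}\}\big) \cong \uA^Q\{x_{Q/Q}\} = \uA^Q$.

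The main obstacle is coherence: one must check that the comparison isomorphism respects the associativity, unit, and symmetry constraints rather than merely matching underlying objects. This is exactly what the Day convolution formalism supplies for free once $\FixN$ is known to be symmetric monoidal and not just product-preserving on objects, so the crux is the routine but essential verification that $(S \times T)^N \cong S^N \times T^N$ is natural and symmetric. The Tambara version $\tilde{\Phi}^N$ follows by the identical argument, using that its box product is likewise computed by Day convolution (and is the coproduct on Tambara functors) together with the analogous computation of \cref{Thm:GFPFreeIncomplete}.
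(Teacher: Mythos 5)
Your proposal is correct and follows essentially the same route as the paper: the paper's proof also observes that both $\boxtimes$ (as Day convolution along $\times$) and $\Phi^N$ (as left Kan extension along $\FixN$) are left Kan extensions, and reduces the claim to the fact that $\FixN$ is strong symmetric monoidal for the Cartesian product, i.e.\ that the square relating $\times$ and $\FixN$ on Burnside categories commutes. Your additional verification on free Mackey functors is a reasonable sanity check but is not needed once the Day convolution formalism is invoked.
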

\begin{proof}
    Since both the box product and the \(N\)-geometric fixed points are given by left Kan extensions, it suffices to show that the underlying diagram 
    \[
    \begin{tikzcd}
        {\ca^G\times\ca^G}
            \arrow{r}{\times}
            \arrow{d}[left]{\FixN}
            &
        {\ca^G}
            \arrow{d}{\FixN}
            \\
        {\ca^Q\times\ca^Q}
            \arrow{r}[below]{\times}
            &
        {\ca^Q,}
    \end{tikzcd}
    \]
    expressing the ways we can take the iterated Kan extensions, commutes. This is the fact that \(\FixN\) is strong symmetric monoidal for the Cartesian product.
\end{proof}

\begin{theorem}\label{thm:GeomFPPreservesFlats}
    The \(N\)-geometric fixed points preserves flat Mackey functors.
\end{theorem}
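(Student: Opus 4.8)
The plan is to show that for a flat $G$-Mackey functor $\uM$, the functor $-\boxtimes\Phi^N\uM$ on $\Mack_Q$ is exact, which is exactly the assertion that $\Phi^N\uM$ is flat. Rather than trying to argue that $\Phi^N$ itself preserves exactness (it does not in general; see below), I would produce a natural isomorphism identifying $-\boxtimes\Phi^N\uM$ with a composite of three functors each of which is manifestly exact. The two inputs that make this possible are already available: $\Phi^N$ is strong symmetric monoidal (\cref{cor:GFPStrongSymMonoidal}), and inflation splits the geometric fixed points, $\Phi^N\circ\Inf_Q^G\cong\id$ (\cref{prop:InfSplitsPhi}).

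The key computation is the following. For any $Q$-Mackey functor $\ul{X}$, rewrite $\ul{X}\cong\Phi^N\Inf_Q^G\ul{X}$ and apply strong monoidality to get
\[
	\ul{X}\boxtimes\Phi^N\uM\cong\Phi^N\big(\Inf_Q^G\ul{X}\big)\boxtimes\Phi^N\uM\cong\Phi^N\big(\Inf_Q^G\ul{X}\boxtimes\uM\big).
\]
I would then unfold $\Phi^N=q_\ast\circ\big(\ul{\tilde E\cf}_N\boxtimes(-)\big)$ and invoke the isomorphism $\ul{\tilde E\cf}_N\boxtimes\Inf_Q^G\ul{X}\cong\Inf_Q^G\ul{X}$ from \cref{prop:InfSplitsPhi}. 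Combined with associativity and commutativity of the box product, this shows that $\Inf_Q^G\ul{X}\boxtimes\uM$ is already $\ul{\tilde E\cf}_N$-local, so the localization factor acts as the identity on it:
\[
	\ul{\tilde E\cf}_N\boxtimes\big(\Inf_Q^G\ul{X}\boxtimes\uM\big)\cong\big(\ul{\tilde E\cf}_N\boxtimes\Inf_Q^G\ul{X}\big)\boxtimes\uM\cong\Inf_Q^G\ul{X}\boxtimes\uM.
\]
Carrying this through and checking naturality in $\ul{X}$ yields a natural isomorphism $\ul{X}\boxtimes\Phi^N\uM\cong q_\ast\big(\Inf_Q^G\ul{X}\boxtimes\uM\big)$.

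To finish, I would observe that the right-hand side is the composite
\[
	\Mack_Q\xrightarrow{\ \Inf_Q^G\ }\Mack_G\xrightarrow{\ -\boxtimes\uM\ }\Mack_G\xrightarrow{\ q_\ast\ }\Mack_Q,
\]
and that each factor is exact: exactness of Mackey functors is tested levelwise, and both $\Inf_Q^G$ (which inserts zeros at $H\in\cf_N$ and reindexes the remaining levels) and $q_\ast$ (which selects the levels $G/K$ with $N\subseteq K$) are computed levelwise, while $-\boxtimes\uM$ is exact precisely because $\uM$ is flat. A composite of exact functors is exact, so $-\boxtimes\Phi^N\uM$ is exact and $\Phi^N\uM$ is flat.

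The main obstacle to be careful about is that one cannot shortcut this by claiming $\Phi^N$ preserves exact sequences: its localization factor $\ul{\tilde E\cf}_N\boxtimes(-)$ is only right exact, since $\ul{\tilde E\cf}_N$ itself need not be flat (for $G=C_p$ with $N=G$ it is concentrated at the top level, and the non-flatness phenomena of \cref{Lem:ZDualNotFlat} are exactly of this flavor). The whole point of the argument is therefore the locality observation: on objects of the form $\Inf_Q^G\ul{X}\boxtimes\uM$ the non-exact localization factor collapses to the identity, letting us replace $\Phi^N$ by the exact functor $q_\ast$. The remaining work is the routine but essential bookkeeping of verifying that the displayed isomorphisms are natural in $\ul{X}$, so that they assemble into an isomorphism of functors rather than merely an objectwise identification.
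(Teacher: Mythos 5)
Your proof is correct and follows essentially the same route as the paper's: both use $\Phi^N\circ\Inf_Q^G\cong\id$ together with strong symmetric monoidality to rewrite $\ul{X}\boxtimes\Phi^N\uM$ as $q_\ast\big(\Inf_Q^G\ul{X}\boxtimes\uM\big)$, then conclude by exactness of inflation, $-\boxtimes\uM$, and $q_\ast$. Your added remark explaining why the localization factor $\ul{\tilde E\cf}_N\boxtimes(-)$ cannot simply be declared exact is a useful clarification of why this detour is necessary, but the underlying argument is the same.
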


\begin{proof}
    We must show that if \(\uM\) is a flat Mackey functor, then the box product with \(\Phi^N\uM\) is an exact functor on \(Q\)-Mackey functors. We show this by rewriting it several ways. Let \(\uN\) be a \(Q\)-Mackey functor. Then using \cref{prop:InfSplitsPhi} and \cref{cor:GFPStrongSymMonoidal}, we have a natural (in \(\uN\)) isomorphism
    \[
    \uN\boxtimes\Phi^N\uM\cong \Phi^N\big(\Inf_Q^G\uN\boxtimes \uM\big).
    \]
    The definition of geometric fixed points allows us to further rewrite this, now using the first clause of \cref{prop:InfSplitsPhi}:
    \[
    \Phi^N\big(\Inf_Q^G\uN\boxtimes \uM\big)\cong q_\ast \big(\Inf_Q^G\uN\boxtimes\uM\big).
    \]
    Now, the inflation functor and \(q_\ast\) are both exact, since both are given by precomposition with an additive functor and exactness is checked objectwise. We deduce that if \(\uM\) is flat, then the functor
    \[
        \uN\mapsto q_\ast\big(\Inf_Q^G\uN\boxtimes\uM\big)\cong \uN\boxtimes\Phi^N(\uM)
    \]
    is exact.
\end{proof}

%%%%%%%%%%% SECTION: NECESSARY CONDITIONS %%%%%%%%%%%%%%
\section{Necessary Conditions for Freeness}\label{Sec:Converse}
\subsection{The restriction functor from $G$-Mackey functors to $H$-Mackey functors}

To give necessary conditions for flatness, we must first establish some properties of the restriction functor. Recall that $i_H^* \colon \Mack_G \to \Mack_H$ for $H \leq G$ is defined by precomposition with the induction functor 
\[ G \times_H (-) \colon \ca^H \to \ca^G. \]
We first introduce its adjoints. 

\begin{definition}[{\cite[Section 4]{TW90}}]
	Given an $H$-Mackey functor $\uN$, we define a $G$-Mackey functor $\Ind_H^G(\uN)$, the \emph{induction} of $\uN$, by 
	\[ 
		\Ind_H^G\uN  = \uN \circ i_H^\ast,
	\]
	where $i_H^\ast$ is the restriction functor from finite $G$-sets to finite $H$-sets.
\end{definition}

\begin{lemma}[{\cite[Proposition 4.2]{TW90}}]
	The restriction functor is both left and right adjoint to the induction functor.
\end{lemma}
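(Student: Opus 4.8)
The plan is to recognize both functors as precomposition functors on the Burnside categories and then prove the statement by transporting an adjunction along precomposition. By definition, $i_H^\ast$ is precomposition with induction $G\times_H(-)\colon\ca^H\to\ca^G$, and $\Ind_H^G$ is precomposition with restriction $\res\colon\ca^G\to\ca^H$ (the functor on Burnside categories induced by forgetting a $G$-action to an $H$-action). So writing $\mathrm{ind}=G\times_H(-)$, we have $i_H^\ast=(-)\circ\mathrm{ind}$ as a functor $\Mack_G\to\Mack_H$ and $\Ind_H^G=(-)\circ\res$ as a functor $\Mack_H\to\Mack_G$. I would first reduce to a statement purely about $\ca^G$ and $\ca^H$: namely that $\mathrm{ind}$ and $\res$ form a \emph{biadjoint} pair, each being both left and right adjoint to the other.

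The transport step is formal. I would record the elementary principle that precomposition reverses adjunctions: if $F\dashv G$ between categories $\cc$ and $\cd$ with unit $\eta$ and counit $\epsilon$, then the precomposition functors $G^\ast=(-)\circ G$ and $F^\ast=(-)\circ F$ on $\Ab$-valued functor categories satisfy $G^\ast\dashv F^\ast$, with unit $\uM\mapsto \uM\circ\eta$ and counit $\uN\mapsto \uN\circ\epsilon$, the triangle identities being inherited directly from those of $\eta,\epsilon$. Applying this to $\mathrm{ind}\dashv\res$ yields $\res^\ast\dashv\mathrm{ind}^\ast$, i.e. $\Ind_H^G\dashv i_H^\ast$ (restriction is right adjoint to induction); applying it to $\res\dashv\mathrm{ind}$ yields $\mathrm{ind}^\ast\dashv\res^\ast$, i.e. $i_H^\ast\dashv\Ind_H^G$ (restriction is left adjoint to induction). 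Combining these gives the claim.

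The real content, and the step I expect to be the main obstacle, is establishing the biadjunction $\mathrm{ind}\dashv\res$ and $\res\dashv\mathrm{ind}$ on the Burnside categories themselves. I would obtain one half from the classical free--forgetful adjunction of $G$-sets, $\Set^G(G\times_H X,Y)\cong\Set^H(X,\res Y)$, checking that it extends to the additive span categories $\ca^H,\ca^G$; here one uses that both $G\times_H(-)$ and $\res$ preserve pullbacks, which is exactly what is needed for them to induce functors on spans and for the unit/counit to be natural with respect to transfers as well as restrictions. The other half I would deduce from the self-duality of the Burnside category: span-reversal gives a contravariant self-equivalence $D\colon(\ca^G)^{\mathrm{op}}\xrightarrow{\sim}\ca^G$ fixing objects, and both induction and restriction commute with $D$ (reversing a span and then forgetting, or inducing, agrees with forgetting, or inducing, and then reversing). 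Since a contravariant equivalence turns a left adjoint into a right adjoint, applying $D$ to $\mathrm{ind}\dashv\res$ produces $\res\dashv\mathrm{ind}$ between the very same functors. Verifying the compatibility of $\mathrm{ind}$ and $\res$ with the duality, and that the set-level adjunction genuinely extends to the group-completed span category, is where the care is required; once the biadjunction is in hand, the two adjunctions on Mackey functors follow immediately from the precomposition principle above.
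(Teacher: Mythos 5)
The paper does not actually prove this lemma; it cites Th\'evenaz--Webb \cite[Proposition 4.2]{TW90} and moves on, so there is no in-text argument to compare against. Your proof is correct. The transport step (an adjunction $F\dashv G$ induces $G^\ast\dashv F^\ast$ on precomposition functors, with unit and counit obtained by whiskering with $\eta$ and $\epsilon$) is standard, and you apply it with the correct variance in both directions to get $\Ind_H^G\dashv i_H^\ast$ and $i_H^\ast\dashv\Ind_H^G$. The real content is, as you identify, the ambidexterity of $\mathrm{ind}$ and $\res$ on Burnside categories, and both of your ingredients go through: the slice equivalence $\Set^G_{/G\times_H X}\simeq\Set^H_{/X}$ upgrades the free--forgetful adjunction to a bijection of spans $\ca^G(G\times_H X,Y)\cong\ca^H(X,\res Y)$ compatible with disjoint union (hence with group completion), and span-reversal is a contravariant self-equivalence fixing objects and commuting with both functors, which converts that adjunction into the other one. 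One point worth spelling out if you write this up: $G\times_H(-)$ is a left adjoint on $G$-sets, so it has no a priori reason to preserve pullbacks; it does so here because, after choosing coset representatives, any map $G\times_H A\to G\times_H C$ induced from an $H$-map preserves the coset decomposition, so the pullback splits as a coproduct over cosets of pullbacks in $H$-sets. This is exactly what makes $\mathrm{ind}$ well defined on span categories, and the whole argument rests on it. Compared with Th\'evenaz--Webb, who work with the axiomatic description of Mackey functors and exhibit the adjunction isomorphisms explicitly in terms of the structure maps, your route through the span category is more conceptual and makes the symmetry between the two adjunctions transparent: the second adjunction comes for free from the duality rather than from a second hand computation.
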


Both restriction and induction are exact functors, as again, exactness is checked objectwise. 

\begin{proposition}
    For any subgroup \(H\), both restriction \(i_H^\ast\) and induction \(\Ind_H^G\) preserve projective objects.
\end{proposition}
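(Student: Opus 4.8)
The plan is to deduce this formally from the two-sided adjunction and the exactness already recorded just above the statement. The key tool is the standard fact: if $F \dashv U$ is an adjunction of additive functors between abelian categories and the right adjoint $U$ is exact, then the left adjoint $F$ preserves projective objects. Indeed, for a projective object $P$ the functor $\operatorname{Hom}(P,-)$ is exact, and the adjunction supplies a natural isomorphism $\operatorname{Hom}(F(P),-) \cong \operatorname{Hom}(P, U(-))$; since $U$ is exact, the right-hand side is a composite of exact functors, hence exact, so $F(P)$ is projective.

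The point is then simply that \emph{both} functors of interest are left adjoints. By the two-sided adjunction established above, restriction $i_H^\ast$ is left adjoint to induction $\Ind_H^G$, and, by the other half, induction $\Ind_H^G$ is left adjoint to restriction $i_H^\ast$; symbolically, $i_H^\ast \dashv \Ind_H^G \dashv i_H^\ast$. We have also already observed that both $i_H^\ast$ and $\Ind_H^G$ are exact, so the right adjoint appearing in each of these two adjunctions is exact.

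First I would apply the criterion to $F = i_H^\ast$ with right adjoint $U = \Ind_H^G$: since $\Ind_H^G$ is exact, $i_H^\ast$ preserves projectives. Then I would apply it again to $F = \Ind_H^G$ with right adjoint $U = i_H^\ast$: since $i_H^\ast$ is exact, $\Ind_H^G$ preserves projectives. This covers both claims of the proposition.

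There is essentially no real obstacle here; the assertion is a purely formal consequence of facts already in hand. The only step requiring any care is the bookkeeping of correctly pairing each left adjoint with its right adjoint and invoking exactness of the \emph{correct} functor in each case, and both of these are immediate from the two-sided adjunction $i_H^\ast \dashv \Ind_H^G \dashv i_H^\ast$.
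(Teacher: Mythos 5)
Your argument is correct and is exactly the paper's proof: the paper simply notes that each functor has an exact right adjoint (via the two-sided adjunction $i_H^\ast \dashv \Ind_H^G \dashv i_H^\ast$ and the objectwise exactness of both functors), and you have spelled out the same formal deduction in detail.
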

\begin{proof}
    This follows from each having an exact right adjoint.
\end{proof}

We can moreover identify the restriction of free Mackey functors.

\begin{proposition}
	There is an isomorphism of Mackey functors 
	\[ 
		i_H^*\uA^G\{x_T\} \cong \uA^H\{x_{i^*_HT}\}.
	\]
	Moreover, $i_H^*$ sends free Mackey functors to free Mackey functors.
\end{proposition}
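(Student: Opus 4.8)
The plan is to prove the displayed isomorphism by a representability (Yoneda) argument and then deduce the ``moreover'' clause formally from the fact that $i_H^*$ is a left adjoint.

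First I would recall that the free Mackey functor represents evaluation: for any $G$-Mackey functor $\uM$ we have $\Mack_G(\uA^G\{x_T\}, \uM) \cong \uM(T)$, naturally in $\uM$. My goal is to show that $i_H^*\uA^G\{x_T\}$ represents evaluation at $i_H^* T$ on $H$-Mackey functors. Given an $H$-Mackey functor $\uN$, I would invoke the adjunction $i_H^* \dashv \Ind_H^G$ (the restriction functor is left adjoint to induction, by the cited Lemma) to compute
\[
\Mack_H(i_H^*\uA^G\{x_T\}, \uN) \cong \Mack_G(\uA^G\{x_T\}, \Ind_H^G \uN) \cong (\Ind_H^G \uN)(T).
\]
Since $\Ind_H^G \uN = \uN \circ i_H^*$ by definition, evaluating at the $G$-set $T$ gives $(\Ind_H^G \uN)(T) = \uN(i_H^* T)$. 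Combining this with the representability of $\uA^H\{x_{i_H^* T}\}$ yields
\[
\Mack_H(i_H^*\uA^G\{x_T\}, \uN) \cong \uN(i_H^* T) \cong \Mack_H(\uA^H\{x_{i_H^* T}\}, \uN),
\]
naturally in $\uN$. By the Yoneda lemma this natural isomorphism of corepresented functors forces $i_H^*\uA^G\{x_T\} \cong \uA^H\{x_{i_H^* T}\}$.

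For the ``moreover'' clause, a free $G$-Mackey functor has the form $\bigoplus_{i \in I}\uA^G\{x_{T_i}\}$. Because $i_H^*$ admits a right adjoint, namely $\Ind_H^G$, it preserves all colimits, in particular direct sums, so
\[
i_H^*\Big(\bigoplus_{i \in I}\uA^G\{x_{T_i}\}\Big) \cong \bigoplus_{i \in I} i_H^*\uA^G\{x_{T_i}\} \cong \bigoplus_{i \in I}\uA^H\{x_{i_H^* T_i}\},
\]
which is again a free $H$-Mackey functor.

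The argument is essentially formal, so I do not expect a serious obstacle; the only points requiring care are tracking the correct direction of the restriction--induction adjunction and confirming that the identification of the corepresenting object is natural in $\uN$ so that Yoneda applies. A more hands-on alternative would be to unwind $i_H^*\uA^G\{x_T\}(S) = \cp^G_{\Iso}(T, G\times_H S)^+$ directly and match spans using Frobenius reciprocity on $G$-sets, but the representability route is cleaner and avoids the bookkeeping with spans.
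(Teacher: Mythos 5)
Your argument is correct and is essentially the paper's own proof: both use the adjunction $i_H^* \dashv \Ind_H^G$ together with $\Ind_H^G\uN = \uN \circ i_H^*$ to show that $i_H^*\uA^G\{x_T\}$ corepresents evaluation at $i_H^*T$, then deduce the ``moreover'' clause from the fact that $i_H^*$, being a left adjoint, preserves direct sums (the paper phrases this via direct limits of finitely generated frees, but the content is the same).
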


\begin{proof}
	It suffices to show that $i_H^*\uA^G\{x_T\}$ represents evaluation at $i_H^*T$ in $H$-Mackey functors. We compute:
	\begin{align*}
		\Mack^H(i_H^*\uA^G\{x_T\},\uN) & 
			\cong \Mack^G(\uA^G\{x_T\}, \Ind_H^G\uN) \\
			\cong \Ind_H^G\uN(T) \\
			\cong \uN(i_H^\ast T).
	\end{align*}
	If a free Mackey functor is finitely generated, the calculation above shows that its restriction is again free. Otherwise, it is a direct limit of finitely generated ones, and $i_H^*$ preserves direct limits because it is a left adjoint.
\end{proof}

As with geometric fixed points, it is a bit more work to show that this functor preserves flat objects.

\begin{lemma}
	The restriction functor $i_H^*$ is strong symmetric monoidal.
\end{lemma}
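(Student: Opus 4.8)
The plan is to mirror the proof of \cref{cor:GFPStrongSymMonoidal}. To show that a functor between Mackey functor categories is strong symmetric monoidal for the box product, it suffices to exhibit the source box product, the target box product, and the functor itself all as left Kan extensions along functors of the underlying index categories, and then to check that the resulting square of index functors commutes. By definition \cite{Lew81}, the box product is the left Kan extension of $\otimes_{\z}\circ(\uM\times\uN)$ along the product functor $\times\colon\cp^G_{\Iso}\times\cp^G_{\Iso}\to\cp^G_{\Iso}$, so that half of the setup is automatic. The one substantive point is to realize the Mackey restriction $i_H^*$ itself as a left Kan extension, since it is \emph{defined} as a precomposition rather than a Kan extension.

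To do this I would convert the defining description of $i_H^*$ into a Kan extension using the fact that restriction and induction are adjoint on both sides. Write $\res\colon\cp^G_{\Iso}\to\cp^H_{\Iso}$ for the set-level restriction functor. By definition, Mackey induction is precomposition with $\res$, namely $\Ind_H^G(\uN)=\uN\circ\res$, and precomposition with $\res$ always admits $\operatorname{Lan}_{\res}$ as its left adjoint. On the other hand, \cite[Proposition 4.2]{TW90} (the preceding lemma) asserts that $i_H^*$ is left adjoint to $\Ind_H^G$. Since left adjoints are unique up to natural isomorphism, $i_H^*\cong\operatorname{Lan}_{\res}$; that is, Mackey restriction is computed by left Kan extension along the set-level restriction $\res$. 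Conceptually this is the shadow of the Wirthm\"uller isomorphism: in the Burnside category, induction of finite $G$-sets is simultaneously left and right adjoint to restriction, so precomposing with induction agrees with Kan-extending along restriction.

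With both the box product and $i_H^*$ exhibited as left Kan extensions, the argument of \cref{cor:GFPStrongSymMonoidal} applies essentially verbatim: the two ways of forming the iterated Kan extension agree provided the square
\[
\begin{tikzcd}
	{\cp^G_{\Iso}\times\cp^G_{\Iso}}
		\arrow{r}{\times}
		\arrow{d}[left]{\res\times\res}
		&
	{\cp^G_{\Iso}}
		\arrow{d}{\res}
		\\
	{\cp^H_{\Iso}\times\cp^H_{\Iso}}
		\arrow{r}[below]{\times}
		&
	{\cp^H_{\Iso}}
\end{tikzcd}
\]
commutes, and this is exactly the statement that set-level restriction is strong symmetric monoidal for the Cartesian product, $\res(X\times Y)\cong\res X\times\res Y$, which is immediate. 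This produces the natural isomorphism $i_H^*(\uM\boxtimes\uN)\cong i_H^*\uM\boxtimes i_H^*\uN$ together with its coherence against the associativity, unit (the Burnside Mackey functor restricts to the Burnside Mackey functor), and symmetry constraints.

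The main obstacle is the middle step: turning the precomposition description of $i_H^*$ into a left Kan extension via uniqueness of adjoints. Everything after that is formal and runs parallel to the geometric fixed points case. As a consistency check one can instead verify the isomorphism directly on generators, using $i_H^*\uA^G\{x_T\}\cong\uA^H\{x_{i_H^*T}\}$, the identification $\uA^G\{x_T\}\boxtimes\uA^G\{x_{T'}\}\cong\uA^G\{x_{T\times T'}\}$ of box products of representables, and $i_H^*(T\times T')\cong i_H^*T\times i_H^*T'$; since both $i_H^*$ and $\boxtimes$ commute with the colimits presenting a general Mackey functor by frees, agreement on generators would suffice.
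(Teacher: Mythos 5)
Your proposal is correct and follows the paper's own proof almost exactly: the paper likewise identifies $i_H^*$ as a left Kan extension along set-level restriction via its adjunction with $\Ind_H^G$ (which is precomposition), and then repeats the argument of \cref{cor:GFPStrongSymMonoidal} using that restriction of $G$-sets preserves Cartesian products. The only detail the paper makes explicit that you leave implicit is that set-level restriction, being a right adjoint, preserves pullbacks and hence genuinely extends to a product-preserving functor $\ca^G\to\ca^H$ as in \cref{Prop:FixNExtends}.
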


\begin{proof}
	Because restriction is left adjoint to a functor $\Ind_H^G$ given by precomposition with $i_H^\ast$, it is given by a left Kan extension along $i_H^\ast \colon \Set^G \to \Set^H$. Then this follows nearly identically to \cref{cor:GFPStrongSymMonoidal}, replacing instances of $\FixN$ by $i_H^\ast$. Note that $i_H^\ast \colon \Set^G \to \Set^H$ is a right adjoint and so preserves limits, and so extends to a product-preserving functor $\ca^G \to \ca^H$ which commutes with Cartesian products as in \cref{Prop:FixNExtends}.
\end{proof}

\begin{theorem}\label{Thm:RestrictionPreservesFlats}
	If $\uM$ is a flat $G$-Mackey functor, then $i_H^*\uM$ is a flat $H$-Mackey functor. 
\end{theorem}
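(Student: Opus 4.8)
The plan is to follow the template of \cref{thm:GeomFPPreservesFlats}, using the strong symmetric monoidal structure on $i_H^*$ just established together with the good homological properties of its adjoint $\Ind_H^G$. The essential tool is a projection (Frobenius reciprocity) isomorphism
\[
\Ind_H^G\big(\uN \boxtimes i_H^*\uM\big) \;\cong\; \Ind_H^G(\uN) \boxtimes \uM,
\]
natural in the $H$-Mackey functor $\uN$ and the $G$-Mackey functor $\uM$ (the left-hand box product is in $\Mack_H$, the right-hand one in $\Mack_G$). Granting this, the theorem follows formally, because $\Ind_H^G$ is both exact (it is precomposition with $i_H^*\colon\Set^G\to\Set^H$, so exactness is objectwise) and conservative: for every $L\le H$ the value $\uN(H/L)$ occurs as a direct summand of $\Ind_H^G\uN(G/L)$, coming from the identity double coset in $i_H^*(G/L)$, so $\Ind_H^G\uN=0$ forces $\uN=0$. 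An additive functor that is exact and reflects zero objects reflects the exactness of sequences.

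First I would establish the projection formula. Both functors $\uN \mapsto \Ind_H^G(\uN \boxtimes i_H^*\uM)$ and $\uN \mapsto \Ind_H^G(\uN)\boxtimes\uM$ preserve colimits, since $\Ind_H^G$ is a left adjoint (from $\Ind_H^G\dashv i_H^*$) and the box product preserves colimits in each variable. Because the free Mackey functors generate $\Mack_H$ under colimits, it then suffices to produce the natural isomorphism when $\uN=\uA^H\{x_T\}$ is free. On such a generator I would unwind both sides using the strong monoidal identification $i_H^*(\uL\boxtimes\uL')\cong i_H^*\uL\boxtimes i_H^*\uL'$, the explicit description of the box product with a free Mackey functor, and the identification $i_H^*\uA^G\{x_T\}\cong\uA^H\{x_{i_H^*T}\}$ proved above; the comparison then reduces to an isomorphism of $G$-sets governed by the double coset formula that relates $\Ind_H^G$ and $i_H^*$. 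The two-sided adjunction $\Ind_H^G\dashv i_H^*\dashv\Ind_H^G$ is exactly what makes the comparison map canonical. This verification on free generators is the only genuinely computational step, and I expect it to be the main obstacle.

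With the projection formula in hand, the conclusion is immediate. Let $0\to\uN'\to\uN\to\uN''\to 0$ be a short exact sequence in $\Mack_H$ and apply $-\boxtimes i_H^*\uM$. To test exactness of the result I would apply the exact, conservative functor $\Ind_H^G$ and invoke the projection formula to rewrite the resulting complex as
\[
\Ind_H^G(\uN')\boxtimes\uM \to \Ind_H^G(\uN)\boxtimes\uM \to \Ind_H^G(\uN'')\boxtimes\uM.
\]
Since $\Ind_H^G$ is exact, the sequence $0\to\Ind_H^G\uN'\to\Ind_H^G\uN\to\Ind_H^G\uN''\to 0$ is exact, and since $\uM$ is flat the functor $-\boxtimes\uM$ keeps it exact. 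Thus $\Ind_H^G$ carries $\uN^\bullet\boxtimes i_H^*\uM$ to an exact sequence, and conservativity then forces $\uN^\bullet\boxtimes i_H^*\uM$ itself to be exact. Hence $-\boxtimes i_H^*\uM$ is exact, i.e. $i_H^*\uM$ is flat.
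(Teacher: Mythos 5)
Your argument is correct and follows essentially the same route as the paper: both proofs hinge on the projection formula $\Ind_H^G(\uN)\boxtimes\uM\cong\Ind_H^G\big(\uN\boxtimes i_H^*\uM\big)$ (which the paper likewise attributes to the Frobenius relation without a detailed verification) together with the exactness of induction and restriction. The only difference is the final step: the paper exhibits $\uN\mapsto\uN\boxtimes i_H^*\uM$ as a natural retract of the exact functor $\uN\mapsto i_H^*\big(\Ind_H^G(\uN)\boxtimes\uM\big)$ via the naturally split unit of the adjunction, whereas you use that $\Ind_H^G$ is exact and conservative and hence reflects exactness --- both facts rest on the same observation that the identity double coset makes $\uN(H/L)$ a natural summand of $\Ind_H^G\uN(G/L)$.
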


\begin{proof}
	Since induction and restriction are exact functors, if \(\uM\) is a flat \(G\)-Mackey functor, then the functor
	\[
		\uN \mapsto i_H^\ast\big(\Ind_H^G(\uN)\boxtimes\uM\big)
	\]
	is exact. By the Frobenius relation, we have an isomorphism
	\[
	    \Ind_H^G(\uN)\boxtimes\uM\cong \Ind_H^G\big(\uN\boxtimes i_H^\ast \uM\big).
	\]
	Finally, the unit of the restriction-induction adjunction
	\[
	\uN\mapsto i_H^\ast\Ind_H^G \uN
	\]
	is naturally a split inclusion. This follows from noting the that right-hand side is the functor
	\[
	    T\mapsto \uN\big(i_H^\ast(G\times_{H} T)\big),
	\]
	and the inclusion of \(H\) into \(G\) is the inclusion of an \(H\mhyphen H\)-biset summand. We deduce that the functor
	\[
		\uN\mapsto \uN \boxtimes i_H^\ast\uM
	\]
	is a retract of an exact functor, and hence is exact.
\end{proof}

\subsection{Necessity of triviality of the restriction}

We begin by proving that $i^*_H \co \cong \co^{\triv}$ is a necessary condition for freeness. 
We start here with a technical observation:

\begin{lemma}\label{lem:SmallNormal}
    If \(\co\) is an indexing category for \(G\), then there is a smallest subgroup \(N\) such that
    \begin{enumerate}[(a)]
        \item \(G/N\) is admissible.
        \item If \(G/H\) is admissible, then \(N\leq H\).
        \item If \(H/K\) is an admissible \(H\)-set with \(N\leq H\), then \(N\leq K\).
        \item $N$ is normal in $G$.
    \end{enumerate}
\end{lemma}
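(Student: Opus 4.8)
The plan is to produce $N$ as the minimum of the poset of admissible orbits and then to verify the extra closure properties as consequences of minimality. Write $\mathcal{S} = \{H \leq G : \text{$G/H$ is admissible for } \co\}$; conditions (a) and (b) together assert precisely that $N = \min \mathcal{S}$, so the first task is to show $\mathcal{S}$ has a least element. Before that I would record a closure property used throughout. Since $\co$ is pullback stable, any product $f_1 \times f_2$ of morphisms of $\co$ again lies in $\co$ (write it as the composite of two pullbacks of the $f_i$ along projections), and since $\co$ is finite coproduct complete the coproduct injections belong to $\co$; hence whenever a map $\coprod_i G/K_i \to G/H$ lies in $\co$, each restriction $G/K_i \to G/H$ does as well. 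Call this the \emph{summand property}.

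Next I would show $\mathcal{S}$ is closed under intersection. Given $H_1, H_2 \in \mathcal{S}$, the projections $G/H_i \to G/G$ lie in $\co$, so their product $G/H_1 \times G/H_2 \to G/G$ lies in $\co$. The double coset decomposition $G/H_1 \times G/H_2 \cong \coprod_{g} G/(H_1 \cap {}^gH_2)$ together with the summand property, applied to the identity double coset, shows $G/(H_1 \cap H_2) \to G/G$ lies in $\co$, so $H_1 \cap H_2 \in \mathcal{S}$. Since $\mathcal{S} \ni G$ is nonempty and $G$ is finite, $N := \bigcap_{H \in \mathcal{S}} H \in \mathcal{S}$, which gives (a), and $N \leq H$ for every admissible $H$, which gives (b). Because $G/{}^gH \cong G/H$ as $G$-sets, $\mathcal{S}$ is closed under conjugation, so ${}^gN \in \mathcal{S}$ for all $g$; minimality forces $N \leq {}^gN$, hence $N = {}^gN$, proving normality (d).

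Finally, for (c), suppose $H/K$ is admissible with $N \leq H$, so $\co$ contains a map $\psi \colon G/K \to G/H$ with, say, $\psi(eK) = aH$ and $K \leq {}^aH$. Since $N \leq H$ there is a projection $G/N \to G/H$, and I would pull $\psi$ back along it: pullback stability places the resulting map $G/N \times_{G/H} G/K \to G/N$ in $\co$. The point $(eN, a^{-1}K)$ lies in this fibre product and has stabilizer $N \cap a^{-1}Ka$, so its orbit is a summand $G/(N \cap a^{-1}Ka)$ whose structure map to $G/N$ is the canonical projection. By the summand property this projection lies in $\co$, and composing with the admissible $G/N \to G/G$ shows $G/(N \cap a^{-1}Ka)$ is admissible. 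Condition (b) then yields $N \leq N \cap a^{-1}Ka \leq a^{-1}Ka$, and normality of $N$ gives $N \leq K$, as required.

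I expect the pullback computation in the last paragraph to be the main obstacle: one must pick the correct point of $G/N \times_{G/H} G/K$, compute its stabilizer, and confirm that its orbit maps to $G/N$ by the projection that the summand property controls, rather than becoming entangled in the full double-coset bookkeeping of the fibre product. The remaining steps are routine once the summand property and the closure of $\mathcal{S}$ under intersection and conjugation are in hand.
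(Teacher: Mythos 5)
Your proof is correct and follows essentially the same route as the paper: define $N$ as the intersection of all $H$ with $G/H$ admissible, get (a) from closure of admissibles under intersection, (b) by construction, (d) from closure under conjugation plus minimality, and (c) by restricting the admissible $H$-set $H/K$ down to $N$ and composing with $G/N \to G/G$. The only difference is that you spell out details the paper leaves implicit (the double-coset argument for closure under intersection, and the explicit fibre-product computation behind ``by restriction, $N/(N\cap K)$ is an admissible $N$-set''), and your choice of base point forces a final appeal to normality that the paper's version avoids.
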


\begin{proof}
	Let $S = \{ H \leq G \mid G/H \text{ is admissible} \}$. Let $N = \bigcap_{H \in S} H$. Then $N$ satisfies $(a)$ and $(b)$:
	\begin{enumerate}[(a)]
		\item $G/N$ is admissible since $G/(H \cap H')$ is admissible whenever $G/H$ and $G/H'$ are admissible.
		\item By definition, $N \leq H$ for every $H$ such that $G/H$ is admissible.
	\end{enumerate}
	Now, suppose $H/K$ is an admissible $H$-set and $N \leq H$. By restriction, \(N/(N\cap K)\) is an admissible \(N\)-set, and since admissibles are closed under self-induction, \(G/(N\cap K)\) is an admissible \(G\)-set. Minimality of \(N\) implies that \(N\cap K=N\), and hence \(N\subset K\).

	Finally, the closure under conjugacy of admissibles implies by minimality that \(N\) is actually normal.
\end{proof}

As in \cref{Sec:GFP}, we write \(Q\) for \(G/N\).

\begin{remark}
	If \(N\) is the smallest normal subgroup associated to \(\co\) from \cref{lem:SmallNormal}, then
	\[
		\co\cap \co^N_\gen=\co.
	\]
	In particular, we can take  geometric fixed points and not lose information we might have wanted to preserve.
\end{remark}

\begin{proposition}\label{Prop:Triv}
    Let \(\co\) be an indexing category. If \(\uA^{\co}[x_{G/G}]\) is flat, then \(\co=\co^{\triv}\).
\end{proposition}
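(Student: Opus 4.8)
The plan is to argue the contrapositive: assuming $\co \neq \co^{\triv}$, I will derive a contradiction with flatness of $\uA^\co[x_{G/G}]$ by transporting the problem down to the group $C_p$, where the non-flat Mackey functors $\uZ$ and $\uZ^{\ast}$ of \cref{Lem:ZDualNotFlat} are available. The two transport functors are restriction $i_H^\ast$ and the geometric fixed points $\tilde\Phi^N$; both preserve flatness (\cref{Thm:RestrictionPreservesFlats}, \cref{thm:GeomFPPreservesFlats}), and --- crucially --- both should carry the free Tambara functor on a generator at the terminal level to the free Tambara functor on a generator at the terminal level of the smaller group. For $\tilde\Phi^N$ this is exactly \cref{Thm:GFPFreeIncomplete} applied with $T = G/G$, since $(G/G)^N = G/G = Q/Q$, giving $\tilde\Phi^N\uA^\co[x_{G/G}] \cong \uA^{q_\ast\co}[x_{Q/Q}]$. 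For $i_H^\ast$ I would record the analogous statement $i_H^\ast \uA^\co[x_{G/G}] \cong \uA^{i_H^\ast\co}[x_{H/H}]$, which I can check on underlying Mackey functors by matching the polynomial bases of \cref{cor:PolynomialsAsPolynomials}: a basis element of $i_H^\ast\uA^\co[x_{G/G}]$ at $H/M$ is a $G$-polynomial $G/G \leftarrow S \to G/J \to G/M$ with all groups subconjugate to $M \leq H$, and these biject with the corresponding $H$-polynomials compatibly with restriction, transfer and the Weyl action.

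With these functorial facts in hand, here is the reduction. Since $\co \neq \co^{\triv}$ there is a nontrivial admissible set, i.e.\ subgroups $K < H$ with $H/K$ admissible. Restricting to $H$ replaces $(G, \co)$ by $(H, i_H^\ast\co)$, in which $K < H$ is now a \emph{proper} admissible subgroup, so the smallest normal subgroup $N$ of \cref{lem:SmallNormal} satisfies $N \leq K < H$. In particular $Q := H/N \neq e$, and since $i_H^\ast\co \subseteq \co^N_{\gen}$ (the defining property of the minimal $N$), the functor $\tilde\Phi^N$ is defined and sends our free Tambara functor to $\uA^{q_\ast(i_H^\ast\co)}[x_{Q/Q}]$, still flat. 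Because $N$ is minimal, the admissible $H/N$ descends to an admissible $Q/e$, so $Q/e$ is admissible for $q_\ast(i_H^\ast\co)$. Finally, restricting to any $C_p \leq Q$ makes $C_p/e$ admissible, which by \cref{Ex:IndexingCategoriesCp} forces the indexing category to be complete. The generator has stayed at the terminal level at every stage, so we are reduced to the single claim: for $G = C_p$, the Mackey functor underlying $\uA^{\co^{\cplt}}[x_{C_p/C_p}]$ is not flat.

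For the base case I would compute this Mackey functor, call it $\uR$, explicitly. Its underlying level is $\uR(C_p/e) = \z[y]$ with trivial action (where $y = \res x$), while $\uR(C_p/C_p)$ has basis the monomials $x^b \nm(y)^a$ together with the transfers $\tr(y^c)$, with $\res(x^b\nm(y)^a) = y^{b+pa}$ and $\res\tr(y^c) = p\,y^c$. Grading by the degree in $y$ (which every structure map preserves) splits $\uR$ as a direct sum of Mackey functors $\uR = \bigoplus_{k} \uR_k$, so it suffices to show one summand is not flat. I would isolate the degree-$p$ piece $\uR_p$, with underlying level $\z\langle y^p\rangle$ and top level $\z\langle \nm(y),\, x^p,\, \tr(y^p)\rangle$. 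Mapping a rank-two free Mackey functor $\uA^{\oplus 2}$ onto $\uR_p$ by sending the two top generators to $\nm(y)$ and $x^p$ produces a short exact sequence
\[ 0 \to \uZ^{\ast} \to \uA^{\oplus 2} \to \uR_p \to 0, \]
whose kernel is spanned by the difference of the two restriction classes downstairs and the difference of the two transfer classes upstairs, and is therefore precisely $\uZ^{\ast}$. Were $\uR_p$ flat, the long exact sequence for $\uTor^{\uA}(-, \uN)$ would give $\uTor_m^{\uA}(\uZ^{\ast}, \uN) \cong \uTor_{m+1}^{\uA}(\uR_p, \uN) = 0$ for all $m \geq 1$ and all $\uN$, making $\uZ^{\ast}$ flat and contradicting \cref{Lem:ZDualNotFlat}.

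I expect the main obstacle to be the base-case analysis rather than the formal reduction: one must pin down the Mackey-functor structure of $\uA^{\co^{\cplt}}[x_{C_p/C_p}]$, check that the degree grading genuinely splits it into a direct sum of Mackey functors, and correctly identify the first syzygy of $\uR_p$ as $\uZ^{\ast}$. A secondary technical point is the restriction identity $i_H^\ast\uA^\co[x_{G/G}] \cong \uA^{i_H^\ast\co}[x_{H/H}]$, whose verification through \cref{cor:PolynomialsAsPolynomials} requires care with the Weyl-group actions at the levels below $H$.
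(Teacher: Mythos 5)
Your proof is correct. The reduction is essentially the one in the paper: pass to the minimal normal subgroup $N$ of \cref{lem:SmallNormal}, use that $\tilde\Phi^N$ and $i_H^\ast$ both preserve flatness and both carry the free $\co$-Tambara functor on a generator at the terminal level to the free Tambara functor on a generator at the terminal level of the smaller group, and land on $\uA^{\co^{\cplt}}[x_{C_p/C_p}]$. Your reorganization (restrict to $H$ first, then take geometric fixed points) lets you treat an arbitrary nontrivial admissible $H/K$ in one pass, whereas the paper first proves the statement for admissible $G$-sets and then runs an induction over the subgroup lattice via the same restriction identity $i_H^\ast\uA^{\co}[x_{G/G}]\cong\uA^{i_H^\ast\co}[x_{H/H}]$; the two are interchangeable. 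Where you genuinely diverge is the base case: the paper disposes of $\uA^{\co^{\cplt}}[x_{C_p/C_p}]$ by citing \cite[Lemma 3.6]{BH18}, which exhibits the (non-flat) augmentation ideal of $\uA\to\uZ$ as a summand, while you give a self-contained argument by splitting off the degree-$p$ graded piece $\uR_p$ and computing its first syzygy. I checked your computation: with top-level basis $\{\nm(y),\,x^p,\,\tr(y^p)\}$ and bottom level $\z\{y^p\}$, the surjection $\uA^{\oplus 2}\to\uR_p$ sending the two unit classes to $\nm(y)$ and $x^p$ has kernel generated by $e_1-e_2$ below and $t_1-t_2$ above, with $\tr(e_1-e_2)=t_1-t_2$ and $\res(t_1-t_2)=p(e_1-e_2)$, which is indeed $\uZ^{\ast}$; the dimension-shift then contradicts \cref{Lem:ZDualNotFlat}. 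This buys you independence from the external reference at the cost of the explicit basis bookkeeping (which \cref{cor:PolynomialsAsPolynomials} justifies), and it identifies concretely \emph{which} summand of the free Tambara functor obstructs flatness.
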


\begin{proof}
    Let \(N\) be the (normal) subgroup from \cref{lem:SmallNormal}, and take the \(N\)-geometric fixed points. This gives
    \[
	    \Phi^N\big(\uA^{\co}[x_{G/G}]\big)\cong \uA^{\co^N}[x_{Q/Q}],
    \]
    which is flat by \cref{thm:GeomFPPreservesFlats}. Note that now we have norms from \(N/N=\{e\}\) to all larger subgroups of \(Q\). 
    
    If \(N\neq G\), then we can choose a \(C_p\) in \(Q\) for some prime \(p\). Restricting down to \(C_p\), we get
    \[
    	i_{C_p}^\ast\uA^{q_*\co}[x_{Q/Q}]\cong \uA^{\co^{\cplt}}[x_{C_p/C_p}],
    \]
    since there are only two indexing categories for \(C_p\) and we have a non-trivial norm. This is not flat by \cite[Lemma 3.6]{BH18}, which implies that the augmentation ideal of $\uA \to \uZ$ (which is not flat) is a summand. We conclude that \(N=G\) and hence $G/G$ is the only admissible transitive $G$-set for $\co$.

    To show that the admissible \(H\)-sets are also all trivial, we note that we have a natural isomorphism
    \[
        i_H^\ast\uA^{\co}[x_{G/G}]\cong \uA^{i_H^\ast\co}[x_{H/H}].
    \]
    By \cref{Thm:RestrictionPreservesFlats} restriction preserves flats, so induction on the subgroup lattice shows then that \(\co=\co^{\triv}\).
\end{proof}

\begin{corollary}\label{Cor:TrivRes}
    Let \(\co\) be an indexing category and let \(H\leq G\) be a subgroup. If \(\uA^{\co}[x_{G/H}]\) is flat, then \(i_H^\ast\co=\co^{\triv}\).
\end{corollary}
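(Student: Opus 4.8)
The plan is to deduce the general statement from the already-established case $H=G$, namely \cref{Prop:Triv}, by restricting to $H$ and recognizing $\uA^{i_H^*\co}[x_{H/H}]$ inside the restriction. First I would restrict along $i_H^*$: since $\uA^{\co}[x_{G/H}]$ is flat by hypothesis, \cref{Thm:RestrictionPreservesFlats} shows that the $H$-Mackey functor $i_H^*\uA^{\co}[x_{G/H}]$ is flat as well.

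Second, I would compare this restriction with the free $i_H^*\co$-Tambara functor on the fixed generator. By \cref{Prop:NormO} we have $n_H^G\uA^{i_H^*\co}[x_{H/H}]\cong\uA^{\co}[x_{G/H}]$, so the unit of the adjunction $n_H^G\dashv i_H^*$ furnishes a map of $i_H^*\co$-Tambara functors $\eta\colon \uA^{i_H^*\co}[x_{H/H}]\to i_H^*\uA^{\co}[x_{G/H}]$ sending the universal generator to the restriction of the universal generator. The crux is to show that $\eta$ is a split monomorphism of underlying $H$-Mackey functors. This is the analogue, for the Tambara left adjoint $n_H^G$ in place of additive induction $\Ind_H^G$, of the split inclusion $\uN\to i_H^*\Ind_H^G\uN$ exploited in the proof of \cref{Thm:RestrictionPreservesFlats}: the splitting should be governed by the identity double coset $H/H\hookrightarrow i_H^*(G/H)$, which is a retract of $i_H^*(G/H)$ as $H$-sets. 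Concretely, at level $H/L$ the group $\big(i_H^*\uA^{\co}[x_{G/H}]\big)(H/L)=\cp^G_{\co}(G/H,G/L)^+$ is free on isomorphism classes of polynomials (\cref{cor:PolynomialsAsPolynomials}), and $\eta$ identifies $\uA^{i_H^*\co}[x_{H/H}]$ with the span of those polynomials induced from $H$; the retraction projects onto this subspan.

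The main obstacle is verifying that this projection is genuinely a map of Mackey functors, i.e.\ that it is compatible with the restrictions and transfers on the $H$-side. Here I would use the formulas of \cref{Cor:ResTrFormulas} together with the fact that induction preserves the relevant pullbacks, because $G\times_H(-)\colon\Set^H\xrightarrow{\sim}\Set^G_{/(G/H)}$ is an equivalence and hence sends pullbacks of $H$-sets over $H/L$ to the corresponding fibre products of the induced $G$-sets over $G/L$. The delicate point is to check that restricting a non-induced polynomial never produces an induced summand, so that the projection commutes with the structure maps; this is exactly the identity-double-coset bookkeeping that must be carried out carefully.

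Granting the splitting, the conclusion is immediate: flatness passes to retracts, so $\uA^{i_H^*\co}[x_{H/H}]$ is a flat $H$-Mackey functor. Applying \cref{Prop:Triv} to the group $H$ and the indexing category $i_H^*\co$ (which is an indexing category for $H$ by \cref{Prop:RestrictIndexCat}) then forces $i_H^*\co=\co^{\triv}$, as desired.
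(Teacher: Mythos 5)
Your overall strategy is the right one and matches the paper's: restrict to $H$ (flatness is preserved by \cref{Thm:RestrictionPreservesFlats}), exhibit $\uA^{i_H^\ast\co}[x_{H/H}]$ as a retract of the restriction, and then invoke \cref{Prop:Triv} for the group $H$ and the indexing category $i_H^\ast\co$. But the one step in your argument that actually requires work --- that the unit map $\eta\colon \uA^{i_H^\ast\co}[x_{H/H}]\to i_H^\ast\uA^{\co}[x_{G/H}]$ is a \emph{split} monomorphism of Mackey functors --- is asserted rather than proved. You propose a retraction by ``projecting onto the span of induced polynomials'' and then explicitly defer the verification that this projection commutes with transfers and restrictions (``the delicate point\dots must be carried out carefully''). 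As written, that is a genuine gap: the set of basis polynomials in $\cp^G_{\co}(G/H,G/L)^+$ that are \emph{not} induced from $H$ is not obviously closed under the Mackey structure maps, and characterizing ``induced'' at the level of individual polynomial diagrams is exactly the double-coset bookkeeping you did not carry out.

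The paper sidesteps this entirely. Since $i_H^\ast$ of a free $\co$-Tambara functor on $T$ is the free $i_H^\ast\co$-Tambara functor on $i_H^\ast T$, and since the free Tambara functor on a disjoint union is the box product (i.e.\ coproduct) of the frees on the pieces, writing $i_H^\ast(G/H)=H/H\amalg T$ gives
\[
i_H^\ast\uA^{\co}[x_{G/H}]\;\cong\;\uA^{i_H^\ast\co}[x_{H/H}]\boxtimes\uA^{i_H^\ast\co}[x_{T}],
\]
and each box factor is automatically a direct summand of the box product (split off by boxing with an augmentation of the other factor, which exists by representability). This packages your ``identity double coset'' splitting into a one-line consequence of formal properties, with no polynomial-level case analysis. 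I would recommend replacing your hand-rolled retraction with this decomposition; your argument then closes, since the remaining steps (flatness of retracts, \cref{Prop:Triv} applied to $H$ and $i_H^\ast\co$) are exactly as you state.
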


\begin{proof}
    Write \(i_H^\ast G/H=H/H\amalg T\) for some \(H\)-set \(T\). If \(\uA^{\co}[x_{G/H}]\) is flat, then so is the restriction to \(H\). This is given by
    \[
        \uA^{i_H^\ast\co}[x_{i_H^\ast G/H}]\cong \uA^{i_H^\ast\co}[x_{H/H}]\boxtimes \uA^{i_H^\ast\co}[x_{T}].
    \]
    If this is flat, then so are both box factors since they are also summands. The result then follows from \cref{Prop:Triv}.
\end{proof}

\subsection{Necessity that $H$ is normal in $G$}

In fact, the decomposition in the proof of \cref{Cor:TrivRes} implies that $H$ is a normal subgroup of $G$. Consequently, all of the factors in that decomposition are the same (and the Weyl action permutes them). To show this, we need to analyze exactly when Green functors are free.

\begin{theorem}\label{thm:NoGo}
	If \(\uA^{\co^\triv}[x_{G/H}]\) is flat, then $H = G$.
\end{theorem}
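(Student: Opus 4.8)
The plan is to prove the contrapositive by induction on $|G|$: for every proper subgroup $H < G$, the free Green functor $\uA^{\co^{\triv}}[x_{G/H}]$ fails to be flat. The two workhorses are the functors already known to preserve flatness — restriction $i_H^{\ast}$ (\cref{Thm:RestrictionPreservesFlats}) and geometric fixed points $\Phi^N$ (\cref{thm:GeomFPPreservesFlats}) — together with their explicit effect on free Green functors. Since $i_H^{\ast}\co^{\triv} = \co^{\triv}$ and $q_{\ast}\co^{\triv} = \co^{\triv}$, restriction sends $\uA^{\co^{\triv}}[x_{G/H}]$ to a box product $\boxtimes_j \uA^{\co^{\triv}}[x_{H/L_j}]$ indexed by the double cosets $Hg_jH$ with $L_j = H \cap g_jHg_j^{-1}$ (the refinement of the decomposition used in \cref{Cor:TrivRes}), while $\Phi^{N}$ sends it to $\uA^{\co^{\triv}}[x_{(G/H)^N}]$ (the incomplete-Tambara refinement \cref{Thm:GFPFreeIncomplete}, available because every isomorphism, hence all of $\co^{\triv}$, lies in $\co^N_{\gen}$). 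In each box product every factor is a retract via the augmentation $x \mapsto 0$ on the other factors, so flatness of the whole forces flatness of each factor.

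Assuming $\uM = \uA^{\co^{\triv}}[x_{G/H}]$ is flat with $H < G$, I would run three reductions. First, if $H$ is not normal, choose $g \notin N_G(H)$; the corresponding factor $\uA^{\co^{\triv}}[x_{H/L}]$ with $L = H \cap gHg^{-1} < H$ is a flat summand over the smaller group $H$, so the inductive hypothesis forces $L = H$, a contradiction. Hence $H$ is normal. Second, if $H$ is normal and nontrivial, apply $\Phi^{H}$: since $H$ acts trivially on $G/H$ we have $(G/H)^H = G/H \cong Q/e$ as a $Q = G/H$-set, so $\Phi^{H}\uM \cong \uA^{\co^{\triv}}[x_{Q/e}]$ is a flat free Green functor over the strictly smaller group $Q$ with generator at a proper level, contradicting the inductive hypothesis. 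Third, if $H = e$ and $G$ is not of prime order, restrict to a subgroup $C_p \leq G$ of prime order to obtain the flat summand $\uA^{\co^{\triv}}[x_{C_p/e}]$ over the smaller group $C_p$, again contradicting the hypothesis. Every case thus reduces to strictly smaller order except the single base case $G = C_p$, $H = e$.

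The base case — showing $\uA^{\co^{\triv}}[x_{C_p/e}]$ is not flat — is the real obstacle, precisely because both reduction functors trivialize here: restriction to $e$ lands in free $\z$-modules and $\Phi^{C_p}$ kills the generator. So it must be handled by hand. I would use the weight grading by total degree in $x$, which respects all structure maps and hence splits $\uM$ as a Mackey functor, and isolate the degree-$p$ summand $\uM_p$. Writing $x_0, \dots, x_{p-1}$ for the $C_p$-conjugates of $x$ at level $C_p/e$, the diagonal monomial $e_p = x_0 \cdots x_{p-1}$ is the unique $C_p$-fixed degree-$p$ monomial, all others lying in free orbits. Using \cref{cor:PolynomialsAsPolynomials} together with the Frobenius relation (which collapses every product of transfers to a single transfer), the group $\uM_p(C_p/C_p)$ is exactly the image of the transfer, freely spanned by $\tr(e_p)$ and one transfer per free orbit. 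The sub-Mackey functor $\uN$ generated by $e_p$ then has $\uN(C_p/e) = \z\{e_p\}$, $\uN(C_p/C_p) = \z\{\tr(e_p)\}$, transfer an isomorphism, and restriction multiplication by $p$ (since $\res\tr(e_p) = \sum_i \gamma^i e_p = p\,e_p$), so $\uN \cong \uZ^{\ast}$. Projecting onto the $e_p$-coordinate at the bottom and onto $\tr(e_p)$ at the top is $C_p$-equivariant and compatible with transfer and restriction, yielding a retraction $\uM_p \to \uN$; hence $\uZ^{\ast}$ is a direct summand of $\uM$. Since $\uZ^{\ast}$ is not flat (\cref{Lem:ZDualNotFlat}) and summands of flats are flat, $\uM$ is not flat. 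The delicate point to verify carefully is the well-definedness of the top component of this retraction, which rests entirely on the identification of $\uM_p(C_p/C_p)$ with the transfer image via the Frobenius collapse.
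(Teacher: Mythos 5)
Your proposal is correct, and it reaches the same ultimate obstruction as the paper --- the Mackey functor $\uZ^{\ast}$ sitting inside the degree-$[G:H]$ symmetric power via the orbit of the ``diagonal'' monomial --- but by a genuinely different route. The paper works with a fixed pair $(G,H)$, sets $n=[G:H]$, and analyzes the summand of $\Sym^n(\uA\{x_{G/H}\})$ corresponding to the $G\times\Sigma_n$-orbit of a bijection $\{1,\dots,n\}\to G/H$; an orbit--stabilizer and double-coset computation identifies that summand as $\big(\uA\{x_{G/N_H}\}\big)/(G/N_H)$, which is then reduced to $\uZ^{\ast}$ by geometric fixed points and restriction. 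You instead run an induction on $|G|$, applying restriction and geometric fixed points to the \emph{entire} free Green functor (using \cref{Thm:RestrictionPreservesFlats}, \cref{thm:GeomFPPreservesFlats}, \cref{Thm:GFPFreeIncomplete}, and the retract structure of box factors) to funnel every case down to $(C_p,e)$, and only there do you exhibit $\uZ^{\ast}$ by hand as a retract of $\Sym^p$. Your case analysis is complete (non-normal $H$ via double cosets, normal nontrivial $H$ via $\Phi^H$, $H=e$ via restriction to $C_p$, base case $G=C_p$), the inclusion $\co^{\triv}\subseteq\co^N_{\gen}$ needed for the geometric fixed point step holds since $\co^{\triv}$ is the minimum of the poset of indexing categories, and your base-case computation --- that $\Sym^p(\uA\{x_{C_p/e}\})(C_p/C_p)$ is spanned by one transfer per Weyl orbit of degree-$p$ monomials (\cref{cor:PolynomialsAsPolynomials}), so that projection onto the $\tr(e_p)$-coordinate defines a retraction onto a copy of $\uZ^{\ast}$ --- is sound, and \cref{Lem:ZDualNotFlat} finishes. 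What your argument buys is a more elementary path that avoids the $G\times\Sigma_n$ bookkeeping entirely; what it gives up is the paper's sharper intermediate statement that the specific summand $\Sym^{[G:H]}(\uA\{x_{G/H}\})$ fails to be flat for arbitrary $G$ and $H$, which is the form of the result the paper reuses in its later remark on non-solvable groups.
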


The free Green functor on a class at level \(G/H\) can be equivalently described as the ordinary symmetric algebra on \(\uA\{x_{G/H}\}\):
\[
	\uA^{\co^\triv}[x_{G/H}]\cong \bigoplus_{n\geq 0} \Sym^n(\uA\{x_{G/H}\}).
\]

For a general finite \(G\)-set \(T\), the \(n\)-th symmetric power on \(\uA\{x_T\}\) is a quotient of a free:
\[
	\Sym^n(\uA\{x_T\})\cong \big(\uA\{x_{T^{\times n}}\}\big)/\Sigma_n.
\]
Here we use that \(T^{\times n}\) is a \(G\times\Sigma_n\)-set, or equivalently, a \(\Sigma_n\)-object in \(G\)-sets.

\cref{thm:NoGo} will follow immediately from a more precise statement.

\begin{theorem}
	If \(n=[G:H]\), then \(\Sym^n(\uA\{x_{G/H}\})\) is not flat.
\end{theorem}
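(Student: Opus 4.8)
\emph{Plan of proof.} The plan is to split off an explicit direct summand of $\Sym^n(\uA\{x_{G/H}\})$ coming from the ``top'' configuration and to show that this summand is not flat; since a direct summand of a flat Mackey functor is flat (the relevant box functor is a retract of an exact one), this will finish the proof. Write $n=[G:H]=|G/H|$ and assume $H<G$, equivalently $n\geq 2$, which is the case of interest. Starting from the identification $\Sym^n(\uA\{x_{G/H}\})\cong\big(\uA\{x_{(G/H)^{\times n}}\}\big)/\Sigma_n$ recorded before the statement, I would decompose the $(G\times\Sigma_n)$-set $(G/H)^{\times n}$ according to whether a tuple has distinct entries. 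Because $n=|G/H|$, a tuple has distinct entries if and only if it is a bijection $\{1,\dots,n\}\xrightarrow{\sim}G/H$; let $B$ be this $(G\times\Sigma_n)$-set of bijections and let $R$ be its complement. Both are $G$- and $\Sigma_n$-stable, so $\uA\{x_{(G/H)^{\times n}}\}\cong\uA\{x_B\}\oplus\uA\{x_R\}$ $\Sigma_n$-equivariantly, and taking $\Sigma_n$-coinvariants exhibits $\uL:=\big(\uA\{x_B\}\big)/\Sigma_n$ as a direct summand of $\Sym^n(\uA\{x_{G/H}\})$. It remains to show $\uL$ is not flat. Here $G$ acts on $B$ through the permutation representation $\rho\colon G\to\Sigma_n$ attached to $G/H$, the group $\Sigma_n$ acts on $B$ \emph{freely} by reindexing, and the two actions commute.

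Next I would reduce to a cyclic group of prime order using geometric fixed points. Let $N=\ker\rho$, a normal subgroup of $G$ contained in $H$ (it is the core of $H$ in $G$), and set $\bar G=G/N$, which acts faithfully on $G/H\cong\bar G/\bar H$ with $\bar H=H/N$. Since $N$ acts trivially on $B$ we have $B^N=B$, so by \cref{Thm:GFPFree}---together with the fact that $\Phi^N$ is a left adjoint and hence commutes with the colimit computing $\Sigma_n$-coinvariants---we get $\Phi^N\uL\cong\big(\uA^{\bar G}\{x_B\}\big)/\Sigma_n$, which is exactly the analogous summand for $(\bar G,\bar H)$. As $H<G$ we have $\bar G\neq 1$, so I may pick a subgroup $C_p\leq\bar G$ of prime order; faithfulness of the $\bar G$-action means $\rho$ is injective on $C_p$, so $C_p$ acts on $B$ freely. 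Restricting along $i_{C_p}^\ast$, and using that restriction is exact and strong symmetric monoidal (hence commutes with $\Sigma_n$-coinvariants and with $\uA\{x_{-}\}$), I obtain $i_{C_p}^\ast\Phi^N\uL\cong\big(\uA^{C_p}\{x_B\}\big)/\Sigma_n$, where now $B$ carries a free $C_p$-action and a free, commuting $\Sigma_n$-action.

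Finally I would compute this last $C_p$-Mackey functor directly. At level $C_p/e$ the free Mackey functor gives $\mathbb{Z}[B]$; since $\Sigma_n$ acts freely and transitively (so $B\cong\Sigma_n$ as a $\Sigma_n$-set), the coinvariants are $\mathbb{Z}$ with trivial Weyl action. At level $C_p/C_p$ the value is the free abelian group on the $C_p$-orbits of $B$, and $\Sigma_n$ permutes these orbits transitively, so the coinvariants are again $\mathbb{Z}$. The transfer sends a point to the orbit containing it, inducing the identity on coinvariants, while the restriction sends an orbit to the sum of its $p$ points, inducing multiplication by $p$. Thus $i_{C_p}^\ast\Phi^N\uL\cong\uZ^{\ast}$, which is not flat by \cref{Lem:ZDualNotFlat}. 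Since restriction and geometric fixed points both preserve flatness (\cref{Thm:RestrictionPreservesFlats} and \cref{thm:GeomFPPreservesFlats}), it follows that $\uL$ is not flat, and therefore neither is $\Sym^n(\uA\{x_{G/H}\})$.

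The main obstacle I expect is the last, computational step: identifying the reduced summand precisely with $\uZ^{\ast}$, i.e.\ verifying after passage to $\Sigma_n$-coinvariants that the transfer becomes an isomorphism while the restriction becomes multiplication by $p$. A secondary point requiring care is arranging the geometric-fixed-points reduction so that $B^N=B$; this is exactly why one must take $N=\ker\rho$ (the full core), as any larger family of ``fixed points'' would annihilate $B$ and destroy the summand, while any smaller $N$ would not quotient to a faithful action on which a prime-order subgroup acts freely.
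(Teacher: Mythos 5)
Your proposal is correct and follows essentially the same route as the paper: the summand you call $\uL=\big(\uA\{x_B\}\big)/\Sigma_n$ is exactly the paper's summand $\big(\uA\{x_{G\times\Sigma_n\cdot f}\}\big)/\Sigma_n$ (the bijections form a single $G\times\Sigma_n$-orbit), and both arguments then reduce via $N$-geometric fixed points at the core and restriction to a $C_p$ to the non-flatness of $\uZ^{\ast}$ from \cref{Lem:ZDualNotFlat}. The only organizational difference is that the paper first identifies the underlying $G$-Mackey functor of the summand as $\big(\uA\{x_{G/N_H}\}\big)/(G/N_H)$ via the orbit-stabilizer theorem and the double coset formula before applying $\Phi^{N_H}$, whereas you apply $\Phi^N$ directly using $B^N=B$ and defer all explicit computation to the final $C_p$ stage, which you carry out correctly.
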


\begin{proof}
	A decomposition of \((G/H)^{\times n}\) into \(G\times\Sigma_n\)-sets gives a direct sum decomposition of \(\Sym^n(\uA\{x_{G/H}\})\). We single out some particular summands.

	Choose a non-equivariant isomorphism
	\[
		\big(f\colon \{1,\dots,n\}\to G/H\big)\in (G/H)^{\times n}.
	\]
	Using \(f\) to identify \(G/H\) with \(\{1,\dots,n\}\), the \(G\)-action on \(G/H\) is classified by a homomorphism
	\[
		\phi\colon G\to\Sigma_n.
	\]
%The orbit of \(f\) is the one we will consider;
%the stabilizer of \(f\) is the graph \(\Gamma_\phi\) of \(\phi\).
	Thinking of \(\phi\) as defining the \(G\)-set structure on \(G/H\), we see that the kernel of \(\phi\) is the normal subgroup
	\[
		N_H=\bigcap_{g\in G} gHg^{-1}.
	\]
%This subgroup is also the kernel of the homomorphism \(\phi\), since the stabilizer of the coset \(gH\) is \(gHg^{-1}\), so something stabilizes all cosets if and only if it is in \(N_H\). 
	Let \(\tilde{\phi}\) be the inclusion of \(G/N_H\) into \(\Sigma_n\) induced by \(\phi\).

	We have to understand the summand
	\[
		\big(\uA\{x_{G\times\Sigma_n\cdot f}\}\big)/\Sigma_n.
	\]
	of \(\Sym^n(\uA\{x_{G/H}\})\), where $G \times \Sigma_n \cdot f$ is the orbit of $f \in (G/H)^{\times n}$ under the $G \times \Sigma_n$-action. 
	
	By the orbit-stabilizer theorem, as a \(G\times\Sigma_n\)-set, we have
	\[
		G\times\Sigma_n\cdot f\cong G\times\Sigma_n/\Gamma_{\phi},
	\]
	where \(\Gamma_\phi\) is the graph of \(\phi\). The underlying Mackey functor is the free Mackey functor  
	\[
		\uA\{x_{i_G^\ast(G\times\Sigma_n\cdot f)}\},
	\]
	and this has a residual \(\Sigma_n\)-action. 

	Restricting to \(G\), we use the double-coset formula
	\[
		i_G^{\ast} \big(G\times\Sigma_n/\Gamma_{\phi}\big)\cong \coprod_{G(g,\sigma)\Gamma_\phi\in G\backslash G\times\Sigma_n/\Gamma_\phi} G/G\cap (g,\sigma)\Gamma_\phi (g,\sigma)^{-1}.
	\]
	The group \(G\cong G\times\{e\}\) is a normal subgroup of \(G\times\Sigma_n\), so it intersects all conjugates of a fixed group in conjugates:
	\[
		G\times\{e\}\cap ((g,\sigma)\Gamma_\phi (g,\sigma)^{-1})=(g,\sigma)\big(G\times\{e\}\cap\Gamma_\phi\big){(g,\sigma)^{-1}}.
	\]
	By definition of the graph, the intersection 
	\[
		G\times\{e\}\cap \Gamma_\phi=\{(g,e)\mid \phi(g)=e\}
	\]
	is the graph of \(\phi\) restricted to its kernel. Since the kernel is normal in \(G\), this is normal in \(G\times\Sigma_n\):
	\[
		\big(G\times\{e\}\cap\Gamma_\phi\big)^{(g,\sigma)}=N_H\times\{e\}.
	\]
	Therefore we have
	\[
		i_G^{\ast} ( G\times\Sigma_n/\Gamma_{\phi} )\cong \coprod_{G(g,\sigma)\Gamma_\phi\in G\backslash G\times\Sigma_n/\Gamma_\phi} G/N_H.
	\]

	For the residual \(\Sigma_n\)-action, we recall that \(\Sigma_n\) acted freely on \(G\times\Sigma_n/\Gamma_\phi\). The quotient map 
	\[
		G\times\Sigma_n\to G\times\Sigma_n/\Gamma_\phi
	\]
	takes \(G\) first to \(G/N_H\) and then identifies it with the subgroup \(\tilde{\phi}(G/N_H)\). This identifies the double cosets: we have
	\[
		G\backslash G\times\Sigma_n/\Gamma_\phi=\Sigma_n/\tilde{\phi}(G/N_H).
	\]

	Putting this together, the restriction to \(G\) of \(G\times\Sigma_n/\Gamma_{\phi}\) is just the decomposition of \(\Sigma_n\) into \(G/N_H\)-cosets. As a consequence, we have
	\[
		\big(\uA\{x_{G\times\Sigma_n\cdot f}\}\big)/\Sigma_n\cong \big(\uA\{x_{G/N_H}\}\big)/(G/N_H),
	\]
	where \(G/N_H\) acts on itself via the right action (which is then an isomorphism of left \(G\)-sets). 

	If this summand is flat, then applying \(N_H\)-geometric fixed points (and letting \(Q=G/N_H\)) shows that the \(Q\)-Mackey functor
	\[
		\big(\uA^{Q}\{x_{Q}\}\big)/Q
	\]
	is also flat. In other words, without loss of generality, we reduce to the case that \(N_H=\{e\}\) and \(Q=G\).

	Since \(G\) is finite and \(G\neq \{e\}\), we can find an element of order \(p\) for some prime \(p\). This gives us a subgroup \(K\cong C_p\). Restricting to \(K\), we have an isomorphism
	\[
		i_K^\ast \big(\uA\{x_{G}\}\big)/G\cong \big(\uA\{x_K\}\big)/K.
	\] 
	The latter we can compute directly: it is the dual to the constant Mackey functor \(\uZ\). Since this is not flat, by \cref{Lem:ZDualNotFlat}, we deduce that the summand we started with could not be either. 
\end{proof}

This has a surprising consequence.

\begin{proposition}\label{Prop:HNormal}
    Let \(\co\) be an indexing category, and let \(H\subset G\).
    If \(\uA^{\co}[x_{G/H}]\) is flat, then \(H\) is normal in \(G\).
\end{proposition}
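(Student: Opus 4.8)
The plan is to restrict everything to $H$ and then apply the ``no-go'' result \cref{thm:NoGo} one orbit at a time. First I would invoke \cref{Cor:TrivRes} to learn that $i_H^\ast\co = \co^{\triv}$, so that the restriction of $\uA^{\co}[x_{G/H}]$ to $H$ is the free Green functor $\uA^{\co^{\triv}}[x_{i_H^\ast G/H}]$. Since restriction preserves flatness (\cref{Thm:RestrictionPreservesFlats}), this restriction is flat as an $H$-Mackey functor.

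Next I would decompose the $H$-set $i_H^\ast G/H$ into its $H$-orbits, writing $i_H^\ast G/H \cong H/L_1 \amalg \cdots \amalg H/L_k$. Because the free Tambara functor on a disjoint union is the box product of the free Tambara functors on the summands (the identification already used in the proof of \cref{Cor:TrivRes}), we obtain
\[
	\uA^{\co^{\triv}}[x_{i_H^\ast G/H}] \cong \uA^{\co^{\triv}}[x_{H/L_1}] \boxtimes \cdots \boxtimes \uA^{\co^{\triv}}[x_{H/L_k}].
\]
Exactly as in the proof of \cref{Cor:TrivRes}, each factor is a Mackey-functor summand (via the unit and augmentation maps), so each $\uA^{\co^{\triv}}[x_{H/L_i}]$ is flat.

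Now I would apply \cref{thm:NoGo} with $H$ in place of $G$: flatness of $\uA^{\co^{\triv}}[x_{H/L_i}]$ forces $L_i = H$ for every $i$. Thus every $H$-orbit of $i_H^\ast G/H$ is a single fixed point, i.e.\ $i_H^\ast G/H$ is a trivial $H$-set. Finally I would translate this into normality: under the left $H$-action on $G/H$, the stabilizer of the coset $gH$ is $H \cap gHg^{-1}$, so $gH$ is $H$-fixed precisely when $H \subseteq gHg^{-1}$, which (by finiteness) means $g \in N_G(H)$. Triviality of $i_H^\ast G/H$ therefore says $N_G(H) = G$, i.e.\ $H$ is normal in $G$.

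I expect the only point requiring care to be the bookkeeping that each orbit summand of the restricted free Green functor is again flat, but this is precisely the summand argument already carried out in \cref{Cor:TrivRes}, so there is no genuine obstacle here. Everything else is a direct assembly of \cref{Cor:TrivRes}, \cref{Thm:RestrictionPreservesFlats}, and \cref{thm:NoGo}, together with the standard fact that $i_H^\ast G/H$ is a trivial $H$-set if and only if $H$ is normal.
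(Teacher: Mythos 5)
Your proposal is correct and follows essentially the same route as the paper's own proof: restrict to $H$, use \cref{Cor:TrivRes} to get $i_H^\ast\co = \co^{\triv}$ and the box-product/summand decomposition over the orbits of $i_H^\ast G/H$, apply \cref{thm:NoGo} to each factor, and conclude that $i_H^\ast G/H$ is a trivial $H$-set, which is equivalent to normality. Your explicit stabilizer computation at the end is just a spelled-out version of the equivalence the paper asserts without proof.
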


\begin{proof}
    Let us again write \(i_H^\ast G/H=H/H\amalg T\) for some finite \(H\)-set \(T\). We have an isomorphism of \(i_H^\ast\co\)-Tambara functors
    \[
	    i_H^\ast \uA^{\co}[x_{G/H}]\cong \uA^{i_H^\ast\co}[x_{H/H}]\boxtimes \uA^{i_H^\ast\co}[x_T],
    \]
    as in \cref{Cor:TrivRes}, and that corollary also shows that \(i_H^\ast\co=\co^\triv\). Since all of the box factors are also direct summands, \cref{thm:NoGo} shows that they are only free if the set \(T\) is a trivial \(H\)-set. This is equivalent to \(H\) being normal in \(G\).
\end{proof}

\subsection{Connecting freeness to admissibility of \(G/H\)}

We begin with a further constraint on the indexing category.

\begin{proposition}
     Let \(\co\) be an indexing category and \(N\) a normal subgroup of \(G\) such that \(i_N^\ast\co\) is trivial. Then 
     \[
	     \co\cap\co^N_\gen=\co.
     \]
\end{proposition}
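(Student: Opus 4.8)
The plan is to prove the equivalent containment $\co \subseteq \co^N_\gen$; the claimed identity $\co \cap \co^N_\gen = \co$ is merely a restatement of this. By \cref{prop:ONInclusion}, a morphism $f \colon S \to T$ of finite $G$-sets lies in $\co^N_\gen$ exactly when $S^N = f^{-1}(T^N)$. Since equivariance always yields $S^N \subseteq f^{-1}(T^N)$, the only thing I need to check, for each $f \in \co$, is the reverse inclusion $f^{-1}(T^N) \subseteq S^N$. So the proof reduces to a single pointwise containment of subsets of $S$.

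The key input is the hypothesis on $i_N^\ast\co$ combined with \cref{Prop:RestrictIndexCat}. Given $f \in \co$, the restricted $N$-map $i_N^\ast f \colon i_N^\ast S \to i_N^\ast T$ lands in $i_N^\ast\co = \co^{\triv}$, so by \cref{Def:TrivialIndexingCategory} it preserves $N$-isotropy: writing $G_x$ for the $G$-stabilizer of a point $x$, the $N$-stabilizer $N \cap G_x$ of $x$ in $i_N^\ast S$ equals the $N$-stabilizer $N \cap G_{f(x)}$ of its image.

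With this in hand I would take any $x \in f^{-1}(T^N)$ and show $x \in S^N$. The condition $f(x) \in T^N$ means $N \subseteq G_{f(x)}$, so the $N$-stabilizer of $f(x)$ is all of $N$. Isotropy preservation of $i_N^\ast f$ then forces $N \cap G_x = N$, i.e. $N \subseteq G_x$, so $x$ is $N$-fixed. This establishes $f^{-1}(T^N) \subseteq S^N$, hence equality, hence $f \in \co^N_\gen$; as $f \in \co$ was arbitrary, $\co \subseteq \co^N_\gen$.

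I do not expect any real obstacle here: once \cref{Prop:RestrictIndexCat} is used to transport the triviality hypothesis on $i_N^\ast\co$ into the isotropy-preservation statement for $i_N^\ast f$, the remainder is a direct stabilizer computation carried out one point at a time. The only place warranting a sentence of care is confirming that the characterization of \cref{prop:ONInclusion} is being applied to $f$ itself (not just to its orbit-wise components), which is why I phrase everything in terms of the subsets $S^N$ and $f^{-1}(T^N)$ rather than decomposing $S$ into orbits.
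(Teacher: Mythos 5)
Your proof is correct, and it takes a genuinely different route from the paper's. The paper reduces to a map of orbits $G/K \to G/H$ in $\co$ and applies the orbit-level criterion of \cref{prop:ONInclusion}(b): assuming $N \subseteq H$, it pulls the map back along $G/N \to G/H$ and uses pullback stability to produce a map $G/L \to G/N$ in $\co$ with $L$ a proper subgroup of $N$, contradicting the triviality of $i_N^\ast\co$. You instead take an arbitrary map $f \colon S \to T$ in $\co$ and verify the set-level criterion of \cref{prop:ONInclusion}(a) directly, by transporting the isotropy-preservation property defining $\co^{\triv}$ (\cref{Def:TrivialIndexingCategory}) through \cref{Prop:RestrictIndexCat} and running a one-line stabilizer computation: if $N \subseteq G_{f(x)}$, then $N \cap G_x = N \cap G_{f(x)} = N$, so $x \in S^N$. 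Your version is the more elementary of the two --- it needs no orbit decomposition and no pullback, only the defining condition of the trivial indexing category --- while the paper's version stays in the subgroup-lattice language of \cref{prop:ONInclusion}(b) that it uses in the surrounding results. Both arguments are complete; your stated worry about orbit-wise components is moot since part (a) of \cref{prop:ONInclusion} is already phrased for arbitrary finite $G$-sets.
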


\begin{proof}
% This follows from pulling back \(G/H\to G/K\) along \(G/N\to G/K\).
	If a map of orbits $G/K \to G/H$ is in $\co$, then $K \not \in N$ since $i_N^*\co$ is trivial. To show that $G/K \to G/H$ is in $\co^N_\gen$, it suffices to show that $N \not \subseteq H$ by \cref{prop:ONInclusion}(b)(ii). If $N$ were contained in $H$, then pullback of $G/H \to G/K$ along $G/N \to G/H$ yields a map $G/L \to G/N$ in $\co$ with $L \neq N$, which does not exist in $\co$ because $i_N^*\co$ is trivial. 
\end{proof}

If \(\uA^{\co}[x_{G/H}]\) is flat, then we have already seen that \(H\) is normal in \(G\) and that \(i_H^\ast\co=\co^\triv\). We want to study the constraints on \(\co\), so we lose no information if we pass to the \(H\)-geometric fixed points. Put another way, we may assume without loss of generality that \(H=\{e\}\).

To study the connection between flatness and admissibility, we will compare $\co$ with an indexing category where we know that we have flatness: the complete one. We need a small lemma on how the free incomplete Tambara functors compare.

\begin{lemma}
	If $\co \subseteq \co'$, then for all $T$, we have a natural inclusion 
	\[
		\uA^{\co}[x_T] \hookrightarrow \uA^{\co'}[x_T]
	\]
	adjoint to 
	\[
		[T \leftarrow T \rightarrow T \rightarrow T] \in \uA^{\co'}[x_T](T).
	\]
	In particular, for any indexing category $\co$, we have an inclusion
	\[
		\phi_{\co}: \uA^{\co}[x_{G/e}] \hookrightarrow \uA^{\co^{\cplt}}[x_{G/e}] \cong N_e^G \z[x]
	\]
	into the free Tambara functor on an underlying generator. 
\end{lemma}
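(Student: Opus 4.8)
The plan is to construct the map from the universal property of free incomplete Tambara functors and then recognize it, level by level, as the inclusion of one free abelian group of polynomials into a larger one. First I would record that the containment $\co \subseteq \co'$ produces a forgetful functor $U \colon \co'\mhyphen\Tamb_G \to \co\mhyphen\Tamb_G$ obtained by restricting the domain of a Tambara functor from $\cp^G_{\co'}$ to $\cp^G_{\co}$ (this is \cite[Proposition 5.14]{BH18}). Crucially, $U$ changes neither the underlying Mackey functor nor the value $\uA^{\co'}[x_T](Y) = \cp^G_{\co'}(T,Y)^+$ on any object; it only forgets which norms are remembered as structure. In particular the element $\id_T = [T \leftarrow T \to T \to T]$ lives in $(U\uA^{\co'}[x_T])(T)$, and by the representability of $\uA^\co[x_T]$ (\cref{rem:RepEval}) it is adjoint to a unique map of $\co$-Tambara functors
\[ \phi \colon \uA^\co[x_T] \to U\uA^{\co'}[x_T], \]
which is the claimed map; functoriality of this adjunction supplies the asserted naturality.

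Next I would identify $\phi$ explicitly. Any generator $\Sigma = [T \xleftarrow{f} A \xrightarrow{g} B \xrightarrow{h} Y]$ of $\uA^\co[x_T](Y)$, with $g \in \co$, is obtained from $\id_T$ by applying the $\co$-structure maps, namely $\Sigma = \tr_h\,\nm_g\,\res_f(\id_T)$, since on a free incomplete Tambara functor these operations are given by post-composition with $T_h$, $N_g$, $R_f$. As $\phi$ is a map of $\co$-Tambara functors with $\phi(\id_T) = \id_T$, and as $g \in \co \subseteq \co'$ makes $\nm_g$ a legitimate $\co$-operation, $\phi$ commutes with $\tr_h$, $\nm_g$, $\res_f$ and therefore sends $\Sigma$ to the very same polynomial, now regarded inside $\cp^G_{\co'}(T,Y)^+$.

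It then remains to check injectivity, which (since $\phi$ is in particular a morphism of underlying Mackey functors) may be verified objectwise. By \cref{cor:PolynomialsAsPolynomials}, applied on orbits with the general case reducing to orbits by product-preservation, both $\uA^\co[x_T](Y)$ and $\uA^{\co'}[x_T](Y)$ are free abelian groups on isomorphism classes of polynomials, differing only in that the exponent $g$ is required to lie in $\co$ rather than in $\co'$. The equivalence relation identifying two polynomial diagrams is an isomorphism of the vertex $G$-sets making the diagram commute, and this makes no reference to the indexing category; hence two $\co$-polynomials are isomorphic precisely when they are isomorphic as $\co'$-polynomials. Thus $\phi$ carries the $\co$-basis injectively into the $\co'$-basis at each level, so $\phi$ is an inclusion. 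The \emph{in particular} clause is then the case $\co' = \co^{\cplt}$ (which contains every $\co$) with $T = G/e$, combined with the identification $\uA^{\co^{\cplt}}[x_{G/e}] \cong N_e^G\z[x]$ coming from \cref{Prop:NormO}.

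The main obstacle is the interface between the second and third steps: one must be certain that the abstractly defined $\phi$ genuinely is ``the identity on polynomials'' and that no distinct basis elements are collapsed. This is exactly what the basis description of \cref{cor:PolynomialsAsPolynomials} secures, once one observes that diagram-isomorphism is independent of the indexing category; everything else is bookkeeping.
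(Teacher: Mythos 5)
The paper states this lemma without proof, treating it as immediate from representability and the basis description of \cref{cor:PolynomialsAsPolynomials}; your argument is a correct and complete filling-in of exactly that intended route (define $\phi$ as the map adjoint to $\id_T$, observe it sends each polynomial to itself, and note that the diagram-isomorphism relation is independent of the indexing category, so distinct basis elements stay distinct). No gaps.
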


We will use this map to identify summands of \(\uA^{\co}[x_G/e]\) by studying their image. If \(G\) is admissible, then we are of course done. So, we describe what happens when \(G/e\) is not admissible.

\begin{definition}
	Let
	\[
		\cf_{\co} = \{ H \mid G/e \xrightarrow{\pi} G/H \in \co \},
	\]
	be the collection of subgroups \(H\) for which \(H\) is an admissible \(H\)-set.
\end{definition}

Put another way, these are all of the subgroups for which we have norms from the trivial group to those subgroups.

\begin{lemma}
	The collection of subgroups $\cf_{\co}$ of $G$ is a family.
\end{lemma}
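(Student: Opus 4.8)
The plan is to verify the two defining closure properties of a family of subgroups: closure under conjugation and closure under passage to subgroups. Throughout I use that, by definition, $H \in \cf_{\co}$ means exactly that $\co$ contains some $G$-equivariant morphism $G/e \to G/H$. Since any such morphism sends $e \mapsto g_0H$ and hence equals the canonical projection $\pi_H \colon G/e \to G/H$ precomposed with the automorphism ``right multiplication by $g_0$'' of $G/e$, and since $\co$ is wide (so contains all isomorphisms and their inverses), membership $H \in \cf_{\co}$ is equivalent to $\pi_H \in \co$.

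Closure under conjugation is essentially formal. For $g \in G$ there is a $G$-equivariant isomorphism $c \colon G/H \xrightarrow{\ \sim\ } G/gHg^{-1}$, and because $\co$ is wide it contains $c$. Composing $c$ with $\pi_H \in \co$ produces a morphism $G/e \to G/gHg^{-1}$ in $\co$, so $gHg^{-1} \in \cf_{\co}$.

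The substance is closure under subgroups. Suppose $H \in \cf_{\co}$ and $K \leq H$, so $\pi_H \in \co$. I would pull $\pi_H$ back along the canonical projection $p \colon G/K \to G/H$, forming
\[
\begin{tikzcd}
	P \arrow{r} \arrow{d}[left]{b} & G/e \arrow{d}{\pi_H} \\
	G/K \arrow{r}{p} & G/H.
\end{tikzcd}
\]
Since $\pi_H \in \co$ and $\co$ is pullback stable, the left vertical map $b \colon P \to G/K$ lies in $\co$. The pullback $P$ is nonempty, as it contains the pair $(e, eK)$, and since $P$ maps to the free orbit $G/e$ it is itself a free $G$-set; consequently every orbit of $P$ is isomorphic to $G/e$. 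Now I invoke that $\co$ is finite-coproduct complete with coproducts created in $\Set^G$: the inclusion of any single orbit $G/e \hookrightarrow P$ is a coproduct coprojection and therefore lies in $\co$. Composing this coprojection with $b$ yields a morphism $G/e \to G/K$ in $\co$, which is precisely the assertion that $K \in \cf_{\co}$.

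The one point requiring care --- and the main obstacle --- is the identification of $P$ and the extraction of a single-orbit morphism from $b$. A direct orbit computation shows $P \cong \coprod_{[H:K]} G/e$, but for the conclusion I need only that $P$ is nonempty and $G$-free, together with the coproduct-creation property, in order to split off one orbit and restrict $b$ to it. I would therefore avoid computing the full double-coset decomposition and instead argue abstractly that freeness of $G/e$ forces freeness of the pullback, which is what makes the extracted component a morphism \emph{out of} $G/e$ rather than some other orbit.
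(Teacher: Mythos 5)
Your proof is correct and rests on the same idea as the paper's (one-line) proof: closure under subgroups comes from pulling back $G/e \to G/H$ along $G/K \to G/H$ and using pullback stability, with conjugation-closure being formal. One small precision: wideness alone only guarantees that $\co$ contains all objects; it is wideness \emph{together with} pullback stability that forces $\co$ to contain all isomorphisms (cf.\ the paper's discussion of $\Set^G_{\Iso}$), which is what your reduction to the canonical projection $\pi_H$ and your conjugation step actually use.
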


\begin{proof}
	The result follows from pullback stability of $\co$. 
\end{proof}

\begin{proposition}
	Consider the summand 
	\[
		\uA\cong \uA \{\nm_e^G(x)\} \hookrightarrow N_e^G\big(\z[x]\big)
	\]
	corresponding to the norm of the (underlying) generator \(x\). 

	The image of $\phi_{\co}$ in $\uA\{\nm_e^G(x)\}$ is isomorphic to $\underline{E\cf}_{\co}$. 
\end{proposition}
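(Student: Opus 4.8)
The plan is to exploit the explicit polynomial basis of \cref{cor:PolynomialsAsPolynomials}, together with the fact recorded there that $\phi_{\co}$ carries basis polynomials to basis polynomials. Since $\uA\{\nm_e^G(x)\}\cong\uA$ is a summand of $N_e^G(\z[x])$, its value at $G/K$ is $\uA(G/K)$, whose $\z$-basis consists of the classes
\[
	[K/L]_{\nm}:=\tr_L^K\res_L^G\nm_e^G(x),
\]
one for each conjugacy class of subgroups $L\le K$; under the identification $\uA\{\nm_e^G(x)\}\cong\uA$ these correspond to the standard basis $\{[K/L]\}$ of $\uA(G/K)$. Correspondingly, $\underline{E\cf}_{\co}(G/K)$ is the $\z$-span of those $[K/L]$ with $L\in\cf_{\co}$. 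Thus it suffices to prove that, at each level $G/K$, the basis element $[K/L]_{\nm}$ of the summand lies in the image of $\phi_{\co}$ if and only if $L\in\cf_{\co}$.

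First I would write $[K/L]_{\nm}$ as an explicit polynomial. Starting from $\nm_e^G(x)=[G/e\xleftarrow{=}G/e\xrightarrow{\pi}G/G\xrightarrow{=}G/G]$, applying the restriction formula of \cref{Cor:ResTrFormulas} along $G/L\to G/G$ and then the transfer formula along the projection $h\colon G/L\to G/K$ yields
\[
	[K/L]_{\nm}=\big[G/e\xleftarrow{\pi_1}(G/e)\times(G/L)\xrightarrow{\pi_2}G/L\xrightarrow{h}G/K\big],
\]
where $\pi_2$ is the projection. The source $(G/e)\times(G/L)$ is a free $G$-set, and on each of its orbits $\pi_2$ is the canonical projection $G/e\to G/L$. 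Because $\co$ is wide and finite-coproduct complete, membership of $\pi_2$ in $\co$ is detected on connected components, so $\pi_2\in\co$ precisely when $G/e\to G/L\in\co$, i.e. precisely when $L\in\cf_{\co}$.

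To conclude, I would combine this with the bijection of bases: by \cref{cor:PolynomialsAsPolynomials} the inclusion $\phi_{\co}$ identifies the basis of $\cp^G_{\co}(G/e,G/K)^+$ with the subset of the basis of $\cp^G_{\co^{\cplt}}(G/e,G/K)^+$ consisting of those polynomials whose middle map lies in $\co$, and each basis polynomial belongs to a unique Mackey summand. Hence the image of $\phi_{\co}$ meets the summand $\uA\{\nm_e^G(x)\}$ exactly in the $\z$-span of the $[K/L]_{\nm}$ with $\pi_2\in\co$, which by the previous paragraph is $\z\{[K/L]_{\nm}\mid L\in\cf_{\co}\}\cong\underline{E\cf}_{\co}(G/K)$. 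These identifications are natural in $K$ (compatible with restrictions, transfers, and the Weyl action), giving the claimed isomorphism of Mackey functors. The step I expect to be most delicate is precisely this compatibility of the polynomial basis with the Mackey-summand decomposition: one must check that projecting the image of $\phi_{\co}$ onto $\uA\{\nm_e^G(x)\}$ agrees with intersecting it, which rests on the fact that $\phi_{\co}$ restricts to an inclusion of spans of basis vectors and that the $[K/L]_{\nm}$ really form the basis of this one summand.
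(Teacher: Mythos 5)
Your argument is correct and is essentially the paper's own proof, made more explicit: both identify the basis polynomials of $\cp^G_{\co}(G/e,-)^+$ landing in the summand as the elements $\tr_L^K\res_L^G\nm_e^G(x) = [G/e \leftarrow G/e\times G/L \to G/L \to G/K]$, and observe that the middle map lies in $\co$ exactly when $L\in\cf_{\co}$, so that the image is the sub-Mackey functor $\underline{E\cf}_{\co}$. Your extra care about the compatibility of the polynomial basis with the Mackey-summand decomposition (so that projecting onto and intersecting with the summand agree) is a worthwhile refinement of a point the paper leaves implicit.
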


\begin{proof}
	The only polynomials in \(\uA^{\co}[x_{G/e}]\) which land in this summand are the linear combinations of polynomials of the form
	\[
		[G/e \leftarrow G/e \times G/H \xrightarrow{\pi} G/H \rightarrow G/H], \quad H \in \cf_{\co}.
	\]
	In \(\uA \cdot \nm_e^G(x) \), the displayed polynomial is the restriction to \(H\) of \(\nm_e^G(x)\), so we see that the image is the sub-Mackey functor generated by \(\uA(G/H)\) with \(H \in \cf_{\co}\). The result follows. 
\end{proof}

\begin{proposition}
	If $G$ is solvable and \(G\not\in\cf_{\co}\), then $\underline{E\cf}_{\co}$ is not flat. 
\end{proposition}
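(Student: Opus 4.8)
The plan is to reduce, through functors that preserve flatness, to the non-flat $C_p$-Mackey functor $\uZ^{\ast}$ of \cref{Lem:ZDualNotFlat}. Recall from the preceding discussion that $\underline{E\cf}_{\co}$ is the sub-Mackey functor of $\uA$ generated by the images of the transfers from the subgroups in the family $\cf_{\co}$; in particular, exactly as in the formula for $\underline{E\cf}_N$, we have $\underline{E\cf}_{\co}(G/H)=\mathbb{Z}\{[H/K]\mid K\leq H,\ K\in\cf_{\co}\}$. Since both the restriction functor $i_K^\ast$ (\cref{Thm:RestrictionPreservesFlats}) and the geometric fixed points functor $\Phi^N$ (\cref{thm:GeomFPPreservesFlats}) take flat Mackey functors to flat Mackey functors, it suffices to produce subgroups $N\triangleleft K\leq G$ with $K/N\cong C_p$ for which $\Phi^N i_K^\ast\underline{E\cf}_{\co}\cong\uZ^{\ast}$.

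The group-theoretic input comes next. Since $\co$ is wide it contains the identity $G/e\to G/e$, so $\{e\}\in\cf_{\co}$; combined with the hypothesis $G\notin\cf_{\co}$, the collection of subgroups not in $\cf_{\co}$ is nonempty and contains no trivial group. I would choose $K$ to be a minimal such subgroup. Then $K$ is nontrivial, every proper subgroup of $K$ lies in $\cf_{\co}$, and $K$ is solvable as a subgroup of the solvable group $G$. A nontrivial solvable group has a normal subgroup of prime index, so we may pick $N\triangleleft K$ with $K/N\cong C_p$; since $N$ is a proper subgroup of $K$, we have $N\in\cf_{\co}$.

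It remains to compute $\Phi^N i_K^\ast\underline{E\cf}_{\co}$. Because the restriction of the Burnside Mackey functor is again Burnside and the value of $i_K^\ast\uM$ at $K/M$ is $\uM(G/M)$, one reads off from the explicit description above that $i_K^\ast\underline{E\cf}_{\co}=\underline{E\cf}^K_{\mathcal G}$, where $\mathcal G=\cf_{\co}\cap\Sub(K)$ is the family of \emph{all} proper subgroups of $K$. Writing $Q=K/N\cong C_p$, I would then evaluate $\Phi^N\underline{E\cf}^K_{\mathcal G}$ at the two orbits of $Q$. At $Q/Q$ the value is $\underline{E\cf}^K_{\mathcal G}(K/K)$ modulo transfers from subgroups not containing $N$; of the generators $[K/J]$ with $J<K$ only $J=N$ survives (as $K/N$ is simple), giving $\mathbb{Z}\langle[K/N]\rangle$. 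At $Q/e$ the value is $\underline{E\cf}^K_{\mathcal G}(K/N)=\uA(N)$ modulo transfers from the proper subgroups of $N$, which leaves $\mathbb{Z}\langle[N/N]\rangle$. The transfer $Q/e\to Q/Q$ sends $[N/N]\mapsto[K/N]$, hence is an isomorphism, while the double coset formula, using that $N$ is normal so that $N\backslash K/N=K/N$ has $p$ elements, gives $\res^K_N[K/N]=p\,[N/N]$, so the restriction $Q/Q\to Q/e$ is multiplication by $p$. This matches the description of $\uZ^{\ast}$, giving $\Phi^N i_K^\ast\underline{E\cf}_{\co}\cong\uZ^{\ast}$.

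Combining these steps: if $\underline{E\cf}_{\co}$ were flat, then $\uZ^{\ast}\cong\Phi^N i_K^\ast\underline{E\cf}_{\co}$ would be flat (applying \cref{Thm:RestrictionPreservesFlats} and then \cref{thm:GeomFPPreservesFlats}, the latter for the normal subgroup $N\triangleleft K$), contradicting \cref{Lem:ZDualNotFlat}. The main obstacle is the final computation: one must carefully track which basis elements of $\underline{E\cf}_{\co}$ survive the two successive quotients that define $\Phi^N$, and verify that the surviving transfer and restriction maps reproduce the $\uZ^{\ast}$ pattern (transfer an isomorphism, restriction multiplication by $p$) rather than the flat functor $\uZ$.
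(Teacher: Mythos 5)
Your proof is correct and follows the same route as the paper's: restrict to a minimal subgroup $K$ not in $\cf_{\co}$ so that the family becomes the proper subgroups, use solvability to find $N\triangleleft K$ of prime index, and apply $\Phi^N$ to land on $\uZ^{\ast}$, which is not flat by \cref{Lem:ZDualNotFlat}. The only difference is that you carry out explicitly the identification $\Phi^N(\underline{E\cp})\cong\uZ^{\ast}$ (transfer an isomorphism, restriction multiplication by $p$), which the paper leaves as a ``direct computation,'' and your computation is accurate.
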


\begin{proof}
	Assume to the contrary that $\underline{E\cf}_{\co}$ is flat. 

	Let \(H\) be a minimal subgroup of \(G\) that is not in \(\cf_{\co}\). By assumption, all proper subgroups of \(H\) are in \(\cf_{\co}\), and since restriction preserves flat objects (\cref{Thm:RestrictionPreservesFlats}), we may without loss of generality assume that \(\cf_{\co}\) is actually the family $\cp$ of proper subgroups of \(G\).

	Since $G$ is solvable, there exists some normal subgroup $N$ of $H$ such that $H/N \cong C_p$ for some prime \(p\). Since geometric fixed points $\Phi^N$ preserve flatness (\cref{thm:GeomFPPreservesFlats}), $\Phi^N(\underline{E\cp})$ is flat. However, direct computation shows that 
	\[
		\Phi^N(\underline{E\cp}) \cong \underline{\z}^\ast,
	\]
	which is not flat by \cref{Lem:ZDualNotFlat}. \end{proof}

We assemble all of the results here into the solvable case of our general theorem.

\begin{theorem}\label{Thm:SolvableCase}
Let \(G\) be a solvable finite group, $\co$ an indexing category for $G$, and $H$ a subgroup of $G$. The following are equivalent: 
\begin{enumerate}[(a)]
    \item The Mackey functor underlying the free \(\co\)-Tambara functor on a class at level \(G/H\) is flat.
    \item The Mackey functor underlying the free \(\co\)-Tambara functor on a class at level \(G/H\) is free.
    \item \(H\) is a normal subgroup of \(G\), \(G/H\) is admissible, and \(i_H^\ast\co=\co^{\triv}\).
\end{enumerate} 
\end{theorem}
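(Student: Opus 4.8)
The plan is to prove the cycle of implications (c) $\Rightarrow$ (b) $\Rightarrow$ (a) $\Rightarrow$ (c). Two of the three arrows are already in hand. For (c) $\Rightarrow$ (b), the hypotheses $i_H^\ast\co = \co^{\triv}$ and ``$G/H$ admissible'' are exactly those of \cref{Thm:Sufficiency}, which produces a free underlying Mackey functor; the normality of $H$ listed in (c) plays no role here, since it is automatic once the other two conditions hold. For (b) $\Rightarrow$ (a), a free module is projective and projective modules are flat, so nothing beyond the general formalism is needed.

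All of the real work is in (a) $\Rightarrow$ (c). First I would read off two of the three conclusions directly from the necessary conditions already established: flatness of $\uA^{\co}[x_{G/H}]$ forces $i_H^\ast\co = \co^{\triv}$ by \cref{Cor:TrivRes} and forces $H \trianglelefteq G$ by \cref{Prop:HNormal}. The remaining claim, that $G/H$ is admissible, is the only place solvability enters and is the heart of the proof. Since $H$ is now known to be normal with $i_H^\ast\co$ trivial, the preceding proposition gives $\co \subseteq \co^H_{\gen}$, so passing to the $H$-geometric fixed points loses no information; as $\Phi^H$ preserves flatness (\cref{thm:GeomFPPreservesFlats}) and carries $\uA^{\co}[x_{G/H}]$ to $\uA^{q_\ast\co}[x_{Q/e}]$ by \cref{Thm:GFPFreeIncomplete} (using $(G/H)^H \cong G/H \cong q^\ast(Q/e)$), I am reduced to the case $H = \{e\}$: it suffices to show that flatness of $\uA^{\co}[x_{G/e}]$ forces $G \in \cf_{\co}$, which is precisely admissibility of $G/e$ and hence, back upstairs, of $G/H$.

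To handle this reduced statement I would follow the summand analysis already assembled in this subsection. Restricting $\phi_{\co} \colon \uA^{\co}[x_{G/e}] \hookrightarrow N_e^G \z[x]$ to the summand $\uA\{\nm_e^G(x)\} \cong \uA$ corresponding to the norm of the underlying generator identifies the image of $\phi_{\co}$ there with $\underline{E\cf}_{\co}$; since summands of flat modules are flat, flatness of $\uA^{\co}[x_{G/e}]$ descends to flatness of $\underline{E\cf}_{\co}$. The final proposition of the subsection then forces $G \in \cf_{\co}$: for solvable $G$, if $G \notin \cf_{\co}$ one chooses a minimal subgroup $H$ outside $\cf_{\co}$, reduces via \cref{Thm:RestrictionPreservesFlats} to the case where $\cf_{\co}$ is the family of proper subgroups, then uses solvability to produce $N \trianglelefteq H$ with $H/N \cong C_p$, and computes $\Phi^N(\underline{E\cf}_{\co}) \cong \uZ^{\ast}$, which is not flat by \cref{Lem:ZDualNotFlat} — a contradiction.

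The main obstacle is this last admissibility step, and within it the indispensable use of solvability. Everything up to and including the reductions (a) $\Rightarrow$ ``$i_H^\ast\co$ trivial and $H$ normal'' is group-theoretically uniform, but the non-flatness computation needs a $C_p$-quotient of a minimal bad subgroup, which only solvability guarantees; without it there is no mechanism to land on $\uZ^{\ast}$, and this is exactly why the full equivalence is asserted only for solvable groups. Care is also required in the geometric-fixed-points reduction to verify that admissibility of $G/e$ for $q_\ast\co$ really does pull back to admissibility of $G/H$ for $\co$, which follows from the definition of $q_\ast\co$ as the intersection $\co \cap \Set^Q$ together with $q^\ast(Q/e) = G/H$ and $q^\ast(Q/Q) = G/G$.
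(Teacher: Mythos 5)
Your proposal is correct and follows the paper's own route: the paper proves this theorem precisely by assembling \cref{Thm:Sufficiency} for (c)~$\Rightarrow$~(b), the general fact that free implies projective implies flat for (b)~$\Rightarrow$~(a), and \cref{Cor:TrivRes}, \cref{Prop:HNormal}, together with the geometric-fixed-points reduction to $H=\{e\}$ and the $\underline{E\cf}_{\co}$ summand argument for (a)~$\Rightarrow$~(c). Your attention to the two delicate points — that $\underline{E\cf}_{\co}$ sits as a direct summand (spanned by basis polynomials) so flatness descends to it, and that admissibility of $Q/e$ for $q_\ast\co$ pulls back to admissibility of $G/H$ for $\co$ — matches what the paper implicitly relies on.
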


\begin{remark}
	For more general groups, we notice that if \(G\) is not admissible, then we have a summand of the free \(\co\)-Tambara algebra on a class at level \(G\) that is a proper sub-Mackey functor of \(\uA\).

	In the non-solvable case, the idempotent splitting of the Burnside ring due to Dress \cite{Dre69} shows that the Burnside Mackey functor splits into a product of various Green functors, with factors corresponding to perfect subgroups. This means that there are, in the non-solvable case, sub-Mackey functors of \(\uA\) that are projective.

	For example, let \(G=A_5\) and let \(\co\) be the indexing category \(\co^G_\gen\). Then the free \(\co\)-Tambara functor at level \(G\) is underlying projective. The argument above shows that the part in \(\uA \{ \nm_{e}^{G}(x) \} \) is \(\ul{E\mathcal P}\), which for \(A_5\) is actually a direct summand.
\end{remark}

\subsection{Asymptotics}\label{SS:Asymptotics}

The conditions on \(H\) in \cref{Thm:SolvableCase} pin down \(H\) quite nicely: it must be the minimal normal subgroup \(N \leq G\) such that \(G/N\) is admissible. In particular, it is uniquely determined by the indexing category. 

\begin{proposition}\label{prop:OnlyOneSubgroup}
    If \(G\) is a solvable group, then for any indexing category \(\co\), there is at most one subgroup \(H \leq G\) such that \(\uA^{\co}[x_{G/H}]\) is flat as an \(\uA\)-module.
\end{proposition}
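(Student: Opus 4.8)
The plan is to show that whenever $\uA^{\co}[x_{G/H}]$ is flat, the subgroup $H$ is forced to equal the distinguished normal subgroup $N$ produced by \cref{lem:SmallNormal}; since that $N$ depends only on $\co$, uniqueness follows immediately. First I would invoke \cref{Thm:SolvableCase}: because $G$ is solvable, flatness of $\uA^{\co}[x_{G/H}]$ gives simultaneously that $H$ is normal in $G$, that $G/H$ is admissible, and that $i_H^\ast\co=\co^{\triv}$. This is the only place solvability is used, and it is essential, since the admissibility of $G/H$ is precisely the ingredient that the argument in the non-solvable case lacks.

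Next I would establish the two inclusions $N\leq H$ and $H\leq N$ separately. The first is immediate: $G/H$ is admissible, so \cref{lem:SmallNormal}(b) yields $N\leq H$, and $N$ is normal by \cref{lem:SmallNormal}(d). For the reverse inclusion I would exploit the admissibility of $G/N$ itself rather than of $G/H$. Since $G/N$ is admissible, the projection $\pi\colon G/N\to G/G$ lies in $\co$, and by \cref{Prop:RestrictIndexCat} its restriction $i_H^\ast\pi$ lies in $i_H^\ast\co$, which by hypothesis equals $\co^{\triv}$ for $H$.

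The crux is then to identify $i_H^\ast\pi$ concretely and read off an isotropy obstruction. Because $N$ is normal and $N\leq H$, restricting the $G$-set $G/N$ to $H$ decomposes it into $[G:H]$ orbits, each isomorphic to $H/N$, so that $i_H^\ast(G/N)\cong [G:H]\cdot(H/N)$ with every point having stabilizer exactly $N$; the map $i_H^\ast\pi$ is the fold of the canonical projections $H/N\to H/H$ on these orbits. A point of the source therefore has stabilizer $N$, while its image in $H/H$ has stabilizer $H$, so $i_H^\ast\pi$ preserves isotropy --- i.e. lies in $\co^{\triv}$ in the sense of \cref{Def:TrivialIndexingCategory} --- only when $N=H$. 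Since $i_H^\ast\pi$ does lie in $i_H^\ast\co=\co^{\triv}$, we conclude $H=N$, and hence $N$ is the unique candidate. I expect the only genuine bookkeeping to be the orbit computation of $i_H^\ast(G/N)$ and the stabilizer check; the real conceptual obstacle --- characterizing flatness by the three conditions on $(\co,H)$ --- has already been overcome in \cref{Thm:SolvableCase}, so no serious difficulty should remain.
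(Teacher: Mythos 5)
Your proof is correct and follows essentially the same route as the paper, which simply asserts (in the sentence preceding the proposition) that the three conditions of \cref{Thm:SolvableCase} force $H$ to be the minimal normal subgroup $N$ from \cref{lem:SmallNormal} with $G/N$ admissible. Your isotropy computation for $i_H^\ast\pi$ supplies the one detail the paper leaves implicit, namely why $i_H^\ast\co=\co^{\triv}$ rules out $N<H$.
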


If the restriction of \(\co\) to the minimal normal subgroup \(N \leq G\) such that \(G/N\) is admissible is not trivial, then \cref{Thm:SolvableCase} shows that there are no free \(\co\)-Tambara functors that are flat as \(\uA\)-modules.

These harsh conditions let us deduce harsh upper bounds on the number of frees that are underlying flat in general. Our strategy is to bound the proportion of free incomplete Tambara functors which are flat as Mackey functors in terms of the depth of a finite solvable group $G$. We then use a result from group theory to reduce from general finite groups to finite solvable groups. 

\begin{definition}
	Let $G$ be a finite group. Let:
	\begin{enumerate}[(a)]
		\item $d(G)$ denote the depth of the subgroup lattice of $G$;
		\item $n(G)$ denote the number of indexing categories for $G$;
		\item $T(G)$ denote the the number of pairs $(\co, H)$ of indexing category $\co$ for $G$ and subgroup $H \leq G$;
		\item $P(G)$ denote the number of such pairs for which $\uA^{\co}[x_{G/H}]$ is flat as a Mackey functor. 
	\end{enumerate}
\end{definition}

The number $P(G)/T(G)$ is the fraction of free incomplete Tambara functors for $G$ which are flat as Mackey functors. The following proposition says that as the depth of a solvable group $G$ increases, this fraction tends to zero. 

\begin{proposition}\label{Prop:DepthBoundG}
	Let $G$ be a solvable group. Then
	\[
		\dfrac{P(G)}{T(G)} \leq \dfrac{1}{d(G)}.
	\]
\end{proposition}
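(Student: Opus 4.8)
The plan is to combine the sharp structural input from \cref{prop:OnlyOneSubgroup} with an elementary count of pairs. Write $s(G)$ for the number of subgroups of $G$. Since a pair $(\co, H)$ consists of one of the $n(G)$ indexing categories together with one of the $s(G)$ subgroups, the total count factors as $T(G) = n(G) \cdot s(G)$.

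For the numerator, I would apply \cref{prop:OnlyOneSubgroup}: for each fixed indexing category $\co$ there is at most one subgroup $H$ for which $\uA^{\co}[x_{G/H}]$ is flat as an $\uA$-module. Summing this bound over the $n(G)$ indexing categories gives $P(G) \leq n(G)$. Combining the two observations yields
\[
\frac{P(G)}{T(G)} \leq \frac{n(G)}{n(G) \cdot s(G)} = \frac{1}{s(G)}.
\]

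It then remains only to note that $s(G) \geq d(G)$. This is immediate: a chain of subgroups realizing the depth of the subgroup lattice consists of pairwise distinct subgroups, and there are at least $d(G)$ of them, so the total number of subgroups of $G$ is at least $d(G)$. Hence $1/s(G) \leq 1/d(G)$, which completes the estimate.

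There is no substantial obstacle here once \cref{prop:OnlyOneSubgroup} is available; the real content of the statement is concentrated in that proposition (and hence in \cref{Thm:SolvableCase}). The only points requiring care are the bookkeeping that $T(G)$ factors as the product $n(G)\cdot s(G)$, and confirming, under whichever convention for $d(G)$ is in force (counting vertices or edges of a maximal chain), that the number of subgroups dominates the depth of the subgroup lattice. In either convention the inequality $s(G) \geq d(G)$ holds, so the argument is unaffected.
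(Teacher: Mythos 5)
Your proposal is correct and is essentially the paper's own argument: the paper likewise bounds $P(G) \leq n(G)$ via \cref{prop:OnlyOneSubgroup} and uses $T(G) \geq d(G)\cdot n(G)$, which is exactly your observation that the subgroup count dominates the depth. Your version merely makes the factorization $T(G) = n(G)\cdot s(G)$ and the inequality $s(G) \geq d(G)$ explicit.
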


\begin{proof}
	For any finite group $G$, we have $T(G) \geq d(G) \cdot n(G)$. It therefore suffices to show that when $G$ is solvable, we have $P(G) \leq n(G)$. This follows from \cref{prop:OnlyOneSubgroup}.
\end{proof}

\begin{definition}
	Let:
	\begin{enumerate}[(a)]
		\item $\g$ (resp. $\g^s$) denote the (countable) set of all finite groups (resp. finite solvable groups);
		\item $\g_{\leq d}$ (resp. $\g_d$) the subset of finite groups of depth at most $d$ (resp. precisely $d$);
		\item $\g_{\leq d}^s$ (resp. $\g_d^s$) the subset of finite solvable groups of depth at most $d$ (resp. precisely $d$). 
	\end{enumerate}
	For any of the sets $S$ above, we write $S(-): \n \to S$ for a fixed bijection with the natural numbers.
\end{definition}

Heuristically, the inequality $P(G)/T(G) \leq 1/d(G)$ says that as depth increases, the proportion of free incomplete Tambara functors which are flat decreases. To extend this heuristic to a precise statement for all finite groups, we will need to take sums over infinite sets of finite groups. This is possible using the following lemma:

\begin{lemma}\label{Lem:Sums}
	Let $S$ be a countable set and let $\{A_s\}_{s \in S}$ and $\{B_s\}_{s \in S}$ be collections of finite integers. Fix a bijection $\phi: \n \to S$ between $S$ and the natural numbers. Then
	\[
		\lim_{n \to \infty} \dfrac{ \sum_{i=1}^n A_{\phi(i)}}{ \sum_{i=1}^n B_{\phi(i)}} \leq \lim_{n \to \infty} \max_n \dfrac{A_{\phi(n)}}{B_{\phi(n)}}.
	\]
\end{lemma}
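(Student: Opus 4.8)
The plan is to prove this by a Stolz--Ces\`aro type argument, reading the right-hand side as the limit superior $L := \limsup_{n\to\infty} A_{\phi(n)}/B_{\phi(n)}$ of the term-by-term ratios. First I would pass to cleaner notation by setting $a_n := A_{\phi(n)}$ and $b_n := B_{\phi(n)}$. Since the ratios $a_n/b_n$ appear in the statement, each $b_n$ must be nonzero; in the intended application (where $b_n = T(G_{\phi(n)})$ counts pairs) the $b_n$ are positive integers, so $b_n \ge 1$ and hence the partial sums satisfy $\sum_{i=1}^n b_i \ge n \to +\infty$. This divergence of the denominator is precisely what makes the estimate work, so I would record it as the standing hypothesis.

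Next, fix $\epsilon > 0$. By the definition of the limit superior there is an index $N$ such that $a_n/b_n \le L + \epsilon$, equivalently $a_n \le (L+\epsilon)\,b_n$, for all $n \ge N$. Splitting the numerator sum at $N$ and dividing through by $\sum_{i=1}^n b_i$ then gives
\[
\frac{\sum_{i=1}^n a_i}{\sum_{i=1}^n b_i} \le \frac{\sum_{i=1}^{N-1} a_i}{\sum_{i=1}^n b_i} + (L+\epsilon)\,\frac{\sum_{i=N}^n b_i}{\sum_{i=1}^n b_i}.
\]
Writing the bound this way, rather than crudely replacing $\sum_{i=N}^n b_i$ by $\sum_{i=1}^n b_i$, avoids any issue with the sign of $L+\epsilon$.

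Now I would let $n \to \infty$. Since $\sum_{i=1}^n b_i \to +\infty$ while the head $\sum_{i=1}^{N-1} a_i$ is a fixed finite integer, the first term on the right vanishes; and since $\sum_{i=1}^{N-1} b_i$ is likewise fixed, the ratio $\big(\sum_{i=N}^n b_i\big)/\big(\sum_{i=1}^n b_i\big) \to 1$. Hence $\limsup_{n\to\infty}\big(\sum_{i=1}^n a_i\big)/\big(\sum_{i=1}^n b_i\big) \le L + \epsilon$, and since $\epsilon > 0$ was arbitrary, the limit superior of the left-hand ratio is at most $L$. This is even slightly stronger than the stated inequality for $\lim$, and reduces to it whenever the left-hand limit exists. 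The only point requiring genuine care, and the sole place the hypotheses enter, is the divergence of $\sum_{i=1}^n b_i$: without it the head term need not vanish and the conclusion can fail. As this divergence is immediate from $b_n \ge 1$, I expect no serious obstacle.
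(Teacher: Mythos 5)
Your proof is correct, but it takes a genuinely different route from the paper's. The paper proves the single finite-$n$ inequality
\[
\frac{\sum_{i=1}^n A_{\phi(i)}}{\sum_{i=1}^n B_{\phi(i)}} \leq \max_{1 \leq i \leq n} \frac{A_{\phi(i)}}{B_{\phi(i)}}
\]
directly (the ``mediant'' or weighted-average inequality: bound each $A_{\phi(j)} = \tfrac{A_{\phi(j)}}{B_{\phi(j)}}B_{\phi(j)}$ by the max times $B_{\phi(j)}$, sum, and divide), and then simply passes to the limit in $n$. That argument matches the stated right-hand side $\lim_n \max_{1\le i\le n}$ on the nose, uses only positivity of the $B$'s, and in particular does not need the partial sums $\sum_i B_{\phi(i)}$ to diverge. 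Your Stolz--Ces\`aro style $\epsilon$-splitting requires the extra hypothesis that $\sum_i b_i \to \infty$ (which you correctly flag and which is automatic here since $T(G)\ge 1$), but in exchange it proves the sharper bound by $\limsup_n a_n/b_n$ rather than by $\sup_n a_n/b_n = \lim_n \max_{1\le i\le n} a_i/b_i$; since the limsup is at most the sup, your conclusion implies the stated one. Both arguments, like the lemma as literally stated, implicitly assume the $B$'s are positive --- the paper only makes this explicit in the application (\cref{Thm:Asymptotics}), and you make it explicit up front, which is a mild improvement in hygiene. For the intended use in \cref{Prop:DepthBoundG} the extra sharpness buys nothing, so the paper's more elementary finite-$n$ inequality is arguably the cleaner fit, but your version would serve equally well.
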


\begin{proof}
	By taking limits, it suffices to show that 
	\[ 
		\frac{\sum_{i=1}^n A_{\phi(i)}}{\sum_{i=1}^n B_{\phi(i)}} \leq \max_{1 \leq i \leq n} \frac{A_{\phi(i)}}{B_{\phi(i)}} 
	\]
	for all $n$. To that end, observe 
	\[
		A_{\phi(j)} = \frac{A_{\phi(j)}}{B_{\phi(j)}} B_{\phi(j)}  \leq  \left(\max_{1 \leq i \leq n} \frac{A_{\phi(i)}}{B_{\phi(i)}} \right) B_{\phi(j)}. 
	\]
	Taking sums, we find 
	\[ 
		\sum_{j=1}^n A_{\phi(j)} \leq \left(\max_{1 \leq i \leq n} \frac{A_{\phi(i)}}{B_{\phi(i)}} \right) \left(\sum_{j=1}^n B_{\phi(j)} \right). 
	\]
	Dividing through by $\sum_j B_{\phi(j)}$ yields the desired inequality. 
\end{proof}

\begin{theorem}\label{Thm:Asymptotics}
	Free incomplete Tambara functors for finite groups are almost never flat. More precisely, 
	\[
		\lim_{n \to \infty} \dfrac{\sum_{i=1}^n P(\g(i))}{\sum_{i=1}^n T(\g(i))} = 0.
	\]	
\end{theorem}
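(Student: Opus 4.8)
The plan is to combine the pointwise depth bound of \cref{Prop:DepthBoundG} with a mass-concentration argument, and to feed the outcome into \cref{Lem:Sums}. Writing $A_i = P(\g(i))$ and $B_i = T(\g(i))$, the partial-sum ratio in \cref{Thm:Asymptotics} is exactly the quantity controlled by \cref{Lem:Sums}. The first thing to observe, however, is that \cref{Lem:Sums} cannot be applied naively: groups such as $C_p$ satisfy $P(C_p)/T(C_p) = 1/2$ for \emph{every} prime $p$, so the individual ratios $A_i/B_i$ do not tend to $0$ and $\limsup_i A_i/B_i \geq \tfrac12$. The real content must therefore be that the bulk of the denominator is produced by groups whose ratio $P/T$ is small. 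The organizing invariant is the depth: since a maximal subgroup chain refines to one with steps of prime index, $d(G) = \Omega(|G|)$, the number of prime factors of $|G|$ counted with multiplicity, so $d(G) \to \infty$ as $|G| \to \infty$ while there are only finitely many groups of each order. I would fix the enumeration $\g(-)$ to be compatible with increasing order, so that a partial sum $\sum_{i \le n}$ is a sum over all groups of order at most some bound $B_n \to \infty$.

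The first step is to discard the non-solvable groups. For these we have only the trivial bound $P(G) \le T(G)$, but the result of Camina--Everest--Gagen \cite{CEG86} shows that non-solvable groups are asymptotically negligible among finite groups. Since every non-solvable group has order at least $60$ while the enumeration is overwhelmingly dominated by $p$-groups, this lets me conclude that the non-solvable groups contribute a vanishing fraction of $\sum_{|G|\le B} T(G)$, so that it suffices to prove the statement with $\g$ replaced by $\g^s$.

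For solvable groups I would stratify by depth. Writing $s(G)$ for the number of subgroups of $G$ so that $T(G) = n(G)\,s(G)$, I fix a threshold $D$ and split
\[
\sum_{\substack{G \in \g^s,\ |G| \le B}} P(G) \ \le\ \sum_{\substack{G \in \g^s,\ |G|\le B\\ d(G) < D}} T(G) \ +\ \frac{1}{D}\sum_{\substack{G \in \g^s,\ |G| \le B\\ d(G) \ge D}} T(G),
\]
using $P(G) \le T(G)$ on the low-depth part and the bound $P(G) \le T(G)/d(G)$ of \cref{Prop:DepthBoundG} (itself a consequence of \cref{prop:OnlyOneSubgroup}) on the high-depth part. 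The second sum is at most $\tfrac{1}{D}\sum_{|G|\le B} T(G)$. For the first sum, a group of depth less than $D$ has order with fewer than $D$ prime factors, and the super-exponential growth of the number of $p$-groups of order $2^k$ forces the total $T$-mass to concentrate on groups of depth comparable to $\log_2 B$; thus the low-depth $T$-mass is $o\big(\sum_{|G|\le B} T(G)\big)$ as $B \to \infty$. Dividing by $\sum_{|G|\le B} T(G)$ and letting $B \to \infty$ yields $\limsup \le 1/D$, and since $D$ is arbitrary the limit is $0$, which is the assertion of \cref{Thm:Asymptotics}.

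The main obstacle is precisely the low-depth term. One must show that groups of \emph{bounded} depth carry a vanishing proportion of the total $T$-mass, and this is genuinely delicate because such groups need not have bounded $T(G)$: for instance the elementary abelian groups $C_p^2$ all have depth $2$, yet both $s(C_p^2)$ and $n(C_p^2)$ grow with $p$, so $T(C_p^2) = n(C_p^2)\,s(C_p^2)$ is unbounded. The point I would have to make precise is that, even accounting for such families, the enumeration of finite groups by order is dominated (in $T$-mass, not merely in count) by groups of large depth, which requires quantitative group-enumeration input together with the Camina--Everest--Gagen reduction. By contrast, the high-depth estimate and the passage to the limit are purely formal once \cref{Prop:DepthBoundG} and \cref{Lem:Sums} are in hand.
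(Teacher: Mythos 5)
Your proposal follows essentially the same route as the paper's proof: reduce to solvable groups via the Camina--Everest--Gagen theorem, then exploit the depth bound $P(G)/T(G)\leq 1/d(G)$ of \cref{Prop:DepthBoundG} together with the fact that large-depth groups dominate the enumeration. The only difference is organizational --- where the paper passes through an iterated limit over the strata $\g^s_{\leq d}$ and $\g^s_d$ before invoking \cref{Lem:Sums}, you split the partial sums at a depth threshold $D$ --- and your write-up is if anything the more careful one, since you explicitly flag that the individual ratios do not tend to zero (e.g.\ $P(C_p)/T(C_p)=\tfrac12$ for every prime $p$) and that the argument therefore hinges on the assertion that bounded-depth groups carry a vanishing fraction of the total $T$-mass, an assertion the paper also relies on (in the guise of its two limit interchanges) without detailed justification.
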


\begin{proof}
	Evidently $P(G)$ and $T(G)$ are positive numbers for all $G \in \g$, so 
	\[
		\lim_{n \to \infty} \dfrac{\sum_{i=1}^n P(\g(i))}{\sum_{i=1}^n T(\g(i))} \geq 0.
	\]
	We need to show
	\[
		\lim_{n \to \infty} \dfrac{\sum_{i=1}^n P(\g(i))}{\sum_{i=1}^n T(\g(i))} \leq 0.
	\]

	By the main theorem of \cite{CEG86}, almost all finite groups are solvable. Therefore 
	\[
		\lim_{n \to \infty} \dfrac{\sum_{i=1}^n P(\g(i))}{\sum_{i=1}^n T(\g(i))} = \lim_{n \to \infty} \dfrac{\sum_{i=1}^n P(\g^s(i))}{\sum_{i=1}^n T(\g^s(i))},
	\]
	so it suffices to prove the theorem for solvable groups. 

	Now, $\g^s = \bigcup_{d =0}^\infty \g^s_d$ so
	\[
		\lim_{n \to \infty} \dfrac{\sum_{i=1}^n P(\g^s(i))}{\sum_{i=1}^n T(\g^s(i))} = \lim_{d \to \infty} \lim_{n \to \infty} \dfrac{\sum_{i=1}^n P(\g^s_{\leq d}(i))}{\sum_{i=1}^n T(\g^s_{\leq d}(i))}.
	\]

	Finally, observe that almost all solvable groups of depth at most $d$ actually have depth precisely $d$, i.e. almost every element in $\g^s_{\leq d}$ is actually contained in the subset $\g^s_d$. Therefore 
	\[
		\lim_{d \to \infty} \lim_{n \to \infty} \dfrac{\sum_{i=1}^n P(\g^s_{\leq d}(i))}{\sum_{i=1}^n T(\g^s_{\leq d}(i))} = \lim_{d \to \infty} \lim_{n \to \infty} \dfrac{\sum_{i=1}^n P(\g^s_d(i))}{\sum_{i=1}^n T(\g^s_d(i))}.
	\]
	By \cref{Prop:DepthBoundG,Lem:Sums}, we have
	\[
		\lim_{d \to \infty} \lim_{n \to \infty} \dfrac{\sum_{i=1}^n P(\g^s_d(i))}{\sum_{i=1}^n T(\g^s_d(i))} \leq \lim_{d \to \infty} \lim_{n \to \infty} \max_n \dfrac{ P(\g^s_d(n))}{T(\g^s_d(n))} \leq \lim_{d \to \infty} \lim_{n \to \infty}\max_n \dfrac{1}{d} = 0,
	\]
	which implies the result in view of the previous equalities. 
\end{proof}

\begin{remark}
	In the proof of \cref{Thm:Asymptotics}, we used the fact that almost all finite groups are solvable to reduce our analysis to the solvable case. If one restricts further to cyclic groups of prime power order, one can obtain explicit bounds using the classification of indexing categories for such groups by Balchin--Barnes--Roitzheim \cite{BBR19}. In fact, the tables in \cref{App:Tables} suggested the more general statement of \cref{Thm:Asymptotics}. 
\end{remark}

%%%%%%%%%%% SECTION: FREENESS AFTER LOCALIZATION %%%%%%%%%%%%%%
\section{Freeness after Localization}\label{Sec:FreeAfterLocalization}

In classical algebra, modules often become free when the base ring is enlarged. For example, base-change from $\z$ to $\q$ (or any field) forces any module to be free. In this section, we explore what happens to free incomplete Tambara functors after inverting various elements in the Burnside functor. 

\subsection{Cohomological and Fixed Point Mackey functors}\label{SS:Cohomological}
Before discussing localization, we discuss two important classes of Mackey functors: cohomological Mackey functors and fixed point Mackey functors. 

\begin{definition}[{\cite[Section 1.4]{Gre71}}]
	A Mackey functor $\uM$ is \emph{cohomological} if
	\[
		\tr_K^H \res_K^H(x) = [H:K]x
	\]
	for all $x \in \uM(G/H)$ and all subgroups $K \leq H \leq G$. 
\end{definition}

The name ``cohomological" comes from the analogous relations satisfied by restriction and transfer in group cohomology. Cohomological Mackey functors have a coordinate free description as well. 

\begin{proposition}[{\cite[Proposition 16.3]{TW95}}]
	There is an equivalence between the category of $\uZ$-modules and the full subcategory of cohomological Mackey functors.  
\end{proposition}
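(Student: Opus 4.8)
The plan is to present $\uZ$ as a quotient Green functor of the Burnside functor and then run the equivariant analogue of the classical fact that modules over a quotient ring $R/I$ are precisely the $R$-modules annihilated by $I$. First I would identify $\uZ$ with $\uA/\uI$, where $\uI$ is the augmentation ideal, namely the kernel of the cardinality map $\uA\to\uZ$ which at level $G/H$ sends a finite $H$-set to its cardinality, i.e. $[H/K]\mapsto[H:K]$. A direct check shows this is a surjection of Green functors: it carries induction to multiplication by the index and restriction to the identity, matching the transfer and restriction of the constant Mackey functor $\uZ$. Thus $\uZ\cong\uA/\uI$.

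Second, since $\uA$ is the unit for $\boxtimes$ (so every Mackey functor is canonically an $\uA$-module and $\boxtimes_{\uA}=\boxtimes$), right exactness of the box product applied to $\uI\to\uA\to\uZ\to0$ gives, for any Mackey functor $\uM$, a natural isomorphism
\[
	\uZ\boxtimes_{\uA}\uM\cong\uM/\uI\uM,
\]
where $\uI\uM$ denotes the image of $\uI\boxtimes\uM\to\uM$. Because $\uA\to\uZ$ is an epimorphism, a $\uZ$-module structure on $\uM$ lifting its canonical $\uA$-structure is unique when it exists, and exists exactly when the unit map $\uM\to\uZ\boxtimes_{\uA}\uM$ is an isomorphism, i.e. when $\uI\uM=0$. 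This yields full faithfulness of the forgetful functor $\uZ\dMod\to\Mack_G$ and identifies its essential image with $\{\uM:\uI\uM=0\}$.

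Third, I would translate $\uI\uM=0$ into the cohomological relation. The input is that $[H/K]$ acts on $\uM(G/H)$ as $\tr_K^H\res_K^H$: writing $[H/K]=\tr_K^H(1_K)$ with $1_K=\res_K^H(1_H)$, the projection formula (Frobenius reciprocity) for modules over a Green functor gives, for $x\in\uM(G/H)$,
\[
	[H/K]\cdot x=\tr_K^H(1_K)\cdot x=\tr_K^H\big(1_K\cdot\res_K^H(x)\big)=\tr_K^H\res_K^H(x).
\]
Since $\uI(G/H)$ is the free abelian group on the elements $[H/K]-[H:K]$ for $K\leq H$, and since levelwise vanishing of these internal products forces $\uI\uM=0$ (transferred contributions $\tr_K^H(a\cdot x)$ then vanish automatically), the condition $\uI\uM=0$ is equivalent to $\tr_K^H\res_K^H(x)=[H:K]x$ for all $K\leq H\leq G$ and all $x$ --- that is, to $\uM$ being cohomological. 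Combining with the second paragraph exhibits the desired equivalence.

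The main obstacle I anticipate is the second paragraph: one must check carefully in the Mackey-functor setting that $\uZ\boxtimes_{\uA}\uM\cong\uM/\uI\uM$ and that uniqueness of the lifted action genuinely gives full faithfulness with the stated essential image, rather than merely appealing to the ring-theoretic analogue. The identification $\uZ\cong\uA/\uI$ and the projection-formula computation are short and routine by comparison.
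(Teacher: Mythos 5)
Your proof is correct and follows essentially the same route as the paper, which cites Th\'evenaz--Webb for the statement but whose subsequent remark computes $\uZ\boxtimes\uM$ levelwise as the quotient of $\uM(G/H)$ by $[H:K]x-\tr_K^H\res_K^H x$ --- precisely your identification $\uZ\cong\uA/\uI$ combined with the projection formula. The only nitpick is that the free basis of $\uI(G/H)$ should run over conjugacy classes of \emph{proper} subgroups $K<H$ (the element for $K=H$ vanishes), which does not affect the argument.
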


In particular, being a $\uZ$-module is a property of a Mackey functor, rather than extra structure. A Mackey functor $\uM$ is a $\uZ$-module if and only if the homomorphism 
\[
	M \cong \uA \boxtimes \uM \to \uZ \boxtimes \uM
\]
induced by the unit  $\eta : \uA \to \uZ$ is an isomorphism. 

\begin{remark}\label{Rmk:ZQuotient}
	Base change $\uZ \boxtimes -$ from $\uA$ to $\uZ$ may be calculated levelwise by the formula
	\[
		(\uZ \boxtimes \uM)(G/H) \cong \uM(G/H)/([H:K]x - \tr_K^H \res_K^H x).
	\]
	This follows from the fact that $\uZ$ is a quotient of $\uA$ and the box product commutes with colimits in each variable. 
\end{remark}

We now aim to characterize the free cohomological Mackey functors. To do so, we must first introduce another class of Mackey functors. 

\begin{definition}\label{exm:FixedPointFunctor}
	Let $\FP$ be the right adjoint to the forgetful functor from Mackey functors to $G$-modules which sends $\uM$ to $\uM(G/e)$ with its Weyl group action. If $V$ is a $G$-module, then $\FP(V)$ is called its \emph{fixed point Mackey functor}. 

	If $k$ is a $G$-ring, then $\FP(k)$ is a Tambara functor, and we call it a \emph{fixed point Tambara functor} of $k$. Forgetting norms, we obtain \emph{fixed point $\co$-Tambara functors}.
\end{definition}

Explicitly, if $V$ is a $G$-module, its fixed point Mackey functor $\FP(V)$ is given at level $G/H$ by $\FP(V)(G/H) = V^H$. Restriction is given by inclusion of fixed points, transfer is given by additive transfer, and norm is given by multiplicative transfer.

\begin{example}\label{exm:ConstantTambara}
	Consider $\z$ equipped with a trivial $G$-action. The fixed point Tambara functor of this $G$-ring is the constant Tambara functor $\uZ$.
\end{example}

\begin{example}\label{Ex:FreeZisFixedPoint}
	Let $H \leq G$. There is an isomorphism of $\uZ$-modules
	\[
		\uZ\{x_{G/H}\} \cong \FP(\z[G/H])
	\]
	between the free $\uZ$-module on a generator at level $G/H$ and the fixed point Mackey functor (\cref{exm:FixedPointFunctor}) of the permutation $G$-module $\z[G/H]$. 
\end{example}

\begin{remark}
	Any Mackey functor $\uM$ has a canonical homomorphism
	\[
		\uM \to \FP(\uM(G/e)).
	\]
	The right-hand side is a $\uZ$-module, so this homomorphism factors through the base change to $\uZ$ to give another
	\[
		\uZ \boxtimes \uM \to \FP(\uM(G/e)).
	\]
\end{remark}

\begin{proposition}\label{Theorem:FreeCohomological}
	A free cohomological Mackey functor is of the form $\FP(V)$ where $V$ is a permutation $G$-module. 
\end{proposition}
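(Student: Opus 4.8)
The plan is to combine the explicit identification of free $\uZ$-modules from \cref{Ex:FreeZisFixedPoint} with the observation that the fixed-point functor $\FP$ commutes with the direct sums appearing in a free module. First I would use the equivalence between $\uZ$-modules and cohomological Mackey functors \cite[Proposition 16.3]{TW95} to rephrase the hypothesis: a free cohomological Mackey functor is exactly a free $\uZ$-module, hence of the form $\bigoplus_{i \in I} \uZ\{x_{T_i}\}$ for finite $G$-sets $T_i$. Decomposing each $T_i$ into orbits and using $\uZ\{x_{G/H}\} \cong \FP(\z[G/H])$ from \cref{Ex:FreeZisFixedPoint}, I would rewrite the free module as $\bigoplus_{\alpha} \FP(\z[G/H_\alpha])$ for a (possibly infinite) family of subgroups $H_\alpha \leq G$.

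The key step is then to show $\bigoplus_\alpha \FP(\z[G/H_\alpha]) \cong \FP\big(\bigoplus_\alpha \z[G/H_\alpha]\big)$. Since direct sums of Mackey functors are computed levelwise, this amounts to the statement that for each subgroup $H$ the natural map $\bigoplus_\alpha (\z[G/H_\alpha])^H \to \big(\bigoplus_\alpha \z[G/H_\alpha]\big)^H$ is an isomorphism, which holds because an element of a direct sum has finite support and $H$-invariance is checked componentwise. One must also verify that this levelwise isomorphism respects the structure maps, but this is routine: restriction is the inclusion of fixed points, the additive transfer $\tr_K^H = \sum_{gK \in H/K} g(-)$ is applied componentwise, and the Weyl action is likewise diagonal, so all three commute with the decomposition into summands. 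Setting $V := \bigoplus_\alpha \z[G/H_\alpha] = \z\big[\coprod_\alpha G/H_\alpha\big]$, which is a permutation $G$-module with permutation basis $\coprod_\alpha G/H_\alpha$, this produces the desired isomorphism with $\FP(V)$.

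The one genuine subtlety --- and the step I expect to require the most care --- is that $\FP$ is a right adjoint and so does not preserve colimits in general; it is precisely the finiteness of $G$ (equivalently, of each $H$) that rescues the argument, via the fact that taking $H$-fixed points commutes with arbitrary direct sums of modules. For finite index sets this is automatic since $\FP$ is additive and finite direct sums are products, but the infinitely generated case must invoke the finite-support observation above. Everything else reduces to unwinding the explicit levelwise description of $\FP$ recorded after \cref{exm:FixedPointFunctor}.
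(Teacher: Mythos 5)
Your proposal is correct and follows essentially the same route as the paper: decompose the free $\uZ$-module into orbit summands, identify each $\uZ\{x_{G/H}\}$ with $\FP(\z[G/H])$ via \cref{Ex:FreeZisFixedPoint}, and commute $\FP$ past the direct sum to obtain the fixed-point functor of a permutation module. The only difference is that the paper cites \cite[Proposition 2.3]{TW95} for the fact that $\FP$ commutes with direct sums, whereas you verify it directly by the finite-support argument; both are fine.
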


\begin{proof}
	Any free cohomological Mackey functor $\uM$ is of the form 
	\[ 
		\uM = \bigoplus_{i \in I} \uZ\{x_{T_i}\} 
	\]
	where each $T_i$ is a finite $G$-set. Decomposing these finite $G$-sets into orbits, we may write $\uM$ as a sum of transitive finite $G$-sets:
	\[ 
		\uM \cong \bigoplus_{j \in J} \uZ\{x_{G/H_j}\}.
	\]
	From \cref{Ex:FreeZisFixedPoint}, each summand $\uZ\{x_{G/H_j}\}$ is a fixed point functor $\FP(\z[G/H_j])$. The functor $\FP$ commutes with direct sums \cite[Proposition 2.3]{TW95}, and direct sums of permutation modules are again permutation modules. So we are done.
\end{proof}

\begin{remark}
	Thevenaz--Webb prove in \cite[Theorem 16.5]{TW95} that every cohomological Mackey functor is a quotient of a fixed point functor $\FP(V)$ for some permutation $G$-module $V$, but they do not explicitly describe the free objects. 
\end{remark}

\subsection{Localization}\label{SS:localization}

We now discuss localization for incomplete Tambara functors. 

\begin{definition}[{\cite[Definition 5.21]{BH18}}]\label{Def:Localization}
	Let $\uR$ be an $\co$-Tambara functor and let $\uS = \{ (a_i, T_i) \mid a_i \in \uR(T_i), \ i \in I\}$ be a collection of elements in the values of $\uR$ at various finite $G$-sets. 

	A map $\phi: \uR \to \uR'$ of $\co$-Tambara functors \emph{inverts} $\uS$ if for all $i \in I$, $\phi(a_i) \in \uR'(T_i)^{\times}.$

	Let $\phi : \uR \to \uS^{-1}\uR$ be the initial map of $\co$-Tambara functors which inverts $\uS$.\footnote{This exists by \cite[Theorem 5.23]{BH18}.} We will refer to $\uS^{-1}\uR$ as the \emph{localization of $\uR$ at $\uS$}. 

	If $\uM$ is an $\uR$-module, define $S^{-1}\uM := \uS^{-1} \uR\boxtimes_{\uR} \uM$. 
\end{definition}

We consider several important localizations of the Burnside $\co$-Tambara functor.

\begin{example}
	Consider the localization of the Burnside Tambara functor $\uA$ at $\uS = \{(n, G/G) \mid n \in \mathbb N_{>0}\}$. The Tambara functor $\uS^{-1}\uA$ is the rational Burnside functor \(\uA_{\q}\). On a finite $G$-set $T$, $\uA_\q(T) \cong \uA(T) \otimes_\z \q$. A module over $\uA_\q$ is a \emph{rational Mackey functor}, i.e. a Mackey functor $\uM$ such that each $\uM(T)$ is a rational vector space. 
\end{example}

\begin{example}
	Consider the Burnside Mackey functor $\uA$. Let $\uS = \{(|G|, G/G)\}$, and write $\uA[\tfrac{1}{|G|}] := \uS^{-1}\uA$. On a finite $G$-set $T$, this Mackey functor is given by 
	\[
		\uA[\tfrac{1}{|G|}](T) = \uA(T)[\tfrac{1}{|G|}] = \uA(T) \otimes_\z \z[\tfrac{1}{|G|}].
	\]
	Modules over this Mackey functor are Mackey functors taking values in $\z[\tfrac{1}{|G|}]$-modules rather than abelian groups. 
\end{example}

Instead of inverting the order of $G$, we could categorify and invert the class of $G$ as a finite $G$-set in the Burnside ring $\uA(G/G)$ of finite $G$-sets. 

\begin{notation}
	Write $\uA[\tfrac{1}{[G/e]}] := \uS^{-1}\uA$ for $\uS = \{([G], G/G)\}$. 
\end{notation}

%\begin{lemma}\label{lemma:invertGInvertOrder}
%	For all $H \leq G$, $|G|$ is a unit in $\uA[\tfrac{1}{[G/e]}](G/H)$. 
%\end{lemma}
%
%\begin{proof}
%	By the Frobenius relation 
%	\[ 
%		\tr_K^G(a \cdot \res^G_K(b)) = \tr^G_K(a) \cdot b 
%	\]
%	with $a = [K/K]$, $b = [G/e]$, we obtain the equations
%	\[
%		[G : K] [G/e] = [G/K][G/e] 
%	\]
%	in $\uS^{-1}\uA(G/G)$ for all $K \leq G$. 
%
%	In particular, we have $[G : K][G/e] = [G/K][G/e]$, or 
%	\[ 
%		[G : K] = [G/K] 
%	\]
%	because $[G/e]$ is invertible. When $K = e$, we have $|G| = [G/e]$ and both are units. Hence, divisors of $|G|$ are units as well, and in particular $[G : K] = [G/K]$ are units.  
%
%	Upon restriction, we see that $|G| = \res^G_H(|G|)$ becomes a unit in $\uS^{-1}\uA(G/H)$.
%\end{proof}

\begin{lemma}\label{lemma:invertGInvertIndex}
	For all $K \leq H \leq G$, $[H/K]$ and $[H : K]$ are invertible and equal in $\uA[\tfrac{1}{[G/e]}](G/H)$. 
\end{lemma}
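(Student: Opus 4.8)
The plan is to work entirely inside the commutative rings $\uA(G/H)=A(H)$ and their images in the localization, using the fact that the localization map $\phi\colon\uA\to\uA[\tfrac{1}{[G/e]}]$ is a map of Tambara functors. Thus each component $\phi_{G/H}$ is a unital ring homomorphism, and every restriction map commutes with $\phi$. The restriction maps of $\uA$ are given by restricting a $G$-set to an $H$-set, so they send the unit $[G/G]$ to $[H/H]$ and are unital ring homomorphisms; consequently the same is true for the restriction maps of $\uA[\tfrac{1}{[G/e]}]$. The only genuinely geometric input I will need is the pair of orbit-decomposition identities in $A(H)$,
\[ [H/e]\cdot[H/K]=[H:K]\,[H/e], \qquad\text{and in particular}\qquad [H/e]^2=|H|\,[H/e], \]
valid for every $K\leq H$; both follow by counting free orbits in the relevant products of finite $H$-sets.

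First I would pin down the image of $[G/e]$ itself. Taking the second identity with $H=G$ gives $[G/e]^2=|G|\,[G/e]$ in $A(G)$, and since $\phi([G/e])$ is by construction a unit in $\uA[\tfrac{1}{[G/e]}](G/G)$, cancelling one factor shows that its image equals $|G|$; in particular $|G|$ is a unit at level $G/G$. Because $\res_H^G$ is a unital ring homomorphism it sends the unit $|G|\cdot 1$ to $|G|\cdot 1$, and ring homomorphisms preserve units, so $|G|$ is a unit in $\uA[\tfrac{1}{[G/e]}](G/H)$ for every $H$. As $[H:K]$ divides $|H|$, which divides $|G|$, each such index is a divisor of a unit integer and is therefore itself a unit at level $G/H$. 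This already establishes the invertibility of $[H:K]$ asserted in the lemma.

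Next I would obtain the invertibility of $[H/e]$, which is what I want to cancel. Since restriction is pullback, $\res_H^G[G/e]=[G:H]\,[H/e]$ in $A(H)$, and the left-hand side is the image of a unit and hence a unit at level $G/H$. Because $[G:H]$ is a unit by the previous step, it follows that $[H/e]$ is a unit as well. Finally I would feed this into the first identity $[H/e]\cdot[H/K]=[H:K]\,[H/e]$: cancelling the unit $[H/e]$ yields $[H/K]=[H:K]$ in $\uA[\tfrac{1}{[G/e]}](G/H)$, and the right-hand side is a unit, so $[H/K]$ is invertible and equal to $[H:K]$, as claimed.

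Every step is short, so the crux is conceptual rather than computational: the localization only formally inverts the single element $[G/e]\in\uA(G/G)$, and I must propagate invertibility down to the integers and across all levels. The identity $[G/e]^2=|G|\,[G/e]$, which collapses $[G/e]$ to the integer $|G|$, is precisely what drives this propagation, while the verification of $\res_H^G[G/e]=[G:H]\,[H/e]$ is the only point where the geometry of restricting $G$-sets enters. The main thing to get right is therefore the bookkeeping that lets me conclude ``unit at level $G/H$'' from ``unit at level $G/G$,'' which is exactly the unitality of the structure maps of $\uA$ and its localization.
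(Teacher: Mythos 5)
Your proposal is correct and follows essentially the same route as the paper: both rest on the orbit-counting identity $[H/e]\cdot[H/K]=[H:K]\,[H/e]$ (which the paper phrases via the Frobenius relation), the collapse $[G/e]=|G|$ after localization, and the observation that $\res^G_H[G/e]=[G:H]\,[H/e]$ propagates invertibility of $[H/e]$ to level $G/H$ so the level-$G/G$ argument can be repeated there. The only difference is cosmetic ordering of the steps.
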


\begin{proof}
	We show this first at level \(G/G\). By the Frobenius relation 
	\[ 
	    \tr^H_K(a) \cdot b=\tr_K^H(a \cdot \res^H_K(b)), 
	\]
	we obtain equations
	\[
		[G/\{e\}]\cdot [G/H]=[G/\{e\}]\cdot [G:H].
	\]
	in \(\uA(G/G)\). In the localization inverting \([G/\{e\}]\), we deduce that for all \(H\),
	\[
	[G/H]=[G:H].
	\]
	When \(H=\{e\}\), this implies that \([G/\{e\}]=|G|\) and hence that \(|G|\) is a unit. Since \([G:H]\) divides \(|G|\), it is a unit, and hence for all \(H\), \([G/H]\) is a unit.
	
	For an arbitrary level \(G/H\), note that 
	\[
	i_H^\ast G/\{e\}=[G:H] H/\{e\},
	\]
	and hence inverting \([G/\{e\}]\) at level \(G/G\) also inverts \([H/\{e\}]\). The result then follows from the analysis for \(G/G\).
\end{proof}

Consider the homomorphism of Tambara functors $\uA \to \uZ[\tfrac{1}{|G|}]$ given by the composite of $\uA \to \uZ$ and localization. At level $G/H$, a finite $H$-set is sent to its cardinality. In particular, the image of $[G/e] \in \uA(G/G)$ is a unit. 
Hence we obtain a homomorphism of Tambara functors 
\[ 
	\alpha \colon \uA[\tfrac{1}{[G/e]}] \to \uZ[\tfrac{1}{|G|}] 
\] 
from the universal property of localization. 

\begin{theorem}\label{prop:invertGiso}
	The morphism $\alpha \colon \uA[\tfrac{1}{[G/e]}] \to \uZ[\tfrac{1}{|G|}]$ is an isomorphism of Tambara functors.
\end{theorem}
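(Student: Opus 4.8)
The plan is to bypass a direct level-by-level analysis of the localization and instead show that $\uZ[\tfrac{1}{|G|}]$, equipped with the tautological map from $\uA$, satisfies the universal property characterizing $\uA[\tfrac{1}{[G/e]}]$ in \cref{Def:Localization}. The composite $\uA \to \uZ \to \uZ[\tfrac{1}{|G|}]$ sends $[G/e]$ to $|G|$, which is invertible in $\z[\tfrac{1}{|G|}]$, so this composite inverts $[G/e]$ and is precisely the map inducing $\alpha$. It therefore suffices to check that for every Tambara functor $\uR$ whose structure map $f \colon \uA \to \uR$ inverts $[G/e]$, the map $f$ factors uniquely through $\uZ[\tfrac{1}{|G|}]$; uniqueness of initial objects then forces $\alpha$ to be an isomorphism.

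First I would show that any such $\uR$ is cohomological. Pushing the relation $[G/e]\big([G/H]-[G:H]\big)=0$ of $\uA(G/G)$ derived in \cref{lemma:invertGInvertIndex} forward along $f$, and using that the image of $[G/e]$ is a unit, gives $[G/H]=[G:H]$ in $\uR(G/G)$; taking $H=e$ identifies this unit with $|G|\cdot 1$, so $|G|$, and hence every index dividing it, is a unit at level $G/G$ and therefore at every level $G/H$ after restriction. Since $\res^G_H[G/e] = [G:H]\cdot[H/e]$ and $[G:H]$ is now a unit, $[H/e]$ is a unit in $\uR(G/H)$ as well. Replaying the same Frobenius computation in the Burnside ring $\uA(G/H)$ to obtain $[H/e]\big([H/K]-[H:K]\big)=0$, pushing it into $\uR$, and inverting $[H/e]$ yields $[H/K]=[H:K]$ in $\uR(G/H)$ for all $K\leq H$. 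By the projection formula $\tr^H_K\res^H_K(x)=[H/K]\cdot x$, this is exactly the assertion that $\uR$ is cohomological.

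Next, because $\uZ$ is the quotient of $\uA$ by precisely the cohomological relations $[H:K]x-\tr^H_K\res^H_K x$ (\cref{Rmk:ZQuotient}) and $\uA\to\uZ$ is a levelwise surjection of Tambara functors, the cohomologicality of $\uR$ forces $f$ to factor uniquely as $\uA\to\uZ\to\uR$. Since $[G/e]$ maps to $|G|\in\uZ(G/G)$ and $f$ inverts $[G/e]$, the map $\uZ\to\uR$ inverts $|G|$, so by the universal property of $\uZ\to\uZ[\tfrac{1}{|G|}]$ it factors uniquely through $\uZ[\tfrac{1}{|G|}]$. Composing the two unique factorizations exhibits the required unique factorization of $f$, establishing the universal property and hence the theorem.

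I expect the main obstacle to be the second paragraph: verifying that inverting the single element $[G/e]$ forces cohomologicality at \emph{every} level rather than only at $G/G$. The crux is propagating the unit $[G/e]$ down the subgroup lattice via restriction to conclude that each $[H/e]$ becomes invertible, after which the Frobenius manipulation of \cref{lemma:invertGInvertIndex} can be replayed inside $i_H^*\uR$. One should also confirm that both factorizations respect the full Tambara structure, including norms and not merely the underlying Green structure; this follows from $\uA\to\uZ$ and $\uZ\to\uZ[\tfrac{1}{|G|}]$ being epimorphisms of Tambara functors. (Alternatively, \cref{lemma:invertGInvertIndex} gives levelwise surjectivity of $\alpha$ directly, but injectivity is cleaner to obtain through the universal property than through an explicit description of the localization.)
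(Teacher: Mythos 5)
Your argument takes a genuinely different route from the paper's. The paper proceeds by direct computation: surjectivity of $\alpha$ because the constant Mackey functor is generated by $1$ at level $G/G$, and injectivity because \cref{lemma:invertGInvertIndex} shows each $\uA[\tfrac{1}{[G/e]}](G/H)$ is a quotient of $\uA[\tfrac{1}{|G|}](G/H)/([H/K]-[H:K]) \cong \z[\tfrac{1}{|G|}]$, so the localization is sandwiched between two copies of $\z[\tfrac{1}{|G|}]$. You instead verify that $\uZ[\tfrac{1}{|G|}]$ satisfies the universal property of \cref{Def:Localization}. Both arguments turn on the Frobenius computation of \cref{lemma:invertGInvertIndex}; yours never needs a levelwise description of the localization, and your intermediate step --- any Tambara functor inverting $[G/e]$ is cohomological --- is a clean statement the paper only recovers a posteriori. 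Your first two paragraphs (cohomologicality of $\uR$, and the unique factorization through $\uZ$ via levelwise surjectivity of $\uA\to\uZ$) are correct as written.

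There is, however, a gap in the final step. You need the factorization $\uZ \to \uZ[\tfrac{1}{|G|}] \to \uR$ to \emph{exist} as a composite of Tambara morphisms, and you justify norm-compatibility by saying that $\uZ\to\uZ[\tfrac{1}{|G|}]$ is an epimorphism of Tambara functors. An epimorphism gives \emph{uniqueness} of a factorization, not existence; and since $\uZ\to\uZ[\tfrac{1}{|G|}]$ is not levelwise surjective, the levelwise ring factorizations $\z[\tfrac{1}{|G|}]\to\uR(G/H)$ do not automatically commute with transfers and norms. The norm issue is genuine: in a general Tambara functor $\nm^H_K\res^H_K(x)\neq x^{[H:K]}$ (compare the formula for $\nm_e^{C_p}$ in $\uA$), whereas the constant Tambara functor has $\nm^H_K(z)=z^{[H:K]}$, so something about $\uR$ must be used. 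The repair is available from what you have already proved: since every index $[H:K]$ is a unit in $\uR$, the relation $\tr^H_K\res^H_K(x)=[H:K]x$ forces every restriction in $\uR$ to be injective; the double coset formula $\res^H_e\nm^H_K(x)=\prod_{hK\in H/K}h\cdot\res^K_e(x)$ then determines all norms of $\uR$ by their effect at the bottom level, where the required identities reduce to identities of ring maps out of $\z[\tfrac{1}{|G|}]$ landing in the Weyl-invariants of $\uR(G/e)$. Compatibility with transfers follows from the Frobenius relation exactly as in \cref{lemma:invertGInvertIndex}. With that addition your proof is complete.
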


\begin{proof}
Since \([G/\{e\}]\) being a unit implies that \(|G|\) is a unit, and since all restriction maps are ring homomorphisms, both the source and the target of the universal map are actually Mackey functors in \(\z[1/|G|]\)-modules. 

As a Mackey functor, the constant Mackey functor \(\uZ[1/|G|]\) is generated by the element \(1\) at level \(G/G\), and the element \(1\) is in the image of the map from \(\uA\), and hence the localization. This means that the natural map is surjective.

\cref{lemma:invertGInvertIndex} shows that for any subgroup \(H\), the map
\[
\uA\big[\tfrac{1}{|G|}\big]\to\uA\big[\tfrac{1}{[G/e]}\big](G/H)
\]
factors through the quotient by the ideal generated by \([H/K]-[H:K]\) for all \(K\). This is the contant Mackey functor \(\z[1/|G|]\), and hence the map is an isomorphism. 
\end{proof}

\subsection{Modules over $\uA[\tfrac{1}{|G|}]$ and $\uA[\tfrac{1}{[G/e]}]$}\label{SS:splitting}

By \cref{prop:invertGiso}, modules over $\uA[\tfrac{1}{[G/e]}]$ and modules over ${\uZ[\frac{1}{|G|}]}$ are equivalent. By this observation, $\uA[\tfrac{1}{[G/e]}]$-modules are cohomological $\uA[\tfrac{1}{|G|}]$-modules. We aim to characterize such modules. 

We begin by discussing the splitting of the category of rational Mackey functors. Let $\Sub_c(G)$ denote a set of representatives for conjugacy classes of subgroups of $G$.

Work of Dress \cite{Dre69} constructs a system of orthogonal idempotents for the rational Burnside ring. For each subgroup $H$ of $G$, define a \emph{mark homomorphism} $\phi_H \colon \uA(G/G) \to \z$ by $\phi_H([T]) = |T^H|$. Rationally, these assemble into an isomorphism of rings 
	\[
		\phi \colon \uA_\q(G/G) \xrightarrow{\cong} \prod_{H \in \Sub_c(G)} \q. 
	\]
	The projections onto individual factors yield a system of orthogonal idempotents $\{e_H\}_{H \in \Sub_c(G)}$ for $\uA_\q(G/G)$ characterized by the property that 
	\[ 
		\phi_H(e_K) = 
		\begin{cases}
			1 & H \text{ and } K \text{ are conjugate in } G,\\
			0 & \text{otherwise.}
		\end{cases}
	\]

\begin{example}\label{Example:IdempotentForTrivial}
	For $H = e$, $e_{\{e\}} = \frac{1}{|G|}[G/e] \in \uA_\q(G/G)$ is the desired idempotent. Indeed, one may verify that $e_{\{e\}}^2 = e_{\{e\}}$, and  
	\[
		\phi_H\left(\tfrac{1}{|G|}[G/e]\right) = \tfrac{1}{|G|}|G^H| = 
		\begin{cases}
			1 & H = e,\\
			0 & H \neq e. 
		\end{cases}
	\]
%\[ e_{\{e\}}^2 = \tfrac{1}{|G|}[G/e] \cdot \tfrac{1}{|G|}[G/e] = \frac{1}{|G|^2} [G/e \times G/e] = \frac{1}{|G|^2} |G|[G/e] = e_{\{e\}}. \]
\end{example}

Since $\uA(G/G) \cong \Mack_G(\uA, \uA)$, this system of orthogonal idempotents gives a splitting of the rational Burnside Mackey functor. We summarize these results in the following theorem:

\begin{theorem}[{\cite{Dre69}}]\label{Thm:RationalBurnsideFunctorSplits}
	There is a system of orthogonal idempotents $\{e_H\}_{H \in \Sub_c(G)}$ splitting the rational Burnside Green functor: there is an isomorphism of Green functors
	\[ 
		\uA_\q \cong \bigoplus_{H \in \Sub_c(G)} e_H \uA_\q, 
	\]
	where $e_H \uA_\q$ is the sub-Green functor of $\uA_\q$ with
	\[ 
		(e_H\uA_\q)(G/K) = \res^G_K(e_H) \cdot \uA_\q(G/K).
	\]
\end{theorem}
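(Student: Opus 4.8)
The plan is to promote each idempotent $e_H \in \uA_\q(G/G)$ to an idempotent endomorphism of the Green functor $\uA_\q$ and then invoke idempotent-completeness of the (abelian) category of rational Mackey functors. Since $\uA_\q$ is the unit for the box product and represents evaluation at $G/G$, there is a ring isomorphism $\Mack_G(\uA_\q,\uA_\q) \cong \uA_\q(G/G)$ under which composition corresponds to Burnside-ring multiplication. Operationally, a global element $a \in \uA_\q(G/G)$ acts as the natural transformation whose component at $G/K$ is multiplication by $\res^G_K(a)$, and it is this description that we will use.

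First I would verify that this prescription really does define a map of Mackey functors -- this is the crux of the argument. Compatibility with restriction is immediate because each $\res^H_K$ is a ring homomorphism and $\res^H_K \res^G_H = \res^G_K$. Compatibility with transfer is exactly Frobenius reciprocity: for $b = \res^G_H(a)$ and $x \in \uA_\q(G/K)$,
\[ \tr^H_K\!\big(\res^H_K(b)\cdot x\big) = b\cdot \tr^H_K(x), \]
so postcomposing a transfer with multiplication by $\res^G_H(a)$ agrees with precomposing it with multiplication by $\res^G_K(a)$. Applying this to the $e_H$ yields endomorphisms $\epsilon_H$ of $\uA_\q$. Because $\res^G_K$ is a ring homomorphism, the defining relations $\sum_H e_H = 1$ and $e_H e_{H'} = \delta_{HH'}e_H$ in $\uA_\q(G/G)$ restrict to the analogous relations among the $\res^G_K(e_H)$ at every level $G/K$; thus $\{\res^G_K(e_H)\}_{H}$ is a complete orthogonal system of idempotents in the ring $\uA_\q(G/K)$, and the $\epsilon_H$ are orthogonal idempotent endomorphisms of $\uA_\q$ summing to the identity.

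Finally I would assemble the splitting. Rational Mackey functors form an abelian, hence idempotent-complete, category, so the orthogonal idempotents $\epsilon_H$ induce a direct sum decomposition $\uA_\q \cong \bigoplus_{H} \im(\epsilon_H)$, and by construction $\im(\epsilon_H)(G/K) = \res^G_K(e_H)\,\uA_\q(G/K)$, which is the stated summand $e_H\uA_\q$. It remains to check that each $e_H\uA_\q$ is a sub-Green functor: closure under multiplication is clear since $\res^G_K(e_H)$ is a central idempotent; closure under restriction follows from $\res$ being a ring map; and closure under transfer is again Frobenius, giving $\tr^H_K(\res^G_K(e_H)\cdot x) = \res^G_H(e_H)\cdot \tr^H_K(x)$. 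Each summand is then a Green functor with unit $\res^G_K(e_H)$, and since the orthogonal idempotents sum to $1$ the levelwise product of rings recovers $\uA_\q(G/K)$; hence the decomposition is an isomorphism of Green functors.

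The main obstacle is the well-definedness in the second paragraph -- checking that multiplication by a global class commutes with transfers, and dually that each summand is closed under transfer. Both reduce to the Frobenius (projection) formula; once that is in hand, the orthogonality relations and the assembly into a Green-functor splitting are formal consequences of idempotent-completeness and the fact that each restriction map is a ring homomorphism.
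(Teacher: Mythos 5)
Your argument is correct and follows essentially the same route the paper takes: the paper offers no independent proof, only the citation of Dress together with the observation that $\uA_\q(G/G) \cong \Mack_G(\uA_\q,\uA_\q)$, so that the orthogonal idempotents coming from the mark homomorphisms split the Green functor. Your write-up simply fills in the details of that strategy (the Frobenius check that multiplication by a global class is a Mackey endomorphism, levelwise orthogonality via the restriction ring maps, and idempotent-completeness), all of which are verified correctly.
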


This splitting of the monoidal unit gives a canonical splitting of the category of rational Mackey functors. Greenlees--May \cite[Appendix A]{GM95} then prove equivalences between $e_H\uA_\q$-modules and modules over a rational group-ring.

\begin{theorem}[{\cite[Theorem A.9]{GM95}}]\label{Thm:RationalMackeyCategorySplits}
	For any $H \leq G$ there is an equivalence of categories
	\[
		U_H \colon e_H\uA_\q\mhyphen\Mod  \simeq \displaystyle \q[W_G(H)]\mhyphen\Mod \colon F_H
	\]
	%where $H$ ranges over a set of representatives for conjugacy classes of subgroups of $G$. 
	Both $U_H$ and $F_H$ are exact functors. 
\end{theorem}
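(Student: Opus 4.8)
The plan is to prove this equivalence by Morita theory, realizing the source category as modules over the endomorphism ring of an explicit projective generator. Throughout write $\uR := e_H\uA_\q$, which is a rational Green functor by \cref{Thm:RationalBurnsideFunctorSplits}, and set $W := W_G(H) = N_G(H)/H$.

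First I would pin down the two functors. The functor $U_H$ is evaluation at $G/H$: for an $\uR$-module $\uM$, put $U_H(\uM) := \uM(G/H)$. This carries the Weyl action of $W$ (\cref{Rmk:WeylAction}) and so is naturally a $\q[W]$-module. Because the abelian structure on $\uR\mhyphen\Mod$ is computed levelwise, evaluation is exact, so exactness of $U_H$ comes for free. By the universal property of free modules (free $\uR$-modules on one generator represent evaluation), there is a projective module $\uP := \uR\{x_{G/H}\}$ with a natural isomorphism $U_H \cong \operatorname{Hom}_{\uR}(\uP, -)$, and the ring $\operatorname{End}_{\uR}(\uP) \cong \uP(G/H)$ acts on $U_H(\uM) = \uM(G/H)$ on the right.

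The Morita argument then requires two checks: (a) $\uP$ generates $\uR\mhyphen\Mod$, and (b) $\operatorname{End}_{\uR}(\uP)^{\mathrm{op}} \cong \q[W]$. For (b) I would compute the underlying Mackey functor of $\uP$ at level $G/H$ via the double-coset formula for the underlying Mackey functor of a free module over a Green functor, which expresses $\uP(G/H)$ as a sum indexed by $H\backslash G/H$ of terms built from $\uR$ on subgroups of the form $H\cap {}^gH$. Here the key input is that $e_H\uA_\q$ is \emph{concentrated on the conjugacy class of $H$}, a consequence of the mark-homomorphism characterization of $e_H$ in \cref{Thm:RationalBurnsideFunctorSplits}: only the double cosets with ${}^gH = H$, i.e. $g \in N_G(H)$, contribute, each contributing a single copy of $\q$. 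This identifies $\uP(G/H)$ with the regular representation $\q[W]$ as a $\q[W]$-module, and hence $\operatorname{End}_{\uR}(\uP)\cong \q[W]$. For (a) I would show that any $\uR$-module $\uM$ with $\uM(G/H) = 0$ vanishes identically: since $e_H$ annihilates the contributions away from the conjugacy class of $H$, one gets $\uM(G/K) = 0$ for $K$ not subconjugate to $H$, while for $K$ subconjugate to $H$ the $e_H$-split restriction and transfer maps linking $G/K$ to $G/H$ recover $\uM(G/K)$ from $\uM(G/H)$.

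Granting (a) and (b), classical Morita theory gives that $U_H = \operatorname{Hom}_{\uR}(\uP, -)$ is an equivalence $\uR\mhyphen\Mod \simeq \operatorname{End}_{\uR}(\uP)^{\mathrm{op}}\mhyphen\Mod = \q[W]\mhyphen\Mod$, with quasi-inverse $F_H := \uP \boxtimes_{\q[W]} (-)$. Both functors are exact: $U_H$ because evaluation is exact, and $F_H$ because it is an inverse equivalence of abelian categories. The main obstacle is the double-coset computation in (b) together with the support statement underlying (a); both rest on controlling $e_H\uA_\q$ through Dress's idempotents, namely that rationally $e_H$ concentrates all data at the conjugacy class of $H$ and that at the top level $G/H$ the residual structure is precisely the regular representation of the Weyl group $W$.
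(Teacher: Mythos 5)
The paper does not actually prove this statement; it is quoted from Greenlees--May \cite[Theorem A.9]{GM95}, and the only proof-adjacent discussion in the paper is the remark before \cref{theorem:invertOrderSplitsModules} deferring to the exposition in [BK20]. Your Morita-theoretic route --- take $\uP = e_H\uA_\q\{x_{G/H}\}$ as a projective generator, identify $\operatorname{End}_{\uR}(\uP)\cong \uP(G/H)\cong \q[W_G(H)]$ via the double-coset decomposition and the vanishing of $\res^G_L(e_H)$ for $L$ a proper subgroup of $H$, and conclude by Morita theory --- is essentially the standard algebraic proof (it is the Th\'evenaz--Webb/[BK20] argument rather than the stable-homotopy argument of [GM95]), and the endomorphism-ring computation in your step (b) is correct: only the double cosets $HgH$ with $g\in N_G(H)$ survive $e_H$, each contributing a copy of $\q$, and composition of the surviving spans $G/H \xleftarrow{=} G/H \xrightarrow{c_\gamma} G/H$ reproduces multiplication in $W_G(H)$.

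There is, however, a concrete error in your step (a): the subconjugacy relation is reversed. Since $\res^G_K(e_H^G)$ is the sum of the idempotents $e_L^K$ over $K$-conjugacy classes of subgroups $L\leq K$ that are $G$-conjugate to $H$, it vanishes unless $K$ \emph{contains} a conjugate of $H$. Hence an $e_H\uA_\q$-module $\uM$ satisfies $\uM(G/K)=0$ for every $K$ to which $H$ is \emph{not} subconjugate --- in particular $\uM(G/K)=0$ for all proper subgroups $K$ of $H$ --- and it is for the $K$ \emph{containing} a conjugate of $H$ that one must recover $\uM(G/K)$ from $\uM(G/H)$ via the $e_H$-split transfer and restriction through $G/H$. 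As written, your argument asserts vanishing on the wrong family and then attempts to reconstruct $\uM(G/K)$ precisely on the subgroups where the module is already forced to vanish, so the generation claim (``$\uM(G/H)=0$ implies $\uM=0$'') is not established by the text as it stands. The fix is local: swap the two cases, and for $K\supseteq {}^gH$ use that $\res^G_K(e_H)$ is a sum of transfers from the conjugates of $H$ inside $K$, so that the identity of $\uM(G/K)$ factors through $\bigoplus \uM(G/{}^gH)$. A sanity check that catches the reversal is $G=C_p$, $H=C_p$: here $e_{C_p}\uA_\q$ is concentrated at level $C_p/C_p$ (which contains $H$) and vanishes at $C_p/e$ (which is subconjugate to $H$), the opposite of what your step (a) predicts.
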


Together, the previous two theorems combine to give an equivalence of categories 
\[ 
	\uA_\q\mhyphen\Mod \simeq \prod_{H \in \Sub_c(G)} \q[W_G(H)]\mhyphen\Mod. 
\]
This equivalence was independently proven in \cite[Appendix A]{GM95} and \cite{TW95} using very different methods -- the former approaches the problem from the perspective of stable homotopy theory, whereas the latter uses algebraic techniques. A recent exposition of this result can be found in \cite{BK20}.  

A consequence of this theorem is that all rational Mackey functors are projective. Indeed, by Maschke's theorem all $\q[W_G(H)]$-modules are projective, and the equivalence of the previous theorem is by exact functors. 

\begin{corollary}[{\cite[Proposition A.2]{GM95}}]
	All rational Mackey functors are projective. 
\end{corollary}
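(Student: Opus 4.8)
The plan is to exploit the equivalence of categories
\[
\uA_\q\mhyphen\Mod \simeq \prod_{H \in \Sub_c(G)} \q[W_G(H)]\mhyphen\Mod
\]
assembled from \cref{Thm:RationalBurnsideFunctorSplits} and \cref{Thm:RationalMackeyCategorySplits}, and to transport the conclusion of Maschke's theorem across it. Since an $\uA_\q$-module is precisely a rational Mackey functor, it suffices to show that every object of $\uA_\q\mhyphen\Mod$ is projective.

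First I would recall that projectivity is an intrinsic property of an object in an abelian category: $\uP$ is projective if and only if the functor $\uA_\q\mhyphen\Mod(\uP, -)$ is exact. An equivalence of abelian categories preserves exactness in both directions, so it carries projective objects to projective objects and reflects them; equivalently, one may invoke the exactness of the functors $U_H$ and $F_H$ recorded in \cref{Thm:RationalMackeyCategorySplits}. Hence it is enough to check projectivity after applying the equivalence, i.e. to verify that every object of each factor $\q[W_G(H)]\mhyphen\Mod$ is projective.

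This last point is exactly Maschke's theorem: because $\q$ has characteristic zero and each Weyl group $W_G(H) = N_G(H)/H$ is finite, the group algebra $\q[W_G(H)]$ is semisimple, and so every $\q[W_G(H)]$-module is projective. A tuple of projective modules is projective in the product category, so the $\uA_\q$-module corresponding under the equivalence to any such tuple is projective, which gives the claim.

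The main obstacle here is essentially cosmetic rather than mathematical: the only genuine thing to confirm is that projectivity transports across the equivalence, and this is immediate once one notes that the equivalence is realized by exact functors. No splitting map or lifting needs to be constructed by hand, so the corollary follows formally from the semisimplicity of the rational group algebras $\q[W_G(H)]$.
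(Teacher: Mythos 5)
Your proposal is correct and follows essentially the same route as the paper: both arguments transport Maschke's theorem (semisimplicity of $\q[W_G(H)]$ for finite Weyl groups) across the equivalence $\uA_\q\mhyphen\Mod \simeq \prod_{H \in \Sub_c(G)} \q[W_G(H)]\mhyphen\Mod$, using that the equivalence is implemented by exact functors and hence preserves projectivity. The only cosmetic difference is that you spell out why projectivity is preserved under an equivalence of abelian categories, which the paper leaves implicit.
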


To prove \cref{Thm:RationalMackeyCategorySplits,Thm:RationalBurnsideFunctorSplits}, it turns out that it isn't necessary to work rationally, but merely to invert $|G|$. 

\begin{theorem}\label{theorem:invertOrderSplitsModules}
	There is an equivalence of categories 
	\begin{equation}\label{Eq:InvertOrderSplitsModules}
		U \colon \uA[\tfrac{1}{|G|}]\mhyphen\Mod \simeq \prod_{H \in \Sub_c(G)} \z[\tfrac{1}{|G|}][W_G(H)]\mhyphen\Mod \colon F.
	\end{equation}
	Both $U$ and $F$ are exact functors. 
\end{theorem}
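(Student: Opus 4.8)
The plan is to rerun the arguments behind \cref{Thm:RationalBurnsideFunctorSplits} and \cref{Thm:RationalMackeyCategorySplits} while keeping track of denominators, the point being that every appeal to rationalization in those proofs only ever requires inverting divisors of $|G|$.

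First I would establish the integral analogue of Dress's idempotent splitting. The mark homomorphism $\phi\colon \uA(G/G)\to\prod_{H\in\Sub_c(G)}\z$, $[T]\mapsto(|T^H|)_H$, is injective with finite cokernel, and Gluck's formula $e_H=\tfrac{1}{|N_G(H)|}\sum_{K\leq H}|K|\,\mu(K,H)[G/K]$ exhibits each Dress idempotent with denominator dividing $|G|$. Equivalently, the cokernel of $\phi$ is annihilated by $|G|$, so after inverting $|G|$ the mark homomorphism becomes an isomorphism
\[
	\phi\colon \uA[\tfrac{1}{|G|}](G/G)\xrightarrow{\ \cong\ }\prod_{H\in\Sub_c(G)}\z[\tfrac{1}{|G|}].
\]
In particular the orthogonal idempotents $\{e_H\}$ already live in $\uA[\tfrac{1}{|G|}](G/G)$; this is consistent with \cref{Example:IdempotentForTrivial}, where $e_{\{e\}}=\tfrac{1}{|G|}[G/e]$ is manifestly defined once $|G|$ is inverted.

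Second, since $\uA[\tfrac{1}{|G|}](G/G)\cong\Mack_G(\uA[\tfrac{1}{|G|}],\uA[\tfrac{1}{|G|}])$ is the endomorphism ring of the monoidal unit, this complete system of orthogonal idempotents splits the unit as a Green functor,
\[
	\uA[\tfrac{1}{|G|}]\cong\bigoplus_{H\in\Sub_c(G)}e_H\uA[\tfrac{1}{|G|}],
\]
exactly as in \cref{Thm:RationalBurnsideFunctorSplits}. A splitting of the unit of a symmetric monoidal abelian category into a finite orthogonal sum of ideals induces a product decomposition of the module category, so that $\uM\mapsto(e_H\uM)_H$ gives an equivalence $\uA[\tfrac{1}{|G|}]\mhyphen\Mod\simeq\prod_{H}e_H\uA[\tfrac{1}{|G|}]\mhyphen\Mod$, and this functor is exact because each idempotent projection $e_H\boxtimes(-)$ is exact.

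Third, I would identify each factor $e_H\uA[\tfrac{1}{|G|}]\mhyphen\Mod$ with $\z[\tfrac{1}{|G|}][W_G(H)]\mhyphen\Mod$ by reexamining the Greenlees--May functors $U_H$ and $F_H$ of \cref{Thm:RationalMackeyCategorySplits}. These are assembled from restriction to $G/H$, the residual $W_G(H)$-action, induction, and multiplication by $e_H$, none of which requires rational coefficients; the only place invertibility is used is to trivialize the composites $\tr^H_K\res^H_K=[H:K]$ arising from proper subgroups $K$, and every such index $[H:K]$ divides $|G|$. Thus the same formulas define functors over $\z[\tfrac{1}{|G|}]$, and assembling them over $\Sub_c(G)$ produces the claimed $U$ and $F$, each exact as a composite of restriction, induction, and idempotent projections. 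The main obstacle is verifying that the unit and counit of the Greenlees--May adjunction remain isomorphisms after inverting only $|G|$ rather than after rationalizing; this reduces to checking that the relevant Brauer-quotient and induction maps become isomorphisms once $|G|$ is a unit, which in turn follows from the fact that $e_H$ annihilates all contributions from subgroups not conjugate to $H$, exactly as in the rational case but now with all denominators confined to powers of $|G|$.
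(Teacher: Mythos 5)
Your proposal is correct and takes essentially the same approach as the paper: the paper's own proof also proceeds by rerunning the rational arguments and observing that the only denominators that ever appear --- in the construction of the Dress idempotents and in the orbit/fixed-point comparison underlying the Greenlees--May equivalence (the two spots it isolates in \cite{BK20}) --- divide $|G|$. Your write-up simply makes that ``careful analysis'' explicit, via Gluck's formula for the idempotents and the indices $[H:K]$ in the adjunction.
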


\begin{proof}
	By a careful analysis of the proofs of \cref{Thm:RationalBurnsideFunctorSplits,Thm:RationalMackeyCategorySplits} in \cite{BK20}, there are only two places where it is necessary to invert an integer: in the construction of orthogonal idempotents splitting the Burnside ring in \cite[Lemma 2.2]{BK20} and the isomorphism between orbits and fixed points in \cite[Example 2.9]{BK20}. In both places, it suffices to invert the orders of all subgroups of $G$. Since inverting $|G|$ necessarily inverts all of its divisors, it is sufficient to work in $\z[\tfrac{1}{|G|}]$ instead of $\q$. 
\end{proof}

From this theorem, we will discover that the summand of $\uA[\frac{1}{|G|}]$ corresponding to the trivial subgroup is $\uA[\frac{1}{[G/e]}]$, and therefore $\uA[\frac{1}{[G/e]}]$-modules are equivalent to modules over a particular group-ring.   

\begin{lemma}\label{Lemma:TheTrivialSummand}
	There is an isomorphism of Green functors 
	$e_{\{e\}}\uA[\tfrac{1}{|G|}] \cong \uZ[\tfrac{1}{|G|}]$.
\end{lemma}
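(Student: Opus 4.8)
The plan is to identify the idempotent summand $e_{\{e\}}\uA[\tfrac{1}{|G|}]$ with the localization $\uA[\tfrac{1}{[G/e]}]$ studied in \cref{prop:invertGiso}, at which point the desired isomorphism follows immediately from that result. The bridge between the two descriptions is the observation from \cref{Example:IdempotentForTrivial} that $e_{\{e\}} = \tfrac{1}{|G|}[G/e]$ in $\uA[\tfrac{1}{|G|}](G/G)$, so that $[G/e] = |G|\,e_{\{e\}}$; since $|G|$ is already inverted, the element $[G/e]$ and the idempotent $e_{\{e\}}$ generate the same localization.

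First I would show that the canonical projection
\[ p \colon \uA[\tfrac{1}{|G|}] \longrightarrow e_{\{e\}}\uA[\tfrac{1}{|G|}], \]
given by multiplication by $e_{\{e\}}$, is the initial map of Green functors inverting $[G/e]$. It inverts $[G/e]$ because in the summand the unit is $e_{\{e\}}$ and the image of $[G/e]$ is $|G|\,e_{\{e\}}$, a unit multiple of the identity. For universality, any map of Green functors $f \colon \uA[\tfrac{1}{|G|}] \to \uR$ inverting $[G/e]$ must send the idempotent $e_{\{e\}}$ to an idempotent which is also a unit, hence to $1$; thus $f$ annihilates the complementary idempotent $1 - e_{\{e\}}$. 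A short computation using $\res^G_K([G/e]) = [G:K][K/e]$ shows that $\res^G_K(e_{\{e\}}) = \tfrac{1}{|K|}[K/e]$ is the corresponding trivial-subgroup idempotent at each level, so the Green-functor ideal generated by $1 - e_{\{e\}}$ is exactly the complementary summand $(1 - e_{\{e\}})\uA[\tfrac{1}{|G|}]$. Hence $f$ factors uniquely through $p$, and $p$ realizes the localization.

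Combining these observations with \cref{lemma:invertGInvertIndex}, which guarantees that inverting $[G/e]$ automatically inverts $|G|$, yields $e_{\{e\}}\uA[\tfrac{1}{|G|}] \cong \uA[\tfrac{1}{|G|}][\tfrac{1}{[G/e]}] = \uA[\tfrac{1}{[G/e]}]$, and \cref{prop:invertGiso} identifies the latter with $\uZ[\tfrac{1}{|G|}]$. The main obstacle is the bookkeeping in the universality step: one must check that killing the single complementary idempotent at level $G/G$ really does kill the complementary summand at every level, which is where the restriction formula for $e_{\{e\}}$ and the Frobenius relations enter. Alternatively, one can bypass the universal property and argue levelwise, using the mark homomorphism $\phi_{\{e\}}$ to identify $e_{\{e\}}\uA[\tfrac{1}{|G|}](G/K)$ with $\z[\tfrac{1}{|G|}]$ and then checking directly that under this identification restrictions become identities and transfers become multiplication by the index, matching the structure maps of $\uZ[\tfrac{1}{|G|}]$.
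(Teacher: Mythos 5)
Your proof is correct, but it takes a genuinely different route from the paper's. The paper argues levelwise and additively: it computes $\res^G_H(e_{\{e\}}) = \tfrac{1}{|H|}[H/e]$, observes that every element of the summand at level $G/H$ is therefore a transfer of an element from level $G/e$ (Frobenius), and concludes that $e_{\{e\}}\uA[\tfrac{1}{|G|}]$ is the sub-Mackey functor of $\uA[\tfrac{1}{|G|}]$ generated by the underlying level, which is $\uZ[\tfrac{1}{|G|}]$. You instead characterize the summand by a universal property --- projection onto the idempotent summand is the initial Green-functor map out of $\uA[\tfrac{1}{|G|}]$ inverting $[G/e]$ --- and then invoke \cref{prop:invertGiso}. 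Your idempotent bookkeeping does go through: $\res^G_K(1-e_{\{e\}})$ is the complementary idempotent at every level, and Frobenius gives closure of the resulting levelwise ideals under transfer, so the Green ideal generated by $1-e_{\{e\}}$ is exactly the complementary summand. A side benefit of your approach is that it directly exhibits the isomorphism $e_{\{e\}}\uA[\tfrac{1}{|G|}] \cong \uA[\tfrac{1}{[G/e]}]$, which the paper only obtains afterwards by composing this lemma with \cref{prop:invertGiso}. The one point you should acknowledge explicitly is that $\uA[\tfrac{1}{[G/e]}]$ in \cref{prop:invertGiso} is defined as a localization of ($\co$-)Tambara functors (\cref{Def:Localization}), whereas your universal property is stated for Green functors; these coincide here because the element being inverted lives at the top level $G/G$, so the localization is computed by the same naive construction in either category (the paper's own manipulations with $\uA[\tfrac{1}{[G/e]}]$ already rely on this), but a sentence to that effect is needed to legitimize citing \cref{prop:invertGiso}. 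Your fallback levelwise argument via the mark homomorphism is essentially the paper's proof.
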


\begin{proof}
	At level $G/H$, we have 
	\[
		\left(e_{\{e\}}\uA[\tfrac{1}{|G|}]\right)(G/H) = \res^G_e(e_{\{e\}}) \cdot \uA[\tfrac{1}{|G|}](G/H).
	\]
	Recall from \cref{Example:IdempotentForTrivial} that $e_{\{e\}} = \frac{1}{|G|}[G/e]$. Therefore, 
	\[
		\res^G_H(e_{\{e\}}) = \tfrac{1}{|H|}[H/e].
	\]

	So at level $G/H$, $e_{\{e\}}\uA[\tfrac{1}{|G|}]$ is the Green ideal of $\uA[\tfrac{1}{|G|}](G/H)$ generated by $[H/e]$. In particular, at the underlying level
	\[
		(e_{\{e\}}\uA[\tfrac{1}{|G|}])(G/e) = \uA[\tfrac{1}{|G|}](G/e) = \z[\tfrac{1}{|G|}].
	\]
	Therefore, every element in $e_{\{e\}}\uA[\tfrac{1}{|G|}]$ at level $G/H$ is a transfer of an element at level $G/e$. We conclude that $e_{\{e\}}\uA[\tfrac{1}{|G|}]$ is the sub-Mackey functor of $\uA[\tfrac{1}{|G|}]$ generated by the underlying level, that is, 
	\[
		e_{\{e\}}\uA[\tfrac{1}{|G|}] \cong \uZ[\tfrac{1}{|G|}]. 
	\]
	Since each level is a commutative ring, and we have levelwise isomorphisms of commutative rings, we conclude this is an isomorphism of Green functors. 
\end{proof}

We also need a description of the functors $U_H$ and $F_H$ when $H$ is the trivial subgroup. 

\begin{lemma}[{cf. \cite[Proposition 4.5]{BK20}}]\label{Lemma:EquivalenceForTrivialSubgroup}
	When $H = \{e\}$, the equivalence 
	\[
		U_{\{e\}} \colon  e_{\{e\}}\uA_\q\mhyphen\Mod \simeq \q[G]\mhyphen\Mod \colon F_{\{e\}} 
	\]
	is given by functors 
	\[
		U_{\{e\}}(\uM) := \uM(G/\{e\}) \text{ and } F_{\{e\}}(V) := \FP(G). 
	\]
\end{lemma}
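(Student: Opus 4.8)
The plan is to exhibit evaluation at the underlying level and the fixed-point functor $\FP$ as an explicit mutually inverse pair, and then to match them with the abstract functors $U_{\{e\}}, F_{\{e\}}$ coming from \cref{Thm:RationalMackeyCategorySplits} by appeal to \cite[Proposition 4.5]{BK20}. First I would record the rational form of \cref{Lemma:TheTrivialSummand}: base-changing the isomorphism $e_{\{e\}}\uA[\tfrac{1}{|G|}] \cong \uZ[\tfrac{1}{|G|}]$ along $\z[\tfrac{1}{|G|}]\to\q$ gives an isomorphism of Green functors $e_{\{e\}}\uA_\q \cong \uZ_\q$. Hence $e_{\{e\}}\uA_\q$-modules are exactly the rational cohomological Mackey functors. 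Since $W_G(\{e\}) = N_G(\{e\})/\{e\} = G$, the target group-ring is $\q[G]$, as the statement asserts, so it remains to check that $U_{\{e\}}(\uM) = \uM(G/e)$ and $F_{\{e\}} = \FP$ define an equivalence between these two categories.

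Next I would verify that the functors are well-defined and dispose of the easy composite. Given a $\q[G]$-module $V$, the functor $\FP(V)$ is cohomological: $\res_K^H$ is the inclusion $V^H \hookrightarrow V^K$ and $\tr_K^H$ is the sum over the cosets of $H/K$, so for $v \in V^H$ one gets $\tr_K^H\res_K^H(v) = \sum_{hK \in H/K} hv = [H:K]\,v$; thus $\FP(V)$ is a rational cohomological Mackey functor, i.e.\ an $e_{\{e\}}\uA_\q$-module. Conversely, for $\uM$ an $e_{\{e\}}\uA_\q$-module the group $\uM(G/e)$ is a rational vector space carrying the Weyl, hence $G$, action, so it is a $\q[G]$-module. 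By \cref{exm:FixedPointFunctor}, $\FP$ is right adjoint to evaluation at $G/e$, which supplies a natural unit $\uM \to \FP(\uM(G/e))$. The other composite is immediate: $U_{\{e\}}F_{\{e\}}(V) = \FP(V)(G/e) = V^{\{e\}} = V$, recovering $V$ with its original $G$-action, so $U_{\{e\}}F_{\{e\}} \cong \id$ naturally.

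The main step is to show the unit $\uM \to \FP(\uM(G/e))$ is an isomorphism for every rational cohomological $\uM$; at level $G/H$ this map is the restriction $\res_e^H \colon \uM(G/H) \to \uM(G/e)^H$. Applying the cohomological relation with $K = \{e\}$ gives $\tr_e^H\res_e^H = |H|\cdot\id$, and since $|H|$ is invertible rationally, $\tfrac{1}{|H|}\tr_e^H$ is a retraction, so $\res_e^H$ is a split injection. For surjectivity onto the fixed points, the double-coset formula yields $\res_e^H\tr_e^H = \sum_{h \in H} c_h$ on $\uM(G/e)$, whence for $v \in \uM(G/e)^H$ we have $\res_e^H\big(\tfrac{1}{|H|}\tr_e^H(v)\big) = \tfrac{1}{|H|}\sum_{h\in H} hv = v$. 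Thus $\res_e^H$ is an isomorphism onto $\uM(G/e)^H = \FP(\uM(G/e))(G/H)$, giving $F_{\{e\}}U_{\{e\}} \cong \id$.

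I expect this last isomorphism to be the only genuinely nonformal point: it fails integrally (for instance $\uZ$ and $\uZ^{\ast}$ are distinguished precisely by the failure of $\res_e^H$ to be invertible) and depends essentially on inverting $|G|$, which is exactly where rationalization is indispensable. Finally, to identify the explicit pair $(U_{\{e\}},F_{\{e\}})$ constructed above with the functors produced by the general splitting of \cref{Thm:RationalMackeyCategorySplits}, I would invoke \cite[Proposition 4.5]{BK20}, where the equivalence at the trivial subgroup is computed in precisely these terms; alternatively, uniqueness of an inverse equivalence up to natural isomorphism suffices for the uses made of this lemma.
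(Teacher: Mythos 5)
Your argument is correct. Note, however, that the paper does not actually prove this lemma: it is stated as a citation to \cite[Proposition 4.5]{BK20}, so there is no internal proof to compare against. What you have written is a self-contained replacement for that citation, and all the key steps check out: the identification $e_{\{e\}}\uA_\q \cong \uZ_\q$ (rationalizing \cref{Lemma:TheTrivialSummand}) reduces the claim to the classical statement that rational cohomological Mackey functors are equivalent to $\q[G]$-modules via evaluation at $G/e$ and $\FP$; the composite $U_{\{e\}}F_{\{e\}}\cong\id$ is immediate; and your verification that the unit $\res^H_e\colon \uM(G/H)\to\uM(G/e)^H$ is split injective by $\tr^H_e\res^H_e=|H|$ and surjective onto $H$-fixed points by $\res^H_e\tr^H_e=\sum_{h\in H}c_h$ is exactly the standard argument, correctly flagged as the one place where inverting $|G|$ is essential (and correctly contrasted with the integral failure visible in $\uZ$ versus $\uZ^\ast$). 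The only caveat, which you already acknowledge, is that the lemma as stated identifies the \emph{specific} functors $U_{\{e\}},F_{\{e\}}$ produced by \cref{Thm:RationalMackeyCategorySplits} with evaluation and $\FP$; your construction produces \emph{an} equivalence by these functors, and matching it with the Greenlees--May one on the nose still requires either the computation in \cite{BK20} or the observation that uniqueness of inverse equivalences up to natural isomorphism suffices for every use made of the lemma in \cref{SS:splitting}. (You also silently corrected the typo $F_{\{e\}}(V):=\FP(G)$ to $\FP(V)$, which is the intended reading.)
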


\begin{corollary}
	There is an equivalence of categories 
	\[
	\begin{tikzcd}
		\uA[\tfrac{1}{[G/e]}]\mhyphen\Mod 
			\arrow[yshift=2]{r}{U}
			& 
		\z[\tfrac{1}{|G|}][G]\mhyphen\Mod, 
			\arrow[yshift=-2]{l}{\FP}
	\end{tikzcd}
	\]
	where $U(\uM) = \uM(G/e)$ and $\FP$ is the fixed point functor, such that:
	\begin{enumerate}[(a)]
		\item both $U$ and $\FP$ are exact functors;
		\item both $U$ and $\FP$ are strong symmetric monoidal for the box-product over $\uA[\tfrac{1}{[G/e]}]$ on $\uA[\tfrac{1}{[G/e]}]\mhyphen\Mod$ and the tensor product over $\z[\tfrac{1}{|G|}]$ on $\z[\tfrac{1}{|G|}][G]\mhyphen\Mod$.
	\end{enumerate}
\end{corollary}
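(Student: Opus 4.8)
The plan is to obtain this equivalence by restricting the integral splitting of \cref{theorem:invertOrderSplitsModules} to the summand indexed by the trivial subgroup and then identifying that summand explicitly. First I would combine \cref{prop:invertGiso} and \cref{Lemma:TheTrivialSummand}: the former identifies $\uA[\tfrac{1}{[G/e]}]$ with $\uZ[\tfrac{1}{|G|}]$ as Tambara functors, while the latter identifies $\uZ[\tfrac{1}{|G|}]$ with the summand $e_{\{e\}}\uA[\tfrac{1}{|G|}]$ of the Burnside Green functor. Consequently $\uA[\tfrac{1}{[G/e]}]\mhyphen\Mod$ is equivalent to $e_{\{e\}}\uA[\tfrac{1}{|G|}]\mhyphen\Mod$, which is precisely the factor of the product in \cref{theorem:invertOrderSplitsModules} corresponding to $H=\{e\}$. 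Since $W_G(\{e\})=G$, that factor is $\z[\tfrac{1}{|G|}][G]\mhyphen\Mod$, and the integral analog of \cref{Lemma:EquivalenceForTrivialSubgroup} identifies the equivalence as $U(\uM)=\uM(G/e)$ with quasi-inverse $\FP$.

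Part (a) is then nearly immediate. The functor $U$ is evaluation at $G/e$, which is exact because exactness of Mackey functors is tested objectwise. As $U$ is one half of an equivalence of abelian categories, its quasi-inverse $\FP$ is automatically exact as well.

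For part (b), I would first verify that $U$ is strong symmetric monoidal and then transport the structure to $\FP$. The key computation is that the relative box product is the relative tensor product of underlying levels: for $\uA[\tfrac{1}{[G/e]}]$-modules $\uM$ and $\uN$, evaluating at $G/e$ gives
\[
	\big(\uM \boxtimes_{\uA[\tfrac{1}{[G/e]}]} \uN\big)(G/e) \cong \uM(G/e) \otimes_{\z[\frac{1}{|G|}]} \uN(G/e),
\]
with the diagonal $G = W_G(\{e\})$-action, which is exactly the tensor product of the associated $\z[\tfrac{1}{|G|}][G]$-modules. Here one uses that $\uA[\tfrac{1}{[G/e]}](G/e) \cong \z[\tfrac{1}{|G|}]$ carries the trivial Weyl action (so that the relative tensor product is an honest tensor product over $\z[\tfrac{1}{|G|}]$), together with the standard fact that the underlying level of a relative box product is the relative tensor product of the underlying levels. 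The symmetry isomorphism of the box product matches that of the tensor product under this identification, so $U$ is strong symmetric monoidal. Finally, a strong symmetric monoidal functor that is an equivalence of categories endows its quasi-inverse with a canonical strong symmetric monoidal structure, so $\FP$ is strong symmetric monoidal as claimed.

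I expect the main obstacle to be the monoidal statement in (b): one must confirm the underlying-level formula for the relative box product and check that the Weyl-group actions on both sides agree, after which the symmetric monoidal structure on the inverse $\FP$ follows formally.
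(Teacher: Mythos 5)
Your proposal is correct and follows essentially the same route as the paper: identify $\uA[\tfrac{1}{[G/e]}]$ with the summand $e_{\{e\}}\uA[\tfrac{1}{|G|}]$ via \cref{prop:invertGiso} and \cref{Lemma:TheTrivialSummand}, invoke \cref{theorem:invertOrderSplitsModules} and the (integral form of) \cref{Lemma:EquivalenceForTrivialSubgroup} to get the equivalence with $\z[\tfrac{1}{|G|}][G]$-modules, and deduce exactness and the monoidal structure on $\FP$ formally from the corresponding properties of $U$. Your extra care about the underlying-level formula for the relative box product and the triviality of the Weyl action on the unit is exactly the point the paper's proof of part (b) relies on.
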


\begin{proof}
	By \cref{theorem:invertOrderSplitsModules}, there is an equivalence of categories 
	\[
		e_{\{e\}}\uA[\tfrac{1}{|G|}]\mhyphen\Mod \simeq \z[\tfrac{1}{|G|}][G]\mhyphen\Mod, 
	\]
	which by \cref{Lemma:EquivalenceForTrivialSubgroup} is given by functors $U$ and $\FP$ as in the statement of the corollary.
	By \cref{prop:invertGiso,Lemma:TheTrivialSummand} there are isomorphisms of Green functors 
	\[
		e_{\{e\}}\uA[\tfrac{1}{|G|}] \cong \uZ[\tfrac{1}{|G|}] \cong \uA[\tfrac{1}{[G/e]}], 
	\]
	yielding the desired equivalence of categories. 

	Exactness follows from exactness in \cref{theorem:invertOrderSplitsModules}. The strong symmetric monoidal property follows for $U$ because 
	\[
		U(\uA[\tfrac{1}{[G/e]}]) = \z[\tfrac{1}{|G|}] \quad \text{and} \quad U(\uM \boxtimes \uN) = \uM(G/e) \otimes_\z \uN(G/e);
	\] 
	the box product/tensor over the localization is the same as the ordinary box/tensor product. Then $\FP$ becomes strong symmetric monoidal as part of an equivalence. 
\end{proof}

\begin{corollary}\label{cor:invertGAllModulesAreFixedPoint}
	Any $\uA[\tfrac{1}{[G/e]}]$-module $\uM$ is a fixed point functor: $\uM \cong \FP(\uM(G/e))$. 
\end{corollary}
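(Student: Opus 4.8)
The plan is to derive this as an immediate formal consequence of the equivalence of categories established in the preceding corollary. That corollary produces a mutually quasi-inverse pair of functors
\[
U \colon \uA[\tfrac{1}{[G/e]}]\mhyphen\Mod \rightleftarrows \z[\tfrac{1}{|G|}][G]\mhyphen\Mod \colon \FP,
\]
with the explicit formula $U(\uM) = \uM(G/e)$ on objects. By the very definition of an equivalence of categories, the composite $\FP \circ U$ comes equipped with a natural isomorphism to the identity functor on $\uA[\tfrac{1}{[G/e]}]\mhyphen\Mod$.

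First I would invoke the unit of this equivalence to obtain, for each module $\uM$, a natural isomorphism $\uM \cong \FP(U(\uM))$. Then I would substitute the formula $U(\uM) = \uM(G/e)$ to conclude
\[
\uM \cong \FP(\uM(G/e)),
\]
which is exactly the asserted claim.

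Since the entire substance has already been packaged into the preceding corollary — which itself rests on \cref{theorem:invertOrderSplitsModules}, \cref{prop:invertGiso}, \cref{Lemma:TheTrivialSummand}, and \cref{Lemma:EquivalenceForTrivialSubgroup} — there is no genuine obstacle here; the argument is purely formal. The only point requiring any care is that the isomorphism asserted in the statement is precisely the one supplied by the equivalence, so no additional naturality or monoidal compatibility needs to be verified beyond what the equivalence already guarantees.
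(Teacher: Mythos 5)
Your proposal is correct and matches the paper's (implicit) argument exactly: the paper states this corollary without proof precisely because it is the formal consequence of the unit isomorphism $\uM \cong \FP(U(\uM))$ of the equivalence established in the preceding corollary, combined with the formula $U(\uM) = \uM(G/e)$. No further verification is needed.
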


\begin{remark}
	In light of this corollary, every restriction in any $\uA[\tfrac{1}{[G/e]}]$-module $\uM$ is injective -- it is the inclusion of fixed points. The condition that all restrictions are injective appears in several seemingly unrelated places. This is the \emph{monomorphic restriction condition} of \cite[Definition 4.19]{Nak12}. It is also the condition necessary for a Mackey functor to be a zero-slice of an equivariant spectrum \cite[Proposition 4.50]{HHR16}. This seemingly innocuous condition has many structural consequences for Mackey functors. In general any such functor satisfying the monomorphic restriction condition is a sub-functor of a fixed point functor, by \cite[Proposition 4.21]{Nak12}.
\end{remark}

%%%%%%%%%%%%%%%%%%%%%%%%%%%%% FREENESS AFTER LOCALIZATION %%%%%%%%%%%%%%%%%%%%%%%%%%%%%%
\subsection{Underlying freeness after localization}
\label{SS:freeLocal}

We prove that all free incomplete Tambara functors over $\uA[\tfrac{1}{[G/e]}]$ are free as $\uA[\tfrac{1}{[G/e]}]$-modules. We will write $\uS^{-1}\uA = \uA[\tfrac{1}{[G/e]}]$, with $\uS = \{([G/e], G/G)\}$ to declutter notation. 

\begin{lemma}\label{Lemma:InvertGUnderlyingLevelIsPermutationModule}
	Let $\co$ be any indexing category, and let $H \leq G$ be a subgroup. Then as a $\z[\tfrac{1}{|G|}]$-algebra, $\uS^{-1}\uA^\co[x_{G/H}](G/e)$ is polynomial on generators $y_{gH}$ for cosets $gH \in G/H$: 
	\[
		\uS^{-1}\uA^\co[x_{G/H}](G/e) \cong \z[\tfrac{1}{|G|}]\big[y_{gH}\mid gH \in G/H\big]. 
	\]
	$G$ acts on the generators by permuting the cosets. In particular, $\uS^{-1}\uA^\co[x_{G/H}](G/e)$ is a permutation $\z[\tfrac{1}{|G|}]$-module.
\end{lemma}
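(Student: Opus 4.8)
The plan is to compute the underlying ring $\uA^{\co}[x_{G/H}](G/e) = \cp^G_{\co}(G/H,G/e)^+$ \emph{integrally} first, and then deduce the localized statement by a base-change argument. First I would invoke \cref{cor:PolynomialsAsPolynomials}, which presents $\cp^G_{\co}(G/H,G/e)^+$ as the free abelian group on isomorphism classes of polynomials $G/H \xleftarrow{f} S \xrightarrow{g} G/J \xrightarrow{h} G/e$. Since a $G$-map $G/J \to G/e$ exists only when $J=e$, every such class has $G/J = G/e$ with $h$ an isomorphism; absorbing $h$ into $g$ normalizes $h=\id$. The surviving map $g\colon S\to G/e$ forces $S$ to be a free $G$-set, $S\cong\coprod_i G/e$, and any map between free $G$-sets preserves isotropy (all stabilizers are trivial), hence lies in the least indexing category $\co^{\triv}\subseteq\co$ (\cref{Def:TrivialIndexingCategory}, \cref{Example:SetGIso}); so the constraint $g\in\co$ is automatic.

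After normalizing $g$ on each orbit, a single-orbit polynomial $[G/H \xleftarrow{f} G/e \xrightarrow{\id} G/e \xrightarrow{\id} G/e]$ is determined precisely by the coset $gH\in G/H$ hit by $f$, and I would label the corresponding basis element $y_{gH}$. A general basis element, having free source $\coprod_i G/e$, is an unordered product of these, so the additive basis is indexed by monomials in the $y_{gH}$. To promote this to a ring isomorphism I would unwind the product formula of \cref{Thm:Tam93} on two such generators: with target $G/e$ the fibre products $B\times_{G/e}B'$, $A\times_{G/e}B'$, and $B\times_{G/e}A'$ all collapse to single free orbits, and the formula returns exactly the two-orbit polynomial representing $y_{gH}\cdot y_{g'H}$. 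This gives $\uA^{\co}[x_{G/H}](G/e)\cong \z[\,y_{gH}\mid gH\in G/H\,]$ as rings.

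For the $W_G(e)=G$ action I would apply \cref{Rmk:WeylAction}: $\gamma$ post-composes the transfer leg with $c_\gamma$, and after renormalizing $h$ back to $\id$ this replaces $f$ by $f\circ c_\gamma^{-1}$, carrying the generator indexed by $gH$ to the one indexed by a $\gamma$-translate of $gH$. Thus $G$ permutes $\{y_{gH}\}$ through its action on $G/H$, as claimed. To pass to the localization I would use \cref{Def:Localization} to write $\uS^{-1}\uA^{\co}[x_{G/H}] = \uS^{-1}\uA\boxtimes_{\uA}\uA^{\co}[x_{G/H}] = \uS^{-1}\uA\boxtimes\uA^{\co}[x_{G/H}]$, the relative box product over the unit $\uA$ being the absolute one. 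Evaluating at $G/e$, using that the underlying level of a box product is the tensor product of underlying levels with diagonal $G$-action, and that $\uS^{-1}\uA(G/e)\cong\z[\tfrac{1}{|G|}]$ by \cref{prop:invertGiso}, yields
\[
\uS^{-1}\uA^{\co}[x_{G/H}](G/e)\cong \z[\tfrac{1}{|G|}]\otimes_{\z}\z[\,y_{gH}\,]\cong \z[\tfrac{1}{|G|}][\,y_{gH}\mid gH\in G/H\,],
\]
a permutation $\z[\tfrac{1}{|G|}]$-module since $\z[\tfrac{1}{|G|}]$ carries the trivial action.

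The step I expect to be the main obstacle is the bookkeeping that upgrades the additive basis to a genuine polynomial-ring isomorphism: one must verify both that the multiplication of \cref{Thm:Tam93} matches concatenation of monomials and that no unexpected identifications occur among the level-$G/e$ diagrams — in particular that transfers, which only raise subgroups, contribute nothing new here. The secondary point needing care is the equivariant refinement of strong monoidality, namely that evaluation at $G/e$ is monoidal as a functor to $\z[G]$-modules (not merely to abelian groups), so that the permutation-module structure genuinely survives the localization.
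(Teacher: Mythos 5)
Your proposal is correct and follows essentially the same route as the paper: both identify $\cp^G_{\co}(G/H,G/e)^+$ as a polynomial ring on generators indexed by cosets of $H$ (single-orbit polynomials whose middle map is automatically admissible), check that the product formula of \cref{Thm:Tam93} concatenates these into monomials, and then base-change along $\uS^{-1}\uA(G/e)\cong\z[\tfrac{1}{|G|}]$. The only difference is organizational — you start from the additive basis of \cref{cor:PolynomialsAsPolynomials} and match monomials, whereas the paper reduces an arbitrary polynomial to a product of the $y_g$ by successive orbit decompositions — but the paper itself remarks that its argument is a strengthening of that corollary, so the content is the same.
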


\begin{proof}
	We have 
	\[
		\uS^{-1}\uA^\co[x_{G/H}](G/e) = (\uS^{-1}\uA \boxtimes \uA^{\co}[x_{G/H}])(G/e) = \cp^G_{\co}(G/H, G/e)^+ \otimes_\z \z[\tfrac{1}{|G|}],
	\]
	so it suffices to describe the ring $\cp^G_{\co}(G/H, G/e)^+$. 

	Consider the polynomials of the form
	\[ 
		y_g = [G/H \xleftarrow \pi G/e \xrightarrow g G/e \xrightarrow \id G/e], 
	\]
	where $\pi$ is the canonical projection and $g$ is multiplication by $g \in G$. Note that the middle map is always admissible, for any indexing category $\co$. Two such basic polynomials are equivalent if the middle map differs by an element of $H$, so we have one such basic polynomial for each coset of $G/H$. 

	We claim that $\{y_g\}$ is a generating set for $\cp^G_{\co}(G/H, G/e)$ as a ring, where $g$ runs over a set of coset representatives for $G/H$. Recall the ring structure from \cref{Thm:Tam93}. 

	Given any polynomial 
	\[
		\Sigma = [G/H \xleftarrow f A \xrightarrow g B \xrightarrow h G/e], 
	\]
	first observe that we may write 
	\[
		\Sigma = [G/H \xleftarrow f A \xrightarrow g \im(g) \xrightarrow h G/e] + [G/H \xleftarrow f \emptyset \xrightarrow g (B \setminus \im(g)) \xrightarrow h G/e], 
	\]
	and this second summand is a sum of copies of $1 = [G/H \leftarrow \emptyset \to G/e \xrightarrow \id G/e]$. So we may assume that $g$ is surjective. 

	Second, we may decompose 
	\[
		B = \bigsqcup_{i \in I} B_i, \quad A = \bigsqcup_{i \in I} g^{-1}(B_i)
	\]
	with $B_i \cong G/e$ for all $i$. Then 
	\[
		\Sigma = \sum_{i \in I} [G/H \xleftarrow f g^{-1}(B_i) \xrightarrow g B_i \xrightarrow h G/e].
	\]
	So we may further reduce to the case where $B \cong G/e$. 

	We are left with $\Sigma = [G/H \leftarrow A \twoheadrightarrow G/e \to G/e]$. Decompose 
	$A = \bigsqcup_{j \in J} A_j$
	with $A_j \cong G/e$ for all $j$. Then 
	\[
		\Sigma = \prod_{j \in J} [G/H \leftarrow A_j \to G/e \to G/e] 
	\]
	is a product of polynomials of the form $y_g$. This establishes the claim. 

	Finally, this polynomial ring is a permutation module because the $G$-action permutes the monomials.
\end{proof}

\begin{remark}
	The argument above is a stronger form of that used to prove \cref{cor:PolynomialsAsPolynomials}. 
\end{remark}

%\begin{remark}
%A generalization of the above argument shows that polynomials of the form 
%\[ [G/H \leftarrow A \xrightarrow g G/K \xrightarrow h G/K], \] 
%where $g \colon A \to G/K$ is in $\co$ and $A$ is transitive or empty, generate 
%$\uA^{\co}[x_{G/H}]$ as a Green functor. 
%\end{remark}

\begin{theorem}\label{Theorem:FreeLocalUnderlyingFree}
	Let $\underline{S}^{-1}\uA$ be the Mackey functor obtained from $\uA$ by inverting $[G/e] \in \uA(G/G)$. For any indexing category $\co$ and subgroup $H \leq G$, the free $\co$-Tambara functor $\uS^{-1}\uA^{\co}[x_{G/H}]$ is free as an $\uS^{-1}\uA$-module. 
\end{theorem}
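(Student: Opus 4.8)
The plan is to exploit the rigidity of the module category after inverting $[G/e]$, where \cref{SS:splitting} shows that freeness is detected entirely at the underlying level. Write $\uM = \uS^{-1}\uA^{\co}[x_{G/H}]$. This is an $\uS^{-1}\uA$-module, so by \cref{cor:invertGAllModulesAreFixedPoint} it is automatically a fixed point functor,
\[
\uM \cong \FP\big(\uM(G/e)\big),
\]
and by \cref{prop:invertGiso} its underlying level $\uM(G/e)$ is naturally a $\z[\tfrac{1}{|G|}][G]$-module.

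First I would pin down $\uM(G/e)$ explicitly. By \cref{Lemma:InvertGUnderlyingLevelIsPermutationModule} the underlying level is the polynomial ring $\z[\tfrac{1}{|G|}][y_{gH} \mid gH \in G/H]$ on which $G$ acts by permuting the generators, so the monomials form a $\z[\tfrac{1}{|G|}]$-basis that is permuted by $G$. Decomposing this basis into $G$-orbits exhibits $\uM(G/e)$ as a permutation module,
\[
\uM(G/e) \cong \bigoplus_{j \in J} \z[\tfrac{1}{|G|}][G/K_j],
\]
where each $K_j \leq G$ is the stabilizer of a monomial.

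Next I would identify the fixed point functors on transitive permutation modules as free $\uS^{-1}\uA$-modules. This is the $\z[\tfrac{1}{|G|}]$-analog of \cref{Ex:FreeZisFixedPoint}: the free module $\uS^{-1}\uA\{x_{G/K}\}$ represents evaluation at $G/K$, and evaluation at $G/K$ of a fixed point functor $\FP(V)$ returns $V^K$, so under the equivalence of categories of \cref{SS:splitting} (whose inverse is $\FP$) the transitive permutation module $\z[\tfrac{1}{|G|}][G/K]$ corresponds to $\uS^{-1}\uA\{x_{G/K}\}$; that is, $\FP\big(\z[\tfrac{1}{|G|}][G/K]\big) \cong \uS^{-1}\uA\{x_{G/K}\}$. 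Since $\FP$ commutes with direct sums by \cite[Proposition 2.3]{TW95}, applying it to the orbit decomposition gives
\[
\uM \cong \bigoplus_{j \in J} \FP\big(\z[\tfrac{1}{|G|}][G/K_j]\big) \cong \bigoplus_{j \in J} \uS^{-1}\uA\{x_{G/K_j}\},
\]
which is manifestly a free $\uS^{-1}\uA$-module.

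The only genuinely substantive point is the identification $\FP\big(\z[\tfrac{1}{|G|}][G/K]\big) \cong \uS^{-1}\uA\{x_{G/K}\}$ of free modules with fixed point functors on transitive permutation modules, where the equivalence of \cref{SS:splitting} carries the weight; the remaining steps are bookkeeping. One caveat to handle with care is that the monomial basis is infinite, so the direct sums above are infinite and we must invoke that $\FP$ preserves arbitrary coproducts, not merely finite ones.
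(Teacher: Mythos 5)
Your proposal is correct and follows essentially the same route as the paper: the paper's proof likewise combines \cref{Lemma:InvertGUnderlyingLevelIsPermutationModule} (the underlying level is a permutation module), \cref{cor:invertGAllModulesAreFixedPoint} (the module is the fixed point functor on its underlying level), and the identification of fixed point functors of permutation modules with free modules via \cref{Theorem:FreeCohomological}. Your only addition is to spell out the last step explicitly (orbit decomposition, $\FP(\z[\tfrac{1}{|G|}][G/K])\cong\uS^{-1}\uA\{x_{G/K}\}$, and compatibility with infinite direct sums), which is exactly what the paper's cited proposition encapsulates.
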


\begin{proof}
	By \cref{Lemma:InvertGUnderlyingLevelIsPermutationModule}, the underlying level of $\uS^{-1}\uA^\co[x_{G/H}]$ is a permutation module. By \cref{cor:invertGAllModulesAreFixedPoint}, $\uS^{-1}\uA^\co[x_{G/H}]$ is isomorphic to the fixed points of a permutation module as an $\uS^{-1}\uA$-module. In particular, it is a free module by \cref{Theorem:FreeCohomological}.
\end{proof}

\cref{Lemma:InvertGUnderlyingLevelIsPermutationModule} also has another interesting consequence. 

\begin{proposition}
	Any two free incomplete $\uS^{-1}\uA$-Tambara functors generated at the same level $G/H$ are isomorphic as Green functors. 
\end{proposition}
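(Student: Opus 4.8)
The plan is to exploit the rigidity of modules over $\uS^{-1}\uA$: every such module is completely determined by its underlying level, and \cref{Lemma:InvertGUnderlyingLevelIsPermutationModule} shows that this underlying level does not depend on the indexing category. Write $\co$ and $\co'$ for the two indexing categories in question, so that the two free incomplete Tambara functors are $\uS^{-1}\uA^{\co}[x_{G/H}]$ and $\uS^{-1}\uA^{\co'}[x_{G/H}]$.

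First I would recall that by \cref{cor:invertGAllModulesAreFixedPoint}, every $\uS^{-1}\uA$-module $\uM$ is canonically isomorphic to the fixed point functor $\FP(\uM(G/e))$ on its underlying $G$-module. Since each free Tambara functor $\uS^{-1}\uA^{\co}[x_{G/H}]$ is in particular an $\uS^{-1}\uA$-module, it is isomorphic as a Mackey functor to $\FP$ of its underlying level. The fixed point construction is functorial in the underlying $G$-ring: for a $G$-ring $k$, the Green functor $\FP(k)$ has $\FP(k)(G/H) = k^H$, with restrictions the inclusions of fixed points, transfers the relative traces, and multiplication inherited levelwise from $k$. All of this data is determined by $k$ as a $G$-ring, so an isomorphism of $G$-rings induces an isomorphism of fixed point Green functors. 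It therefore suffices to exhibit an isomorphism of $G$-rings between the two underlying levels.

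Next I would invoke \cref{Lemma:InvertGUnderlyingLevelIsPermutationModule}, which computes the underlying level of $\uS^{-1}\uA^{\co}[x_{G/H}]$ as the polynomial ring $\z[\tfrac{1}{|G|}][y_{gH}\mid gH\in G/H]$, with $G$ acting by permuting the coset-indexed generators. Crucially, this description makes no reference to $\co$: it yields the same $G$-ring for every indexing category. Hence the underlying levels of $\uS^{-1}\uA^{\co}[x_{G/H}]$ and $\uS^{-1}\uA^{\co'}[x_{G/H}]$ are isomorphic as $G$-rings, and applying $\FP$ produces the desired isomorphism of Green functors.

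The only point requiring care is the passage from an isomorphism of underlying $G$-rings to an isomorphism of Green functors, rather than a mere isomorphism of Mackey functors; this is exactly what \cref{cor:invertGAllModulesAreFixedPoint} together with the functoriality of $\FP$ supplies, so no genuine obstacle remains beyond this bookkeeping. I would emphasize that the resulting isomorphism is one of Green functors (equivalently, of $\co^{\triv}$-Tambara functors), since the two free functors carry genuinely different norm structures and so need not agree as incomplete Tambara functors for their respective indexing categories.
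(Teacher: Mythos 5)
Your proposal is correct and follows essentially the same route as the paper: identify the underlying level via \cref{Lemma:InvertGUnderlyingLevelIsPermutationModule} as a polynomial $G$-ring independent of the indexing category, and then use \cref{cor:invertGAllModulesAreFixedPoint} to conclude that each free incomplete Tambara functor is the fixed-point Green functor of that ring. The paper's proof is a one-line version of this; your extra care about upgrading the Mackey-functor isomorphism to one of Green functors is a reasonable (and correct) piece of bookkeeping that the paper leaves implicit.
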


\begin{proof}
	By \cref{Lemma:InvertGUnderlyingLevelIsPermutationModule}, they are both isomorphic to the fixed-point functor of the ring $\z[\tfrac{1}{|G|}][y_{gH} \mid gH \in G/H]$.
\end{proof}

Essentially, the only difference between free incomplete $\uS^{-1}\uA$-Tambara functors is the norms.

%%%%%%%%%%%%%%%%%%%%%%%%%%%%%%%%%%%%%%%%%%%
%%%%%%%%%%%%%%% APPENDIX %%%%%%%%%%%%%%%%%%
%%%%%%%%%%%%%%%%%%%%%%%%%%%%%%%%%%%%%%%%%%%
\appendix

%%%%%%%%%%% SECTION: TABLES %%%%%%%%%%%%%%
\section{Tables of Underlying Mackey Functors}\label{App:Tables}

The tables below describe the Mackey functors underlying free $\co$-Tambara functors generated by a single element at level $G/H$ for various combinations $G$ and $H$. The columns indicate the level of the generator and the rows indicate the indexing category. An indexing category $\co$ for $G$ is represented by a graph whose vertices are the subgroups of $G$ and edges $K \to H$ indicates that $H/K$ is admissible for $i^*_H \co$.\footnote{For the reader familiar with \cite{Rub20}, the diagrams in the left-hand column are just transfer systems.} Within each cell of the table, the freeness of the underlying Mackey functor is stated, with the convention that an empty cell indicates that the Mackey functor underlying is not free. 

The values in the tables are deduced from \cref{Thm:SolvableCase}. The classifications of all of the indexing categories for cyclic groups below appear in \cite[Theorem 2]{BBR19} and \cite[Section 3.2]{Rub20}. The classification of categories for $D_6$ appears in \cite[Section 3.2]{Rub20}.

\begin{table}[h]
\label{Table:Cp}
\caption{This table describes the underlying Mackey functor of $\uA^{\co}[x_{C_p/H}]$. The table says, for example, that $\uA^{\co^{\cplt}}[x_{C_p/e}]$ is free as an $\uA$-module, while $\uA^{\co^{\cplt}}[x_{C_p/C_p}]$ is not free as an $\uA$-module.}
\centering
\[ \begin{array}{|c||c|c|}
\hline
 & C_p/e & C_p/C_p \\ \hline \hline
\co^\triv &  & \text{free} \\ \hline 
\co^\cplt & \text{free} & \\ \hline
\end{array} \]
\end{table}

\begin{table}[h]
\label{Table:Cp2}
\caption{The table below describes the Mackey functor underlying $\uA^{\co}[x_{C_{p^2}/H}]$.}
\[ \begin{array}{|c||c|c|c|}
\hline
 & C_{p^2} / e & C_{p^2} / C_p & C_{p^2} / C_{p^2} \\ \hline \hline
\begin{tikzpicture}
\draw[fill] (0,0) node(e) {$e$};
\draw[fill] (1,0) node(Cp) {$C_p$};
\draw[fill] (2,0) node(Cp2) {$C_{p^2}$};
\end{tikzpicture}
&  &  & \text{free} \\ \hline
\begin{tikzpicture}
\draw[fill] (0,0) node(e) {$e$};
\draw[fill] (1,0) node(Cp) {$C_p$};
\draw[fill] (2,0) node(Cp2) {$C_{p^2}$};
\draw[->] (e) -- (Cp);
\end{tikzpicture}
&  &   &  \\ \hline
\begin{tikzpicture}
\draw[fill] (0,0) node(e) {$e$};
\draw[fill] (1,0) node(Cp) {$C_p$};
\draw[fill] (2,0) node(Cp2) {$C_{p^2}$};
\draw[->] (e) -- (Cp);
\draw[->] (e) to[bend left] (Cp2);
\end{tikzpicture}
& \text{free} &   &  \\ \hline
\begin{tikzpicture}
\draw[fill] (0,0) node(e) {$e$};
\draw[fill] (1,0) node(Cp) {$C_p$};
\draw[fill] (2,0) node(Cp2) {$C_{p^2}$};
\draw[->] (Cp) -- (Cp2);
\end{tikzpicture}
&  & \text{free}  &  \\ \hline
\begin{tikzpicture}
\draw[fill] (0,0) node(e) {$e$};
\draw[fill] (1,0) node(Cp) {$C_p$};
\draw[fill] (2,0) node(Cp2) {$C_{p^2}$};
\draw[->] (e) -- (Cp);
\draw[->] (e) to[bend left] (Cp2);
\draw[->] (Cp) -- (Cp2);
\end{tikzpicture}
& \text{free} &   & \\ \hline
\end{array} \]
\end{table}

\begin{table}[h!]
\label{Table:Cp3}
\caption{The table below describes the Mackey functor underlying $\uA^{\co}[x_{C_{p^3}/H}]$.}
\[ \begin{array}{|c||c|c|c|c|}
\hline
& C_{p^3}/e & C_{p^3}/C_p & C_{p^3} / C_{p^2} & C_{p^3}/C_{p^3} \\ \hline \hline 
\begin{tikzpicture}
\draw (0,0) node(e) {$e$};
\draw (1,0) node(Cp) {$C_p$};
\draw (2,0) node(Cp2) {$C_{p^2}$};
\draw (3,0) node (Cp3) {$C_{p^3}$};
\end{tikzpicture}
& & & & \text{free} \\ \hline
\begin{tikzpicture}
\draw (0,0) node(e) {$e$};
\draw (1,0) node(Cp) {$C_p$};
\draw (2,0) node(Cp2) {$C_{p^2}$};
\draw (3,0) node (Cp3) {$C_{p^3}$};
\draw[->] (e) -- (Cp);
\end{tikzpicture}
& & & & \\ \hline
\begin{tikzpicture}
\draw (0,0) node(e) {$e$};
\draw (1,0) node(Cp) {$C_p$};
\draw (2,0) node(Cp2) {$C_{p^2}$};
\draw (3,0) node (Cp3) {$C_{p^3}$};
\draw[->] (Cp) -- (Cp2);
\end{tikzpicture}
& & &  & \\ \hline
\begin{tikzpicture}
\draw (0,0) node(e) {$e$};
\draw (1,0) node(Cp) {$C_p$};
\draw (2,0) node(Cp2) {$C_{p^2}$};
\draw (3,0) node (Cp3) {$C_{p^3}$};
\draw[->] (Cp2) -- (Cp3);
\end{tikzpicture}
& & & \text{free} & \\ \hline
\begin{tikzpicture}
\draw (0,0) node(e) {$e$};
\draw (1,0) node(Cp) {$C_p$};
\draw (2,0) node(Cp2) {$C_{p^2}$};
\draw (3,0) node (Cp3) {$C_{p^3}$};
\draw[->] (e) -- (Cp);
\draw[->] (Cp2) -- (Cp3);
\end{tikzpicture}
& &  & & \\ \hline
\begin{tikzpicture}
\draw (0,0) node(e) {$e$};
\draw (1,0) node(Cp) {$C_p$};
\draw (2,0) node(Cp2) {$C_{p^2}$};
\draw (3,0) node (Cp3) {$C_{p^3}$};
\draw[->] (e) -- (Cp);
\draw[->] (e) to[bend left] (Cp2);
\end{tikzpicture}
&  &  & & \\ \hline
\begin{tikzpicture}
\draw (0,0) node(e) {$e$};
\draw (1,0) node(Cp) {$C_p$};
\draw (2,0) node(Cp2) {$C_{p^2}$};
\draw (3,0) node (Cp3) {$C_{p^3}$};
\draw[->] (Cp) -- (Cp2);
\draw[->] (Cp) to[bend left] (Cp3);
\end{tikzpicture}
& & \text{free} & & \\ \hline
\begin{tikzpicture}
\draw (0,0) node(e) {$e$};
\draw (1,0) node(Cp) {$C_p$};
\draw (2,0) node(Cp2) {$C_{p^2}$};
\draw (3,0) node (Cp3) {$C_{p^3}$};
\draw[->] (e) -- (Cp);
\draw[->] (e) to[bend left] (Cp2);
\draw[->] (e) to[bend left] (Cp3);
\end{tikzpicture}
&  \text{free} &  & & \\ \hline
\begin{tikzpicture}
\draw (0,0) node(e) {$e$};
\draw (1,0) node(Cp) {$C_p$};
\draw (2,0) node(Cp2) {$C_{p^2}$};
\draw (3,0) node (Cp3) {$C_{p^3}$};
\draw[->] (e) -- (Cp);
\draw[->] (Cp) -- (Cp2);
\draw[->] (e) to[bend left] (Cp2);
\end{tikzpicture}
& & & & \\ \hline
\begin{tikzpicture}
\draw (0,0) node(e) {$e$};
\draw (1,0) node(Cp) {$C_p$};
\draw (2,0) node(Cp2) {$C_{p^2}$};
\draw (3,0) node (Cp3) {$C_{p^3}$};
\draw[->] (Cp) -- (Cp2);
\draw[->] (Cp2) -- (Cp3);
\draw[->] (Cp) to[bend left] (Cp3);
\end{tikzpicture}
& & \text{free} & & \\ \hline
\begin{tikzpicture}
\draw (0,0) node(e) {$e$};
\draw (1,0) node(Cp) {$C_p$};
\draw (2,0) node(Cp2) {$C_{p^2}$};
\draw (3,0) node (Cp3) {$C_{p^3}$};
\draw[->] (e) -- (Cp);
\draw[->] (Cp) -- (Cp2);
\draw[->] (e) to[bend left] (Cp2);
\draw[->] (e) to[bend left] (Cp3);
\end{tikzpicture}
& \text{free}& & & \\ \hline
\begin{tikzpicture}
\draw (0,0) node(e) {$e$};
\draw (1,0) node(Cp) {$C_p$};
\draw (2,0) node(Cp2) {$C_{p^2}$};
\draw (3,0) node (Cp3) {$C_{p^3}$};
\draw[->] (e) -- (Cp);
\draw[->] (Cp2) -- (Cp3);
\draw[->] (e) to[bend left] (Cp2);
\draw[->] (e) to[bend left] (Cp3);
\end{tikzpicture}
& \text{free}& & & \\ \hline
\begin{tikzpicture}
\draw (0,0) node(e) {$e$};
\draw (1,0) node(Cp) {$C_p$};
\draw (2,0) node(Cp2) {$C_{p^2}$};
\draw (3,0) node (Cp3) {$C_{p^3}$};
\draw[->] (e) -- (Cp);
\draw[->] (Cp) -- (Cp2);
\draw[->] (e) to[bend left] (Cp2);
\draw[->] (e) to[bend left] (Cp3);
\draw[->] (Cp) to[bend right] (Cp3);
\end{tikzpicture}
& \text{free}& & & \\ \hline
\begin{tikzpicture}
\draw (0,0) node(e) {$e$};
\draw (1,0) node(Cp) {$C_p$};
\draw (2,0) node(Cp2) {$C_{p^2}$};
\draw (3,0) node (Cp3) {$C_{p^3}$};
\draw[->] (e) -- (Cp);
\draw[->] (Cp) -- (Cp2);
\draw[->] (Cp2) -- (Cp3);
\draw[->] (e) to[bend left] (Cp2);
\draw[->] (e) to[bend left] (Cp3);
\draw[->] (Cp) to[bend right] (Cp3);
\end{tikzpicture}
& \text{free} & & & \\ \hline
\end{array}\]
\end{table}

\begin{table}
\label{Table:Cpq}
\caption{Let $p$ and $q$ be distinct primes and consider the cyclic group $C_{pq}$. The indexing categories for $C_{pq}$ are given in \cite[Figure 2]{Rub20}. The table below describes the Mackey functor underlying $\uA^{\co}[x_{C_{pq}/H}]$.}
\[ \begin{array}{|c||c|c|c|c|}
\hline
 & C_{pq} / e & C_{pq} / C_p & C_{pq} / C_q & C_{pq} / C_{pq} \\ \hline \hline
\begin{tikzpicture}
\draw (0,0) node(e) {$e$};
\draw (-1,0.5) node(Cp) {$C_p$};
\draw (1,0.5) node(Cq) {$C_q$};
\draw (0,1) node (Cpq) {$C_{pq}$};
\end{tikzpicture}
& & & & \text{free} \\ \hline
\begin{tikzpicture}
\draw (0,0) node(e) {$e$};
\draw (-1,0.5) node(Cp) {$C_p$};
\draw (1,0.5) node(Cq) {$C_q$};
\draw (0,1) node (Cpq) {$C_{pq}$};
\draw[->] (e) -- (Cp);
\end{tikzpicture}
& & & & \\ \hline
\begin{tikzpicture}
\draw (0,0) node(e) {$e$};
\draw (-1,0.5) node(Cp) {$C_p$};
\draw (1,0.5) node(Cq) {$C_q$};
\draw (0,1) node (Cpq) {$C_{pq}$};
\draw[->] (e) -- (Cq);
\end{tikzpicture}
& & & & \\ \hline
\begin{tikzpicture}
\draw (0,0) node(e) {$e$};
\draw (-1,0.5) node(Cp) {$C_p$};
\draw (1,0.5) node(Cq) {$C_q$};
\draw (0,1) node (Cpq) {$C_{pq}$};
\draw[->] (e) -- (Cp);
\draw[->] (e) -- (Cq);
\end{tikzpicture}
& & & & \\ \hline
\begin{tikzpicture}
\draw (0,0) node(e) {$e$};
\draw (-1,0.5) node(Cp) {$C_p$};
\draw (1,0.5) node(Cq) {$C_q$};
\draw (0,1) node (Cpq) {$C_{pq}$};
\draw[->] (e) -- (Cp);
\draw[->] (Cq) -- (Cpq);
\end{tikzpicture}
& &  & \text{free} & \\ \hline
\begin{tikzpicture}
\draw (0,0) node(e) {$e$};
\draw (-1,0.5) node(Cp) {$C_p$};
\draw (1,0.5) node(Cq) {$C_q$};
\draw (0,1) node (Cpq) {$C_{pq}$};
\draw[->] (e) -- (Cq);
\draw[->] (Cp) -- (Cpq);
\end{tikzpicture}
& & \text{free} & & \\ \hline
\begin{tikzpicture}
\draw (0,0) node(e) {$e$};
\draw (-1,0.5) node(Cp) {$C_p$};
\draw (1,0.5) node(Cq) {$C_q$};
\draw (0,1) node (Cpq) {$C_{pq}$};
\draw[->] (e) -- (Cpq);
\draw[->] (e) -- (Cp);
\draw[->] (e) -- (Cq);
\end{tikzpicture}
& \text{free} & & & \\ \hline
\begin{tikzpicture}
\draw (0,0) node(e) {$e$};
\draw (-1,0.5) node(Cp) {$C_p$};
\draw (1,0.5) node(Cq) {$C_q$};
\draw (0,1) node (Cpq) {$C_{pq}$};
\draw[->] (e) -- (Cpq);
\draw[->] (e) -- (Cp);
\draw[->] (e) -- (Cq);
\draw[->] (Cq) -- (Cpq);
\end{tikzpicture}
& \text{free} & & & \\ \hline
\begin{tikzpicture}
\draw (0,0) node(e) {$e$};
\draw (-1,0.5) node(Cp) {$C_p$};
\draw (1,0.5) node(Cq) {$C_q$};
\draw (0,1) node (Cpq) {$C_{pq}$};
\draw[->] (e) -- (Cpq);
\draw[->] (e) -- (Cp);
\draw[->] (e) -- (Cq);
\draw[->] (Cp) -- (Cpq);
\end{tikzpicture}
& \text{free} & & & \\ \hline
\begin{tikzpicture}
\draw (0,0) node(e) {$e$};
\draw (-1,0.5) node(Cp) {$C_p$};
\draw (1,0.5) node(Cq) {$C_q$};
\draw (0,1) node (Cpq) {$C_{pq}$};
\draw[->] (e) -- (Cpq);
\draw[->] (e) -- (Cp);
\draw[->] (e) -- (Cq);
\draw[->] (Cq) -- (Cpq);
\draw[->] (Cp) -- (Cpq);
\end{tikzpicture}
& \text{free} & & & \\ \hline
\end{array}\]
\end{table}

\begin{table}
\label{Table:D6}
\caption{Consider the dihedral group $D_6$. This group has five proper subgroups: the trivial subgroup, three conjugate copies of $C_2$, and one copy of $C_3$. Write $H_1, H_2, H_3$ for its subgroups of order two and $C_3$ for its subgroup of order three. The indexing categories for $D_6$ are described in \cite[Figure 4]{Rub20}. 
The table below describes the Mackey functor underlying $\uA^{\co}[x_{D_6/H}]$.}
\[ \begin{array}{|c||c|c|c|c|c|c|}
\hline 
 & D_6 / e & D_6 / H_1 & D_6 / H_2 & D_6/H_3 & D_6/C_3 & D_6 /D_6 \\ \hline \hline
\begin{tikzpicture}
\tikzstyle{every node}=[font=\tiny]
\draw (0,-0.25) node(e) {$e$};
\draw (-1,0.5) node(H1) {$H_1$};
\draw (0,0.5) node(H2) {$H_2$};
\draw (1,0.5) node(H3) {$H_3$};
\draw (0.5,0.6) node(C3) {$C_3$};
\draw (0,1.25) node (D6) {$D_6$};
\end{tikzpicture}
& & & & & &  \text{free} \\ \hline
\begin{tikzpicture}
\tikzstyle{every node}=[font=\tiny]
\draw (0,-0.25) node(e) {$e$};
,0.6) node(C3\draw (-1,0.5) node(H1) {$H_1$};
\draw (0,0.5) node(H2) {$H_2$};
\draw (1,0.5) node(H3) {$H_3$};
\draw (0.5,0.6) node(C3) {$C_3$};
\draw (0,1.25) node (D6) {$D_6$};
\draw[->] (e) -- (C3);
\end{tikzpicture}
& & & & & & \\ \hline
\begin{tikzpicture}
\tikzstyle{every node}=[font=\tiny]
\draw (0,-0.25) node(e) {$e$};
,0.6) node(C3\draw (-1,0.5) node(H1) {$H_1$};
\draw (0,0.5) node(H2) {$H_2$};
\draw (1,0.5) node(H3) {$H_3$};
\draw (0.5,0.6) node(C3) {$C_3$};
\draw (0,1.25) node (D6) {$D_6$};
\draw[->] (e) -- (H1);
\draw[->] (e) -- (H2);
\draw[->] (e) -- (H3);
\end{tikzpicture}
& & & & & & \\ \hline
\begin{tikzpicture}
\tikzstyle{every node}=[font=\tiny]
\draw (0,-0.25) node(e) {$e$};
,0.6) node(C3\draw (-1,0.5) node(H1) {$H_1$};
\draw (0,0.5) node(H2) {$H_2$};
\draw (1,0.5) node(H3) {$H_3$};
\draw (0.5,0.6) node(C3) {$C_3$};
\draw (0,1.25) node (D6) {$D_6$};
\draw[->] (e) -- (H1);
\draw[->] (e) -- (H2);
\draw[->] (e) -- (H3);
\draw[->] (e) -- (C3);
\end{tikzpicture}
& & & & & & \\ \hline
\begin{tikzpicture}
\tikzstyle{every node}=[font=\tiny]
\draw (0,-0.25) node(e) {$e$};
,0.6) node(C3\draw (-1,0.5) node(H1) {$H_1$};
\draw (0,0.5) node(H2) {$H_2$};
\draw (1,0.5) node(H3) {$H_3$};
\draw (0.5,0.6) node(C3) {$C_3$};
\draw (0,1.25) node (D6) {$D_6$};
\draw[->] (e) -- (H1);
\draw[->] (e) -- (H2);
\draw[->] (e) -- (H3);
\draw[->] (C3) -- (D6);
\end{tikzpicture}
& & & & & \text{free} & \\ \hline
\begin{tikzpicture}
\tikzstyle{every node}=[font=\tiny]
\draw (0,-0.25) node(e) {$e$};
,0.6) node(C3\draw (-1,0.5) node(H1) {$H_1$};
\draw (0,0.5) node(H2) {$H_2$};
\draw (1,0.5) node(H3) {$H_3$};
\draw (0.5,0.6) node(C3) {$C_3$};
\draw (0,1.25) node (D6) {$D_6$};
\draw[->] (e) -- (H1);
\draw[->] (e) -- (H2);
\draw[->] (e) -- (H3);
\draw[->] (e) -- (C3);
\draw[->] (e) to[bend left] (D6);
\end{tikzpicture}
& \text{free} & & & & & \\ \hline
\begin{tikzpicture}
\tikzstyle{every node}=[font=\tiny]
\draw (0,-0.25) node(e) {$e$};
,0.6) node(C3\draw (-1,0.5) node(H1) {$H_1$};
\draw (0,0.5) node(H2) {$H_2$};
\draw (1,0.5) node(H3) {$H_3$};
\draw (0.5,0.6) node(C3) {$C_3$};
\draw (0,1.25) node (D6) {$D_6$};
\draw[->] (e) -- (H1);
\draw[->] (e) -- (H2);
\draw[->] (e) -- (H3);
\draw[->] (e) -- (C3);
\draw[->] (e) to[bend left] (D6);
\draw[->] (C3) -- (D6);
\end{tikzpicture}
& \text{free} & & & & & \\ \hline
\begin{tikzpicture}
\tikzstyle{every node}=[font=\tiny]
\draw (0,-0.25) node(e) {$e$};
,0.6) node(C3\draw (-1,0.5) node(H1) {$H_1$};
\draw (0,0.5) node(H2) {$H_2$};
\draw (1,0.5) node(H3) {$H_3$};
\draw (0.5,0.6) node(C3) {$C_3$};
\draw (0,1.25) node (D6) {$D_6$};
\draw[->] (e) -- (H1);
\draw[->] (e) -- (H2);
\draw[->] (e) -- (H3);
\draw[->] (e) -- (C3);
\draw[->] (H1) -- (D6);
\draw[->] (H2) -- (D6);
\draw[->] (H3) -- (D6);
\draw[->] (e) to[bend left] (D6);
\end{tikzpicture}
& \text{free} & & & & & \\ \hline
\begin{tikzpicture}
\tikzstyle{every node}=[font=\tiny]
\draw (0,-0.25) node(e) {$e$};
,0.6) node(C3\draw (-1,0.5) node(H1) {$H_1$};
\draw (0,0.5) node(H2) {$H_2$};
\draw (1,0.5) node(H3) {$H_3$};
\draw (0.5,0.6) node(C3) {$C_3$};
\draw (0,1.25) node (D6) {$D_6$};
\draw[->] (e) -- (H1);
\draw[->] (e) -- (H2);
\draw[->] (e) -- (H3);
\draw[->] (e) -- (C3);
\draw[->] (H1) -- (D6);
\draw[->] (H2) -- (D6);
\draw[->] (H3) -- (D6);
\draw[->] (e) to[bend left] (D6);
\draw[->] (C3) -- (D6);
\end{tikzpicture}
& \text{free} & & & & &  \\ \hline
\end{array}\]
\end{table}

\newpage
\clearpage

\bibliographystyle{plain}
\bibliography{master}

\end{document}